\let\expandafter\xbf\csname bfseries \endcsname
\let\expandafter\xmd\csname mdseries \endcsname
\let\xbar\bar
\let\bar\xbar
\let\csname bfseries \endcsname\xbf
\let\csname mdseries \endcsname\xmd
\newtheorem{Th}{Theorem}[section]
\newtheorem{Prop}[Th]{Proposition}
\newtheorem{Lem}[Th]{Lemma}
\newtheorem{Cor}[Th]{Corollary}
\newtheorem{Rem}[Th]{Remark}
\newtheorem{Def}[Th]{Definition}
\newcommand{\xxi}{\langle \xi\rangle}
\newcommand{\Rl}{\mathbb{R}}
\newcommand{\Rn}{\mathbb{R}^{n}}
\newcommand{\Z}{\mathbb{Z}}
\renewcommand{\d}{\partial}
\def\la{\langle}
\def\ra{\rangle}
\newcommand{\BMO}{\mathrm{BMO}}
\def\bra#1{{\langle{#1}\rangle}}
\newcommand{\jap}[1]{\left\langle #1\right\rangle}
\newcommand{\bmo}{\mathrm{bmo}}
\newcommand{\phase}{\varphi}
\newcommand{\dd}{\, \mathrm{d}}
\newcommand{\ddd}{\,\text{\rm{\mbox{\dj}}}}
\newcommand{\bessel}[1]{(1-\Delta)^{#1/2}}
\renewcommand{\SS}{{\mathscr{S}}}
\newcommand{\at}{\mathfrak{a}}
\newcommand{\brkt}[1]{\left({#1}\right)}
\newcommand{\set}[1]{\left\{{#1}\right\}}
\newcommand{\norm}[1]{\left\Vert#1\right\Vert}
\newcommand{\abs}[1]{\left |#1\right |}
\newcommand{\eps}{\varepsilon}
\newcommand{\eq}[1]{\begin{equation*}
\begin{split}
#1
\end{split}
\end{equation*}}
\newcommand{\nm}[2]{\begin{equation}\label{#1}
\begin{split}
#2
\end{split}
\end{equation}}
\DeclareMathOperator{\supp}{supp}
\title
[Regularity of oscillatory integrals operators]
{Regularity properties of Schr\"odinger integral operators and general oscillatory integrals}
\author[A. J. Castro]{Alejandro J. Castro}
\author[A. Israelsson]{Anders Israelsson}
\author[W. Staubach]{Wolfgang Staubach}
\author[M. Yerlanov]{Madi Yerlanov}
\address{\newline
       Alejandro J. Castro \newline
       Department of  Mathematics, Nazarbayev University, \newline
		010000 Nur-Sultan, Kazakhstan}
\email{alejandro.castilla@nu.edu.kz}
\address{\newline
       Anders Israelsson, Wolfgang Staubach \newline
       Department of  Mathematics, Uppsala University, \newline
       S-751 06 Uppsala, Sweden}
       \email{anders.israelsson@math.uu.se, wulf@math.uu.se}
\address{\newline
       Madi Yerlanov \newline
       Department of  Mathematics, Simon Fraser University, \newline
	   V5A 1S6 Burnaby, B.C., Canada }
\email{myerlano@sfu.ca}       
 \thanks{
A. J. Castro is supported by the Nazarbayev University Faculty Development Competitive Research Grants Program, grant number 110119FD4544.
{W. Staubach is partially supported by a grant from the Crafoord foundation and by a grant from G. S. Magnusons fond, grant number MG2015-0077}}
 \keywords{Schr\"odinger integral operators, Oscillatory integral operators.}
 \subjclass[2010]{Primary: {42B20, 42B35, 42B37, 47D06, 47D08}, Secondary: {35S30, 37L50}}
\begin{document}
\vspace*{-1cm}
\maketitle
\vspace*{-0.6cm}
 \begin{abstract}
We introduce the notion of Schr\"odinger integral operators and prove sharp local and global regularity results for these (including propagators for the quantum mechanical harmonic oscillator). Furthermore we introduce general classes of oscillatory integral operators with inhomogeneous phase functions, whose local and global regularity are also established in classical function spaces (both in the Banach and quasi-Banach scales). The results are then applied to obtain optimal (local in time) estimates for the solution to the Cauchy problem for variable-coefficient Schr\"odinger equations as well as other evolutionary partial differential equations.
 \end{abstract}
\vspace*{-0.2cm}
\tableofcontents
\newpage



\section{Introduction}\label{Sect:Intro}

The goal of this paper is to prove sharp estimates (in classical function spaces) for a wide class of oscillatory integral operators that appear in the theory of partial differential equations and mathematical physics. As an upshot of these, one also obtains optimal regularity for the Cauchy problem for various evolutionary partial differential equations, such as Schr\"odinger-type equations, just to mention one important example. We start by giving an overview of the previously known regularity results for oscillatory integral operators, which is to a large extent biased by their relevance to our current paper.\\

For simplicity, we confine ourselves to oscillatory integral operators of the form
\begin{equation}\label{intro:eq:OIO}
T_a^\varphi f(x) := \int_{\Rl^n} e^{i\varphi(x,\xi)}\, a(x,\xi)\, \widehat f (\xi) \ddd\xi,
\end{equation}

with amplitude $a(x,\xi)$ and phase function $\varphi(x,\xi)$. It was shown by L. H\"ormander {\cite{Hor2}}
and G. I. Eskin {\cite{Esk2}} in the 1970's
that; if $a(x,\xi)$ is smooth and compactly supported in $x$ and belongs to the class $S^{0}_{1,0}(\Rn)$ (see Definition \ref{symbol class Sm}), and if $\varphi\in \mathcal{C}^{\infty}(\Rl^n \times \Rl^n \setminus \{0\})$ is positively homogeneous of degree 1 in $\xi$ and the mixed Hessian matrix of $\varphi(x,\xi)$ has a non-zero determinant on the support of $a(x,\xi)$ (the non-degeneracy condition), then the operator $T_a^\varphi$ is $L^2$-bounded. Later D. Fujiwara
{\cite{Fuj}} showed that if $\varphi\in \mathcal{C}^{\infty}(\Rl^n \times \Rl^n)$ satisfies the condition \begin{equation}\label{fujiwaras condition}
    \abs{\partial^\alpha_\xi \partial^\beta_x \varphi (x,\xi)}
\leq c_{\alpha,\beta} 
, \quad 
|\alpha+\beta|\geq 2,
\end{equation}
and the determinant of the mixed Hessian of $\varphi$ is globally bounded from below by a non-zero constant (the strong non-degeneracy condition), and the amplitude merely belongs to the class $S^{0}_{0,0}(\Rn)$, then $T_a^\varphi$ is $L^2$-bounded. The Fujiwara condition on the phase function may seem rather strong, however its role is to deal with the lack of decay of the amplitude. Of course if one is in the case that was considered by Eskin and H\"ormander, where the amplitude has some decay, then the assumptions on the phase function could be relaxed. Compared to the result of Eskin and H\"ormander, the boundedness result of Fujiwara has the advantage of avoiding the homogeneity assumption on the phase function, and it also avoids the assumption of compact support in the spatial variables and therefore provides a global $L^2$-boundedness result. Inspired by this result of Fujiwara's, D. Dos Santos Ferreira and W. Staubach {\cite{DS}} showed that; if the phase function $\varphi$ is positively homogeneous of degree 1 in $\xi$, is strongly non-degenerate, and satisfies 
\begin{equation*}
	\sup_{(x,\,\xi) \in \Rl^n \times\Rl^n \setminus\{0\}}  |\xi| ^{-1+\vert \alpha\vert}\left | \partial_{\xi}^{\alpha}\partial_{x}^{\beta}\varphi(x,\xi)\right |
	\leq C_{\alpha , \beta},\qquad |\alpha+\beta|\geq 2,
\end{equation*}
	 and if the amplitude $a(x,\xi)\in \mathcal{C}^{\infty}(\Rl^n \times \Rl^n)$ satisfies 
	 \begin{equation*}
	\left|\partial_{\xi}^{\alpha}\partial_{x}^{\beta} a(x,\xi)\right |
	\leq C_{\alpha , \beta} (1+|\xi|)^{m-\rho|\alpha|+\delta|\beta|},
\end{equation*}	  
then one has that $T^\varphi_a$ is globally $L^2$-bounded, provided that $\rho, \delta\in [0,1]$, $\delta\neq 1$ and $m=\min(0,n(\rho-\delta)/2)$, or $\rho\in [0,1],$ $\delta=1$ and $m<n(\rho-1)/2$. This result is sharp.\\

In 2010, E. Cordero, F. Nicola and L. Rodino {\cite{CNR2}} gave an elegant proof of the result of Fujiwara, which completely avoids many of the technicalities (e.g. the use of the Cotlar-Stein lemma) involved in the previous proofs and instead relies on techniques from the theory of modulation spaces. Therefore, one could assert that the $L^2$-regularity of operators $T_a^\varphi$ with smooth amplitudes and smooth non-degenerate phase functions, has been brought to completion. However, the extent of the impact of  {\cite{CNR2}} was not confined to the aforementioned $L^2$-result, and indeed the investigations of Cordero--Nicola--Rodino also paved the way and inspired much activity in the field, and not least, some of the results of this paper. \\

Turning to the problem of $L^p$-regularity for $p\neq 2$, in the 1980's 
J. Peral {\cite{Per}}, 
A. Miyachi {\cite{Miyachi2}}, and
M. Beals {\cite{Bea}}
studied the problem of $L^p$-boundedness of operators of the form \eqref{intro:eq:OIO},  when the phase function is non-degenerate and positively homogeneous of degree one. It was realised that for an $a\in S^m_{1,0}(\Rn)$ the $L^p$-boundedness can (in general) not hold if
$m>-(n-1) |1/p-1/2|$. \\

As a matter of fact, the influential paper by Miyachi {\cite{Miyachi1}} had a decisive impact on the development of the regularity theory of  oscillatory integral operators, as we shall briefly explain. Miyachi essentially considered operators $T_a^\varphi$ with phase functions $\varphi(x,\xi)= x\cdot\xi +|\xi|^k$ for $k>0$ and amplitudes $a(x,\xi)=\sigma(\xi)\in S^{m}_{1,0}(\Rn)$ i.e. satisfying $|\partial^{\alpha}_{\xi} \sigma(\xi)|\leq c_{\alpha}(1+|\xi|)^{m-|\alpha|}.$ Then he showed that 
for $k=1$ the operator $T_a^\varphi$ is $L^p$-bounded for $1<p<\infty$ if and only if  
$m\leq -(n-1)|1/p-1/2|$, and for $k>0$ (but $k\neq 1$) $T_a^\varphi$ is $L^p$-bounded if and only if   
$m\leq -kn |1/p-1/2|.$
Moreover, Miyachi goes beyond these and also proves similar results on the scales of quasi-Banach Hardy spaces and also in Lipschitz (or H\"older) spaces.\\

For $k=1,$ which is intimately connected to the wave equation and yields a Fourier integral operator in the sense of H\"ormander {\cite{Hor2}}, A. Seeger, C. Sogge and E. Stein \cite{SSS}  generalised Miyachi's result to the variable-coefficient setting. More specifically, Miyachi's result for the ordinary wave operator was generalised to the case of $x$-dependent amplitudes (with compact spatial support) and more general homogeneous of degree one non-degenerate phase functions $\varphi(x,\xi)$. Using a novel method which was also partly inspired by C. Fefferman's paper \cite{Feff}, Seeger--Sogge--Stein proved the optimal local $L^p$-boundedness of Fourier integral operators. Extensions to global estimates in more general function spaces were carried out by A. Israelsson, S. Rodr\'iguez-L\'opez, and W. Staubach in \cite{IRS}.
Thus, the investigations mentioned above complete the picture regarding the regularity of Fourier integral operators of the form \eqref{intro:eq:OIO} with non-degenerate homogeneous of degree one phase functions, and amplitudes in $S^m_{1,0}(\Rn)$.\\

Therefore, the natural remaining problem is the extension of the results of Miyachi to the variable-coefficient setting in the case of $k\neq 1$. This amounts to the investigation of the regularity properties of oscillatory integral operators that fall beyond the scope of the theory of Fourier integral operators. As we mentioned earlier, Fujiwara's work on one hand and the work of Cordero--Nicola--Rodino on the other, have clarified the $L^2$-regularity of certain class of oscillatory integral operators. Indeed, the earlier investigations of B. Helffer and  D. Robert {\cite{Helf-Rob}} and those of Helffer {\cite{Helffer}} in connection to the study of propagators for Schr\"odinger equations (for example the harmonic oscillator) have demonstrated the importance of oscillatory integral operators of the form considered in \cite{Fuj}, see e.g. the paper of Cordero--Nicola--Rodino \cite{CoNiRo}. Furthermore, in the remarkable paper {\cite{DN}}, P. D'Ancona and F. Nicola established some sharp estimates between $L^p$ and Sobolev spaces for operators $ T_a^\varphi$ that satisfy the aforementioned Fujiwara condition \eqref{fujiwaras condition}, without using any $L^p$-boundedness results for $ T_a^\varphi$. As a matter of fact, they also admit in their paper that, so far, one doesn't have any $L^p$-regularity theory (much the same as the one available for the Fourier integral operators) for those oscillatory integral operators that are associated to Schr\"odinger equations.\\

One of the goals of this paper is to fill this gap by not only providing an $L^p$-regularity (and indeed even $L^p$--$L^q$, $1< p\leq q<\infty$) theory for the so called {\it{Schr\"odinger integral operators}} which are the oscillatory integrals with phase functions satisfying \eqref{fujiwaras condition} (see Definition \ref{def:SIO}), but also to go beyond these classes of operators and investigate the regularity of general oscillatory integral operators (see Definition \ref{def:OIO}) in classical function spaces. Furthermore, the results that are obtained here also extend the range of validity of the estimates obtained in \cite{DN}.\\

Here it is important to highlight the fact that, just as in the case of Fourier integral operators which had their origin in  P. Lax's construction of parametrices for hyperbolic partial differential equations in his pioneering 1957-paper
\cite{Lax}, the theory of Schr\"odinger integral operators has its origin in the seminal 1978-paper of K. Asada and D. Fujiwara \cite{AF}. In that paper, the authors considered operators intimately related to those considered here and also established their $L^2$-regularity. \\

One of the main features of the results that are obtained in this paper is the abolition of the usual homogeneity assumption in the phase functions. 
 For these phase functions {we will} also allow the amplitudes to merely belong to the class $S^{m}_{0,0}(\Rn),$ as opposed to the usual $S^m_{1,0}(\Rn)$. Finally, we will show that the regularity results that are obtained here are optimal for the specific order of decay $m$ that we choose. It is important to note that, for those oscillatory integral operators that are not Fourier integral operators, choosing an amplitude in the better class $S^m_{1,0}(\Rn)$ would not yield any improvement in the order of decay $m$, which is required for the regularity in various functions spaces. Therefore, our results are not only sharp regarding the order of the amplitudes, but also optimal regarding their type, which is measured by the lower-case indices of the classes of amplitudes.\\
 
Regarding calculi for Schr\"odinger integral operators, Asada and Fujiwara \cite{AF} studied the action of pseudodifferential operators on their class of oscillatory integrals and showed that their class is closed under composition with pseudodifferential operators of order zero. In 2013, E. Cordero, K--H. Gr\"ochenig, F. Nicola and L. Rodino showed in \cite{CGNR} that the class of operators that we here refer to as {Schr\"odinger integral operators (of order zero)} is actually closed under composition.
In this paper, we prove a basic composition theorem, similar to that of H\"ormander's in the Fourier integral operator-setting, for the composition of a pseudodifferential operator and a general oscillatory integral operator. The usefulness of this composition theorem is also clearly demonstrated in the applications to the regularity results in Besov-Lipschitz and Triebel-Lizorkin spaces.  \\

The paper is organised as follows{: In} Section \ref{subsection:definitions} we recall some of the basic definitions and facts from the theory of function spaces. We also define our classes of phase functions, amplitudes and the corresponding operators which will be treated in this paper. In Section \ref{sec:maintheorems} we state the main results of the paper and outline the proofs of the theorems. This includes {both local and global regularity} results in  Besov-Lipschitz and Triebel-Lizorkin spaces, $L^p-L^q$ estimates, and also our parameter-dependent composition theorem. Furthermore in the same section we also provide some examples regarding applications of our results within harmonic analysis and partial differential equations. For example we show the regularity of operators with phase functions of the form  $x\cdot\xi+ t(x)|\xi|^k$ with $0<k\leq 1$ and $x\cdot\xi+ t(x)\jap{\xi}$, with $t(x)$ being smooth and bounded together with all of its derivatives. The former is significant in the study of water-wave equation ($k=\frac{1}{2}$), and the latter example is of significance in the study of Klein-Gordon equations. Since our regularity results are also valid for phase functions of the form $x\cdot\xi+|\xi|^k$ with $0<k<\infty$, this enables us to prove sharp basic estimates (in both Banach and quasi-Banach scales) for the solutions of a large class of dispersive PDEs. Thereafter, we turn to variable-coefficient Schr\"odinger equations and show sharp  estimates in classical function spaces for the solutions of Schr\"odinger equations with quadratic potentials (including the case of the harmonic oscillator).\\

Section \ref{Sect:kernel estimates} is devoted to the basic kernel estimates for oscillatory integral operators. In Section \ref{sec:L2-boundedness} we discuss the $L^2$-regularity of the operators and in Section \ref{sect:lowfreq} we deal with the boundedness of the low frequency portion of the operators in Besov-Lipschitz and Triebel-Lizorkin spaces. Since we will sometimes divide the operators in question into  low frequency,  middle frequency and  high frequency portions, in Section \ref{sect:midfreq} we treat the boundedness of the middle frequency portion of the operators. In Section \ref{sec:local Lp-boundedness} we prove a local $h^p-L^p$ result for the oscillatory integral operators and Section \ref{sect:highfreq} is devoted to the study of the  $h^p-L^p$ boundedness of the high-frequency portion of the operators. The same problem for the Schr\"odinger integral operators is treated in Section \ref{sect:hpLpScho}. In Section \ref{section:left comp of OIO with pseudo} we prove a parameter-dependent composition formula and an expansion for the action of a pseudodifferential operator on an oscillatory integral operator. Section \ref{sec:Besov} and Section \ref{sec:Triebel} are devoted to regularity results in Besov-Lipschitz and Triebel-Lizorkin spaces respectively. The sharpness of the results is discussed in Section \ref{sect:Sharp}.\\

{\bf{Acknowledgements.}}
The authors are grateful to Jorge Betancor for reading through the first draft of the manuscript and for his comments that have led to an overall improvement of the presentation. 

\section{Definitions and preliminaries}\label{subsection:definitions}

\noindent In this section, we will collect all the definitions that will be used throughout this paper. We also state some useful results from both harmonic and microlocal analysis which will be used in the proofs.
\subsection{Notations}
We will denote constants which can be determined by known parameters in a given situation, but whose values are not crucial to the problem at hand, by $C$, or $c$ or $c_\alpha$ and so on. Such parameters in this paper would be, for example, $m$, $p$, $n$,  and the constants connected to the seminorms of various amplitudes or phase functions. The value of the constants may differ
from line to line, but in each instance could be estimated if necessary. We also write $a\lesssim b$ as shorthand for $a\leq Cb$ and $a\sim b$ when $a\lesssim b$ and $b\lesssim a$.\\

Also, we shall denote the normalised Lebesgue measure $\dd \xi / (2\pi)^n$ by $\ddd \xi$, \linebreak$\langle\xi\rangle:= (1+|\xi|^2)^{1/2},$ the space of smooth functions with compact support by $\mathcal{C}_c^\infty(\Rn)$,  the space of smooth functions with bounded derivatives of all orders by $\mathcal{C}_b^\infty(\Rn),$ the Schwartz class of rapidly decreasing smooth functions by $\mathscr{S}(\Rn)$ and the set of non-negative integers $\{0,\,1,\,2,\,\dots\}$ by $\mathbb{Z}_+$. In what follows, we use the notation
$$ \widehat{f}(\xi):= \int_{\mathbb R^n} f(x)\, e^{-ix\cdot\xi}\dd x,$$
for the Fourier transform of the function $f$ and $\xi$ and $\eta$ will denote frequency variables.

\subsection{Function spaces}
We start this section by defining the standard \textit{Littlewood-Paley decomposition} which  is a basic ingredient in our proofs and is also used to define the function spaces that we are concerned with here.

\begin{Def}\label{def:LP}
Let $\psi_0 \in\mathcal C_c^\infty(\Rl^n)$ be equal to $1$ on $B(0,1)$ and have its support in $B(0,2)$. Then let 
$$\psi_j(\xi) := \psi_0 \left (2^{-j}\xi \right )-\psi_0 \left (2^{-(j-1)}\xi \right ),$$
where $j\geq 1$ is an integer and $\psi(\xi) := \psi_1(\xi)$. Then $\psi_j(\xi) = \psi\left (2^{-(j-1)}\xi \right )$ and one has the following Littlewood-Paley partition of unity
\eq{
\sum_{j=0}^\infty \psi_j(\xi) = 1 \quad \text{\emph{for all }}\xi\in\Rl^n .
}
\end{Def}
Observe that $\psi_j$ is supported inside the annulus $\set{\xi\in\Rn:2^{j-1}\leq \abs\xi \leq 2^{j+1}}$. It is sometimes also useful to define a sequence of smooth and compactly supported functions $\Psi_j$ with $\Psi_j=1$ on the support of $\psi_j$ and $\Psi_j=0$ outside a slightly larger compact set. Explicitly, one could set
\begin{equation*}\label{eq:PSI}
\Psi_j 
:= \psi_{j+1}+\psi_j+\psi_{j-1},
\end{equation*}
with $\psi_{-1}:=\psi_{0}$.\\

Next we proceed with the definition of \emph{local Hardy space}, $h^p(\Rl^n)$ due to D. Goldberg, see \protect{\cite{Goldberg}}. This space plays an important role in the paper, since many of the subsequent results will be obtained by means of interpolation with the local Hardy spaces.

\begin{Def}\label{nonatomic hardy}
For $0<p\leq 1$, the local Hardy space $h^p(\Rl^n)$ is the set of distributions $f \in {\mathscr{S}}'(\Rl^n)$ such that
\begin{equation*}\label{eq:hp basic}
	\|f\|_{{h}^p(\Rl^n)} := \brkt{\int_{\Rl^n} \sup_{0<t<1} \abs{\psi_0(tD) f(x)}^p\dd x}^{1/p}<\infty,
\end{equation*}
where $\psi_0$ is given in \emph{Definition \ref{def:LP}}, and for $t\in \Rl$
$$\psi_0 (tD) f(x)
:= \int_{\Rl^n} e^{ix\cdot\xi} \, \psi_{0} (t\xi)\, \widehat{f}(\xi)  \ddd \xi.$$ 
\end{Def}
Another useful definition of the Hardy spaces is based on the so called \emph{atoms} and is given as follows:
\begin{Def}\label{def:hardyspace}
For $0<p\leq 1$, a function $\at$ is called an $h^p$-atom $($or a p-atom for short$)$ if for some $x_0\in \Rl^n$ and $r>0$ the following three conditions are satisfied:
\begin{enumerate}
\item[$i)$] $\supp \at\subset B(x_{0}, r)$,
\item[$ii)$] $ |\at(x)|\leq|B(x_{0}, r)|^{-1/p},$
\item[$iii)$] if $r\leq 1$ 
then 
\smallskip 
$ \int_{\Rl^n} x^{\alpha}\,\at(x)\dd x=0,$
for any multi-index $\alpha$  with 
$|\alpha|\leq [n(1/p-1)]$, and no further condition if $r>1.$ Here $[x]$ denotes the integer part of $x$.
\end{enumerate}

Then one has that a distribution $f\in h^p (\Rl^n)$ has an atomic decomposition
$$
f=\sum_{j}\lambda_{j}\at_{j},
$$
where the $\lambda_{j}$ are constants such that $$ \sum_{j}|\lambda_{j}|^{p}
\sim \Vert f\Vert_{h^p(\mathbb{R}^{n})}^{p},$$ and the $a_{j}$ are $p$-atoms. 
\end{Def}
For $1<p<\infty$ we identify $h^p (\Rl^n)$ with $L^p(\Rl^n).$
The dual of the local Hardy space $h^1 (\Rl^n)$ is the \textit{local} $\BMO(\Rn)$, and is denoted by $\bmo(\Rn)$, which consists of locally integrable functions that verify
$$\Vert f\Vert_{\bmo(\Rl^n)}:= \Vert f\Vert_{\BMO(\Rl^n)}+ \Vert \psi_0 (D) f\Vert_{L^{\infty} (\Rl^n)}<\infty,$$
where $\BMO(\Rl^n)$ is the usual John-Nirenberg space of functions of bounded mean oscillation and $\psi_0$ is the cut-off function introduced in Definition \ref{def:LP}.

Using the Littlewood-Paley decomposition above, we define the \textit{Besov-Lipschitz spaces}. 

\begin{Def}\label{def:Besov}
	Let $0 < p,q \le \infty$ and $s \in {\mathbb R}$. The Besov-Lipschitz spaces are defined by
	\[
	{B}^s_{p,q}(\Rl^n)
	:=
	\Big\{
	f \in {{\SS}'(\Rl^n)} \,:\,
	\|f\|_{{B}^s_{p,q}(\Rl^n)}
	:=
	\Big(
	\sum_{j=0}^\infty
	2^{jq s}\|\psi_j(D)f\|^{q}_{L^p(\Rl^n)}
	\Big)^{1/q}<\infty
	\Big\}.
	\]
\end{Def}

\noindent 
It is worth to mention that for $p=q=\infty$ and $0<s\leq 1$ we obtain the familiar Lipschitz (or H\"older) space $\Lambda^s(\Rl^n)$, i.e. $$B^s_{\infty,\infty}(\Rl^n)= \Lambda^s(\Rl^n).$$

We will also produce boundedness results in the realm of \textit{Triebel-Lizorkin} spaces which can be defined using Littlewood-Paley theory, as follows:

\begin{Def}\label{def:Triebel}
Let $0 < p<\infty$, $0<q \le \infty$ and $s \in {\mathbb R}$. The Triebel-Lizorkin spaces are given by
	\[
	{F}^s_{p,q}(\Rl^n)
	:=
	\Big\{
	f \in {\SS}'(\Rl^n) \,:\,
	\|f\|_{{F}^s_{p,q}(\Rl^n)}
	:=
	\Big\|\Big(
	\sum_{j=0}^\infty
	2^{jq s}|\psi_j(D)f|^{q}
	\Big)^{1/q} \Big\|_{L^p(\Rl^n)} <\infty
	\Big\}.
	\]

\end{Def}
\noindent It is well-known, see e.g. \cite[p. 51]{Trie83}
that
\begin{equation}\label{table of tl spaces}
F^s_{p,q}(\Rl^n)=
\left\{
\begin{array}{llll}
L^{p}(\Rl^n), 
& s=0, 
& 1< p<\infty, 
& q=2,\\
h^p(\Rl^n), 
& s=0, 
& 0<p\leq 1,
& q=2,\\
\mathrm{bmo}(\mathbb{R}^n), 
& s=0, 
& p=\infty, 
& q=2,\\
H^{s,p}(\Rl^n), 
& -\infty <s<\infty, 
& 1< p<\infty, 
& q=2,\\
\end{array}
\right.
\end{equation}
where $H^{s,p}(\Rl^n)$ are various Sobolev and Slobodecskij spaces.

\begin{Rem}
Different choices of the basis $\{\psi_j\}_{j=0}^\infty$ give equivalent
{\emph{(}quasi\emph{)}-norms}
of $B_{p,q}^s(\Rl^n)$ and $F_{p,q}^s(\Rl^n)$  in \emph{Definition \ref{def:Besov}} and \emph{\ref{def:Triebel},} see e.g. \cite[p. 41]{Trie83}. We will use either $\{\psi_j\}_{j=0}^\infty$ or  $\{\Psi_j\}_{j=0}^\infty$ to define the norm of $B_{p,q}^s(\Rl^n)$ and $F_{p,q}^s(\Rl^n)$.
\end{Rem}
Another fact which will be useful to us is that for $-\infty <s<\infty$ and $0<p\leq \infty$
\begin{equation}\label{equality of TL and BL}
B^s_{p,p}(\Rl^n)= F^s_{p,p}(\Rl^n),
\end{equation}
and that one has the two continuous embeddings
\begin{equation}\label{embedding of TL}
F^{s+\varepsilon}_{p,q_0}(\Rl^n)\xhookrightarrow{} F^s_{p ,q_1}(\Rl^n)\quad\text{and}\quad B^{s+\varepsilon}_{p,q_0}(\Rl^n)\xhookrightarrow{} B^s_{p ,q_1}(\Rl^n) 
\end{equation}
for $-\infty <s<\infty$, $0<p< \infty$, $0<q_0,q_1 \leq \infty$ and all $\varepsilon>0$.
Furthermore, for $s'\in \Rl$, the operator $ \brkt{1-\Delta}^{s'/2}$ maps ${F}^s_{p,q}(\Rl^n)$ isomorphically into ${F}^{s-s'}_{p,q}(\Rl^n)$ and ${B}^s_{p,q}(\Rl^n)$ isomorphically into ${B}^{s-s'}_{p,q}(\Rl^n),$ see \cite[p. 58]{Trie83}.\\

We will also make repeated use of the estimate; for and all $s,\, p,\, q$
\begin{equation}\label{poitwise multiiplier}
    \Vert fu\Vert_{A^s_{p,q} (\Rl^n)} \lesssim \Big(\sum_{|\alpha|\leq M} \sup_{x\in \Rl^n} |\partial^\alpha f(x)|\Big)\,\Vert u\Vert_{A^s_{p,q}(\Rl ^n)},
\end{equation}
which is valid for $A=B$ (Besov-Lipschitz spaces) or $A=F$ (Triebel-Lizorkin spaces), and $M\in \Z_+$ large enough, see \cite[p. 229, eq. (9), (10)]{RuSi}. For all the other facts about function spaces that are used in this paper we refer the reader to \cite{Trie83}.\\

We will state the following lemma which is a consequence of a Lemma originally due to J. Peetre \cite{peetre}, which turns out to be useful in proving quasi-Banach Besov-Lipschitz/Triebel-Lizorkin boundedness of the low frequency portions of oscillatory integral operators studied in forthcoming sections. 
\begin{Lem}\label{grafakos lemma 1}
Let $f\in \mathcal C^1(\Rl^n)$ with Fourier support inside the unit ball. Then for every $\rho>n$, and $r \in ( n/\rho,1]$ one has
\eq{
\left (\langle \cdot\rangle^{-\rho} \ast |f|\right )(x)\lesssim \Big (M(|f|^r)(x)\Big ) ^{1/r}, \quad x \in \mathbb{R}^n,
}
where $M$ denotes the Hardy-Littlewood maximal function on $\Rl^n$. 

\end{Lem}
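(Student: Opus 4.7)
The proof combines Peetre's pointwise maximal inequality for band-limited functions with a weighted integral estimate. Since $\widehat{f}$ is supported in the unit ball, one can write $f=K\ast f$ for a fixed Schwartz kernel $K$ with $\widehat K\equiv 1$ on $B(0,1)$; together with the triangle inequality $(1+|x-z-y|)\ge (1+|x-z|)/(1+|y|)$ and the standard pointwise domination of a radial $L^{1}$-convolution by the Hardy--Littlewood maximal function, this yields the Plancherel--Polya-type estimate
\[
|f(x-y)|^{r}\lesssim (1+|y|)^{N}\int_{\mathbb{R}^{n}}\frac{|f(z)|^{r}}{(1+|x-z|)^{N}}\,\mathrm{d}z \lesssim (1+|y|)^{N}\,M(|f|^{r})(x),
\]
valid for every fixed $N>n$. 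Taking $r$-th roots produces the Peetre-type pointwise bound
\[
|f(x-y)|\lesssim (1+|y|)^{N/r}\bigl(M(|f|^{r})(x)\bigr)^{1/r}.
\]

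Inserting this into the convolution and pulling out the maximal-function factor, I would obtain
\[
\bigl(\langle\cdot\rangle^{-\rho}\ast|f|\bigr)(x)\lesssim \bigl(M(|f|^{r})(x)\bigr)^{1/r}\int_{\mathbb{R}^{n}}\langle y\rangle^{-\rho}(1+|y|)^{N/r}\,\mathrm{d}y,
\]
and the remaining integral is finite as soon as the exponent of the weight is strictly less than $-n$. Under the hypothesis $r\in(n/\rho,1]$, a suitable value $N>n$ can be selected to ensure this integrability condition, delivering the claimed pointwise bound.

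The main technical obstacle is selecting the Plancherel--Polya parameter $N$ with the right balance: large enough to dominate by the Hardy--Littlewood maximal function, yet small enough to keep the weighted integral convergent at infinity. The admissible range of $r$ in the hypothesis is what permits both requirements to be arranged compatibly, and the hypothesis $f\in \mathcal{C}^{1}$ enters only to guarantee that $f$ is a well-defined pointwise function so that the representation $f=K\ast f$ and the absolute-value integrand $|f|$ make classical sense.
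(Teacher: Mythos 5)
Your proposal has a genuine gap at the step where you assert that ``a suitable value $N>n$ can be selected to ensure this integrability condition.'' Tracking the constraints explicitly: the Plancherel--Polya domination $\int_{\Rl^n}(1+|x-z|)^{-N}|f(z)|^{r}\,\mathrm{d}z\lesssim M(|f|^{r})(x)$ forces $N>n$ (otherwise $(1+|\cdot|)^{-N}\notin L^{1}$), while convergence of $\int_{\Rl^n}\langle y\rangle^{-\rho}(1+|y|)^{N/r}\,\mathrm{d}y$ forces $\rho-N/r>n$, i.e.\ $N<r(\rho-n)$. For a window $n<N<r(\rho-n)$ to exist you need $r>n/(\rho-n)$, which is strictly stronger than the hypothesis $r>n/\rho$ (note $n/(\rho-n)>n/\rho$ for every $\rho>n$); worse, whenever $n<\rho\le 2n$ one has $n/(\rho-n)\ge1$ and the window is empty for \emph{every} admissible $r\in(n/\rho,1]$. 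Even if you replaced your Plancherel--Polya step with the sharp Peetre bound $|f(x-y)|\lesssim(1+|y|)^{n/r}\bigl(M(|f|^{r})(x)\bigr)^{1/r}$ and then integrated directly, you would need $\rho>n+n/r$, again stronger than what the lemma assumes. The failure is therefore structural, not a matter of choosing constants more carefully.

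The ingredient you are missing is a factorization \emph{before} integrating, which is the heart of the paper's argument. Write
\[
\frac{|f(x-y)|}{\langle y\rangle^{\rho}}
=\Big(\frac{|f(x-y)|}{\langle y\rangle^{\rho}}\Big)^{1-r}\cdot\frac{|f(x-y)|^{r}}{\langle y\rangle^{\rho r}},
\]
bound the first factor (raised to the power $1-r\in[0,1)$) by Peetre's maximal estimate $\sup_{y}|f(x-y)|/\langle y\rangle^{\rho}\lesssim\bigl(M(|f|^{r})(x)\bigr)^{1/r}$, valid for $r\ge n/\rho$, and integrate only the second factor against the far more decaying weight $\langle y\rangle^{-\rho r}$, which lies in $L^{1}$ precisely because $\rho r>n$, i.e.\ $r>n/\rho$. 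Combining, $\bigl((M(|f|^{r})(x))^{1/r}\bigr)^{1-r}\cdot M(|f|^{r})(x)=\bigl(M(|f|^{r})(x)\bigr)^{1/r}$. The integrability threshold is thus $\rho r>n$ rather than $\rho-N/r>n$, which is what yields the full stated range of $r$. Your computation essentially re-derives a non-sharp form of Peetre's inequality; it is the way the inequality is deployed, not the inequality itself, that needs repair.
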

\begin{proof}
As was shown by Peetre, see e.g. \cite[Proposition 2, p. 22]{RuSi}, one has for 
$r\geq n/\rho$ that 
\begin{equation}\label{Peetres inequality}
    \sup_{y\in\Rl^n} \frac{|f(x-y)|}{\langle y\rangle^{\rho}}
\lesssim \Big( M(|f|^r)(x)\Big)^{1/r}.
\end{equation}
Now taking $r\in (n/\rho, 1]$, and using \eqref {Peetres inequality} we obtain
\begin{align*}
|\langle \cdot\rangle^{-\rho} \ast f(x)|
& \lesssim \int_{\Rl^n} \frac{|f(x-y)|}{\langle y\rangle^{\rho} } \dd y
\leq \Big(\sup_{y\in\Rl^n} \frac{|f(x-y)|}{\langle y\rangle^{\rho} }\Big)^{1-r} 
\int_{\Rl^n} \frac{|f(x-y)|^r}{\langle y\rangle^{\rho r} } \dd y
\\ & \lesssim \Big(M(|f|^r)(x)\Big)^{1/r-1} \Big(M(|f|^r)(x)\Big)
 =\Big(M(|f|^r)(x)\Big)^{1/r}. \qedhere
\end{align*}
\end{proof}

In establishing the local boundedness of oscillatory integral operators in the  range $0<p<1$, the following Bernstein-type estimate will be useful. The proof can be found in \cite[p. 22]{Trie83}. 

\begin{Lem}\label{lem:bernstein}
Let $\mathcal{K} \subset \Rl^n$ be a compact set and let $0<p\leq r \leq \infty$. Then
\eq{
\norm {\partial^\alpha f }_{L^r(\Rl^n)} \lesssim \norm{f}_{L^p(\Rl^n)},
}
for all multi-indices $\alpha$ and all $f\in L^p_\mathcal{K} (\Rl^n)$, where 
\begin{equation*}
L_\mathcal{K}^p(\Rl^n) := \left \{ f\in {\SS}' (\Rl^n) \, :\, \norm{f}_{L^p(\Rl^n)} <\infty,  \,\supp \widehat f \subset \mathcal{K}  \right \}.
\end{equation*}
\end{Lem}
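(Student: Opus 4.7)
The plan is to prove this Plancherel--Polya--Nikolskii (Bernstein) type inequality by decomposing $f$ as a convolution with a Schwartz function. Since $\mathcal{K}$ is compact and $\operatorname{supp}\widehat{f} \subset \mathcal{K}$, I will pick $\chi \in \mathcal{C}_c^\infty(\Rl^n)$ with $\chi \equiv 1$ on a neighbourhood of $\mathcal{K}$. Then $\widehat{f} = \chi\widehat{f}$, so $f = \check{\chi}\ast f$ and hence
$$\partial^\alpha f = (\partial^\alpha\check{\chi})\ast f,$$
with $\partial^\alpha\check{\chi} \in \mathscr{S}(\Rl^n)$. By a harmless dilation I may additionally assume $\mathcal{K} \subset B(0,1)$, which allows me to invoke Lemma \ref{grafakos lemma 1} directly on $f$.

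For the Banach range $1\leq p\leq r\leq \infty$, I will apply Young's convolution inequality with exponent $s \in [1,\infty]$ determined by $1+1/r = 1/p+1/s$; this is feasible since $p\leq r$. Because $\partial^\alpha\check{\chi}$ is Schwartz, $\|\partial^\alpha\check{\chi}\|_{L^s(\Rl^n)}$ is finite for every such $s$, and one obtains
$$\|\partial^\alpha f\|_{L^r(\Rl^n)} \leq \|\partial^\alpha\check{\chi}\|_{L^s(\Rl^n)}\,\|f\|_{L^p(\Rl^n)} \lesssim \|f\|_{L^p(\Rl^n)}.$$

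For the quasi-Banach range $0<p<1$, Young's inequality is unavailable, and the plan is to exploit the pointwise bound provided by Lemma \ref{grafakos lemma 1}. From the rapid decay $|\partial^\alpha\check{\chi}(y)| \lesssim \langle y\rangle^{-\rho}$ (any $\rho>n$) one obtains
$$|\partial^\alpha f(x)| \lesssim (\langle\cdot\rangle^{-\rho}\ast|f|)(x) \lesssim M(|f|^t)(x)^{1/t},$$
for any $t\in (n/\rho,1]$. Fixing $\rho>n/p$ so that some $t \in (n/\rho,p)$ is admissible, and using that $M$ is bounded on $L^{p/t}(\Rl^n)$ since $p/t > 1$, this yields $\|\partial^\alpha f\|_{L^p(\Rl^n)} \lesssim \|f\|_{L^p(\Rl^n)}$. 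To pass from this to the $L^r$-estimate with $r>p$, I will invoke the scalar Nikolskii estimate $\|g\|_{L^r(\Rl^n)} \lesssim \|g\|_{L^p(\Rl^n)}$ for $g\in L^p_{\mathcal{K}}(\Rl^n)$, applied to $g = \partial^\alpha f$ (which has the same Fourier support as $f$).

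The main obstacle is precisely proving this auxiliary scalar Nikolskii estimate in the quasi-Banach range, particularly at the endpoint $r=\infty$, where direct $L^\infty$-boundedness of $M$ is useless. The plan there is to upgrade to the full Peetre pointwise bound
$$|f(z)| \lesssim \langle z-x\rangle^{n/t}\,M(|f|^t)(x)^{1/t}, \quad x,z\in\Rl^n,$$
(cf.\ \eqref{Peetres inequality}), and then average in $x$ over $B(z,1)$ to eliminate the $x$-dependence. Since $\langle z-x\rangle \lesssim 1$ on this ball, one is left with $\|M(|f|^t)\|_{L^{p/t}(\Rl^n)}^{1/t} \lesssim \|f\|_{L^p(\Rl^n)}$, yielding $\|f\|_{L^\infty(\Rl^n)} \lesssim \|f\|_{L^p(\Rl^n)}$. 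The intermediate cases $p<r<\infty$ then follow by the elementary interpolation $\|f\|_{L^r(\Rl^n)}^r \leq \|f\|_{L^\infty(\Rl^n)}^{r-p}\,\|f\|_{L^p(\Rl^n)}^p$, completing the proof.
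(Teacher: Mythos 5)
The paper does not supply a proof of this lemma---it is the classical Plancherel--Polya--Nikolskii inequality and the paper merely cites \cite[p.~22]{Trie83}---so there is no internal argument to compare your attempt against. Your proof is correct and proceeds along the same lines one finds in Triebel: dilation reduction to $\mathcal{K}\subset B(0,1)$, the reproducing identity $\partial^\alpha f=(\partial^\alpha\check\chi)\ast f$ with $\check\chi$ Schwartz, Young's inequality in the Banach range, and the Peetre maximal-function machinery in the quasi-Banach range. Two small points of precision, neither a gap: first, the ``averaging in $x$ over $B(z,1)$'' at the $L^\infty$-endpoint must be a $p$-th power mean---raise $|f(z)|\lesssim \langle z-x\rangle^{n/t}M(|f|^t)(x)^{1/t}$ to the power $p$, integrate in $x$ over $B(z,1)$, then take the $p$-th root; a linear average does not pass to the $L^{p/t}$-norm when $p<1$. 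Second, to apply Lemma \ref{grafakos lemma 1} one should note that $f$ is automatically $\mathcal{C}^\infty$ (indeed it equals $\check\chi\ast f$ with $\check\chi$ Schwartz, or one invokes Paley--Wiener), so the $\mathcal{C}^1$-hypothesis holds. With these understood, the dilation step, the convolution bound $|\partial^\alpha f(x)|\lesssim(\langle\cdot\rangle^{-\rho}\ast|f|)(x)\lesssim (M(|f|^t)(x))^{1/t}$ with $t\in(n/\rho,p)$, the boundedness of $M$ on $L^{p/t}$, the auxiliary scalar Nikolskii estimate via Peetre's inequality \eqref{Peetres inequality}, and the elementary interpolation $\norm{g}_{L^r(\Rl^n)}^r\leq\norm{g}_{L^\infty(\Rl^n)}^{r-p}\norm{g}_{L^p(\Rl^n)}^p$ all fit together to give the claim in full generality $0<p\leq r\leq\infty$.
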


\subsection{Oscillatory integral operators}

The class of amplitudes which are the basic building blocks of the oscillatory integral operators, were first introduced by L. H\"ormander in \cite{Hor1}.
\begin{Def}\label{symbol class Sm}
Let $m\in \Rl$ and $0\leq\rho, \delta\leq 1$. An \textit{amplitude} \emph{(}symbol\emph{)} $a(x,\xi)$ in the class $S^m_{\rho,\delta}(\Rl^n)$ is a function $a\in \mathcal{C}^\infty (\Rl^n\times \Rl^n)$ that verifies the estimate
\begin{equation*}
\left |\partial_\xi^\alpha \partial_x^\beta a(x,\xi) \right |\leq C_{\alpha,\beta} \langle\xi\rangle ^{m-\rho|\alpha|+\delta|\beta|},
\end{equation*}
for all multi-indices $\alpha$ and $\beta$ and $(x,\xi)\in \Rl^n\times \Rl^n$.
We shall henceforth refer to $m$ as the order of the amplitude, and $\rho, \delta$ as its type.
\end{Def}

\noindent Given the symbol classes defined above, one associates to the symbol its \textit{Kohn-Nirenberg quantisation} as follows:
\begin{Def}
Let $a$ be a symbol. Define a pseudodifferential operator \emph{(}$\Psi\mathrm{DO}$ for short\emph{)} as the operator
\begin{equation*}
a(x,D)f(x) := \int_{\Rl^n}e^{ix\cdot\xi}\,a(x,\xi)\,\widehat{f}(\xi) \ddd \xi,
\end{equation*}
a priori defined on the Schwartz class $\mathscr{S}(\Rl^n).$
\end{Def}


In order to define the oscillatory integral operators that are studied in this paper, we also define classes of phase functions, which together with the amplitudes of Definition \ref{symbol class Sm} are useful and natural conditions to assume in the study of oscillatory integral operators.

\begin{Def}\label{def:fk}
For $0<k<\infty$, we say that a real-valued \textit{phase function} $\varphi(x,\xi)$ belongs to the class $\textart F^k$, if
$\varphi(x,\xi)\in \mathcal{C}^{\infty}(\Rl^n \times\Rl^n \setminus\{0\})$ and satisfies the following estimates $($depending on the range of $k)$\emph:
\begin{itemize}
    \item for $k \geq 1$, 
        $$ 
\abs{\partial^\alpha_\xi  (\varphi (x,\xi)-x\cdot\xi) }\leq
c_{\alpha} |\xi|^{k-1}, \quad  |\alpha| \geq 1 ,
$$
    \item for $0<k<1$, 
$$ 
\abs{\partial^\alpha_\xi \partial^{\beta}_x  (\varphi (x,\xi)-x\cdot\xi) }\leq
c_{\alpha,\beta} |\xi|^{k-|\alpha|}, \quad |\alpha + \beta | \geq 1 ,
$$
\end{itemize}
for all $x\in \Rl^n$ and $|\xi|\geq 1$.

\end{Def}






 

\begin{Rem}
Allowing a singularity $($in the frequency$)$ at the origin in the phase functions is a natural assumption for both $k\geq 1$ and $k<1$ and is motivated by the \emph{PDE}--applications. Indeed the phase function associated to the wave equation is $x\cdot \xi+ |\xi|$ $($$k=1 $$)$, the phase associated to the water-wave equation is $x\cdot \xi+ |\xi|^{1/2}$ $($$k<1 $$)$, and the phase associated to the capillary waves is $x\cdot \xi+ |\xi|^{3/2}$ $($$k>1 $$)$, all of which are non--smooth at $\xi=0$.
\end{Rem}
 
We will also need to consider phase functions that satisfy a certain {\em non-degeneracy condition}. To this end we have

\begin{Def}
One says that the phase function $\varphi(x,\xi)\in \mathcal{C}^{\infty}(\Rl^n \times\Rl^n \setminus\{0\})$ satisfies the strong non-degeneracy condition \emph{(}or $\varphi$ is $\mathrm{SND}$ for short\emph{)} if
\begin{equation*}\label{eq:SND}
	\Big|\det \Big(\partial^{2}_{x_{j}\xi_{k}}\varphi(x,\xi)\Big) \Big|\geq \delta,
\end{equation*}
for  some $\delta>0$,  all $x\in \mathbb{R}^{n}$ and all $\abs\xi\geq 1 $.\\

In case $\varphi(x,\xi)\in \mathcal{C}^{\infty}(\Rl^n \times\Rl^n)$, then we require that the condition above is  satisfied for all $(x,\xi)\in \Rl^n \times\Rl^n$.
\end{Def}
In order the guarantee that our operators are globally $L^2$-bounded we should also put yet another condition of the phase which we shall henceforth simply refer to as the $L^2$-condition (motivated by D. Fujiwara's result in \cite{Fuj}).
\begin{Def}
One says that the phase function $\varphi(x,\xi)\in \mathcal{C}^{\infty}(\Rl^n \times\Rl^n \setminus\{0\})$ satisfies the $L^2$-condition if
\begin{equation}\label{eq:L2 condition}
	\big |\partial^{\alpha}_\xi \partial^{\beta}_x \varphi (x,\xi)\big |
\leq c_{\alpha,\beta},\qquad  |\alpha| \geq 1, \,  |\beta| \geq 1,
\end{equation}
for all $x\in \mathbb{R}^{n}$ and all $|\xi|\geq 1$.\\

In case $\varphi(x,\xi)\in \mathcal{C}^{\infty}(\Rl^n \times\Rl^n)$ then we require that the condition above is  satisfied for all $(x,\xi)\in \Rl^n \times\Rl^n$.
\end{Def}




\noindent Having the definitions of the amplitudes and the phase functions at hand, one has
\begin{Def}\label{def:OIO}
An oscillatory integral operator $T_a^\varphi$ with amplitude $a\in S^{m}_{\rho, \delta}(\Rl^n)$ and a real valued phase function $\varphi$, is defined \emph{(}once again a-priori on $\mathscr{S}(\Rl^n)$\emph{)} by
\begin{equation}\label{eq:OIO}
T_a^\varphi f(x) := \int_{\Rl^n} e^{i\varphi(x,\xi)}\, a(x,\xi)\, \widehat f (\xi) \ddd\xi.
\end{equation}
If $\varphi\in \textart F^k$ and is $\mathrm{SND}$, then these operators will be referred to as oscillatory integral operators of order $k$.
\end{Def}
The formal adjoint of $T_a^\varphi$ is denoted by   $\brkt{T_a^\varphi}^*$ and is given by 
\nm{eq:adjointOIO}{
\brkt{T_a^\varphi}^* f(x) = \int_{\Rl^n} \int_{\Rl^n} e^{ix\cdot\xi -i\varphi(y,\xi)}\,\overline{a(y,\xi)}\, f (y) \dd y\ddd\xi.
}

To deal with the low frequency portion of an oscillatory integral, which is frequency supported in a neighborhood of the origin, one would need a separate analysis because the phase function might be, and it usually is singular at the origin. This typically doesn't affect the Banach space results so much, but as we shall see, it certainly restricts the ranges of parameters in the quasi-Banach spaces. Therefore, to be able to prove regularity results for the low frequency portions of the operators, one should put a mild condition on the phase functions. From the point of view of the applications into PDE's, this condition will always be satisfied and would not cause any loss of generality. 
\begin{Def}\label{Def:LFmu}
Assume that $\varphi(x,\xi)\in \mathcal{C}^{\infty}(\Rl^n \times \Rl^n \setminus \{0\}) $ is real-valued and $0<\mu\leq 1$. 
 We say that $\varphi$ satisfies the low frequency phase condition of order $\mu$, 
\linebreak\emph($\varphi$ satisfies \emph{LF}$(\mu)$-condition for short\emph),
if one has 
\begin{equation}\label{eq:LFmu}
\abs{\partial^{\alpha}_{\xi}\partial_{x}^{\beta} (\varphi(x,\xi)-x\cdot \xi) }\leq c_{\alpha} |\xi|^{\mu-|\alpha|}, 
\end{equation}
for all $x\in \Rl^n$, $0<|\xi| \leq 2$ and all multi-indices 
$\alpha,\beta$.
\end{Def}

\subsection{Schr\"odinger integral operators}
Another important class of oscillatory integrals is the following:
\begin{Def}\label{def:SIO}
An operator of the form \eqref{eq:OIO} with a real-valued phase function $\varphi (x,\xi)\in \mathcal{C}^\infty (\Rl^n \times \Rl^n)$  that verifies
\begin{equation}\label{Schrodinger phase}
\abs{\partial^\alpha_\xi \partial^\beta_x \varphi (x,\xi)}
\leq c_{\alpha,\beta}, \quad   |\alpha+\beta|\geq 2,
\end{equation}
for all $(x,\xi)\in\Rl^n \times \Rl^n,$ will be referred to as a \textit{Schr\"odinger integral operator}.
\end{Def}

\begin{Rem}
Observe that in one dimension $\sin x\,\sin \xi + \xi^2 +(2\xi +1)x$ is an example of an \emph{SND} phase function satisfying \eqref{Schrodinger phase} which is not in $\textart F^2$.
\end{Rem}

Our motivation for such a name stems from the fact that the solution to the Cauchy problem with initial data $f$, for the free Schr\"odinger equation is given by the operator $e^{it\Delta}f$. Observe that for a fixed time (say $t=1$), the phase function of the oscillatory integral defining the Schr\"odinger semigroup is given by $x\cdot \xi +|\xi|^2$ which satisfies \eqref{Schrodinger phase} and is also SND, and its amplitude is identically equal to one which is trivially in the class $S^0_{1,0}(\Rl^n)$. A less naive example, which once again motivates our choice of designation above, stems for the Cauchy problem for the Schr\"odinger equation associated to the quantum mechanical harmonic oscillator $-\Delta+ |x|^2$. In this case, the solution is given by $e^{it(-\Delta+|\cdot|^2)}f$, which is also a Schr\"odinger integral operator according to Definition \ref{def:SIO} with a phase function which is once again SND and verifies \eqref{Schrodinger phase}, see \cite{Helffer}.\\

For the purpose of proving boundedness results for oscillatory integral operators, it turns out that, in most of the cases, the following order of the amplitude is the critical one, namely
\begin{equation*}\label{eq:criticaldecay}
m_k(p) := -kn \Big|\frac 1p -\frac 12 \Big|,
\end{equation*}
where $0<p\leq\infty$ and $k>0$ stems from the so called $\textart F^k$-condition, which is given in Definition \ref{def:fk}. The corresponding critical order for the Schr\"odinger integral operators will be $m_2(p).$ This means that, we will be able to establish various boundedness results for the oscillatory integral operators (and Schr\"odinger integral operators) when the order of the amplitude is less than or equal to $m_k(p)$ (or $m_2(p)$), respectively.\\ 

A common method throughout the paper will be to split the amplitude $a(x,\xi)$ into several pieces with respect to $\xi$. This is used when there is a singularity at the origin $\xi=0$ that needs to be treated separately. In some cases we divide the amplitude into a low and a high frequency part $$a(x,\xi) = \psi_0(\xi)\,a(x,\xi)+(1-\psi_0(\xi))\,a(x,\xi)=:a_L(x,\xi)+a_H(x,\xi),$$ 
where 
$\psi_0$ is given in Definition \ref{def:LP}.
In other cases we divide the amplitude into three different pieces, a low, middle and high frequency part 
\eq{a(x,\xi) &= \psi_0(\xi)\,a(x,\xi)+(\psi_0(\xi/R)-\psi_0(\xi))\,a(x,\xi)+(1-\psi_0(\xi/R))\,a(x,\xi)\\&=:a_L(x,\xi)+a_M(x,\xi)+a_H(x,\xi),}
where $R$ is some large constant that typically depends only on the dimension and the upper and lower bound of the mixed Hessian of $\varphi$.
\begin{Rem}
We should emphasise here that the conditions that are put on the phases of the oscillatory- and the Schr\"odinger integral operators are quite natural and indeed without the \emph{SND}-condition and boundedness of the mixed derivatives \eqref{eq:L2 condition}, the operators under consideration $($i.e. with inhomogeneous phase functions$)$ are not $($in general$)$ even $L^2$-bounded.  Assuming, say homogeneity of degree one in the frequency variable of the phase function, which is the case of Fourier integral operators, enables one to improve on the order of decay of the amplitude. This is however not possible for the Schr\"odinger- and general oscillatory integral operators. The other conditions on the phase functions are there to guarantee $L^p$-boundedness, and the ability to develop a calculus in order to be able to establish boundedness in Besov-Lipschitz and Triebel-Lizorkin spaces.
\end{Rem}

\section{Main regularity results and applications}\label{sec:maintheorems}
In this section, we gather the main regularity results of this paper and briefly outline the proofs, or rather refer to the relevant sections where the various proofs could be found. At the end of this section, we shall discuss the application of our results to regularity problems in harmonic analysis and theory of partial differential equations.

\subsection{Local regularity results}
This subsection deals with local regularity of both Schr\"odinger integral operators and oscillatory integral operators on Besov-Lipschitz and Triebel-Lizorkin spaces. This, as usual, amounts to study the operators whose amplitude $a(x,\xi)$ is compactly supported in the spatial variables.\\

First, we start by the following basic theorem which is the counterpart of the available local $L^p$-boundedness result in the more familiar context of Fourier integral operators.\\

In what follows, the operator $T^\varphi_a$ will denote an oscillatory integral of the form \eqref{eq:OIO}.

\begin{Th}\label{thm:main1}
Let $k \geq 1$, $p\in(1,\infty)$
and $a(x,\xi) \in S^{m_k(p)}_{0,0}(\Rl^n)$  with compact support in the $x$-variable.
Assume that one of the following assumptions hold true:
\begin{enumerate}
    \item[$i)$] $\varphi(x,\xi) \in \textart F^k\cap \mathcal{C}^\infty(\Rn \times \Rn)$ is \emph{SND} and satisfies the $L^2$-condition \eqref{eq:L2 condition}.
\item[$ii)$]  $\varphi(x,\xi) \in \textart F^k$ is \emph{SND}, satisfies \eqref{eq:L2 condition},  and additionally satisfies the estimate
\begin{equation*}
|\partial^{\alpha}_{\xi}\partial^{\beta}_{x} \varphi(x,\xi)|\leq c_{\alpha,\beta} |\xi|^{-|\alpha|},
\qquad |\alpha+\beta| \geq 1, \,\quad\, 0<|\xi|\leq 2\sqrt n.\end{equation*}
\end{enumerate}
 Then in either case, the operator $T_a^\varphi$  maps $L^p(\Rl^n)$ into $L^p(\Rl^n)$ continuously. In the case $0<k<1$, all the results above are true provided that $a(x,\xi) \in S^{m_k(p)}_{1,0}(\Rl^n)$.
\end{Th}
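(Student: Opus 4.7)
The plan is to use interpolation between an endpoint $h^p \to L^p$ estimate at the critical order $m_k(p)$ for $0 < p \leq 1$ and the $L^2 \to L^2$ estimate at order $0$, followed by a duality argument via the formal adjoint \eqref{eq:adjointOIO} to cover the range $p > 2$.

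First I would decompose $a = a_L + a_H$ using the cutoff $\psi_0$ of Definition \ref{def:LP}. The low-frequency operator $T^\varphi_{a_L}$ will be handled in Section \ref{sect:lowfreq}: under $i)$ the phase is smooth across $\xi = 0$ and standard kernel estimates suffice; under $ii)$ the assumed estimate on $\varphi$ for $0 < |\xi| \leq 2\sqrt{n}$ gives exactly an \emph{LF}$(\mu)$-type control on $\varphi(x,\xi) - x \cdot \xi$, and the Peetre-type bound of Lemma \ref{grafakos lemma 1} together with the $L^p$-boundedness of the Hardy-Littlewood maximal operator for $p > 1$ yields the required estimate. The essential content therefore lies with $T^\varphi_{a_H}$.

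For the high-frequency piece, the $L^2 \to L^2$ bound at $m = 0$ is the Fujiwara-type estimate of Section \ref{sec:L2-boundedness}, which uses only the SND and $L^2$-conditions together with the compact $x$-support. The critical endpoint I would establish in Section \ref{sect:highfreq} is $T^\varphi_{a_H}: h^p(\Rn) \to L^p(\Rn)$ at order $m_k(p)$ for $0 < p \leq 1$. The argument is the familiar atomic one: given an $h^p$-atom $\mathfrak{a}$ supported in a ball $B(x_0, r)$, one performs a second dyadic decomposition of $a_H$ in $\xi$ at scale $2^j$, evaluates $T^\varphi_{a_H} \mathfrak{a}$ on a near region by Cauchy-Schwarz and the dyadic $L^2$-bound, and on a complementary far region by repeated integration by parts. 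The latter succeeds precisely because the non-degeneracy $|\det \partial^2_{x\xi}\varphi| \geq \delta$ combined with the $\textart F^k$-estimates on $\varphi$ produces rapidly decaying kernels. Summing the dyadic contributions against the size and support of the atom pins down the critical exponent $m_k(p) = -kn|1/p - 1/2|$. The need for $a \in S^{m_k(p)}_{1,0}$ when $0 < k < 1$ is forced by this analysis, since the stronger $\xi$-singularities of $\varphi$ in that range must be offset by the full $\xi$-decay of the amplitude.

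Complex interpolation between the endpoints $h^1 \to L^1$ at $m = -kn/2$ and $L^2 \to L^2$ at $m = 0$ (carried out in the amplitude order, i.e.\ by multiplying $a$ by $\langle\xi\rangle^z$ in an admissible strip) yields $L^p \to L^p$ at order $m_k(p)$ for $1 < p \leq 2$. For $2 < p < \infty$, I would run the same program for the formal adjoint in \eqref{eq:adjointOIO}: although not literally of the form \eqref{eq:OIO}, its kernel enjoys the same SND, $\textart F^k$ and $L^2$-type structure with the roles of $x$ and $y$ interchanged, so an analogous $h^{p'} \to L^{p'}$ estimate for the adjoint on $0 < p' \leq 1$ carries through and gives, by duality, the desired $L^p$-bound on $T_a^\varphi$ for $2 \leq p < \infty$. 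The main obstacle is clearly the critical $h^p \to L^p$ endpoint, where balancing the decay obtained from SND/$\textart F^k$-integration by parts against the dyadic amplitude decay must be executed sharply enough to reach the exact exponent $m_k(p)$.
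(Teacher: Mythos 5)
Your high-level plan matches the paper's: split into low and high frequency, establish the $L^2$ endpoint via Fujiwara (Theorem~\ref{thm:fujiwara}), obtain an atomic $h^p\to L^p$ estimate at the critical order for the high-frequency piece, interpolate, and run the same argument for the adjoint to cover $p>2$ by duality. However, there is a genuine gap in the interpolation step. The atomic argument you sketch -- Cauchy--Schwarz plus the dyadic $L^2$ bound on the near region, integration by parts on the far region, then summing over $j$ -- produces a geometric series in $j$ whose exponent at the critical order $m_k(p)$ is $n-n/p$ (this is exactly the situation of Lemma~\ref{lem:hpLpofT} and Lemma~\ref{lem:Miyachistuff}). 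For $p<1$ that exponent is strictly negative and the sum converges; at $p=1$ it is zero and the sum diverges logarithmically. Consequently $h^1\to L^1$ at order $m_k(1)=-kn/2$ is \emph{not} obtainable by the atomic method as described, and the paper is careful to state Propositions~\ref{Th:new} and \ref{Tj:Tad global} only for $0<p<1$. Your proposed interpolation "between the endpoints $h^1\to L^1$ and $L^2\to L^2$" therefore uses an endpoint that has not been established, and establishing it would itself require interpolation, which is circular. The fix, which is precisely what the paper does, is to interpolate (via the analytic family of Mac\'ias--Coifman--Weiss, with amplitude $a_z=|\xi|^{m_k(p_0)\varepsilon-(1+\varepsilon)m_k(p_0)z}a$) from a lower endpoint $h^q\to L^q$ at some $q<1$ (chosen so that $[n(1/q-1/2)]=[n/2]$) to $L^2\to L^2$; this reaches $h^{p_0}\to L^{p_0}$ for all $0<p_0\le 2$, with $p_0=1$ an interior point.

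A secondary imprecision concerns the low-frequency piece under $ii)$. The hypothesis there is $|\partial_\xi^\alpha\partial_x^\beta\varphi(x,\xi)|\le c_{\alpha,\beta}|\xi|^{-|\alpha|}$ for $|\alpha+\beta|\ge 1$ and small $\xi$, which is a Mikhlin-type condition, not an LF$(\mu)$-type condition: the latter has the strictly better exponent $\mu-|\alpha|$ with $\mu>0$ and includes $\alpha=\beta=0$. The Peetre/maximal-function route (Lemma~\ref{grafakos lemma 1} and Lemma~\ref{low freq lemma llf 1}) requires $\mu>0$ to produce the integrable kernel $\langle x-y\rangle^{-n-\varepsilon\mu}$, so it does not apply under $ii)$. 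Instead the paper treats this case with Lemma~\ref{low freq lemma llf 2}, which feeds the Mikhlin condition and the compact $x$-support directly into the Mikhlin multiplier theorem to get $L^p$-boundedness for $1<p<\infty$ -- exactly the Banach-range result needed here. Since Theorem~\ref{thm:main1} only asserts $L^p$ for $1<p<\infty$, the Mikhlin route suffices; your Peetre route would need stronger low-frequency hypotheses on $\varphi$.
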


\textbf{Outline of the proof}.
\begin{itemize}

\item[] For the high frequency portion of the operator, we use Propositions \ref{Th:new} and
\ref{Tj:Tad global} to show that for 
$a(x,\xi)\in S^{m_{k}(p_0)}_{0,0}(\Rl^n)$ (when $k \geq 1$), and for 
$a(x,\xi)\in S^{m_{k}(p_0)}_{1,0}(\Rl^n)$ (when $0<k<1$), the operators $T^{\varphi}_a$ and $\brkt{T^{\varphi}_a}^*$ are bounded from $h^{p_0}(\mathbb{R}^n)$ to $L^{p_0}(\mathbb{R}^n)$ for all $0<p_0 <1$. Now using analytic interpolation to the analytic family of operators in the Hardy space setting due to R. Mac\'ias (see e.g. \cite[Theorem E, p. 597]{coifweiss}), one considers $T^{\varphi}_{a_z}$ and ${(T^{\varphi}_{a_z})}^*$,  with  $0\leq \operatorname{Re} z\leq 1$ and
$$a_z (x,\xi):=|\xi|^{m_k(p_0)\varepsilon -(1+\varepsilon)m_k(p_0) z }\,a (x,\xi),$$
with $\varepsilon = (1/q-1/2)/(1/p_0-1/2)-1$ and $q<1$ chosen such that $[n(1/q-1/2)]=[n/2]$. Now the method of proof of Propositions \ref{Th:new} and
\ref{Tj:Tad global} reveals that $T^{\varphi}_{a_{i\!\operatorname{Im}z}}$ and ${(T^{\varphi}_{a_{i\!\operatorname{Im}z}})}^*$ are bounded from $h^{q}(\mathbb{R}^n)$ to $L^{q}(\mathbb{R}^n)$ with bounds that depend on a positive power of $(1+|\operatorname{Im}z|)$ while Theorem \ref{thm:fujiwara} yields that $T^{\varphi}_{a_{1+i\!\operatorname{Im}z}}$ and ${(T^{\varphi}_{a_{1+i\!\operatorname{Im}z}})}^*$ are bounded from $L^{2}(\mathbb{R}^n)$ to $L^{2}(\mathbb{R}^n)$ with constant bounds independent of $z$. This enables one to interpolate these results in accordance with \cite[Theorem E, p. 597]{coifweiss} to show that $T^{\varphi}_a$ and ${(T^{\varphi}_a)}^*$ are bounded from $h^{p_0}(\mathbb{R}^n)$ to $L^{p_0}(\mathbb{R}^n)$ for all $0<p_0 \leq 2.$ Hence, the claimed $L^p$-boundedness follows by duality.\\

\item[] For the low frequency portion of the operator, when the phase function is smooth we use Proposition \ref{Prop:ThpLp}, Lemma \ref{Lem:smoothlowfreq} and interpolation with Fujiwara's $L^2$-boundedness result in \cite{Fuj} (see the proof of Theorem \ref{thm:fujiwara} for details). For the low frequency portion in the non-smooth case, we just use Lemma \ref{low freq lemma llf 2}.\qed \\ 
\end{itemize}
The next theorem deals with the local regularity of oscillatory integral operators on Besov-Lipschitz and Triebel-Lizorkin spaces.

\begin{Th}\label{thm:main4}
Let $m, s\in\Rl$ and $a(x,\xi) \in S^{m}_{0,0}(\Rl^n)$ with compact support in the $x$-variable.  Assume that $k \geq 1$, $\varphi\in \textart F^k$ \emph{is SND}, satisfies the $L^2$-condition \eqref{eq:L2 condition} and the \emph{LF}$(\mu)$-condition \eqref{eq:LFmu} for some $0<\mu\leq 1$.
Then the following statements hold true:
\begin{enumerate}
\item[$i)$] If 
$p\in (0,\infty]$, 
$q\in (0,\infty]$, then $T_a^\varphi :B_{p,q}^{s+m-m_k(p)}(\Rl^n)\to B_{p,q}^{s}(\Rl^n)$.
\item[$ii)$] If 
$p\in (0,\infty )$, $q\in (0,\infty]$ and $\varepsilon>0$, then $T_a^\varphi :F_{p,q}^{s+m-m_k(p)+\varepsilon}(\Rl^n)\to F_{p,q}^{s}(\Rl^n)$.
\item[$iii)$] If 
$p\in (0,\infty )$, $\min\, (2,p)\leq q\leq \max\,( 2,p)$, then $T_a^\varphi :F_{p,q}^{s+m-m_k(p)}(\Rl^n)\to F_{p,q}^{s}(\Rl^n)$.
\item[$iv)$] $T_a^\varphi :F_{\infty,2}^{s+m-m_k(\infty)}(\Rl^n)\to F_{\infty,2}^{s}(\Rl^n)$.
\end{enumerate}
In the case $0<k<1$, all the results above are true provided that 
$a(x,\xi) \in S^{m}_{1,0}(\Rl^n)$.
\end{Th}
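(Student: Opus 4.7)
The plan follows a standard two-step strategy: first reduce to a single principal case, then establish the four statements via a Littlewood-Paley decomposition together with soft arguments (embedding, duality, interpolation).

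\textbf{Reduction to $s=0$, $m=m_k(p)$.} By the lifting isomorphism of $(1-\Delta)^{s'/2}$ on the Besov-Lipschitz and Triebel-Lizorkin scales, the four mapping statements are equivalent to the analogous estimates for
\begin{equation*}
\widetilde T := (1-\Delta)^{s/2}\,T_a^\varphi\,(1-\Delta)^{-(s+m-m_k(p))/2}
\end{equation*}
between spaces with regularity index $0$. The composition on the right is the frequency-side Fourier multiplier $\langle\xi\rangle^{-(s+m-m_k(p))}$ and simply modifies the amplitude. The composition on the left is exactly the object of the pseudodifferential-times-oscillatory-integral calculus of Section~\ref{section:left comp of OIO with pseudo}, which yields $\widetilde T = T_b^\varphi + R$, with $b\in S^{m_k(p)}_{0,0}(\Rl^n)$ (respectively $S^{m_k(p)}_{1,0}(\Rl^n)$ when $0<k<1$), still compactly supported in $x$, and $R$ of arbitrarily low order. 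It therefore suffices to prove (i)--(iv) in the reduced case $s=0$, $m=m_k(p)$.

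\textbf{Proof of (i).} Split $a = a_L + a_H$ as in Section~\ref{subsection:definitions}. The LF$(\mu)$-condition places $T_{a_L}^\varphi$ within the scope of Section~\ref{sect:lowfreq}, from which boundedness on every $B^0_{p,q}(\Rl^n)$ follows. For the high-frequency piece, apply the Littlewood-Paley projector $\psi_\ell(D)$ to the output; the composition lemma of Section~\ref{section:left comp of OIO with pseudo} rewrites
\begin{equation*}
\psi_\ell(D)\,T_{a_H}^\varphi = T_{a_\ell}^\varphi + R_\ell,
\end{equation*}
where $a_\ell$ is supported in the dyadic shell $|\xi|\sim 2^\ell$---a consequence of the bi-Lipschitz character of $\xi\mapsto\nabla_x\varphi(x,\xi)$ guaranteed by SND and the $L^2$-condition---and $R_\ell$ has arbitrarily negative order. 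Since $a_\ell$ is $\xi$-localized, $T_{a_\ell}^\varphi f = T_{a_\ell}^\varphi\Psi_\ell(D)f$, so Theorem~\ref{thm:main1} combined with the $h^p$-$L^p$-analyses of Sections~\ref{sec:local Lp-boundedness}--\ref{sect:highfreq} (and, for $p=\infty$, a kernel-$L^1$ estimate obtained by duality from the $L^1$-bound of the adjoint) yields
\begin{equation*}
\|\psi_\ell(D)\,T_{a_H}^\varphi f\|_{L^p(\Rl^n)} \lesssim \|\Psi_\ell(D) f\|_{L^p(\Rl^n)}.
\end{equation*}
Summing in $\ell^q$ finishes (i) for all $p,q\in(0,\infty]$.

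\textbf{Proofs of (ii), (iii), (iv).} Part (ii) is immediate from (i) via the embedding chain
\begin{equation*}
F^{s+\varepsilon}_{p,q_0}(\Rl^n)\hookrightarrow B^{s+\varepsilon/2}_{p,\infty}(\Rl^n)\hookrightarrow B^s_{p,\min(p,q_1)}(\Rl^n)\hookrightarrow F^s_{p,q_1}(\Rl^n),
\end{equation*}
valid throughout the quasi-Banach range. Part (iv) follows by duality: the adjoint $(T_a^\varphi)^*$ of \eqref{eq:adjointOIO} satisfies the same structural hypotheses, and combining its $h^1$-$h^1$-analogue (established via the arguments of Sections~\ref{sec:local Lp-boundedness}--\ref{sect:highfreq}) with $(F^0_{1,2}(\Rl^n))^* = F^0_{\infty,2}(\Rl^n) = \bmo(\Rl^n)$ delivers the $\bmo$-bound. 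For (iii), one interpolates between the two endpoints $q=p$ (which coincides with (i) through $F^s_{p,p} = B^s_{p,p}$) and $q=2$ (where $F^s_{p,2}$ reduces to $L^p$, $H^{s,p}$, $h^p$, or $\bmo$ by \eqref{table of tl spaces}, and the required bound is supplied by Theorem~\ref{thm:main1} and its Hardy-space counterparts). Real/complex interpolation in the Triebel-Lizorkin scale, valid in the full quasi-Banach regime, then covers every intermediate $q\in[\min(2,p),\max(2,p)]$.

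\textbf{Main obstacle.} The technical heart of the argument is the almost-orthogonal decomposition $\psi_\ell(D)\,T_{a_H}^\varphi = T_{a_\ell}^\varphi + R_\ell$ with a remainder whose order-gain outpaces the dyadic growth coming from the Littlewood-Paley projectors uniformly in $\ell$, so that $\{R_\ell\}$ remains $\ell^q(L^p)$-summable. This is precisely where the $\textart F^k$-, SND-, and $L^2$-conditions on $\varphi$ intertwine most delicately, and where the pseudodifferential/oscillatory-integral calculus of Section~\ref{section:left comp of OIO with pseudo} is exploited to its fullest.
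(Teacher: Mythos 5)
Your overall strategy---lifting to the zeroth-order case, Littlewood--Paley splitting with the parameter-dependent composition calculus of Section~\ref{section:left comp of OIO with pseudo}, then embedding/duality/interpolation---mirrors the paper's. The low/high frequency split, the shell localisation $\sigma_{0,j}(x,\xi)=\psi(2^{-j}\nabla_x\varphi(x,\xi))a(x,\xi)$ via the bi-Lipschitz property of $\xi\mapsto\nabla_x\varphi(x,\xi)$, and the $\ell^q$-summation of the remainder are all as in the paper's Sections~\ref{sec:Besov}--\ref{sec:Triebel}.

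There is, however, a genuine gap in your treatment of part~$iii)$. You propose to interpolate between the diagonal $q=p$ and the endpoint $q=2$, citing ``Theorem~\ref{thm:main1} and its Hardy-space counterparts'' for the latter. Theorem~\ref{thm:main1} gives $L^p\to L^p$ for $1<p<\infty$, and the Hardy-space results of Sections~\ref{sec:local Lp-boundedness}--\ref{sect:highfreq} give $h^p\to L^p$. But what the $q=2$ endpoint of $iii)$ actually requires (for $0<p\leq 1$) is $h^p\to h^p$, i.e.\ $F^0_{p,2}\to F^0_{p,2}$, and this does not follow automatically from $h^p\to L^p$: the $h^p$-quasinorm of the output involves $\sup_{0<t<1}|\psi_0(tD)\,T_a^\varphi f|$, so one needs uniform control over the family $\{b(tD)T_a^\varphi\}_{t\in(0,1]}$ for symbols $b\in S^0_{1,0}(\Rl^n)$. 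This is exactly the role of the \emph{parameter}-dependent version of Theorem~\ref{thm:left composition with pseudo}: using the expansion \eqref{asymptotic expansion} with $M=1$ one writes $b(tD)T_{a_H}^\varphi = T^{\varphi}_{a_H b(t\cdot)} + t^\varepsilon T^\varphi_{r}$ with amplitudes bounded in $S^{m_k(p)}_{0,0}(\Rl^n)$ uniformly in $t$, and only then invokes the $h^p$-$L^p$ machinery to each term. Your proposal never invokes this lifting step and treats the $q=2$ endpoint as if it were already available from $h^p$-$L^p$ boundedness; as written, that step would fail. (A secondary, more minor point: your initial reduction conjugates by $(1-\Delta)^{s/2}$ before any frequency splitting, but the composition theorem's hypotheses on $\varphi$ are only verifiable on the high-frequency support, via Lemma~\ref{Lem:saviouroftoday}; the split must precede the conjugation, which is what the paper does.)
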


\textbf{Outline of the proof}.
\begin{itemize}
 \item[] $i)$ See Section \ref{sec:Besov}. 
 \item[] $ii)$--$iv)$  See Section \ref{sec:Triebel}. \qed 
\end{itemize}

The following theorem deals with the question of the local regularity of the Schr\"odinger integral operators in Besov-Lipschitz and Triebel-Lizorkin spaces.

\begin{Th}\label{thm:main5}
Let $m, s\in\Rl$ and $a(x,\xi)\in S^{m}_{0,0}(\Rl^n)$ with compact support in the $x$-variable. Assume that
$\varphi$ satisfies \eqref{Schrodinger phase} and is \emph{SND}.
Then the following statements hold true:
\begin{enumerate}
\item[$i)$] If $p\in (0,\infty]$, $q\in (0,\infty]$, then $T_a^\varphi :B_{p,q}^{s+m-m_2(p)}(\Rl^n)\to B_{p,q}^{s}(\Rl^n)$.
\item[$ii)$] If $p\in (0,\infty)$, $q\in (0,\infty]$ and $\varepsilon>0$, then $T_a^\varphi :F_{p,q}^{s+m-m_2(p)+\varepsilon}(\Rl^n)\to F_{p,q}^{s}(\Rl^n)$.
\item[$iii)$] If $p\in (0,\infty)$, $\min\, (2,p)\leq q\leq \max\,( 2,p)$, then $T_a^\varphi :F_{p,q}^{s+m-m_2(p)}(\Rl^n)\to F_{p,q}^{s}(\Rl^n)$.
\item[$iv)$] $T_a^\varphi :F_{\infty,2}^{s+m-m_2(\infty)}(\Rl^n)\to F_{\infty,2}^{s}(\Rl^n)$.
\end{enumerate}
\end{Th}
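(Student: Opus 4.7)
The plan is to adapt the strategy used for Theorem \ref{thm:main4}, exploiting the fact that a Schr\"odinger phase function is smooth globally on $\Rl^n \times \Rl^n$ and has all derivatives of order $\geq 2$ bounded. In particular there is no singularity at $\xi=0$, so the low-frequency phase condition \emph{LF}$(\mu)$ is automatic. Consequently, the splitting $a = a_L + a_H$ already suffices, and the low-frequency piece $T^\varphi_{a_L}$ has a Schwartz-class, compactly supported amplitude in the frequency variable; its boundedness on every $B^s_{p,q}(\Rl^n)$ and $F^s_{p,q}(\Rl^n)$ then follows readily from Fujiwara's $L^2$-theorem together with Lemma \ref{grafakos lemma 1} and \eqref{poitwise multiiplier}, exactly as in the corresponding step of Theorem \ref{thm:main4}.

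For statement $(i)$, the main work lies with the high-frequency piece $T^\varphi_{a_H}$. The plan is to decompose $T^\varphi_{a_H} f = \sum_j T^\varphi_{a_H}\,\Psi_j(D)\psi_j(D) f$ and, for each output block $\psi_\ell(D) T^\varphi_{a_H}\Psi_j(D)$, to apply the parameter-dependent composition formula of Section \ref{section:left comp of OIO with pseudo}. This rewrites the composition as a Schr\"odinger integral operator whose amplitude is essentially supported where $|\xi|\sim 2^j$ and which picks up a rapidly decaying factor in $|\ell-j|$ off the diagonal band. The local $h^p$--$L^p$ estimate for Schr\"odinger integral operators from Section \ref{sect:hpLpScho} then bounds each frequency-localized block of order $m$ by a constant multiple of $2^{j(m-m_2(p))}$. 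Summing geometrically in $|\ell-j|$ and taking the $\ell^q$-norm in $\ell$ with weight $2^{\ell s}$ yields $(i)$; in the quasi-Banach range $0<p<1$ this last step invokes Lemma \ref{grafakos lemma 1} to control the frequency-localized output pointwise by the Hardy--Littlewood maximal function.

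For parts $(ii)$--$(iv)$, the plan is to promote the Besov result to the Triebel-Lizorkin scale. Part $(ii)$ follows at once from $(i)$ via the embedding \eqref{embedding of TL} combined with the identification \eqref{equality of TL and BL} at $p=q$, the loss $\varepsilon$ absorbing the discrepancy between the $L^p(\ell^q)$ and $\ell^q(L^p)$ quasi-norms. For part $(iii)$, the endpoint $q=2$ reduces, via \eqref{table of tl spaces} and the Bessel-potential lifting, to the local $h^p$--$L^p$ bound from Section \ref{sect:hpLpScho}; the general case $\min(2,p)\leq q\leq \max(2,p)$ is then obtained by vector-valued interpolation between this $q=2$ case and the diagonal $q=p$ Besov case from $(i)$. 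Part $(iv)$ is handled by duality: writing $F^s_{\infty,2}(\Rl^n)$ as the dual of $F^{-s}_{1,2}(\Rl^n)$ and observing that the formal adjoint $(T^\varphi_a)^*$ from \eqref{eq:adjointOIO} is, modulo a regularizing remainder produced by the composition calculus of Section \ref{section:left comp of OIO with pseudo}, again a Schr\"odinger integral operator of the same type, so that the $p=1$ case of $(iii)$ delivers the required bound.

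The main obstacle I anticipate is ensuring that the composition formula provides enough $|\ell-j|$-decay for the Littlewood-Paley sums to converge in all the quasi-Banach ranges $0<p<1$, and carrying the critical $p=1$ estimate cleanly through the duality argument of $(iv)$; in particular one has to verify that the remainder in the symbolic expansion for $(T^\varphi_a)^*$ is sufficiently smoothing to be absorbed by a compactly supported pseudodifferential argument rather than feeding back into the genuinely oscillatory analysis.
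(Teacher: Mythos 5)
Your overall route---composition calculus, Littlewood--Paley decomposition, and the $h^p$-estimates of Section~\ref{sect:hpLpScho}---is broadly the one the paper follows for the Schr\"odinger case, and the double-localization you describe for part $(i)$ is essentially a reformulation of the paper's one-sided use of formula~\eqref{eq:highfreq1} together with Lemma~\ref{lem:besovuniformboundedness}~$iv)$. However, there is a genuine gap in part $(iii)$. After lifting by Bessel potentials (and using Theorem~\ref{thm:left composition with pseudo} to see that $(1-\Delta)^{s/2}T_\sigma^{\tilde\varphi}(1-\Delta)^{-s'/2}$ is again a Schr\"odinger integral operator), the $q=2$ endpoint requires $F^0_{p,2}(\Rl^n)\to F^0_{p,2}(\Rl^n)$, i.e.\ $h^p(\Rl^n)\to h^p(\Rl^n)$ when $0<p\leq 1$. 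But Theorem~\ref{thm:schrodinger} only delivers $h^p(\Rl^n)\to L^p(\Rl^n)$, which for $p\leq 1$ is strictly weaker since $h^p(\Rl^n)\subsetneq L^p(\Rl^n)$; your reduction therefore does not close. The paper bridges exactly this by proving, via the expansion \eqref{asymptotic expansion} with $M=1$, that $b(tD)T_a^{\tilde\varphi}$ is $h^p\to L^p$ bounded \emph{uniformly in $t\in(0,1]$} for every Fourier multiplier $b\in S^0_{1,0}(\Rl^n)$, which then yields the genuine $h^p\to h^p$ bound.

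A second issue is the duality step in part $(iv)$: you assert that $(T_a^\varphi)^*$ is, up to a smoothing remainder, again a Schr\"odinger integral operator of the same type, but the adjoint \eqref{eq:adjointOIO} is an operator with a $y$-integral and the composition calculus of Section~\ref{section:left comp of OIO with pseudo} (a $\Psi$DO acting on an oscillatory integral) does not convert it to the forward form~\eqref{eq:OIO}. The paper instead obtains $F^0_{\infty,2}$-boundedness directly from the $L^\infty\to\bmo$ statement in Theorem~\ref{thm:schrodinger}, whose proof works with the adjoint's $h^1\to L^1$ bound at the level of explicit kernel estimates. Finally, your low/high split is an unnecessary detour here and your claim that the low-frequency piece "follows readily from Fujiwara's $L^2$-theorem together with Lemma~\ref{grafakos lemma 1} and \eqref{poitwise multiiplier}" does not cover $p<1$; the paper avoids the split entirely by replacing $\varphi$ with $\tilde\varphi(x,\xi):=\varphi(x,\xi)-\varphi(x,0)$, using \eqref{poitwise multiiplier} to dispose of the factor $e^{i\varphi(x,0)}$, and then applying the composition formula globally---a reduction that is also needed to verify the hypotheses of Lemma~\ref{Lem:saviouroftoday}~$(iii)$ and Theorem~\ref{thm:left composition with pseudo} near $\xi=0$, which you do not address.
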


\textbf{Outline of the proof}.
\begin{itemize}
\item[] $i)$ See Section \ref{sec:Besov}.
\item[] $ii)$--$iv)$ See Section \ref{sec:Triebel}. \qed
\end{itemize}

\subsection{Global regularity results}
In this subsection, we deal with global regularity of both Schr\"odinger integral and oscillatory integral operators on Besov-Lipschitz and Triebel-Lizorkin spaces. We shall see that the global results concerning oscillatory integral operators (but not Schr\"odinger integrals) 
{also require a further restriction of the range of $p$ in case of operators with phase functions that are non-smooth (at the origin) in the frequency variables .}\\

We start with a global $L^p$-boundedness theorem.
\begin{Th}\label{thm:main1globalLp}
Let $k \geq 1$, $p\in(1,\infty)$
and $a(x,\xi) \in S^{m_k(p)}_{0,0}(\Rl^n)$.
Assume that $\varphi(x,\xi) \in \textart F^k$ is \emph{SND} and satisfies the $L^2$-condition \eqref{eq:L2 condition},
 and for some $\mu>0$ and some $R>n$ verifies the estimate
\begin{equation*}\label{extra phase condition 1}
|\partial^{\alpha}_{\xi} (\varphi(x,\xi)-x\cdot\xi)|\leq c_{\alpha} |\xi|^{\mu-|\alpha|},
\qquad |\alpha| \geq 0, \,\quad\, 0<|\xi|\leq 2R.
\end{equation*}
Then the operator $T_a^\varphi$ as defined in \eqref{eq:OIO} maps $L^p(\Rl^n)$ into $L^p(\Rl^n)$ continuously. In the case $0<k<1$, all the results above are true provided that $a(x,\xi) \in S^{m_k(p)}_{1,0}(\Rl^n)$.
\end{Th}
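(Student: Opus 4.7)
The plan is to follow the same high/low frequency split used for the local Theorem \ref{thm:main1}, but to take particular care of the lack of compact $x$-support of $a(x,\xi)$. Write $a = a_H + a_L$ with $a_L(x,\xi)=\psi_0(\xi)a(x,\xi)$ and $a_H(x,\xi)=(1-\psi_0(\xi))a(x,\xi)$. The high-frequency operator $T_{a_H}^\varphi$ can be treated by running verbatim the analytic interpolation scheme outlined for Theorem \ref{thm:main1}: the kernel estimates of Propositions \ref{Th:new} and \ref{Tj:Tad global} only use the $\textart F^k$, SND and $L^2$-conditions for $|\xi|\ge 1$ and are entirely insensitive to the spatial support of $a$, so they yield $h^{p_0}\to L^{p_0}$ bounds for $T_{a_H}^\varphi$ and for $(T_{a_H}^\varphi)^*$ with $a\in S^{m_k(p_0)}_{0,0}$ when $k\ge 1$ (respectively $S^{m_k(p_0)}_{1,0}$ when $0<k<1$). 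Mac\'\i as' analytic interpolation---between these estimates on $\operatorname{Re}z=0$, which grow only polynomially in $|\operatorname{Im}z|$, and Fujiwara's global $L^2$-bound (Theorem \ref{thm:fujiwara}) on $\operatorname{Re}z=1$, which is $z$-uniform thanks to the $L^2$-condition \eqref{eq:L2 condition}---then produces $h^{p_0}\to L^{p_0}$-boundedness for every $0<p_0\le 2$, and duality applied to the adjoint estimate covers $2\le p<\infty$.

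For the low-frequency part $T_{a_L}^\varphi$, the newly-added assumption
\[
|\partial_\xi^\alpha(\varphi(x,\xi)-x\cdot\xi)|\lesssim |\xi|^{\mu-|\alpha|},\qquad 0<|\xi|\le 2R,\quad |\alpha|\ge 0,
\]
with $R>n$ is precisely what must replace the compact $x$-support of the local case. I would rewrite the low-frequency kernel as
\[
K_L(x,y) := \int_{\Rl^n} e^{i(x-y)\cdot\xi}\, e^{i(\varphi(x,\xi)-x\cdot\xi)}\,\psi_0(\xi)\, a(x,\xi)\,\ddd\xi,
\]
and use the new hypothesis to view $b(x,\xi) := e^{i(\varphi(x,\xi)-x\cdot\xi)}\,\psi_0(\xi)\, a(x,\xi)$ as a symbol satisfying $|\partial_\xi^\alpha b(x,\xi)|\le C_\alpha$ uniformly in $x$, with compact $\xi$-support. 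Non-stationary-phase integrations by parts in $\xi$ then give $|K_L(x,y)|\lesssim\langle x-y\rangle^{-N}$ for every $N$, and Schur's lemma delivers global $L^p$-boundedness of $T_{a_L}^\varphi$ for all $1\le p\le\infty$; this is essentially the content of Lemma \ref{low freq lemma llf 2}, to which one should add Lemma \ref{Lem:smoothlowfreq} and Proposition \ref{Prop:ThpLp} (interpolated against Fujiwara's $L^2$-bound) in the smooth case to recover the full boundedness statement.

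The main difficulty is this low-frequency analysis: in the local Theorem \ref{thm:main1} compact $x$-support trivialised the low-frequency estimate via H\"older and Hausdorff--Young, whereas globally one must genuinely extract integrable spatial decay from the phase difference $\varphi(x,\xi)-x\cdot\xi$, a subtle point because $\varphi$ may be singular at $\xi=0$. The choice $R>n$ is calibrated so that after integrating by parts more than $n$ times and absorbing the $|\xi|^{\mu-|\alpha|}$-losses produced by differentiating the phase factor $e^{i(\varphi-x\cdot\xi)}$, the resulting kernel decays at an integrable rate in $|x-y|$. Combining the high- and low-frequency estimates yields the claimed global $L^p$-boundedness for $1<p<\infty$; the variant $0<k<1$ goes through identically using the $S^{m_k(p)}_{1,0}$-version of the kernel propositions, which is the only adjustment needed in the high-frequency step.
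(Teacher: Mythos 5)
Your high‑frequency argument has a real gap. Proposition~\ref{Tj:Tad global}, which provides the adjoint bound $\brkt{T^\varphi_{a_H}}^*:h^{p_0}\to L^{p_0}$, is \emph{not} ``insensitive to the spatial support of $a$''; in addition to the $\textart F^k$, SND and $L^2$‑conditions it has an extra hypothesis, namely that for every $\xi$ in the frequency support there is some $e_\ell$ so that the segment from $\xi$ to $e_\ell$ misses $B(0,1)$ \emph{and} $\partial^\beta_x\varphi(x,e_\ell)\in L^\infty(\Rl^n)$ for all $|\beta|\geq 1$. The hypothesis of Theorem~\ref{thm:main1globalLp} gives no control at all on $\partial^\beta_x\varphi$ (only $\xi$‑derivatives appear), so this extra condition is not guaranteed, and your claim that the interpolation scheme for Theorem~\ref{thm:main1} runs ``verbatim'' does not go through. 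The paper resolves this by a directional decomposition: using the cut‑offs $\lambda_\ell$ built from $\chi_\ell$ one splits $T^\varphi_{a_H}=\sum_{\ell=1}^{2n} T^\varphi_{a_\ell}$, and then replaces $\varphi$ by $\tilde\varphi(x,\xi):=\varphi(x,\xi)-\varphi(x,e_\ell)$ on the $\ell$‑th piece; since $|e^{i\varphi(x,e_\ell)}|=1$ this leaves $L^p$‑norms unchanged, and now $\tilde\varphi(x,e_\ell)\equiv 0$ makes the extra hypothesis of Proposition~\ref{Tj:Tad global} trivially true. This reduction is precisely the step your proposal omits. (The requirement $R>n$ is also tied to this decomposition — it guarantees that for $|\xi|\geq R$ some coordinate satisfies $|\xi_\gamma|\geq R/\sqrt n>1$, so the segment to $\pm e_\gamma$ stays outside the unit ball — rather than to the low‑frequency integrations by parts as you speculate.)

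Your low‑frequency argument also overstates what can be extracted. From the hypothesis $|\partial_\xi^\alpha(\varphi-x\cdot\xi)|\lesssim|\xi|^{\mu-|\alpha|}$ one does \emph{not} get $|\partial_\xi^\alpha b(x,\xi)|\leq C_\alpha$ uniformly with $b=e^{i(\varphi-x\cdot\xi)}\psi_0\, a$: once $|\alpha|>\mu$ the $\xi$‑derivatives of the exponential factor blow up like $|\xi|^{\mu-|\alpha|}$ as $\xi\to0$, and $\mu$ is only assumed positive (so can be small). Consequently you cannot integrate by parts arbitrarily many times, and the bound $|K_L(x,y)|\lesssim\langle x-y\rangle^{-N}$ for all $N$ is false in general. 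What does hold is the estimate in Lemma~\ref{low freq lemma llf 1} (applied under assumption~\eqref{eq:lf2}), which goes through a dyadic decomposition in $\xi$ and yields only $|K_L(x,y)|\lesssim\langle x-y\rangle^{-n-\eps\mu}$ for $0\leq\eps<1$; that is still integrable and Schur's lemma delivers the global $L^p$‑bound, so your conclusion stands but for a different and more delicate reason. Note also that Lemma~\ref{low freq lemma llf 2}, which you invoke, assumes compact $x$‑support of the amplitude, so it is not available in the global setting; the paper deliberately uses Lemma~\ref{low freq lemma llf 1} here.
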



\textbf{Outline of the proof}.
\begin{itemize}
\item[] For the low frequency part of the operator, using $a_L(x,\xi)=\psi_0(\xi/R)\,a(x,\xi)$, one applies 
Lemma \ref{low freq lemma llf 1} with condition \eqref{eq:lf2}.\\

\item[] For the high frequency part, using $a_H(x,\xi)=(1-\psi_0(\xi/R))\,a(x,\xi)$, we shall use Propositions \ref{Th:new} and \ref{Tj:Tad global}. To this end, we break up the operator $T_a^\varphi$ into pieces $T_{a_\ell}^{\tilde\varphi}$ that satisfy $\partial_x^\beta\tilde\varphi(x,e_\ell)\in L^\infty(\Rn)$. To do this we make the following construction: define the set of unit vectors $\set{e_{\ell}}_{\ell=1}^{2n}$ by letting $\set{e_{2\gamma-1}}_{\gamma=1}^n$ be the standard basis in $\Rn$ and $e_{2\gamma} := -e_{2\gamma-1}$ for $1\leq \gamma \leq n$. 

Next let $\chi$ be a non-negative function in $\mathcal{C}^\infty_b(\Rl)$ with 
\eq{
\supp\chi =\set {t\in \Rl;\,{t} \geq 1},\quad \chi({t})\geq 1\text{ if } {t} \geq {R/\sqrt{n}}, 
}
with $R$ as in the statement of the theorem, and let $\chi_
\ell$ be the functions in $\mathcal{C}^\infty_b(\Rl^n)$ defined by  \eq{
\chi_{2\gamma-1}(\xi) =\chi (\xi_\gamma), \quad
\chi_{2\gamma}(\xi) =\chi (-\xi_\gamma), \quad 1\leq\gamma\leq n,
}
where $\xi=(\xi_1,\dots,\xi_n)$.

Furthermore define $\lambda_\ell(\xi) := \chi_\ell(\xi)/\sum_{\ell=1}^{2n}\chi_\ell(\xi)$ for $1\leq\ell\leq 2n$, so that $\lambda_\ell\in \mathcal{C}^\infty(\Rl ^n)$,  and $\sum_{\ell=1}^{2n}\lambda_\ell(\xi) = 1$ for every $\xi\in\Rn\setminus B(0,R)$. Observe that on the $\xi$-support of $a_H(x,\xi)$, the sum $\sum_{\ell=1}^{2n}\chi_\ell(\xi)$ is bounded from below by $1$. This is because of the fact that if $\abs\xi\geq R$, then at least one coordinate $\xi_\gamma$ must satisfy $\abs{\xi_\gamma}\geq {R/\sqrt{n}}$ and hence one of the $\chi_\ell$'s is bounded from below by $1$. This yields that for all multi-indices $\alpha,$ one has $\abs{\partial^\alpha \lambda_\ell(\xi)}\lesssim 1$. Now split 
\eq{
T_{a_H}^\varphi f(x) 
= \sum_{\ell=1}^{2n}\int_{\Rn} e^{i\varphi(x,\xi)}\,a_H(x,\xi)\,\lambda_\ell(\xi)\, \widehat f(\xi) \ddd \xi 
=: \sum_{\ell=1}^{2n} T_{a_\ell}^\varphi f(x).
}
The proof reduces to showing the $L^p$-boundedness of each $T_{a_\ell}^\varphi$. By letting $\tilde\varphi(x,\xi) :=\varphi(x,\xi)-\varphi(x,e_\ell) $, we can write $T_{a_\ell}^\varphi f(x) = e^{i\varphi(x,e_\ell)}\,T_{a_\ell}^{\tilde\varphi} f(x)$ with $\tilde\varphi(x,e_\ell) = 0 $ and since $L^p$-norms are invariant under multiplications by factors of the form $e^{i\varphi(x,e_\ell)}$, the results are unchanged. Now the rest of the argument goes exactly as in the proof of Theorem \ref{thm:main1}, however this does not require compact support in the $x$-variable. In particular Proposition \ref{Tj:Tad global} goes through since the new phase function $\tilde\varphi$ trivially satisfies $\partial_x^\beta\tilde\varphi(x,e_\ell)\in L^\infty(\Rn)$ for every integer $\ell\in[1,2n]$.
\end{itemize}

Next we prove the global Besov-Lipschitz and Triebel-Lizorkin regularity of oscillatory integral operators.

\begin{Th}\label{thm:main4glob}
Let $m, s\in\Rl$ and $a(x,\xi) \in S^{m}_{0,0}(\Rl^n)$.  Assume that $k \geq 1$, $\varphi\in \textart F^k$ \emph{is SND}, satisfies the $L^2$-condition \eqref{eq:L2 condition} and the \emph{LF}$(\mu)$-condition \eqref{eq:LFmu} for some $0<\mu\leq1$.
Then the following statements hold true:
\begin{enumerate}
\item[$i)$] If 
$p\in (n/(n+\mu),\infty]$, 
$q\in (0,\infty]$, then $T_a^\varphi :B_{p,q}^{s+m-m_k(p)}(\Rl^n)\to B_{p,q}^{s}(\Rl^n)$.
\item[$ii)$] If 
$p\in (n/(n+\mu),\infty )$, $q\in (0,\infty]$ and $\varepsilon>0$, then $T_a^\varphi :F_{p,q}^{s+m-m_k(p)+\varepsilon}(\Rl^n)\to F_{p,q}^{s}(\Rl^n)$.
\item[$iii)$] If 
$p\in (n/(n+\mu),\infty )$, $\min\, (2,p)\leq q\leq \max\,( 2,p)$, then $T_a^\varphi :F_{p,q}^{s+m-m_k(p)}(\Rl^n)$ $\to F_{p,q}^{s}(\Rl^n)$.
\item[$iv)$]$T_a^\varphi :F_{\infty,2}^{s+m-m_k(\infty)}(\Rl^n)\to F_{\infty,2}^{s}(\Rl^n)$. 
\end{enumerate}

In the case $0<k<1$, all the results above are true provided that 
$a(x,\xi) \in S^{m}_{1,0}(\Rl^n)$. \\

If one deals with smooth phase functions, i.e. if we assume that $\varphi \in \textart F^k\cap \mathcal{C}^{\infty}(\Rl^n \times \Rl^n)$, is \emph{SND} and verifies the $L^2$-condition \eqref{eq:L2 condition} \emph(both conditions for all $(x,\xi) \in\Rl^n \times \Rl^n)$, and $\abs{\nabla_x \varphi(x,0)} \in L^\infty(\mathbb{R}^n),$ then the range of validity of the results above can be extended to $p>0.$
\end{Th}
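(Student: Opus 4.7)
The plan is to decompose $T_a^\varphi = T_{a_L}^\varphi + T_{a_H}^\varphi$ with $a_L(x,\xi)=\psi_0(\xi)\,a(x,\xi)$ and $a_H=(1-\psi_0(\xi))\,a(x,\xi)$, and treat the two pieces separately on the target space. The high-frequency piece is where the sharp $m_k(p)$-loss enters and where the pseudodifferential calculus intervenes; the low-frequency piece carries the singularity of the phase at $\xi=0$ and is responsible for the restriction $p>n/(n+\mu)$ in parts $(i)$--$(iii)$.

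For $T_{a_H}^\varphi$, following the proof of Theorem \ref{thm:main1globalLp}, I would first apply the partition-of-unity device to write
\[
T_{a_H}^\varphi f = \sum_{\ell=1}^{2n} e^{i\varphi(x,e_\ell)}\, T_{a_\ell}^{\tilde\varphi} f, \qquad \tilde\varphi(x,\xi):=\varphi(x,\xi)-\varphi(x,e_\ell),
\]
so that $\partial_x^\beta \tilde\varphi(x,e_\ell)\equiv 0$. The $L^2$-condition on $\varphi$ ensures, through the pointwise multiplier estimate \eqref{poitwise multiiplier}, that the factors $e^{i\varphi(x,e_\ell)}$ act boundedly on $B^s_{p,q}(\Rn)$ and $F^s_{p,q}(\Rn)$. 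Next, Littlewood-Paley-decomposing in $\xi$ and invoking the composition theorem of Section \ref{section:left comp of OIO with pseudo} to rewrite $\psi_j(D)\, T_{a_\ell}^{\tilde\varphi}$ as $T_{b_j}^{\tilde\varphi}$ with amplitudes $b_j$ whose seminorms produce the desired $2^{-j m_k(p)}$-factors, one obtains part $(i)$ by combining the $h^p$--$L^p$ bounds of Propositions \ref{Th:new} and \ref{Tj:Tad global} with Fujiwara's $L^2$-theorem \ref{thm:fujiwara} via Mac\'ias-type analytic interpolation, then summing in $j$. Parts $(ii)$ and $(iii)$ follow from $(i)$ via \eqref{embedding of TL}, \eqref{equality of TL and BL}, and classical interpolation between $F^s_{p,p}$ and $F^s_{p,2}=H^{s,p}$; part $(iv)$ is deduced by a duality argument using \eqref{eq:adjointOIO}.

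For $T_{a_L}^\varphi$, writing the kernel as $K_L(x,y)=\int_{\Rn} e^{i(\varphi(x,\xi)-y\cdot\xi)}\,a_L(x,\xi)\,\ddd\xi$ and using LF$(\mu)$, one sees that $e^{i(\varphi(x,\xi)-x\cdot\xi)}\,a_L(x,\xi)$ is smooth on $|\xi|\leq 2$ except for a $\mu$-H\"older singularity at $\xi=0$. Integration by parts in $\xi$ therefore yields $|K_L(x,y)|\lesssim \langle x-y\rangle^{-\rho}$ for any $\rho<n+\mu$. Since the amplitude is frequency-localised in $|\xi|\leq 2$, one may replace $f$ by $\Psi_0(D)f$, whose Fourier support is compact, and then Peetre's majorisation Lemma \ref{grafakos lemma 1} produces the pointwise control $|T_{a_L}^\varphi f(x)|\lesssim (M(|\Psi_0(D)f|^r)(x))^{1/r}$ for any $r\in(n/(n+\mu),1]$. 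Combined with the Fefferman-Stein vector-valued maximal inequality and the fact that only finitely many Littlewood-Paley blocks of $T_{a_L}^\varphi f$ are non-negligible, this delivers the claimed Besov-Lipschitz and Triebel-Lizorkin estimates in the range $p>n/(n+\mu)$.

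The main obstacle is the final assertion: extending the range to all $p>0$ for smooth phases. The crude kernel decay $\langle x-y\rangle^{-(n+\mu)}$ is dictated precisely by the $\mu$-H\"older regularity of $\varphi$ at $\xi=0$ and cannot be improved otherwise. Under the smoothness assumption $\varphi\in \mathcal{C}^{\infty}(\Rn\times \Rn)$, with \eqref{eq:L2 condition} valid on all of $\Rn\times\Rn$ and $|\nabla_x\varphi(x,0)|\in L^\infty(\Rn)$, the phase satisfies Fujiwara's condition \eqref{fujiwaras condition} globally, so Fujiwara's theorem \ref{thm:fujiwara} applies directly to $T_{a_L}^\varphi$; the boundedness of $\nabla_x\varphi(x,0)$ is what prevents the affine correction from destroying global $L^2$-boundedness. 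Moreover, Lemma \ref{Lem:smoothlowfreq} now produces arbitrary-order decay of $K_L$ in $\langle x-y\rangle$ via unrestricted integration by parts. Interpolating between these two ingredients promotes the low-frequency bound to the full range $p>0$, and gluing with the unchanged high-frequency analysis yields the conclusion.
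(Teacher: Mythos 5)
Your overall architecture is in the right spirit (frequency decomposition, composition calculus for the high part, kernel estimates with Peetre's lemma for the low part), but there are concrete gaps.

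First, and most importantly, the two-way split $a=a_L+a_H$ with cutoff at $|\xi|\sim1$ does not work here, and the paper's three-way split into $a_L+a_M+a_H$ with $a_M=(\psi_0(\xi/R)-\psi_0(\xi))\,a$ and $a_H=(1-\psi_0(\xi/R))\,a$ is forced on you. The reason is that the high-frequency machinery you invoke --- the composition Theorem~\ref{thm:left composition with pseudo} via Lemma~\ref{Lem:saviouroftoday}, and the $h^p$--$L^p$ bounds through Lemma~\ref{lem:besovuniformboundedness} $i)$--$ii)$ --- explicitly requires the frequency support of the amplitude to lie outside $B(0,R)$ for a large $R\gg1$ depending on $n$, the SND constant, and the upper bound of the mixed Hessian, so that $|\xi|\lesssim|\nabla_x\varphi(x,\xi)|$ holds. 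Meanwhile the LF$(\mu)$-condition only controls $\varphi$ on $|\xi|\leq2$. Your $a_H$ supported in $|\xi|\geq1$ therefore includes an intermediate annulus $1\leq|\xi|\leq R$ on which neither mechanism applies, and you never address it; this is exactly the content of Lemma~\ref{Lem:midfreq}, which uses the $\textart{F}^k$- and $L^2$-conditions together with $\partial^\beta_x\varphi(\cdot,\cdot)\in L^\infty(\Rn\times\mathbb S^{n-1})$ (a consequence of LF$(\mu)$) to get decay $\langle x-y\rangle^{-N}$ for all $N$.

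Second, your treatment of the smooth-phase extension breaks down. You invoke Lemma~\ref{Lem:smoothlowfreq} to get arbitrary-order kernel decay, but that lemma requires $a_L\in\mathcal C_c^\infty(\Rn\times\Rn)$ --- compactly supported in $x$ --- which fails in the global Theorem~\ref{thm:main4glob}. Moreover, interpolating an $L^2$ bound against a kernel decay estimate is not how the paper promotes the range to all $p>0$; the paper instead avoids any low/middle/high split by conjugating with the unimodular factor $e^{i\varphi(x,0)}$, applying the pointwise-multiplier bound~\eqref{poitwise multiiplier} (which uses $|\nabla_x\varphi(x,0)|\in L^\infty$ and the global $L^2$-condition to bound derivatives of $e^{i\varphi(x,0)}$), reducing to the modified phase $\tilde\varphi(x,\xi)=\varphi(x,\xi)-\varphi(x,0)$ with $\tilde\varphi(x,0)=0$, and then using Lemma~\ref{Lem:saviouroftoday}~$iii)$ and Lemma~\ref{lem:besovuniformboundedness}~$iii)$, which apply on all of $\Rn\times\Rn$.

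Third, parts $iii)$--$iv)$ are not just ``classical interpolation'' from part $i)$: the paper first upgrades the $h^p$--$L^p$ bound to an $h^p$--$h^p$ bound (by showing $b(tD)T^\varphi_{a_H}$ is $h^p$--$L^p$ bounded uniformly in $t\in(0,1]$ via the $M=1$ composition formula), then lifts to arbitrary $F^s_{p,2}$ by conjugating with Bessel potentials and using Theorem~\ref{thm:left composition with pseudo} to keep the conjugated operator in the same class, and only then interpolates in $q$ against the diagonal $F^s_{p,p}=B^s_{p,p}$ result; part $iv)$ then comes from the $L^\infty\to\bmo$ boundedness ($F^0_{\infty,2}$) at the low/middle/high level, not from a bare duality argument.

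Minor: you attribute the boundedness of $x\mapsto e^{i\varphi(x,e_\ell)}$ as a pointwise multiplier to the $L^2$-condition, but that condition only controls mixed $(\geq1,\geq1)$-derivatives; the boundedness of $\partial_x^\beta\varphi(x,e_\ell)$ actually comes from LF$(\mu)$ with $\alpha=0$ at $\xi=e_\ell$.
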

\textbf{Outline of the proof}.
\begin{itemize}
 \item[] $i)$ See Section \ref{sec:Besov}. 
 \item[] $ii)$--$iv)$  See Section \ref{sec:Triebel}. \qed 
\end{itemize}

For the case of Schr\"odinger integral operators, as in the case of smooth phase functions treated above, {we only need to have control on $|\nabla_x \varphi(x,0)|$}, instead of the LF($\mu$)-condition \eqref{eq:LFmu} thanks to the smoothness of the phase and assumption \eqref{Schrodinger phase}.

\begin{Th}\label{thm:main5glob}
Let $m, s\in\Rl$ and $a(x,\xi)\in S^{m}_{0,0}(\Rl^n)$. Assume that
$\varphi$ satisfies \eqref{Schrodinger phase}, is \emph{SND} and $|\nabla_x \varphi(x,0)| \in L^\infty(\mathbb{R}^n)$.
Then the following statements hold true:
\begin{enumerate}
\item[$i)$] If $p\in (0,\infty]$, $q\in (0,\infty]$, then $T_a^\varphi :B_{p,q}^{s+m-m_2(p)}(\Rl^n)\to B_{p,q}^{s}(\Rl^n)$.
\item[$ii)$] If $p\in (0,\infty)$, $q\in (0,\infty]$ and $\varepsilon>0$, then $T_a^\varphi :F_{p,q}^{s+m-m_2(p)+\varepsilon}(\Rl^n)\to F_{p,q}^{s}(\Rl^n)$.
\item[$iii)$] If $p\in (0,\infty)$, $\min\, (2,p)\leq q\leq \max\,( 2,p)$, then $T_a^\varphi :F_{p,q}^{s+m-m_2(p)}(\Rl^n)\to F_{p,q}^{s}(\Rl^n)$.
\item[$iv)$] $T_a^\varphi :F_{\infty,2}^{s+m-m_2(\infty)}(\Rl^n)\to F_{\infty,2}^{s}(\Rl^n)$.
\end{enumerate}
\end{Th}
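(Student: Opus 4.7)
The plan is to parallel the argument for the smooth-phase case of Theorem \ref{thm:main4glob}, with the Schr\"odinger condition \eqref{Schrodinger phase} playing the role of the $\textart F^k$ and $L^2$-conditions combined, and the hypothesis $|\nabla_x \varphi(x,0)| \in L^\infty(\Rn)$ replacing the LF$(\mu)$-condition. First I would split $T_a^\varphi = T_{a_L}^\varphi + T_{a_H}^\varphi$ via $a_L(x,\xi) = \psi_0(\xi/R)\,a(x,\xi)$ and $a_H(x,\xi) = (1-\psi_0(\xi/R))\,a(x,\xi)$, with $R$ chosen large enough that on $\supp a_H$ the mixed Hessian of $\varphi$ is uniformly invertible. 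On this high-frequency piece I would invoke the partition of unity $\{\lambda_\ell\}_{\ell=1}^{2n}$ from the outline of Theorem \ref{thm:main1globalLp} to reduce to a finite sum of operators $T_{a_\ell}^{\tilde\varphi_\ell}$ with phases $\tilde\varphi_\ell(x,\xi) = \varphi(x,\xi)-\varphi(x,e_\ell)$. These inherit the Schr\"odinger condition and SND, and one checks using the $L^\infty$ bound on $\nabla_x\varphi(x,0)$ together with a single $\xi$-integration of $\partial_x^\beta\partial_\xi\varphi$ (which is bounded by \eqref{Schrodinger phase} for $|\beta|\geq 1$) that $\partial_x^\beta\tilde\varphi_\ell(x,e_\ell)\in L^\infty(\Rn)$ for every multi-index $\beta$.

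For the high-frequency pieces I would then follow the Littlewood-Paley/composition strategy of Sections \ref{sec:Besov} and \ref{sec:Triebel}. For part $i)$ the input is decomposed as $f=\sum_{j\geq 0} \Psi_j(D)\psi_j(D)f$, and the $h^p$-$L^p$ bounds for Schr\"odinger integral operators developed in Section \ref{sect:hpLpScho} are applied to each frequency slab (these bounds do not require compact spatial support of the amplitude once $\partial_x^\beta\tilde\varphi_\ell(x,e_\ell)\in L^\infty$ is in force). Summing the resulting pieces in $\ell^q$ with weight $2^{js}$ yields the Besov estimate. For parts $ii)$--$iv)$ I would invoke the parameter-dependent composition formula of Section \ref{section:left comp of OIO with pseudo} to commute $\psi_j(D)$ past $T_{a_\ell}^{\tilde\varphi_\ell}$, producing a main term with amplitude frequency-localized near $2^j$ plus rapidly smoothing remainders; a vector-valued Fefferman-Stein maximal function argument combined with the Schr\"odinger $h^p$-$L^p$ bound then delivers $iii)$ and $iv)$, while $ii)$ follows from $iii)$ via the embedding \eqref{embedding of TL}.

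For the low-frequency term $T_{a_L}^\varphi$, the Schr\"odinger phase is smooth at $\xi=0$; Taylor expanding in $\xi$ gives $\nabla_x\varphi(x,\xi) = \nabla_x\varphi(x,0)+O(|\xi|)$ on $\supp a_L$, so the hypothesis $|\nabla_x\varphi(x,0)|\in L^\infty(\Rn)$ together with \eqref{Schrodinger phase} makes the low-frequency phase uniformly tame. One then applies the smooth-phase low-frequency lemmas of Section \ref{sect:lowfreq} (in particular Lemma \ref{low freq lemma llf 1}) to obtain boundedness on every function space appearing in the statement with no restriction coming from $p$; this is precisely why the full range $p\in(0,\infty]$ in $i)$ and $p\in(0,\infty)$ in $ii)$--$iii)$ is attainable here, in contrast to the oscillatory-integral situation of Theorem \ref{thm:main4glob}. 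The step I expect to be the main obstacle is controlling, uniformly in $j$, the symbol-norm behaviour of the amplitudes produced by composing $\psi_j(D)$ with $T_{a_\ell}^{\tilde\varphi_\ell}$: one needs each expansion term to lie in $S^{\tilde m}_{0,0}(\Rn)$ with seminorms that are uniform in $j$, so that the Schr\"odinger $h^p$-$L^p$ bound applies with constants summable against $2^{js}$ across the full frequency range.
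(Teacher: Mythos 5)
Your proposal diverges substantially from the paper's proof, and there is a real gap in the reasoning as written. The paper's proof of Theorem~\ref{thm:main5glob} (in Step 4 of the Besov proof in Section~\ref{sec:Besov} and in Steps 2--3 of the Triebel--Lizorkin proof in Section~\ref{sec:Triebel}) does \emph{not} split into low, middle and high frequency parts and does \emph{not} use the $\{\lambda_\ell\}$ partition associated with the directions $e_\ell$. Both of those devices exist to cope with phases that are singular at $\xi=0$ (the $\textart F^k$ case); for a Schr\"odinger phase $\varphi\in\mathcal C^\infty(\Rn\times\Rn)$ they are unnecessary. The paper's key move is the single global factorisation $T_a^\varphi = e^{i\varphi(x,0)}T_a^{\tilde\varphi}$ with $\tilde\varphi(x,\xi):=\varphi(x,\xi)-\varphi(x,0)$, followed by the pointwise multiplier estimate \eqref{poitwise multiiplier} to pass the unimodular factor through the $B^s_{p,q}$, $F^s_{p,q}$, and $h^p$ norms. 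This is precisely where the hypothesis $|\nabla_x\varphi(x,0)|\in L^\infty(\Rn)$ is used: together with \eqref{Schrodinger phase} it ensures $e^{i\varphi(x,0)}\in\mathcal C_b^\infty(\Rn)$ so that \eqref{poitwise multiiplier} applies. Once $\tilde\varphi(x,0)=0$, Lemma~\ref{Lem:saviouroftoday} part~$iii)$ gives the gradient comparability and growth bounds needed for the composition Theorem~\ref{thm:left composition with pseudo}, and the rest of the argument runs through Lemma~\ref{lem:besovuniformboundedness} part~$iv)$ and Theorem~\ref{thm:schrodinger}.

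The concrete gap: you propose to verify ``$\partial_x^\beta\tilde\varphi_\ell(x,e_\ell)\in L^\infty(\Rn)$'' using the $L^\infty$ bound on $\nabla_x\varphi(x,0)$ and an integration in $\xi$. But $\tilde\varphi_\ell(x,e_\ell)=\varphi(x,e_\ell)-\varphi(x,e_\ell)\equiv 0$, so this statement is vacuous and carries no information. The quantity one must actually control is $\partial_x^\beta\varphi(x,e_\ell)$ (or, more cleanly, $\partial_x^\beta\varphi(x,0)$), in order to show that the extracted unimodular factor is a pointwise multiplier on $B^s_{p,q}$ and $F^s_{p,q}$. For $L^p$ this is free because unimodular multiplications preserve the norm; for the Besov--Lipschitz/Triebel--Lizorkin norms it is not, and nowhere in your proposal is the multiplier estimate on $e^{i\varphi(x,0)}$ (or $e^{i\varphi(x,e_\ell)}$) made explicit. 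Without it, applying Theorem~\ref{thm:schrodinger} and the composition formula (which, through Lemma~\ref{Lem:saviouroftoday}~$iii)$, also needs $\tilde\varphi(x,0)=0$ to hold globally) is not justified for an unreduced $\varphi$ with possibly unbounded $\nabla_x\varphi(x,0)$. Two secondary discrepancies: for parts~$iii)$--$iv)$ the paper does not use a vector-valued Fefferman--Stein argument but rather an $h^p\to h^p$ step via Theorem~\ref{thm:left composition with pseudo}, conjugation by $(1-\Delta)^{s/2}$, and interpolation in $q$; and part~$ii)$ is deduced from part~$i)$ together with the embeddings \eqref{equality of TL and BL}--\eqref{embedding of TL}, not from part~$iii)$.
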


\textbf{Outline of the proof}.
\begin{itemize}
\item[] $i)$ See Section \ref{sec:Besov}.
\item[] $ii)$--$iv)$ See Section \ref{sec:Triebel}. \qed
\end{itemize}

\begin{Rem}\label{Rem:cases}
Using the function space table \eqref{table of tl spaces}, one immediately sees that the above regularity results yield in particular the local and global boundedness of the Schr\"odinger and oscillatory integral operators on 
$L^p(\Rl^n)$, 
$h^p(\Rl^n)$, 
$\bmo(\Rl^n)$, and
$\Lambda^s(\Rl^n)$.
\end{Rem}

\begin{Rem}\label{Rem:varphi0}
In dealing with the $L^p$-boundedness in the smooth case of \emph{Theorem \ref{thm:main4glob}} and {\emph{Theorem \ref{thm:main5glob}} }
the assumption on the boundedness of the derivatives of $\varphi(x,0)$  is superfluous. Indeed, if this is not the case, then we can simply replace $\varphi(x,\xi)$ by $\varphi(x,\xi)-\varphi(x,0)+\varphi(x,0)$. Now the new phase function $\tilde{\varphi}(x,\xi):=\varphi(x,\xi)-\varphi(x,0)$ is also \emph{SND},
verifies \eqref{Schrodinger phase} and last but not least $\tilde{\varphi}(x,0)=0$. 
Since $L^p$-norms are invariant under multiplications by factors of the form $e^{i\varphi(x,0)}$, the results are unchanged.
\end{Rem}

An interesting question here is, whether one can prove global regularity results for Schr\"odinger integral operators  when $|\nabla_x\varphi(x,0)|\notin L^\infty(\Rl^n)$. This case already appears for the phase function associated to the propagator of the harmonic oscillator where $\varphi(x,0)$ exhibits quadratic behaviour. The following theorem provides an answer to this question.

\begin{Th}\label{Th:bonus}
 Assume that
$\varphi$ satisfies \eqref{Schrodinger phase} and is \emph{SND}. Then the following statements hold true:
\begin{itemize}
    \item[$i)$] If $p \in (0,\infty)$ and $a(x,\xi)\in S^{m_2(p)}_{0,0}(\Rl^n)$, then 
    $T_a^\varphi : h^p(\Rl^n) \to h^p(\Rl^n).$ 
 \item[$ii)$] If $m\in\Rl$, $a(x,\xi)\in S^{m}_{0,0}(\Rl^n)$, $p \in [2,\infty)$ and $s \in [m_2(p),0]$, then \linebreak$T_a^\varphi : {H^{s+m-m_2(p),p}}(\Rl^n) \to H^{s,p}(\Rl^n).$ 
    Furthermore, this estimate is sharp with respect to $s$.
\end{itemize}
\end{Th}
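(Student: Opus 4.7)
The plan is to establish part $(i)$ first and then use it as the engine for part $(ii)$. Throughout, the key structural observation is that the pointwise absolute value of the Schwartz kernel of $T_a^\varphi$ depends on $\varphi$ only through the reduced phase $\tilde\varphi(x,\xi) := \varphi(x,\xi) - \varphi(x,0)$, since $\varphi(x,0)$ contributes only an overall unimodular factor $e^{i\varphi(x,0)}$. Consequently, all pointwise kernel estimates from Section \ref{Sect:kernel estimates} remain available without the assumption $|\nabla_x\varphi(x,0)|\in L^\infty(\Rl^n)$, and $\tilde\varphi$ itself satisfies \eqref{Schrodinger phase}, is SND, and verifies $\nabla_x\tilde\varphi(x,0)\equiv 0\in L^\infty(\Rl^n)$, placing it within the scope of Theorem \ref{thm:main5glob}.

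For part $(i)$ with $p\in(1,\infty)$, where $h^p(\Rl^n)=L^p(\Rl^n)$, I would factor $T_a^\varphi f(x)=e^{i\varphi(x,0)}T_a^{\tilde\varphi}f(x)$. Theorem \ref{thm:main5glob} applied to $T_a^{\tilde\varphi}$ provides the $L^p$-bound, and multiplication by the unimodular factor $e^{i\varphi(x,0)}$ preserves $L^p$, so $T_a^\varphi:L^p\to L^p$. For $p\in(0,1]$ this trick fails because $e^{i\varphi(x,0)}$ need not preserve $h^p$, so I would argue via the atomic decomposition of $h^p$. For each $p$-atom $\at$ supported in $B(x_0,r)$, the $h^p\to L^p$ estimate for Schr\"odinger integrals from Section \ref{sect:hpLpScho} controls $\|T_a^\varphi\at\|_{L^p}$ on a fixed dilate of $B(x_0,r)$, while the off-diagonal kernel estimates of Section \ref{Sect:kernel estimates} give polynomial decay outside. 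These upgrade $T_a^\varphi\at$ to a fixed multiple of an $h^p$-molecule in the sense of Taibleson--Weiss, and summation over atoms yields $T_a^\varphi:h^p\to h^p$.

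For part $(ii)$ I would reduce to $L^p$-boundedness at the two endpoints and interpolate. At $s=0$, the estimate $T_a^\varphi:H^{m-m_2(p),p}\to L^p$ is equivalent to $T_a^\varphi\bessel{-(m-m_2(p))}:L^p\to L^p$; the right composition is immediate since $T_a^\varphi\bessel{-(m-m_2(p))}=T_{\tilde a}^\varphi$ with $\tilde a(x,\xi)=a(x,\xi)\langle\xi\rangle^{-(m-m_2(p))}\in S^{m_2(p)}_{0,0}$, and part $(i)$ closes the argument. At $s=m_2(p)$, the estimate reduces to $\bessel{m_2(p)}T_a^\varphi\bessel{-m}:L^p\to L^p$; here I would first reduce $T_a^\varphi\bessel{-m}$ to an OIO with amplitude in $S^0_{0,0}$, then invoke the parameter-dependent composition formula of Section \ref{section:left comp of OIO with pseudo} to rewrite the left composition with $\bessel{m_2(p)}$ as an OIO with phase $\varphi$ and amplitude in $S^{m_2(p)}_{0,0}$, modulo a smoothing remainder, and apply part $(i)$ once more. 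Complex interpolation between the pairs $\bigl(H^{m-m_2(p),p},H^{m,p}\bigr)$ and $\bigl(L^p,H^{m_2(p),p}\bigr)$ fills in the range $s\in(m_2(p),0)$. Sharpness in $s$ follows by specialising to $\varphi(x,\xi)=x\cdot\xi+|\xi|^2$ and a radial symbol of order $m$, where Miyachi's sharp necessary condition \cite{Miyachi1} for $L^p$-boundedness of the free Schr\"odinger propagator forces $m\leq m_2(p)$, so the smoothness loss $m-m_2(p)$ cannot be reduced for any admissible $s$.

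The main obstacle is part $(i)$ in the quasi-Banach range $p\in(0,1]$, where the reduction to $\tilde\varphi$ is unavailable at the level of function spaces. The molecular argument is feasible precisely because the pointwise kernel estimates are insensitive to $\varphi(x,0)$, but verifying the moment and decay conditions of the molecules $T_a^\varphi\at$, especially for ``large'' atoms ($r\geq 1$, no cancellation hypothesis), requires a dedicated analysis of the low-frequency part of $T_a^\varphi$ in the absence of the usual $\mathrm{LF}(\mu)$-type conditions employed elsewhere in the paper.
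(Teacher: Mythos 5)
Your reduction to $T_a^{\tilde\varphi}$ with $\tilde\varphi(x,\xi)=\varphi(x,\xi)-\varphi(x,0)$ is the right starting observation, and for part $i)$ in the Banach range $p\in(1,\infty)$ it does close the argument via Theorem \ref{thm:main5glob}. However, there are three genuine gaps in the remaining parts.

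\emph{Part $i)$, $p\in(0,1]$.} Your molecular argument is not viable as sketched, and I do not see how to repair it without a fundamentally different idea. The obstruction is that a Schr\"odinger kernel has \emph{curved, frequency-dependent} localisation: the proof of Theorem \ref{thm:schrodinger} shows the frequency piece $T_j^\nu\at$ concentrates on the set $\{x:|\nabla_\xi\varphi(x,\xi_j^\nu)-\bar y|\lesssim r\}$, which moves with $(j,\nu)$ and is not a ball of fixed center. Summing these pieces does not produce an object with molecular decay relative to any single center, and even if one accepted a deformed decay profile, there is no mechanism transporting the vanishing moments of $\at$ to moments of $T_a^\varphi\at$ — an oscillatory propagator generically destroys cancellation. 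The paper's actual device is the Asada--Fujiwara commutator calculus: writing $a=\sigma\jap{\xi}^{m_2(p)}$ with $\sigma\in S^0_{0,0}$, one uses \cite[Theorem~6.1]{AF} to obtain $[b(tD),T_\sigma^\varphi]=T_r^\varphi$ with $r\in S^0_{0,0}$ uniformly in $t\in(0,1]$ for any order-zero pseudodifferential operator $b$; commuting $\psi_0(tD)$ past $T_\sigma^\varphi$ then reduces the $h^p$-quasinorm of $T_a^\varphi f$ to the $h^p\to L^p$ bound of Theorem \ref{thm:schrodinger}. Crucially, that calculus works directly with $\varphi$, so the problematic factor $e^{i\varphi(x,0)}$ never needs to be handled on the quasi-Banach Hardy scale. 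Your proposal does not mention this calculus, and without it (or some equivalent replacement) I do not think the quasi-Banach case goes through.

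\emph{Part $ii)$, endpoint $s=m_2(p)$.} Your plan to apply the composition formula of Theorem \ref{thm:left composition with pseudo} to $\bessel{m_2(p)}T_a^\varphi$ fails in exactly the situations this theorem is designed to cover: hypotheses $i)$ and $ii)$ of Theorem \ref{thm:left composition with pseudo} require $|\xi|\lesssim|\nabla_x\varphi(x,\xi)|\lesssim|\xi|$ and $|\partial_x^\alpha\varphi|\lesssim\jap\xi$ on $\supp a$, and these are violated whenever $\nabla_x\varphi(x,0)$ is unbounded (e.g.\ the harmonic-oscillator phase). One cannot first peel off $e^{i\varphi(x,0)}$ either, because that factor then sits between $\bessel{m_2(p)}$ and $T_a^{\tilde\varphi}$ and does not commute with the Bessel potential. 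The paper sidesteps this entirely by quoting the D'Ancona--Nicola smoothing estimate \cite[Theorem~5.3]{DN}, which gives $T_a^\varphi:L^p\to H^{m_2(p),p}$ directly; that is the ingredient your proof is missing.

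\emph{Sharpness.} You have addressed the wrong claim. ``Sharp with respect to $s$'' in the statement of Theorem \ref{Th:bonus} $ii)$ means the constraint $s\in[m_2(p),0]$ cannot be relaxed, not that the loss $m-m_2(p)$ cannot be improved. The Miyachi example with phase $x\cdot\xi+|\xi|^2$ only establishes the latter, and that sharpness (of the order $m_2(p)$) is treated separately in Section \ref{sect:Sharp}. The paper's sharpness-in-$s$ proof instead uses the Schr\"odinger integral operator with phase $\varphi(x,\xi)=x\cdot\xi+|x|^2$, i.e.\ the pointwise multiplier $T=e^{i|x|^2}$ composed with $\bessel{m}$, and a direct computation with the test functions $\jap{x}^\mu$ to show that $T$ fails to be bounded between any pair of Sobolev spaces of strictly positive (and, by duality, of sufficiently negative) smoothness. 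This is a qualitatively different counterexample and cannot be recovered from the radial-symbol Miyachi construction.
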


\textbf{Outline of the proof}.
\begin{itemize}
\item[] $i)$  See Section \ref{sec:Triebel}.
\item[] $ii)$ Assume that $T_a^\varphi$ is any Schr\"odinger integral operator with $a\in S^m_{0,0}(\Rn)$ $m_2(p)\leq s\leq 0$ for $2\leq p<\infty$. By \cite[Theorem 5.3]{DN}, one has $T_a^\varphi:L^p(\Rn)\to H^{m_2(p),p}(\Rn) $ if $m=0$, which directly generalises to $T_a^\varphi:H^{m,p}(\Rn)\to H^{m_2(p),p}(\Rn) $ for any $m\in\Rl$. It follows from Theorem \ref{thm:schrodinger} that \linebreak$T_a^\varphi:H^{m-m_2(p),p}(\Rn)\to L^{p}(\Rn)$ and hence complex interpolation $H^{s , p}(\Rn)=\left (H^{0 , p}(\Rn), H^{m_2(p) , p}(\Rn)\right )_\theta$ (taking $\theta=s/m_2(p)$) yields the desired estimate.\\
To prove the sharpness in $s$, define the operator $Tf(x) := e^{i|x|^2}f(x)$ and let $a(x,\xi) := \jap\xi^{m}$ and $\varphi(x,\xi) := x\cdot\xi+ \abs x^2$. Using the fact that $T_a^\varphi = T \bessel{m}$, we note that the estimate
 $$\norm{T_a^\varphi f}_{H^{s,p}(\Rl^n)} 
 \lesssim \norm{f}_{H^{s+m-m_2(p),p}(\Rl^n)},$$
 is equivalent to  
 $$\norm{Tf}_{H^{s,p}(\Rl^n)} 
 \lesssim \norm{f}_{H^{s-m_2(p),p}(\Rl^n)}.$$
 Hence, from now on, we can take $m=0$.\\
We start by assuming that $s>0$. 
If $\mu := -s-n/p$ and $s'>0$, then $\bessel{s'}\jap x^\mu \sim \jap x^\mu \in L^p(\Rn) $, but $\bessel{s}T\jap x^\mu \sim \jap x^{\mu+s} = \jap x^{-n/p}\notin L^p(\Rn) $. 
This shows that $T$ does not map $H^{s',p}(\Rn)$ into $H^{s,p}(\Rn)$ continuously for any $s,s'>0$ and in particular, if we choose $s'=s-m_2(p)$, then this is true.\\
We now assume that $s<m_2(p)$. Since $T^*f(x) = e^{-i|x|^2}f(x)$ we see by a duality argument that for any $s,s'<0$, $T$ does not map $H^{s',p}(\Rn)$ into $H^{s,p}(\Rn)$ continuously for any $s,s'<0$. 
If one takes $s'=s -m_2(p)$, then $s<m_2(p)$ implies $s,s'<0$ and this concludes the proof. \qed\\

\end{itemize}
One can also show the sharpness of the results in Theorem \ref{Th:bonus} in a much larger scale, as the following corollary shows: 
\begin{Cor}

If $s<m_2(p)$ or $s>0$, then there is a Schr\"odinger integral operator $T_a^\varphi$ of order $m_2(p)$ that is not $F^s_{p,q}$-- or $B^s_{p,q}$--bounded for $1<p<\infty$ and $0<q\leq \infty$.
\end{Cor}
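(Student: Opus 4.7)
The plan is to exhibit a single explicit Schr\"odinger integral operator that realises sharpness simultaneously in the Triebel--Lizorkin and Besov scales, namely the one that already appeared in the proof of Theorem \ref{Th:bonus}(ii). Concretely, I would take $\varphi(x,\xi) := x\cdot\xi+|x|^2$, which is SND and satisfies \eqref{Schrodinger phase}, together with the amplitude $a(x,\xi) := \jap{\xi}^{m_2(p)}\in S^{m_2(p)}_{0,0}(\Rn)$, so that the resulting $T_a^\varphi$ is a Schr\"odinger integral operator of order $m_2(p)$. As observed in Theorem \ref{Th:bonus}(ii), one then has the factorisation $T_a^\varphi = T\bessel{m_2(p)}$ with $Tf(x):=e^{i|x|^2}f(x)$.

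The next step is to transfer any hypothetical boundedness of $T_a^\varphi$ on $F^s_{p,q}(\Rn)$ or on $B^s_{p,q}(\Rn)$ into a mapping property of $T$ between Bessel potential spaces. Using the $\varepsilon$-embedding \eqref{embedding of TL}, the identification $F^{s}_{p,2}=H^{s,p}$ for $1<p<\infty$ from \eqref{table of tl spaces}, and the equality $F^{s}_{p,p}=B^{s}_{p,p}$ from \eqref{equality of TL and BL}, one obtains, for every $\varepsilon>0$, the chain
\begin{equation*}
H^{s+\varepsilon,p}(\Rn)\hookrightarrow F^s_{p,q}(\Rn)\hookrightarrow H^{s-\varepsilon,p}(\Rn),
\end{equation*}
and, after a brief detour through $B^{s\pm\varepsilon/2}_{p,p}=F^{s\pm\varepsilon/2}_{p,p}$, the analogous chain with $F^s_{p,q}$ replaced by $B^s_{p,q}$. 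Since $\bessel{m_2(p)}$ is an isomorphism between Bessel potential spaces, an $F^s_{p,q}$-- or $B^s_{p,q}$--bound on $T_a^\varphi$ would therefore force $T:H^{s+\varepsilon-m_2(p),p}(\Rn)\to H^{s-\varepsilon,p}(\Rn)$ boundedly for every sufficiently small $\varepsilon>0$.

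The final step is to reach a contradiction by reusing the Bessel potential sharpness arguments from the proof of Theorem \ref{Th:bonus}(ii). If $s>0$, I would choose $\varepsilon\in(0,s)$; both exponents $s-\varepsilon$ and $s+\varepsilon-m_2(p)$ are then strictly positive (as $m_2(p)<0$), which contradicts the fact, verified there via the test function $\jap{x}^{-s-n/p}$, that $T$ never maps $H^{s',p}$ into $H^{s,p}$ when $s,s'>0$. If $s<m_2(p)$, I would instead choose $\varepsilon\in(0,m_2(p)-s)$; both exponents are then strictly negative, which contradicts the dual statement -- obtained by applying the previous argument to the adjoint $T^\ast f = e^{-i|x|^2}f$ -- that $T$ never maps $H^{s',p}$ into $H^{s,p}$ when $s,s'<0$.

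I do not anticipate a substantial obstacle, since the argument is essentially a repackaging of the sharpness part of Theorem \ref{Th:bonus}(ii) through standard embeddings. The only point requiring mild care is assembling the Besov embeddings from the tools available in the excerpt, which is handled by the detour through $B^{s\pm\varepsilon/2}_{p,p}=F^{s\pm\varepsilon/2}_{p,p}$ indicated above; one should also briefly check that the $H^{s',p}\not\to H^{s,p}$ counterexample used there extends to the full range $1<p<\infty$ (not only $p\geq 2$), which is immediate from the scale-invariance of the test function $\jap{x}^{-s-n/p}$.
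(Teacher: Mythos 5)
Your proof is correct and takes essentially the same route as the paper: both transfer a hypothetical $A^s_{p,q}$-bound into a Bessel-potential mapping property via the embeddings \eqref{embedding of TL} and \eqref{equality of TL and BL}, and both reach the contradiction through the operator $e^{i|x|^2}\bessel{m_2(p)}$ and the sharpness argument from Theorem \ref{Th:bonus} $ii)$. The only cosmetic difference is that you centre the embedding sandwich at $s$ itself (so the assumed boundedness is applied at index $s$, giving $H^{s+\eps,p}\hookrightarrow A^s_{p,q}\hookrightarrow H^{s-\eps,p}$), whereas the paper shifts the whole chain and applies the hypothesis at $s-2\eps$; your version is slightly cleaner bookkeeping, and your remark that the test-function argument extends to all $1<p<\infty$ makes explicit a point the paper leaves implicit.
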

\begin{proof}
The proofs for $F^s_{p,q}(\Rn)$ and $B^s_{p,q}(\Rn)$ can be done in one single step, so let $A$ denote either $F$ or $B$. We proceed using a proof by contradiction. Assume that $s<m_2(p)$ or $s>0$ and that all $T_a^\varphi$ are $A^s_{p,q}$-bounded. Take $0<\eps < \abs{s/4}$. Then according to the boundedness assumption one has
\eq{
\norm{T_a^\varphi f}_{H^{s-4\eps, p}(\Rn)} 
&\lesssim \norm{T_a^\varphi f}_{F^{s-3\eps}_{p,p}(\Rn)}
= \norm{T_a^\varphi f}_{A^{s-3\eps}_{p,p}(\Rn)} 
\lesssim \norm{T_a^\varphi f}_{A^{s-2\eps}_{p,q}(\Rn)} \\
& \lesssim \norm{f}_{A^{s-2\eps}_{p,q}(\Rn)} 
\lesssim \norm{ f}_{A^{s-\eps}_{p,p}(\Rn)} 
=\norm{f}_{F^{s-\eps}_{p,p}(\Rn)}
\lesssim \norm{ f}_{H^{s,p}(\Rn)}, 
}
using the embeddings in \eqref{equality of TL and BL} and \eqref{embedding of TL}. Now this is a contradiction since $s-4\eps<m_2(p)$ or $s-4\eps>0$ and the Schr\"odinger integral operator  $e^{i|x|^2}\bessel{m_2(p)}$ is not bounded from $H^{s,p}(\Rn)$ to $H^{s-4\eps,p}(\Rn)$ as was shown in the proof of Theorem \ref{Th:bonus} \emph{ii}).
\end{proof}

\subsection{A parameter--dependent composition formula}

The next result describes the action of a parameter-dependent pseudodifferential operator on a general oscillatory integral operator. Its significance is two-fold. On one hand, it provides a step towards a calculus for the oscillatory integral operators. On the  other, it enables one to prove regularity results for the operators on classical functions paces, in both Banach and quasi-Banach scales. The result also generalises the asymptotic expansion that was obtained in \cite{RRS1}.

 \begin{Th}\label{thm:left composition with pseudo}
Let $m,s\in \Rl$ and $\rho\in[0,1]$. Suppose that $ a(x, \xi)\in S_{\rho,0}^m (\Rl^n)$, $b(x,\xi)\in S^s_{1,0}(\Rl^n)$ and $\varphi$ is a phase function that is smooth on $\supp a$ and verifies the conditions
\begin{enumerate}
\item[$i)$]$|\xi| \lesssim |\nabla_x \varphi(x, \xi)| \lesssim |\xi|$ and
\item[$ii)$]for all $|\alpha|, |\beta| \geq 1$, $\abs{\partial_x^\alpha \varphi(x, \xi)}\lesssim  \la \xi \ra$ and $\abs{\partial_\xi^\alpha \partial _x^\beta \varphi (x, \xi)} \lesssim  1$, 
\end{enumerate}
for all $(x, \xi) \in \supp a$.
\noindent For  $0<t\leq 1$ consider the parameter-dependent pseudodifferential operator 
\begin{equation*}
b(x, tD)f(x) := \int_{\Rl^n} e^{ix\cdot \xi}\,b(x,t\xi)\,\widehat f(\xi) \ddd \xi,
\end{equation*}
and the oscillatory integral operator
$$T_{a}^\varphi f(x) := \int_{\Rl^n} e^{i\varphi(x,\xi)}\,a(x, \xi)\,\widehat f(\xi) \ddd\xi.$$
Let $\sigma_t$ be the amplitude of the composition operator 
$T_{\sigma_t}^\varphi
:=b(x, tD)T_{a}^\varphi$ 
given by
\begin{equation*}
\sigma_t(x, \xi) := \iint_{\Rl^n\times \Rl^n} a(y, \xi)\, b (x,t\eta)\,e^{i(x-y)\cdot \eta+i\varphi(y,\xi)-i\varphi(x,\xi)} \ddd\eta \dd y.
\end{equation*}
Then for any $M\geq 1$ and all $0<\eps<1/2$, one can write $\sigma_t$ as

\begin{equation}\label{asymptotic expansion}
\sigma_t(x, \xi) =b(x,t\nabla_{x}\phase(x,\xi))\,a(x,\xi) + \sum_{0<|\alpha| < M}\frac{t^{\eps|\alpha|}}{\alpha!}\, \sigma_{\alpha}(t,x,\xi)+t^{\eps M} r(t,x,\xi),
\end{equation}

where, for all multi-indices $\beta, \gamma$ one has
\begin{align*}
\abs{\partial^{\beta}_{\xi} \partial^{\gamma}_{x}\sigma_{\alpha}(t,x,\xi)} & \lesssim  t^{\min(s,0)}  \bra{\xi}^{s+m-(1/2-\varepsilon)|\alpha|-\rho\abs\beta},\quad \text{for}\,\, 0<|\alpha|<M,  \\\abs{\partial^{\beta}_{\xi} \partial^{\gamma}_{x} r(t,x,\xi)} &  \lesssim t^{\min(s,0)} \bra{\xi}^{s+m-\brkt{1/2- \varepsilon}M-\rho\abs\beta}.
\end{align*}
\end{Th}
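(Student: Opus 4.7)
\emph{Canonical form and decomposition.} The plan is to first reduce $\sigma_t$ to a standard oscillatory-integral form. Setting $y = x+z$ and applying the fundamental theorem of calculus yields
\begin{equation*}
\varphi(x+z,\xi) - \varphi(x,\xi) = z\cdot\tilde\Phi(x,z,\xi), \qquad \tilde\Phi(x,z,\xi) := \int_0^1 \nabla_x\varphi(x+sz,\xi)\,\dd s,
\end{equation*}
after which the translation $\eta \mapsto \theta + \tilde\Phi(x,z,\xi)$ recasts $\sigma_t$ as
\begin{equation*}
\sigma_t(x,\xi) = \iint a(x+z,\xi)\, b\bigl(x,\,t\theta + t\tilde\Phi(x,z,\xi)\bigr)\, e^{-iz\cdot\theta}\,\ddd\theta\, \dd z.
\end{equation*}
Writing $\tilde\Phi(x,z,\xi) = \nabla_x\varphi(x,\xi) + H(x,z,\xi)$, the function $H$ vanishes to first order at $z=0$; hypothesis (ii) then gives $|\partial_z^\alpha H|\lesssim \langle\xi\rangle$ for $|\alpha|\geq 1$ together with the crucial improvement $|\partial_\xi^\beta\partial_z^\alpha H|\lesssim 1$ whenever $|\alpha|\geq 1$ and $|\beta|\geq 1$.

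\emph{Taylor expansion and integration by parts.} Next, I would Taylor expand $b\bigl(x,\,t\nabla_x\varphi(x,\xi) + t(\theta + H(x,z,\xi))\bigr)$ in its second argument about the base point $t\nabla_x\varphi(x,\xi)$ up to order $M$. The zeroth-order term already yields $a(x,\xi)\,b(x,t\nabla_x\varphi(x,\xi))$ after the trivial $(z,\theta)$-integration, reproducing the principal term in the expansion. For each higher-order term, expanding $(\theta + H)^\gamma$ multinomially into $\theta^{\gamma_1} H^{\gamma_2}$ with $\gamma_1+\gamma_2=\gamma$ and using $\int \theta^{\gamma_1} e^{-iz\cdot\theta}\,\ddd\theta = (i\partial_z)^{\gamma_1}\delta(z)$, one arrives by integration by parts in $z$ at terms of the form
\begin{equation*}
c_{\gamma_1,\gamma_2}\, t^{|\gamma|}\, (\partial_\xi^\gamma b)(x,t\nabla_x\varphi(x,\xi))\,\partial_z^{\gamma_1}\bigl[a(x+z,\xi)\,H^{\gamma_2}(x,z,\xi)\bigr]\Big|_{z=0}.
\end{equation*}
Since $H^{\gamma_2}$ vanishes to order $|\gamma_2|$ at $z=0$, only configurations with $|\gamma_1|\geq |\gamma_2|$ --- equivalently $|\gamma_1|\geq |\gamma|/2$ --- survive, and these are collected to define $\sigma_\alpha$ with $\alpha = \gamma$.

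\emph{Symbol estimates, remainder, and main obstacle.} The symbol bound on $\sigma_\alpha$ then follows from $|(\partial_\xi^\gamma b)(x,t\nabla_x\varphi)|\lesssim \langle t\xi\rangle^{s-|\gamma|}$ (using $|\nabla_x\varphi|\sim|\xi|$) combined with the $\langle\xi\rangle^{|\gamma_2|}$-size of the $z$-derivatives of $H^{\gamma_2}$ at $z=0$ and $|\partial_x^\delta\partial_\xi^\beta a|\lesssim\langle\xi\rangle^{m-\rho|\beta|}$. Treating the regimes $t|\xi|\gtrsim 1$ and $t|\xi|\lesssim 1$ separately leads in each case to a bound of the form $t^{\min(s,0)}\langle\xi\rangle^{s+m-|\gamma_1|-\rho|\beta|}$ per $\gamma$-term; factoring out the displayed $t^{\varepsilon|\alpha|}$ and invoking $|\gamma_1|\geq|\gamma|/2$ together with the slack $(t\langle\xi\rangle)^{\varepsilon|\alpha|}\geq 1$ in the high-frequency regime (and the parallel manipulation $\langle\xi\rangle\lesssim 1/t$ in the low-frequency regime) delivers the claimed estimate on $\sigma_\alpha$. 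For the Taylor remainder $r(t,x,\xi)$ the plan is to write it in integral form and insert the regulariser $(1+|z|^2)^{-N}(1-\Delta_\theta)^N$ through integration by parts, exploiting the oscillation $e^{-iz\cdot\theta}$ to secure absolute convergence of the $(z,\theta)$-integral. The main technical obstacle will be establishing the remainder bound uniformly in $t\in(0,1]$ across the transition $t|\xi|\sim 1$, verifying that the factor $t^{\min(s,0)}$ (and not a worse blow-up as $t\to 0^+$) emerges after all derivatives of $a$, $b$, $\varphi$ and $H$ are accounted for, especially in the degenerate case $\rho=0$ where no baseline decay comes from $a$ itself.
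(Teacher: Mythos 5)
Your reparametrisation is a legitimate alternative to the paper's, and the analysis of the main terms $\sigma_\alpha$ is essentially equivalent. The paper shifts $\eta\mapsto\nabla_x\varphi(x,\xi)+\zeta$ (a shift independent of $y$) and retains the nonlinear phase residue $e^{i\Phi(x,y,\xi)}$, then applies Fa\`a di Bruno to $\partial_y^\alpha e^{i\Phi}\big|_{y=x}$; since $\Phi$ and $\nabla_y\Phi$ vanish at $y=x$, every surviving factor has $|\gamma_j|\geq 2$, yielding $\lesssim\langle\xi\rangle^{|\alpha|/2}$. You absorb the whole nonlinearity into the argument of $b$ through the $z$-dependent shift $\tilde\Phi$, linearising the oscillation to $e^{-iz\cdot\theta}$, and your constraint $|\gamma_1|\geq|\gamma_2|$ in the multinomial expansion of $(\theta+H)^\gamma$ is the exact counterpart, again producing $\langle\xi\rangle^{|\gamma_2|}\leq\langle\xi\rangle^{|\alpha|/2}$; one can check the resulting formulas for $\sigma_\alpha$ agree after unwinding signs. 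One omission you must repair: $\tilde\Phi(x,z,\xi)=\int_0^1\nabla_x\varphi(x+sz,\xi)\,\dd s$ and the identity $\varphi(x+z,\xi)-\varphi(x,\xi)=z\cdot\tilde\Phi$ require $\varphi$ to be smooth along the whole segment from $x$ to $x+z$, which smoothness on $\supp a$ alone does not grant. The paper inserts a near-diagonal cutoff $\chi(x-y)$ at the very start and disposes of the off-diagonal contribution separately as a negligible error; you need the same cutoff, and without it the $z$-integral is not under control either.

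The substantive gap is in the remainder. Inserting $(1+|z|^2)^{-N}(1-\Delta_\theta)^N$ secures absolute convergence, but it does not by itself produce the decisive gain $\langle\xi\rangle^{-(1/2-\varepsilon)M}$, which is the entire point of the expansion. After IBP your integrand contains $(\theta+H)^\gamma$ with $|\gamma|=M$ multiplied by $\theta$-derivatives of the integral remainder $\sim\langle t\nabla_x\varphi+\tau t(\theta+H)\rangle^{s-M-|\beta|}$, and the issue is not just convergence but harvesting the $\langle\xi\rangle$-decay in $M$ alongside the $t^{\min(s,0)}$ factor. The paper achieves this by introducing a $\xi$-dependent frequency cutoff $g(\zeta/\langle\xi\rangle)$ that splits the remainder at scale $|\zeta|\sim\langle\xi\rangle$: on the small-$\zeta$ piece one first establishes the comparability $\langle t\nabla_x\varphi+t\tau\zeta\rangle\sim\langle t\xi\rangle$, so the $b$-remainder contributes $\langle t\xi\rangle^{s-M}$, and the volume factor $\langle\xi\rangle^{n}$ of this region is cancelled by a $\langle\xi\rangle^{-n}$ coming from the $\langle\xi\rangle$-adapted IBP weight $L_\zeta=(1-\langle\xi\rangle^2\Delta_\zeta)/(1+\langle\xi\rangle^2|x-y|^2)$; the large-$\zeta$ piece is killed by nonstationary phase in $y$. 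Your $\xi$-independent regulariser forfeits exactly these powers of $\langle\xi\rangle$, so the remainder bound would come out $\langle\xi\rangle$-uniform in $M$ instead of gaining $(1/2-\varepsilon)M$. You correctly flagged the obstacle, but its resolution is a frequency-localised, $\langle\xi\rangle$-weighted argument, not merely harder convergence estimates.
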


\textbf{Outline of the proof}.
\begin{itemize}
\item[] See Section \ref{section:left comp of OIO with pseudo}. \qed
\end{itemize}
\begin{Rem}
We shall frequently use the previous theorem when $t$ is replaced by $2^{-j}$ and 
$b(x, tD)= {\psi}(2^{-j} D)$ with $\psi$ as in \emph{Definition \ref{def:LP}}. This yields the following formula for the composition $\psi(2^{-j} D)T^\varphi_a$.
For any integer $M\geq 1$ and $0<\eps<1/2$ one can write
\begin{equation}\label{eq:highfreq1}
\psi\left (2^{-j}D \right ) T_a ^\varphi 
= \sum_{{|\alpha|< M}} \frac{2^{-j\eps|\alpha|}}{\alpha!}T^{\varphi}_{\sigma_{\alpha,j}} + 2^{-j\eps M}T^\varphi_{r_j},
\end{equation}
 with $\sigma_{0,j}(x,\xi):=\psi\left (2^{-j}\nabla_{x}\phase(x,\xi) \right )a(x,\xi)$ and
\begin{equation}\label{eq:estimate1}
\left |\partial^{\beta}_{\xi} \partial^{\gamma}_{x} \sigma_{\alpha,j}(x,\xi) \right |
\lesssim \bra{\xi}^{m-(1/2-\eps)|\alpha|-\rho\abs\beta}, \qquad |\alpha|\geq 0
\end{equation}
\begin{equation*}\label{eq:estimate2}
\supp_\xi \sigma_{\alpha,j}(x,\xi) 
= \left \{ \xi\in\Rl^n: \ C_1 2^j\leq |\xi| \leq C_2 2^j \right \},
\end{equation*}
\begin{equation}\label{eq:estimate3}
\abs{\partial^{\beta}_{\xi} \partial^{\gamma}_{x}  r_j (x,\xi)} 
\lesssim  \,\bra{\xi}^{m-(1/2-\eps)M-\rho\abs\beta}.
\end{equation}

Moreover, if $a(x,\xi)$ is supported outside the origin in the $\xi$-variable, then $r_j(x,\xi)$ also vanishes in a neighbourhood of $\xi=0$. See the proof of \emph{Theorem \ref{thm:left composition with pseudo}} for the details.
\end{Rem}

\subsection{Global \texorpdfstring{$L^p-L^q$}\ \ estimates}
In this section we state and prove basic global $L^p-L^q$ estimates for the oscillatory integral and the Schr\"odinger integral operators. The $L^p-L^q$ estimates for the oscillatory integral operators are as follows:
\begin{Th}\label{thm:p-q oscillatory}
Let $m, s\in\Rl$ and $a(x,\xi) \in S^{m}_{0,0}(\Rl^n)$.  Assume that $k \geq 1$, $\varphi\in \textart F^k$ \emph{is SND}, satisfies the $L^2$-condition \eqref{eq:L2 condition} and for some $\mu>0$ and some $R>n$ verifies the estimate
\begin{equation*}
|\partial^{\alpha}_{\xi} (\varphi(x,\xi)-x\cdot\xi)|\leq c_{\alpha} |\xi|^{\mu-|\alpha|},
\qquad |\alpha| \geq 0, \,\quad\, 0<|\xi|\leq 2R.\end{equation*}
Then for
$1< p\leq q<\infty$,  $T_a^\varphi :L^p (\Rl^n)\to L^q(\Rl^n)$, provided that $m\leq m_k(q)-n(1/p-1/q)$.
In the case $0<k<1$, all the results above are true provided that 
$a(x,\xi) \in S^{m}_{1,0}(\Rl^n)$.
\end{Th}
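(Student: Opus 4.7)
The natural strategy is to reduce the $L^p\to L^q$ estimate to the already established $L^q\to L^q$ estimate via Sobolev embedding, by splitting off an appropriate power of the Bessel potential from the amplitude. To this end, set $\nu := n(1/p-1/q)\geq 0$ and write
\begin{equation*}
a(x,\xi) = \tilde a(x,\xi)\,\langle \xi\rangle^{-\nu}, \qquad \tilde a(x,\xi) := \langle\xi\rangle^{\nu}\,a(x,\xi).
\end{equation*}
Since $\langle\xi\rangle^{\nu}\in S^{\nu}_{1,0}(\Rn)\subset S^{\nu}_{0,0}(\Rn)$ and symbol orders are additive under multiplication, the hypothesis $m\leq m_k(q)-\nu$ yields $\tilde a\in S^{m_k(q)}_{0,0}(\Rn)$ (respectively $S^{m_k(q)}_{1,0}(\Rn)$ in the case $0<k<1$). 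The key observation is the factorisation
\begin{equation*}
T_a^\varphi f(x) = \int_{\Rn} e^{i\varphi(x,\xi)}\,\tilde a(x,\xi)\,\langle\xi\rangle^{-\nu}\,\widehat f(\xi)\ddd\xi = T_{\tilde a}^\varphi\bigl((1-\Delta)^{-\nu/2}f\bigr)(x),
\end{equation*}
which holds because $\widehat{(1-\Delta)^{-\nu/2}f}(\xi)=\langle\xi\rangle^{-\nu}\widehat f(\xi)$.

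Next, I would invoke Theorem \ref{thm:main1globalLp} applied to $\tilde a$ at the exponent $q\in(1,\infty)$: the phase $\varphi$ satisfies by assumption all three hypotheses of that theorem (being in $\textart F^k$, SND, the $L^2$-condition, and the low frequency estimate of order $\mu$), so
\begin{equation*}
\bigl\Vert T_{\tilde a}^\varphi g\bigr\Vert_{L^q(\Rn)} \lesssim \Vert g\Vert_{L^q(\Rn)}.
\end{equation*}
Combining this with the classical Bessel-potential Sobolev embedding $(1-\Delta)^{-\nu/2}:L^p(\Rn)\to L^q(\Rn)$, which holds precisely when $\nu = n(1/p-1/q)$ and $1<p\leq q<\infty$, gives
\begin{equation*}
\Vert T_a^\varphi f\Vert_{L^q(\Rn)} = \bigl\Vert T_{\tilde a}^\varphi\bigl((1-\Delta)^{-\nu/2}f\bigr)\bigr\Vert_{L^q(\Rn)}\lesssim \bigl\Vert (1-\Delta)^{-\nu/2}f\bigr\Vert_{L^q(\Rn)}\lesssim \Vert f\Vert_{L^p(\Rn)},
\end{equation*}
which is the desired estimate.

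There is no substantive obstacle here; the argument is a clean three-line reduction once Theorem \ref{thm:main1globalLp} is in hand. The only point meriting a moment's care is the case $p=q$ (in which $\nu=0$ and the statement collapses to Theorem \ref{thm:main1globalLp} itself) and the verification that the factor $\langle\xi\rangle^{-\nu}$ preserves the hypotheses on the amplitude class ($S^{\cdot}_{0,0}$ or $S^{\cdot}_{1,0}$ according to the value of $k$), which is immediate from the Leibniz rule.
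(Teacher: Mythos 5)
Your argument is correct and essentially identical to the paper's: both factor out a Bessel potential so that what remains is an oscillatory integral with amplitude of order $m_k(q)$, apply Theorem~\ref{thm:main1globalLp} at the exponent $q$, and finish with the Sobolev embedding $(1-\Delta)^{-\nu/2}:L^p\to L^q$. The only cosmetic difference is where the extra decay goes when $m<m_k(q)-n(1/p-1/q)$ strictly: you absorb it into the amplitude $\tilde a\in S^{m_k(q)}_{0,0}$, while the paper absorbs it into a stronger Sobolev embedding order $m_k(q)-m\geq n(1/p-1/q)$.
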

\begin{proof}
We write 
$$T_a^\varphi= T_a^\varphi (1-\Delta)^{(m_k(q)-m)/2}(1-\Delta)^{(m-m_k(q))/2}.$$
Then since $T_a^\varphi (1-\Delta)^{(m_k(q)-m)/2}$ is an oscillatory integral operator with an amplitude in the class  $S^{m_k(q)}_{0,0}(\Rl^n)$ for $k \geq 1$ and $S^{m_k(q)}_{1,0}(\Rl^n)$ when $0<k<1$, Theorem \ref{thm:main1globalLp} yields that $$\Vert T_a^\varphi (1-\Delta)^{(m_k(q)-m)/2} u \Vert_{L^q(\Rl^n)}\lesssim \Vert u \Vert_{L^q(\Rl^n)}.$$ 
Therefore applying the Sobolev embedding theorem and taking 
$u=(1-\Delta)^{(m-m_k(q))/2} f$, we obtain
$$\Vert T_a^\varphi f \Vert_{L^q(\Rl^n)}\lesssim \Vert f \Vert_{L^p(\Rl^n)},$$
provided that $1< p\leq q<\infty$ and 
$1/p-1/q \leq (m_k(q)-m)/n.$
\end{proof}
For the Schr\"odinger integral operators we have
\begin{Th}\label{Th:p-q schrodinger}
 Assume that $a(x,\xi) \in S^{m}_{0,0}(\Rl^n)$ and
$\varphi$ satisfies \eqref{Schrodinger phase} and is \emph{SND}. Then for
$1< p\leq q<\infty$,  $T_a^\varphi :L^p (\Rl^n)\to L^q(\Rl^n)$, provided that \linebreak 
$m\leq m_2(q)-n(1/p-1/q)$.
\end{Th}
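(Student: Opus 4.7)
The strategy closely parallels that of Theorem \ref{thm:p-q oscillatory}: factor $T_a^\varphi$ as a Schr\"odinger integral operator of the critical order $m_2(q)$ composed with a Bessel potential, then combine a global $L^q$-boundedness result at the critical order with the classical Sobolev embedding. The only subtlety relative to the oscillatory case is that here no hypothesis is placed on $\nabla_x\varphi(x,0)$, so we must invoke Theorem \ref{Th:bonus}(i) (rather than Theorem \ref{thm:main5glob}) as the endpoint $L^q$-estimate, since that result does not require any control on $\nabla_x\varphi(x,0)$.

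Concretely, I would first write
\[
T_a^\varphi \;=\; \bigl(T_a^\varphi\,(1-\Delta)^{(m_2(q)-m)/2}\bigr) \circ (1-\Delta)^{(m-m_2(q))/2}.
\]
Because $(1-\Delta)^{(m_2(q)-m)/2}$ is a Fourier multiplier, the operator in parentheses is again an oscillatory integral operator with the same phase $\varphi$ and new amplitude $\tilde a(x,\xi):=a(x,\xi)\,\langle\xi\rangle^{m_2(q)-m}$. By Leibniz's rule together with the standard symbolic estimates for the Bessel symbol, $\tilde a \in S^{m_2(q)}_{0,0}(\Rl^n)$, and since $\varphi$ satisfies \eqref{Schrodinger phase} and is SND, this composition is a Schr\"odinger integral operator in the sense of Definition \ref{def:SIO}. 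Theorem \ref{Th:bonus}(i), combined with the identification $h^q(\Rl^n)=L^q(\Rl^n)$ for $1<q<\infty$, then yields
\[
\bigl\|T_a^\varphi (1-\Delta)^{(m_2(q)-m)/2} u\bigr\|_{L^q(\Rl^n)} \;\lesssim\; \|u\|_{L^q(\Rl^n)}.
\]

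Setting $u := (1-\Delta)^{(m-m_2(q))/2} f$ and invoking the classical Sobolev embedding $(1-\Delta)^{s/2}:L^p(\Rl^n)\to L^q(\Rl^n)$, which holds whenever $1<p\leq q<\infty$ and $-s\geq n(1/p-1/q)$, applied with $s=m-m_2(q)$, the standing hypothesis $m\leq m_2(q)-n(1/p-1/q)$ is exactly what is needed. Chaining the two estimates gives $\|T_a^\varphi f\|_{L^q(\Rl^n)}\lesssim \|f\|_{L^p(\Rl^n)}$, completing the argument. The main — and rather mild — obstacle is the verification that the composition with the Bessel potential preserves the Schr\"odinger-integral class, but this is immediate since the phase is unaltered and the new amplitude is obtained by multiplying the original one by a purely $\xi$-dependent factor of the right order.
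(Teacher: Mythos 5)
Your proof is correct and follows essentially the same route as the paper: factor $T_a^\varphi$ through a Bessel potential, apply an endpoint $L^q$-boundedness result for the critical amplitude order $m_2(q)$, and finish with the Sobolev embedding. The one small difference is the choice of endpoint estimate — you invoke Theorem \ref{Th:bonus}(i) (or equivalently Theorem \ref{thm:schrodinger} together with $h^q=L^q$ for $q>1$), which carries no hypothesis on $\nabla_x\varphi(x,0)$, whereas the paper cites Theorem \ref{thm:main5glob}(iii) and then notes, via the reduction of Remark \ref{Rem:varphi0}, that the $|\nabla_x\varphi(x,0)|\in L^\infty$ assumption is superfluous for $L^p$-type conclusions; both choices yield the same bound.
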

\begin{proof}
The proof is similar to that of Theorem \ref{thm:p-q oscillatory}. The only difference  is that instead of using  Theorem \ref{thm:main1globalLp}, we use Theorem \ref{thm:main5glob}  part $\emph{iii}$), noting that due to the $L^p-L^q$ nature of our result, no requirement on the gradient of the phase function is needed.
\end{proof}
\subsection{Applications to harmonic analysis and PDEs}

In this subsection, we outline some of the applications of the main results of this paper. We start by giving a couple of basic examples to highlight how the results obtain here can provide boundedness results for operators whose regularity has (hitherto) remained elusive.\\

For any $t(x)\in \mathcal{C}^{\infty}_b(\Rl^n)$ the function 
$$\varphi_1(x,\xi)
:=x\cdot\xi+t(x) \langle \xi\rangle$$ 
 is in $\textart{F}^1$. This example in not covered by the theory of Fourier integral operators due to lack of homogeneity, and exhibits the simplest example of a phase function related to equations of Klein-Gordon type. Moreover if we also choose $t(x)$ such that $|\nabla t(x)|$ is small enough, then this phase function also satisfies the SND-condition. It is also easily checked that this phase verifies the $L^2$-condition and $\partial^\beta_x \varphi_1(x,0)=\partial^\beta t(x)\in L^\infty(\Rl^n).$\\
 
Now Theorem \ref{thm:main4glob} (the part for smooth phase functions) shows that if $m\in \Rl$ and $a(x,\xi)\in S^m_{0,0}(\Rl^n)$, then for the oscillatory integral operator $T^{\varphi_1}_a$, $m_1(p)=\linebreak-n |1/p-1/2|$ and $s\in \Rl,$ one has the following regularity results:
 
 \begin{enumerate}
\item[$i)$] For
$p\in (0,\infty]$, 
$q\in (0,\infty]$, $T_a^{\varphi_1} :B_{p,q}^{s+m-m_1(p)}(\Rl^n)\to B_{p,q}^{s}(\Rl^n)$ continuously.
\item[$ii)$] If 
$p\in (0,\infty )$, $q\in (0,\infty]$ and $\varepsilon>0$, then $T_a^{\varphi_1} :F_{p,q}^{s+m-m_1(p)+\varepsilon}(\Rl^n)\to F_{p,q}^{s}(\Rl^n)$ continuously.
\item[$iii)$] If 
$p\in (0,\infty )$, $\min\, (2,p)\leq q\leq \max\,( 2,p)$, then $T_a^{\varphi_1} :F_{p,q}^{s+m-m_1(p)}(\Rl^n)\to F_{p,q}^{s}(\Rl^n)$ continuously.
\item[$iv)$]$T_a^{\varphi_1} :F_{\infty,2}^{s+m+n/2}(\Rl^n)\to F_{\infty,2}^{s}(\Rl^n)$ continuously.\\
\end{enumerate}
 
Another example is that of
$$\varphi_2(x,\xi)
:=x\cdot\xi+t(x)| \xi|^k$$ 
with $0<k\leq 1$ which is in $\textart F^k.$ Once again, if we choose $t(x)$ such that $|\nabla t(x)|$ is small enough, then $\varphi_2$ is also satisfies the SND-condition. Furthermore we have that  
$$|\partial^{\alpha}_{\xi} \partial^{\beta}_{x}(x\cdot\xi+t(x) | \xi|^k -x\cdot\xi)|\leq c_{\alpha,\beta} |\xi|^{k-|\alpha|}, 
 \quad |\alpha +\beta|\geq 0,
 \quad |\xi|\leq 2,$$
 which yields that the LF($\mu$) condition is satisfied with $\mu=k$. Finally  $$|\partial^{\alpha}_{\xi} \partial^{\beta}_{x}(x\cdot\xi+t(x) | \xi|^k )|\leq c_{\alpha,\beta},
 \quad |\alpha|\geq 1,
 \quad |\beta|\geq 1,
 \quad |\xi|\geq 1,$$
 implies that the $L^2$--condition is also satisfied.
 Thus, once again Theorem \ref{thm:main4glob} shows that if $m\in \Rl$ and  $a(x,\xi)\in S^m_{0,0}(\Rl^n)$, then for the oscillatory integral operator $T^{\varphi_2}_a$, 
 $m_k(p)=-kn |1/p-1/2|$ and $s\in \Rl,$ one has similar regularity results in Besov-Lipschitz and Triebel-Lizorkin spaces, as above with the only difference that $m_1(p)$ is replaced by $m_k(p)$ and 
the range of validity of the results in $p$ has to be taken larger than $n/(n+k)$. \\







 

The applications to partial differential equations concern local and global Besov-Lipschitz and Triebel-Lizorkin estimates for solutions to dispersive partial differential equations.
\noindent First, let us consider the basic example of a dispersive equation in $\mathbb{R}^{n+1}$
\begin{equation}\label{dispers eq}
     \left\{ \begin{array}{lll}
        i \partial_t u (x,t)+\phi(D) u(x,t)=0, 
        & x\in\mathbb{R}^n, \, t\not=0,\\
         u(x,0)=f (x),
         & x\in\mathbb{R}^n, \\
      \end{array} \right.
  \end{equation}
where $\widehat{\phi(D) u}(\xi,t)= \phi(\xi)\, \widehat{u}(\xi,t).$ It is well-known that the solution to this Cauchy problem is given by
\begin{equation}\label{eq:morebetterer}
u(x,t)
= \int_{\mathbb{R}^n} e^{ix\cdot \xi + it\phi(\xi)} \,\widehat{f}(\xi)\, \ddd \xi.
\end{equation}

\begin{Th}\label{Th:dispersive equations}
Assume that $0<k<\infty$, $\phi(\xi)\in \mathcal{C}^{\infty}(\mathbb{R}^n \setminus \{0\})$ and 
\begin{equation}\label{eq:simplified phase}
\abs{\partial^\alpha\phi(\xi)} \leq c_\alpha \abs\xi^{k-\abs\alpha} \text{ for }\, \xi\neq 0 \,\text{and}\, \abs\alpha \geq 1,
\end{equation}
and $u(x,t)$ is the solution of the Cauchy problem \eqref{dispers eq} represented by the oscillatory integral above. Then for any $\tau>0$ and each $t\in[-\tau,\tau]$ and all $ p\in (n/(n+\min(1,k)) ,\infty ]$, 
$0<q\leq \infty$, $s\in \mathbb{R}$ and $ m_k (p)=-kn |1/p-1/2 |$, one has
\begin{equation}
\label{main global besov estimate for the wave equation}
\sup_{t\in [-\tau,\tau]}\Vert u\Vert_{B^{s}_{p,q}(\mathbb{R}^n)}\leq C_\tau  \Vert f\Vert_{B^{s-m_k (p)}_{p,q}(\mathbb{R}^n)}.
\end{equation}
Similarly, we have for any $s\in \mathbb{R}$,  
$ p\in (n/(n+\min(1,k)) ,\infty )$, $\min\, (2,p)\leq q\leq \max\,(2,p)$ that
 \begin{equation}\label{main global Triebel estimate for the wave equation}
\sup_{t\in [-\tau,\tau]}\Vert u\Vert_{F^{s}_{p,q}(\mathbb{R}^n)}\leq C_\tau \Vert f\Vert_{F^{s-m_k (p)}_{p,q}(\mathbb{R}^n)}.
\end{equation}
All the results are sharp when $k>1$.\\
Furthermore one also has for $1< p\leq q<\infty$ and $s\in \Rl$, the Sobolev space estimate
\begin{equation}\label{lp-lq sobolev}
\sup_{t\in [-\tau,\tau]}\Vert u\Vert_{H^{s-n(1/p-1/q),q}(\mathbb{R}^n)}\leq C_\tau \Vert f\Vert_{H^{s-m_k (q),p}(\mathbb{R}^n)}.
\end{equation}

\end{Th}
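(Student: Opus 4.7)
The plan is to recast the Fourier representation \eqref{eq:morebetterer} as a one-parameter family of oscillatory integral operators $u(\cdot,t) = T^{\varphi_t}_1 f$, where $\varphi_t(x,\xi) := x\cdot\xi + t\phi(\xi)$ and the amplitude $a(x,\xi)\equiv 1$ belongs to $S^0_{0,0}(\Rl^n)\cap S^0_{1,0}(\Rl^n)$, and then to invoke the global regularity results proved earlier, keeping all constants uniform in $t\in[-\tau,\tau]$.

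First I would verify the hypotheses of Theorem \ref{thm:main4glob} for the phase $\varphi_t$. Since $\partial^2_{x_j\xi_k}\varphi_t = \delta_{jk}$, the SND condition holds with $\delta=1$; the $L^2$-condition \eqref{eq:L2 condition} is immediate because $\partial_\xi^\alpha\partial_x^\beta\varphi_t$ vanishes whenever $|\alpha|\geq 1$ and $|\beta|\geq 2$, while $|\alpha|=|\beta|=1$ produces only a Kronecker delta from the $x\cdot\xi$ term; and the $\textart F^k$ bounds follow from \eqref{eq:simplified phase} after multiplying the constants by $|t|\leq\tau$. For the LF$(\mu)$-condition with $\mu:=\min(1,k)$, I would use that $\phi$ may be replaced by $\phi-c$ for a suitable constant $c$ (this replacement only multiplies the solution by the unimodular factor $e^{-itc}$ and hence does not affect any norm); radial integration of \eqref{eq:simplified phase} then yields $|\phi(\xi)|\lesssim |\xi|^k$ for $|\xi|\leq 2$, while the derivative bounds $|\partial_\xi^\alpha\phi(\xi)|\lesssim |\xi|^{k-|\alpha|}\leq |\xi|^{\mu-|\alpha|}$ for $|\xi|\leq 2$ and $|\alpha|\geq 1$ hold since $k\geq \mu$. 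With these verifications in hand, Theorem \ref{thm:main4glob} applied with $m=0$ produces \eqref{main global besov estimate for the wave equation} and \eqref{main global Triebel estimate for the wave equation} in the stated range $p\in(n/(n+\min(1,k)),\infty]$; for $0<k<1$ the amplitude $a\equiv 1\in S^0_{1,0}$ fits the hypotheses of that theorem in the non-smooth regime.

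For the Sobolev $L^p$--$L^q$ estimate \eqref{lp-lq sobolev}, I would factor the inequality through Bessel potentials and appeal to Theorem \ref{thm:p-q oscillatory}. Setting $g := \bessel{s-m_k(q)}f$ and $v := \bessel{s-n(1/p-1/q)}u$, one has $v = \bessel{s-n(1/p-1/q)}\,T^{\varphi_t}_1\,\bessel{-(s-m_k(q))}g$, and the desired inequality becomes $\|v\|_{L^q}\lesssim \|g\|_{L^p}$. Theorem \ref{thm:left composition with pseudo} represents this composition as an oscillatory integral with phase $\varphi_t$ and an amplitude of order $m_k(q)-n(1/p-1/q)$, and Theorem \ref{thm:p-q oscillatory} applied with exactly this $m$ closes the argument.

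Finally, for the sharpness assertion when $k>1$, the test case $\phi(\xi)=|\xi|^k$ reduces matters to the boundedness of the classical Fourier multiplier with symbol $e^{it|\xi|^k}\jap{\xi}^m$, for which Miyachi's theorem in \cite{Miyachi1} yields the necessity $m\leq m_k(p)$ together with the analogous sharpness statements on the Besov and Triebel-Lizorkin scales. The main technical nuisance I anticipate is tracking uniformity in $t$: every seminorm appearing in the conditions above depends linearly on $|t|$, so the constants produced by Theorem \ref{thm:main4glob} and Theorem \ref{thm:p-q oscillatory} must be followed through carefully to be absorbed into a single $C_\tau$ depending only on $\tau$ and on the seminorms of $\phi$ in \eqref{eq:simplified phase}.
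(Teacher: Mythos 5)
Your proposal is correct and follows essentially the same route as the paper: recognize that $u(\cdot,t)=T^{\varphi_t}_1 f$ with $\varphi_t(x,\xi)=x\cdot\xi+t\phi(\xi)$ and $a\equiv1\in S^0_{1,0}(\Rl^n)\subset S^0_{0,0}(\Rl^n)$, verify the SND, $\textart F^k$, $L^2$- and LF$(\mu)$-conditions (with $\mu=\min(1,k)$) uniformly for $t\in[-\tau,\tau]$, and then apply Theorem \ref{thm:main4glob} with $m=0$ for the Besov--Lipschitz/Triebel--Lizorkin estimates and Theorem \ref{thm:p-q oscillatory} for the Sobolev $L^p$--$L^q$ estimate, with sharpness deferred to Section \ref{sect:Sharp} (Miyachi's necessity). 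One point where you are more careful than the paper deserves mention: the LF$(\mu)$-condition \eqref{eq:LFmu} requires the $\alpha=\beta=0$ bound $|t\phi(\xi)|\lesssim|\xi|^\mu$ for $0<|\xi|\leq2$, which does not follow from \eqref{eq:simplified phase} alone (that hypothesis controls only derivatives $|\alpha|\geq1$). Your device of replacing $\phi$ by $\phi-c$ with $c=\lim_{\xi\to0}\phi(\xi)$ — a limit that exists for $n\geq 2$ by integrating $|\nabla\phi(\xi)|\lesssim|\xi|^{k-1}$, and which multiplies the solution by the unimodular factor $e^{-itc}$ — is exactly the right normalization; the paper's proof implicitly assumes this but does not spell it out.
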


\begin{proof}
Observe that, the phase function in the integral representation \eqref{eq:morebetterer} is $x\cdot\xi+t\phi(\xi)$. Now for any $\tau>0$ and each $t\in[-\tau,\tau]$ the estimate \eqref{eq:simplified phase} yields that this phase function is SND and in $\textart F^k$ for all $k>0$ and also satisfies the LF$(\mu)$-condition \eqref{eq:LFmu} with $\mu=\min(1,k)$. Moreover, the amplitude of the oscillatory integral \eqref{eq:morebetterer} is identically equal to 1, which is trivially in 
$S^{0}_{1,0}(\mathbb{R}^n) \subset S^{0}_{0,0}(\mathbb{R}^n).$
Using \eqref{eq:morebetterer} and Theorem \ref{thm:main4glob}, it follows that the solution equation \eqref{dispers eq} verifies \eqref{main global besov estimate for the wave equation}. The proof of \eqref{main global Triebel estimate for the wave equation} is similar, and hence omitted. For the proof of the sharpness, see Section \ref{sect:Sharp}. Finally \eqref{lp-lq sobolev} follows from Theorem \ref{thm:p-q oscillatory}. 
\end{proof}

\begin{Rem}
If the function $\phi$ in \emph{Theorem \ref{Th:dispersive equations}} is assumed to be positively homogeneous of degree $1,$ then the relevant order $m_1(p)$ in the theorem above could be improved to $-(n-1)\left|1/p-1/2\right|,$ see \cite[Section 10]{IRS}.
\end{Rem}

\noindent Concerning Schr\"odinger equations, let us consider the Cauchy problem for a variable-coefficient Schr\"odinger equation
\begin{equation}\label{hyp Cauchy prob}
\left\{
  \begin{array}{ll}
     i\partial_{t} \Psi(x,t)+ \mathscr{H}( x,D)\Psi(x,t)=0, & \quad x \in \mathbb{R}^n, \, t\neq 0,\\
   \Psi(x,0)=\Psi_0 (x), & \quad x \in \mathbb{R}^n,
  \end{array} 
  \right.
\end{equation}
where $\mathscr{H}(x,D)$ is the Hamiltonian of the quantum mechanical system. For example, one can have $\mathscr{H}( x,D)= -\Delta + V(x),$ which corresponds to the Hamiltonian function $\mathscr{H}( x, \xi)= |\xi|^2  + V(x)$.  Now, if in general $\mathscr{H}$ is real-valued and $|\partial ^{\alpha}_{\xi}\partial^{\beta}_x \mathscr{H}( x,\xi)| \lesssim 1$ for $|\alpha +\beta|\geq 2$ (for example the harmonic oscillator $-\Delta +|x|^2$ yields such a Hamiltonian),  then the Cauchy problem above can be solved locally in time and modulo smoothing operators by
\begin{equation}\label{FIO representation of the solution}
  \Psi(x,t)=  \int_{\mathbb{R}^n} e^{i\varphi(x,\xi,t)} \,a (x,\xi,t) \, \widehat{\Psi_0}(\xi) \ddd \xi,
\end{equation}
where for $t\in (-\tau,\tau)$, $\tau$ sufficiently small, one has that $|\partial^{\alpha}_{\xi} \partial^\beta_x\varphi(x,\xi,t)|\lesssim 1$ for $|\alpha +\beta|\geq 2$, $\varphi$ is SND and $a (x,\xi,t)\in S^{0}_{0,0}(\Rn)$, see \cite[ Proposition 4.1]{CoNiRo}. This yields the following:
\begin{Th}\label{local besov estimate for hyperbolic pde}
\noindent Let $\Psi(x,t)$ be the solution of the Schr\"odinger Cauchy problem \eqref{hyp Cauchy prob} with initial data $\Psi_0$, where the Hamiltonian $\mathscr{H}$ is real-valued and satisfies the estimate $|\partial ^{\alpha}_{\xi}\partial^{\beta}_x \mathscr{H}( x,\xi)| \lesssim 1$ for $|\alpha +\beta|\geq 2$. Then there exists $\tau>0$ such that for all $p, q\in (0, \infty]$ and $s\in \mathbb{R}$, $m_2(p)=-2n\left |1/p-1/2\right |$, we have the local Besov-Lipschitz space estimate
  \begin{equation*}
    \sup_{t\in [-\tau,\tau]}\Vert \Psi\Vert_{B^{s, \mathrm{loc}}_{p,q}(\mathbb{R}^n)}\leq C_{\tau} \Vert \Psi_0\Vert_{B_{p,q}^{s-m_2(p)}(\mathbb{R}^n)}.
  \end{equation*}
Here the superscript \emph{"loc"} means that we first multiply the function $($distribution$)$ with a smooth cut-off function and then take the norm.\\

Similarly, for any $s\in \mathbb{R}$,  $0<p< \infty$, $\min\, (2,p)\leq q\leq \max\,( 2,p)$, one has the local Triebel-Lizorkin estimate
  \begin{equation*} 
    \sup_{t\in [-\tau,\tau]}\Vert \Psi\Vert_{F^{s, \mathrm{loc}}_{p,q}(\mathbb{R}^n)}\leq C_{\tau} \Vert \Psi_0 \Vert_{F_{p,q}^{s-m_2(p)}(\mathbb{R}^n)},
  \end{equation*}
  which also holds when $p=\infty$ and $q=2$.
Moreover, if $ m<m_2(p)$ then for all $s\in\mathbb{R}$ and $p, q\in (0, \infty]$ one has
  \begin{equation*}
    \sup_{t\in [-\tau,\tau]}\Vert \Psi\Vert_{F^{s, \mathrm{loc}}_{p,q}(\mathbb{R}^n)}\leq C_{\tau} \Vert \Psi_0 \Vert_{F_{p,q}^{s-m}(\mathbb{R}^n)}.
  \end{equation*}
  Furthermore, we also have the following global $($in space$)$ sharp estimates
  \begin{equation*} 
 \left\{
\begin{array}{lll}
    \displaystyle \sup_{t\in [-\tau,\tau]}\Vert \Psi\Vert_{F^{s}_{p,2}(\mathbb{R}^n)}\leq C_\tau\Vert \Psi_0\Vert_{F^{s-m_2(p)}_{p,2}(\mathbb{R}^n)}, &  2\leq p< \infty,\, \, s\in [m_2(p), 0], \\
    \displaystyle \sup_{t\in [-\tau,\tau]}\Vert \Psi\Vert_{F^{0}_{p,2}(\mathbb{R}^n)}\leq C_\tau\Vert \Psi_0 \Vert_{F^{-m_{2}(p)}_{p,2}(\mathbb{R}^n)}, & 0<p<\infty,
\end{array}  \right.
\end{equation*}
\end{Th}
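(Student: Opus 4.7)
The plan is to identify the solution operator $\Psi_0 \mapsto \Psi(\cdot,t)$ as a Schr\"odinger integral operator whose phase and amplitude satisfy, uniformly in $t\in(-\tau,\tau)$, the hypotheses of both Theorem \ref{thm:main5} and Theorem \ref{Th:bonus}, and to then derive the stated Besov-Lipschitz and Triebel-Lizorkin estimates directly from those two results, using the embeddings \eqref{equality of TL and BL}--\eqref{embedding of TL} to fill in the ranges of $(p,q)$ that are not covered in a single application.

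\textbf{Step 1 (Reduction to a Schr\"odinger integral operator).} Using the parametrix \eqref{FIO representation of the solution}, one has $\Psi(x,t)=T^{\varphi(\cdot,\cdot,t)}_{a(\cdot,\cdot,t)}\Psi_0(x)$, where by the cited result of \cite{CoNiRo} the phase $\varphi(\cdot,\cdot,t)$ verifies \eqref{Schrodinger phase} and is SND, while $a(\cdot,\cdot,t)\in S^0_{0,0}(\Rn)$, both uniformly for $t$ in the small time interval $(-\tau,\tau)$. This puts us squarely in the setting of the Schr\"odinger integral operators of Definition \ref{def:SIO}, with amplitude of order $m=0$.

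\textbf{Step 2 (Local estimates).} To treat the ``loc'' versions, I would fix a cut-off $\chi\in\mathcal{C}_c^\infty(\Rn)$ and observe that $\chi\Psi=T^{\varphi}_{\chi a}\Psi_0$, which again fits \eqref{eq:OIO} with the spatially compactly supported amplitude $\chi(x)\,a(x,\xi,t)\in S^0_{0,0}(\Rn)$. The first Besov estimate then follows from Theorem \ref{thm:main5} i) with $m=0$, and the Triebel-Lizorkin estimate for $\min(2,p)\leq q\leq \max(2,p)$ (and $p=\infty,q=2$) follows from parts iii)--iv) of the same theorem. For the range $m<m_2(p)$, one chooses $\varepsilon=(m_2(p)-m)/2>0$ so that $s-m=s-m_2(p)+2\varepsilon>s-m_2(p)+\varepsilon$; then the embedding \eqref{embedding of TL} gives $F^{s-m}_{p,q}\hookrightarrow F^{s-m_2(p)+\varepsilon}_{p,q}$ for $0<p<\infty$, and composing with Theorem \ref{thm:main5} ii) produces the desired bound. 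The residual case $p=\infty$ with $q\neq2$ is not directly contained in Theorem \ref{thm:main5} ii) and must be handled by chaining through the Besov scale: use $F^{s-m}_{\infty,q}\hookrightarrow B^{s-m}_{\infty,\infty}$, apply Theorem \ref{thm:main5} i) to gain a factor $m_2(\infty)$ of smoothness (still with loss strictly less than the gap $m_2(p)-m$), and re-embed into $F^s_{\infty,q}$ via \eqref{embedding of TL} together with the $B\hookrightarrow F$ inclusion used in standard form.

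\textbf{Step 3 (Global estimates).} For the global Triebel-Lizorkin estimates I would drop the cut-off $\chi$ and appeal to Theorem \ref{Th:bonus}. In view of the function-space identifications \eqref{table of tl spaces}, the first claim is precisely Theorem \ref{Th:bonus} ii) (applied to the Schr\"odinger integral operator $T^\varphi_a$ with $m=0$, for $2\leq p<\infty$ and $s\in[m_2(p),0]$), including sharpness in $s$. The second global estimate $\|\Psi\|_{F^0_{p,2}(\Rn)}\leq C_\tau\,\|\Psi_0\|_{F^{-m_2(p)}_{p,2}(\Rn)}$ for all $0<p<\infty$ is obtained by writing
\[
T^\varphi_a\;=\;\bigl(T^\varphi_a\,\bessel{m_2(p)}\bigr)\circ\bessel{-m_2(p)},
\]
where $T^\varphi_a\,\bessel{m_2(p)}$ is a Schr\"odinger integral operator with amplitude in $S^{m_2(p)}_{0,0}(\Rn)$ (so it is bounded on $h^p(\Rn)=F^0_{p,2}(\Rn)$ by Theorem \ref{Th:bonus} i)), while $\bessel{-m_2(p)}$ is an isomorphism from $F^{-m_2(p)}_{p,2}(\Rn)$ onto $F^0_{p,2}(\Rn)$ by the lifting property recorded after \eqref{embedding of TL}.

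\textbf{Main obstacle.} The bulk of the theorem is a clean application of Theorems \ref{thm:main5} and \ref{Th:bonus}; the only genuinely delicate point is the sub-case of the local statement with $m<m_2(p)$ when $p=\infty$ and $q\neq2$, which does not fit the direct hypothesis of Theorem \ref{thm:main5} ii) or iv). The fix above exploits the slack in the strict inequality $m<m_2(p)$ to pass through the Besov scale at the price of an arbitrarily small regularity loss, and this passage is what needs to be executed carefully so that the Besov indices chosen in the intermediate step actually embed into the target Triebel-Lizorkin space $F^s_{\infty,q}(\Rn)$.
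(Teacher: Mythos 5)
Your proof is correct and follows the paper's approach: the paper's entire argument for this theorem is a two-sentence citation of the parametrix \eqref{FIO representation of the solution} together with Theorem \ref{thm:main5} for the local estimates and Theorem \ref{Th:bonus} parts $ii)$ and $i)$ for the two global ones, and your Steps 1--3 merely flesh this out, including the correct $(1-\Delta)^{m_2(p)/2}$-lifting trick in Step 3 that is indeed needed to extract the second global bound from Theorem \ref{Th:bonus} $i)$. The $p=\infty$, $q\neq 2$ wrinkle you flag in the $m<m_2(p)$ clause is an imprecision in the paper's statement rather than a gap you must fill: Definition \ref{def:Triebel} only defines $F^s_{p,q}$ for $p<\infty$, none of Theorem \ref{thm:main5} $ii)$--$iv)$ cover $p=\infty$ with $q\neq 2$, and the Besov detour you sketch would require a re-embedding $B^{s'}_{\infty,\infty}\hookrightarrow F^s_{\infty,q}$ that is not supplied by \eqref{equality of TL and BL}--\eqref{embedding of TL}; the intended range is evidently $p<\infty$ together with the tabulated endpoint $(p,q)=(\infty,2)$, which follows from Theorem \ref{thm:main5} $iv)$ and \eqref{embedding of TL}.
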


\begin{proof}
The local results all follow from the oscillatory integral representation \eqref{FIO representation of the solution} and Theorem \ref{thm:main5}. The global estimates are all consequences of Theorem \ref{Th:bonus}
parts $ii)$ and $i)$ respectively.
\end{proof}

\section{Estimates for phases and kernels} 
\label{Sect:kernel estimates}

In this section, we prove some basic kernel estimates for oscillatory integral operators. \\

The following lemma will enable us to use a composition formula and an asymptotic expansion for the action of a pseudodifferential operator on an oscillatory integral operator. It is also helpful in the proof of Proposition \ref{Tj:Tad global} below. Once this is done, we shall then prove Theorem \ref{thm:left composition with pseudo} using only
\eqref{eq:L2 condition}, 
\eqref{asymptotic condition2}, and
\eqref{asymptotic condition3}.

\begin{Lem}\label{Lem:saviouroftoday}
Assume that $a(x,\xi)$ is an amplitude and let  $\varphi$ be a \emph{SND} phase function satisfying
\begin{equation}\label{asymptotic condition1}
\big |\partial_\xi \partial^{\beta}_x \varphi (x,\xi)\big |
\leq c_{\beta} , \quad\, |\beta|\geq 1 \text{ and } \abs\xi\geq 1.
\end{equation}

Then for all $|\beta|\geq 1$, the following estimates
\begin{align}
 |\xi| \lesssim |\nabla_{x} \phase(x,\xi)| & \lesssim  |\xi|,\label{asymptotic condition2} \\
|\partial^{\beta}_{x} \phase(x,\xi)| & \lesssim \langle \xi\rangle, \label{asymptotic condition3}
\end{align}
hold true for the phase function $\varphi$, on the support of $a(x,\xi)$, provided that either
\begin{enumerate}
\item[$i)$]  the $\xi$-support of $a(x,\xi)$ lies outside the ball $B(0,R)$ for some large enough $R\gg 1$ and $\partial^\beta_x \varphi(x,\xi) \in L^\infty(\mathbb{R}^n
\times\mathbb S^{n-1})$, for $|\beta| \geq 1$
\item[] or
\item[$ii)$] the amplitude $a(x,\xi)$ has compact $x$-support and has its $\xi$-support outside the ball $B(0,R)$ for some large enough $R\gg 1$
\item[] or 
\item[$iii)$] $\varphi(x,\xi)\in \mathcal{C}^{\infty}(\Rl^n \times \Rl^n)$, $\varphi(x,0)=0,$ and $\big |\partial_\xi \partial^{\beta}_x \varphi (x,\xi)\big |\leq c_{\beta}$, $|\beta|\geq 1,$ for $(x,\xi)\in \Rl^n \times \Rl^n$.
\end{enumerate}
\end{Lem}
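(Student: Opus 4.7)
The plan is to first establish the upper bound \eqref{asymptotic condition3}, from which the upper half of \eqref{asymptotic condition2} is immediate on the support of $a$, and then to derive the lower half of \eqref{asymptotic condition2} by a global inverse function theorem argument applied to $F(\xi) := \nabla_x \varphi(x,\xi)$ with $x$ fixed.

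For the upper bound, for $|\xi|\geq 1$ and $|\beta|\geq 1$ I would write $\xi = |\xi|\omega$ with $\omega\in\mathbb S^{n-1}$ and integrate radially,
\begin{equation*}
\partial^\beta_x \varphi(x,\xi) = \partial^\beta_x \varphi(x,\omega) + \int_1^{|\xi|} \omega\cdot \nabla_\xi\, \partial^\beta_x \varphi(x, r\omega)\, dr.
\end{equation*}
The integrand is bounded by $c_\beta$ by \eqref{asymptotic condition1}, so the integral is controlled by $c_\beta(|\xi|-1)$. The boundary term has to be treated case by case: in (i) it is directly bounded by the $L^\infty(\mathbb R^n\times\mathbb S^{n-1})$-hypothesis; in (ii) by continuity of $\partial^\beta_x\varphi$ on the compact product $K\times\mathbb S^{n-1}$, with $K$ the compact $x$-support of $a$; in (iii) one integrates instead from $\xi=0$ and uses that $\varphi(x,0)=0$ forces $\partial^\beta_x\varphi(x,0)=0$, which actually gives the sharper $|\partial^\beta_x\varphi(x,\xi)|\lesssim |\xi|$. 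Combining these, $|\partial^\beta_x\varphi(x,\xi)|\lesssim \langle\xi\rangle$ in all three cases, and specializing to $|\beta|=1$ on the support of $a$ (where $|\xi|\gtrsim 1$) gives $|\nabla_x\varphi(x,\xi)|\lesssim |\xi|$.

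For the lower bound, the key observation is that $DF(\xi) = \partial^2_{\xi x}\varphi(x,\xi)$ has entries bounded by a constant (from \eqref{asymptotic condition1} with $|\beta|=1$) and $|\det DF|\geq \delta$ by SND, so by Cramer's rule $\|(DF)^{-1}\| \lesssim 1$ uniformly on its domain of definition. In case (iii), $F$ is a $\mathcal{C}^1$ map from all of $\mathbb R^n$ to $\mathbb R^n$ with a uniformly bounded inverse Jacobian, so Hadamard's global inverse function theorem implies that $F$ is a $\mathcal{C}^1$-diffeomorphism of $\mathbb R^n$ whose inverse is Lipschitz with some constant $C_0$. Since $\varphi(x,0)=0$ forces $F(0)=0$, we conclude
\begin{equation*}
|\xi| = |F^{-1}(F(\xi))-F^{-1}(0)|\leq C_0\,|\nabla_x\varphi(x,\xi)|,
\end{equation*}
which is the desired lower bound.

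Cases (i) and (ii) will be reduced to case (iii) by constructing a smooth extension $\tilde\varphi\in\mathcal{C}^\infty(\mathbb R^n\times\mathbb R^n)$ with $\tilde\varphi(x,0)=0$, agreeing with $\varphi$ for $|\xi|\geq R_0$ (some $1\leq R_0 < R$), and retaining both SND and the mixed-derivative bound globally; this will be achieved by radially interpolating between $\varphi$ and the canonical phase $x\cdot\xi$ on an annular shell. The delicate point is that the interpolation must preserve the SND condition, and it is here that the $L^\infty$-control of $\partial^\beta_x\varphi$ on $\mathbb R^n\times\mathbb S^{n-1}$ in (i), respectively the compactness of the $x$-support in (ii), is used to keep the extra terms produced by the cutoff derivative small compared to the uniform lower bound $\delta$. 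Once $\tilde\varphi$ is in hand, applying case (iii) to it and then restricting to $|\xi|\geq R$ yields \eqref{asymptotic condition2} on the support of $a$. This SND-preserving extension in (i) and (ii) will be the main obstacle; the rest is a routine combination of the fundamental theorem of calculus and Hadamard's theorem.
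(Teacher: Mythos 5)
Your treatment of the upper bounds \eqref{asymptotic condition3} and the right half of \eqref{asymptotic condition2} is essentially the paper's argument (the paper uses the mean value theorem along the segment from $\xi_0=\xi/|\xi|$ to $\xi$, you use the fundamental theorem of calculus along the same radial path; these are the same estimate). Your case $iii)$ lower bound via Hadamard's theorem and $F(0)=0$ also matches the paper's treatment.

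The gap is in the lower bound for cases $i)$ and $ii)$. You propose to reduce them to case $iii)$ by building a smooth extension $\tilde\varphi$ that agrees with $\varphi$ for $|\xi|\geq R_0$, vanishes at $\xi=0$, keeps the mixed-derivative bound, and \emph{remains SND globally}, obtained by ``radially interpolating'' between $\varphi$ and $x\cdot\xi$ on an annular shell. You acknowledge this is the main obstacle, and in fact as described it does not work: SND is a determinant lower bound and is not preserved under convex interpolation of the mixed Hessian. For instance, in dimension two the convex combination of $\mathrm{diag}(-1,1)$ (SND) and the identity (SND) passes through $\mathrm{diag}(0,1)$, which is degenerate. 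On top of that, the cutoff derivative produces an extra term of the form $(\partial_\xi\chi)\otimes(\nabla_x\varphi(x,\xi)-x)$ in the mixed Hessian, and by the very upper bound you just proved $|\nabla_x\varphi(x,\xi)-x|$ can be of size $|\xi|\sim R_0$ on the transition shell; there is no reason this perturbation is small compared to the SND constant $\delta$. So the proposed reduction is not a ``routine'' deferral of a technical point but a step that, as stated, fails.

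The paper avoids constructing any extension. It applies the global-inverse / reverse-Lipschitz estimate (cited as {\cite[Proposition~1.11]{DS}}) directly to $\xi\mapsto\nabla_x\varphi(x,\xi)$ on the region $|\xi|\geq 1$, where $\varphi$ is smooth and SND, and compares $\xi$ with $\xi_0=\xi/|\xi|$. This yields
\begin{equation*}
|\xi|-|\xi_0| \leq |\xi-\xi_0| \lesssim |\nabla_x\varphi(x,\xi)-\nabla_x\varphi(x,\xi_0)| \leq |\nabla_x\varphi(x,\xi)| + |\nabla_x\varphi(x,\xi_0)|,
\end{equation*}
and then the hypotheses of $i)$ or $ii)$ bound $|\nabla_x\varphi(x,\xi_0)|$ uniformly, so taking $R$ large enough absorbs that constant and gives $|\xi|\lesssim|\nabla_x\varphi(x,\xi)|$ on $\supp a$. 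To repair your proof, replace the extension-and-Hadamard step in $i)$--$ii)$ by this direct comparison: you do not need $F$ to be a global diffeomorphism of $\Rn$; you only need the bi-Lipschitz lower bound $|\xi-\xi_0|\lesssim|F(\xi)-F(\xi_0)|$ on the star-shaped region $|\xi|\geq 1$, which follows from SND together with the bound on the mixed Hessian there, and then the triangle inequality and largeness of $R$.
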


\begin{proof}

We would like to compare $\partial_x^\beta\varphi(x,\xi) $ with some $\partial_x\beta\varphi(x,\xi_0) $ for $\abs\beta\geq 1$. In $i)$ and $ii)$ we choose $\xi_0 = \xi/\abs\xi$. Note that the line segment $\xi_0+t ( \xi-\xi_0)$, with $t\in (0,1)$ and $\abs\xi \geq R$, does not intersect $B(0,1)$ so we can use \eqref{asymptotic condition1} without problem.  In $iii)$ we choose $\xi_0=0$. Therefore on the support of $a(x,\xi)$, using \eqref{asymptotic condition1} and the mean-value theorem yield for $|\beta|\geq 1$ that
\eq{\abs{\partial_x^\beta \varphi(x,\xi)}&\leq \abs{\partial_x^\beta \varphi(x,\xi)-\partial_x^\beta \varphi(x,\xi_0)}+\abs{\partial_x^\beta \varphi(x,\xi_0)}\lesssim \abs{\xi-\xi_0}+\abs{\partial_x^\beta \varphi(x,\xi_0)} \\&\lesssim \abs\xi+\abs{\partial_x^\beta \varphi(x,\xi_0)}.}
Thus for both cases $i)$ and $ii)$ one has that $\abs{\partial_x^\beta \varphi(x,\xi_0)}\lesssim 1 \lesssim \abs\xi$, uniformly in $x$ on the support of $a(x,\xi)$, and the same is also true in case $iii)$ due to the vanishing of the derivatives. This proves \eqref{asymptotic condition3} and the second inequality of \eqref{asymptotic condition2}.\\

To prove the first inequality of \eqref{asymptotic condition2}, Schwartz's global inverse function theorem can be used just as in the proof of  in \cite[Proposition 1.11]{DS} to obtain
\begin{equation}\label{eq:lastest}
|\xi|-\abs{\xi_0} 
\leq \abs{\xi-\xi_0} 
\lesssim |\nabla_x \phase(x,\xi) - \nabla_x \phase(x,\xi_0)| 
\leq |\nabla_x \phase(x,\xi)|+\abs{\nabla_x \phase(x,\xi_0)}.
\end{equation}
Therefore, to prove the desired lower bound for $|\nabla_x\varphi(x,\xi)|$ in case $i)$  and $ii)$, let $\xi_0$ be defined above and insert it to  \eqref{eq:lastest}. Then for a certain constant $A=A( n, \delta, c_1)>0$ (where $n$ is the dimension, $\delta$ is the lower bound in the SND-condition and $c_1$ is the upper bound on the norm of the mixed Hessian of $\varphi(x,\xi)$  when $|\xi|\geq 1$) \eqref{eq:lastest} yields that
\eq{
\abs\xi \leq A \Big( |\nabla_x \phase(x,\xi)|+\abs{\nabla_x \phase(x,\xi_0)}  \Big) +1.
}
However since $|\xi|>R$, on the support of $a(x,\xi)$,  and $R$ can be chosen large enough, by taking 
$$R\geq 2A \Big(\max_{x\in\supp a(x,\xi_0)}\abs{(\nabla_x \phase(x,\xi_0)} \Big)+2,$$ 
we obtain
\eq{
\abs\xi \lesssim |\nabla_x \phase(x,\xi)|.}
In case $iii)$ the same inequality is once gain valid since we take $\xi_0=0$ and \linebreak$\nabla_x \phase(x,0)=0$ in \eqref{eq:lastest}.
\end{proof}

Next we turn to kernel estimates of the operators in various settings. A  simple case is when the amplitude is spatially localised.
\begin{Lem}\label{Lem:pointwiseKjsimplified}
Let $m\in \Rl$, $\varphi(x,\xi)$ be a real-valued function and $a(x,\xi) \in S^{m}_{0,0}(\Rl^n)$ has compact support in the spatial variable $x$. Define
\eq{
K_j(x,y) := \int_{\Rl^n} a_j(x,\xi)\,e^{i\varphi(x,\xi)} 
\,e^{-iy\cdot\xi}\ddd\xi,
}
where $a_j(x,\xi):= \psi_j(\xi)\,a(x,\xi)$ is a Littlewood-Paley piece of the amplitude $a$.
Then for each $j \in \Z_+$ and all multi-indices $\beta$ we have
\begin{equation*}\label{eq:KjL2}
\norm{ \partial_y^\beta K_j(x,y)}_{L_{x,y}^\infty(\Rl^n \times \Rl^n)}
\lesssim 2^{j(m+|\beta|+n)}.
\end{equation*}
\end{Lem}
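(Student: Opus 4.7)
The plan is to differentiate under the integral sign in $y$, take absolute values, and bound the resulting integrand using only the Littlewood--Paley localisation of $\psi_j$ together with the $S^{m}_{0,0}$-bound on $a$. The phase $\varphi$ plays no role here because it is real-valued, so the oscillatory factor contributes nothing to a pointwise bound.

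Concretely, since $\psi_j$ is compactly supported in $\xi$, differentiation under the integral is legitimate and yields
\[
\partial_y^\beta K_j(x,y) = \int_{\Rl^n} (-i\xi)^\beta\,\psi_j(\xi)\,a(x,\xi)\,e^{i\varphi(x,\xi)-iy\cdot\xi}\ddd\xi.
\]
Because $\varphi$ is real and $y\cdot\xi$ is real, $|e^{i\varphi(x,\xi)-iy\cdot\xi}|=1$, so taking absolute values inside the integral gives
\[
|\partial_y^\beta K_j(x,y)| \leq \int_{\Rl^n} |\xi|^{|\beta|}\,|\psi_j(\xi)|\,|a(x,\xi)|\ddd\xi.
\]

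The three ingredients needed to finish are now immediate. By Definition \ref{def:LP}, the support of $\psi_j$ lies in the annulus $\{2^{j-1}\leq|\xi|\leq 2^{j+1}\}$ for $j\geq 1$ and in $B(0,2)$ for $j=0$, hence its Lebesgue measure is $\lesssim 2^{jn}$; on this support $|\xi|^{|\beta|}\lesssim 2^{j|\beta|}$; and by Definition \ref{symbol class Sm} the $S^{m}_{0,0}$-bound gives $|a(x,\xi)|\lesssim \langle\xi\rangle^m\lesssim 2^{jm}$ uniformly in $x$. Multiplying these three contributions yields $|\partial_y^\beta K_j(x,y)|\lesssim 2^{j(m+|\beta|+n)}$ uniformly in $(x,y)$, which is the claim.

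No step presents any real obstacle; the lemma is a crude size estimate whose proof is entirely elementary. Note that neither the compact $x$-support of $a$ nor any non-degeneracy assumption on $\varphi$ is actually used to obtain the pointwise bound, although such hypotheses will be essential for the sharper, off-diagonal kernel estimates developed elsewhere in Section \ref{Sect:kernel estimates}.
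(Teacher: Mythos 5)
Your proof is correct and follows the same elementary approach as the paper: differentiate under the integral in $y$, use that the oscillatory factors have modulus one, and combine the $S^m_{0,0}$-bound on $a$ with the $O(2^{jn})$ measure of the support of $\psi_j$ and the bound $|\xi|^{|\beta|}\lesssim 2^{j|\beta|}$ there. Your closing observation that neither the compact $x$-support nor any hypothesis on $\varphi$ is used in this pointwise estimate is also accurate.
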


\begin{proof}
Observe that
\begin{align*}
| \partial_y^\beta K_j(x,y)|
& = \Big|\partial_y^\beta \int_{\Rl^n} a_j(x,\xi)\,e^{i\varphi(x,\xi)} 
\,e^{-iy\cdot\xi}\ddd\xi \Big|
= \Big| \int_{\Rl^n} a_j(x,\xi)\,e^{i\varphi(x,\xi)} \,\xi^\beta \,e^{-iy\cdot\xi}\ddd\xi \Big| \\
& \lesssim \int_{\Rl^n}\vert a_j(x,\xi) \vert \,\vert \xi \vert ^{\vert \beta \vert}\ddd\xi 
\lesssim \Vert a_j \Vert_{L^\infty(\Rl^n \times \mathbb{R}^n)}\,2^{j(\vert\beta\vert+n)}\lesssim 2^{j(\vert\beta\vert+m+n)},
\end{align*}
for any $x,\,y\in \Rl^n$.
\end{proof}

Next we prove a kernel estimate for the low frequency portion of oscillatory integral operators.

\begin{Lem}\label{low freq lemma llf 1}
Let $\mu>0$, $a_L(x,\xi)
$ be a symbol that is compactly supported and smooth outside the origin in the $\xi$-variable and $\varphi(x,\xi)\in \mathcal{C}^{\infty}(\Rl^n \times \Rl^n \setminus \{0\}) $ be a phase function. Assume that one of the following conditions hold:
\begin{align}
&\begin{cases}\Vert \partial_{\xi}^{\alpha} a_L(\cdot,\xi)\Vert_{L^\infty(\Rl^n)}
\leq c_\alpha |\xi|^{\mu-|\alpha|}, 
& |\alpha|\geq 0,\\
\Vert \partial^{\alpha}_{\xi} (\varphi(x,\xi)-x\cdot \xi) \Vert_{L^\infty_x(\Rl^n)}\leq c_{\alpha} |\xi|^{-|\alpha|}, & \abs\alpha\geq 1,
\end{cases}\label{eq:lf1}\\
&\begin{cases}\Vert \partial_{\xi}^{\alpha} a_L(\cdot,\xi)\Vert_{L^\infty(\Rl^n)}
\leq c_\alpha , 
&|\alpha|\geq 0,\\
\Vert \partial^{\alpha}_{\xi} (\varphi(x,\xi)-x\cdot \xi) \Vert_{L^\infty_x(\Rl^n)}\leq c_{\alpha} |\xi|^{\mu-|\alpha|}, & \abs\alpha\geq 0,
\end{cases}\label{eq:lf2}
\end{align}
{for $\xi\neq0$} and on the support of $a_L(x,\xi).$ Then the modulus of the integral kernel 
$$K(x,y)
:= \int_{\Rn} a_L(x,\xi)\, e^{i\varphi(x,\xi)-iy\cdot \xi}\ddd\xi,$$
and that of $\overline{K(y,x)}$ are both bounded by  $\langle x-y\rangle ^{-n-\eps\mu}$ for any $0\leq \eps <1$. 
\end{Lem}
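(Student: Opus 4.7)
My plan is to dispose first of the easy regime $|z| := |x-y| \le C$ for a suitable constant, where $\langle x-y\rangle^{-n-\varepsilon\mu}$ is comparable to a fixed constant and the trivial estimate $|K(x,y)| \le \|a_L(x,\cdot)\|_{L^1(\Rl^n)} < \infty$ suffices. For the complementary range $|z| \gtrsim 1$, setting $\psi(x,\xi) := \varphi(x,\xi) - x \cdot \xi$ I rewrite the kernel as
$$
K(x,y) = \int_{\Rn} a_L(x,\xi)\, e^{iz\cdot\xi + i\psi(x,\xi)} \ddd\xi,
$$
so the task reduces to extracting decay of order $|z|^{-n-\varepsilon\mu}$ from the oscillation $e^{iz\cdot\xi}$.

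Under hypothesis \eqref{eq:lf1}, the composite symbol $F(x,\xi) := a_L(x,\xi)\,e^{i\psi(x,\xi)}$ inherits the symbol-type bound $|\partial^\alpha_\xi F| \lesssim |\xi|^{\mu - |\alpha|}$, coming from the chain rule applied to $e^{i\psi}$ (with the $|\xi|^{-|\alpha|}$ bounds on $\partial^\alpha_\xi \psi$) multiplied by the bounds on $\partial^\alpha_\xi a_L$. The plan is to take a dyadic decomposition $F = \sum_{j \ge 0} F_j$ with $F_j$ supported in $\{|\xi|\sim 2^{-j}\}$, and to estimate each $K_j := \int F_j \, e^{iz\cdot\xi}\,\ddd\xi$ in two complementary ways: a crude $L^1$ bound gives $|K_j| \lesssim 2^{-j(n+\mu)}$ (integrating $|F_j| \lesssim 2^{-j\mu}$ over a support of measure $2^{-jn}$), while $N$-fold integration by parts against $L_z f := (z \cdot \nabla_\xi f)/(i|z|^2)$, which satisfies $L_z e^{iz\cdot\xi} = e^{iz\cdot\xi}$, produces $|K_j| \lesssim |z|^{-N}\, 2^{j(N-n-\mu)}$. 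Balancing the two bounds at $j \sim \log_2 |z|$ and summing the resulting geometric series over $j$ yields $|K| \lesssim |z|^{-(n+\mu)}$, which comfortably dominates $|z|^{-n-\varepsilon\mu}$.

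Under hypothesis \eqref{eq:lf2} the new obstacle is that $|a_L|$ is merely bounded at the origin, so the previous dyadic $L^1$ bound on $F$ fails. I would remedy this with the algebraic splitting $e^{i\psi} = 1 + (e^{i\psi} - 1)$, giving $K = K^{(1)} + K^{(2)}$. Since the bound $|\psi| \lesssim |\xi|^{\mu}$ from \eqref{eq:lf2} forces $|e^{i\psi} - 1| \lesssim |\xi|^{\mu}$, the amplitude $a_L(e^{i\psi} - 1)$ again obeys $|\partial^\alpha_\xi[a_L(e^{i\psi}-1)]| \lesssim |\xi|^{\mu - |\alpha|}$, so the dyadic-plus-IBP argument of the previous paragraph applies verbatim and yields $|K^{(2)}| \lesssim |z|^{-(n+\mu)}$. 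The remaining piece $K^{(1)}(x,y) = \int a_L(x,\xi)\, e^{iz\cdot\xi}\,\ddd\xi$ is the Fourier transform of $a_L(x,\cdot)$; the uniform bounds on every classical $\xi$-derivative of $a_L$ and its compact support (together with the fact that these bounds force a smooth extension to the origin when $n \ge 2$, and are in any case good enough for finite-order IBP in $n = 1$) permit sufficiently many integrations by parts against $L_z$ to obtain $|K^{(1)}| \lesssim_N |z|^{-N}$ for any $N$, and in particular a bound dominating $|z|^{-n-\varepsilon\mu}$. The restriction $\varepsilon < 1$ enters as the technical slack in the dyadic balance needed to accommodate the non-vanishing of $a_L$ at the origin.

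The bound on $\overline{K(y,x)} = \int \overline{a_L(y,\xi)}\, e^{iz\cdot\xi - i\psi(y,\xi)}\,\ddd\xi$ is obtained by the identical argument, since swapping $x \leftrightarrow y$ and replacing $\psi$ by $-\psi$ preserves the hypotheses \eqref{eq:lf1} and \eqref{eq:lf2}, both of which are stated on absolute values. I anticipate the main technical difficulty to lie in the careful dyadic bookkeeping under \eqref{eq:lf2}, where the splitting $K^{(1)} + K^{(2)}$ must extract the precise loss $\varepsilon\mu$ in the decay exponent beyond the trivial rate $|z|^{-n}$, rather than in any single estimate in isolation.
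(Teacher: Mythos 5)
Your proof is correct and structurally parallel to the paper's own argument: under \eqref{eq:lf1} you rewrite the kernel as $\int \sigma(x,\xi)\,e^{iz\cdot\xi}\ddd\xi$ with $|\partial_\xi^\alpha\sigma|\lesssim|\xi|^{\mu-|\alpha|}$ and run a dyadic decomposition plus non-stationary-phase integration by parts, and under \eqref{eq:lf2} you make the identical algebraic split $e^{i\psi}=1+(e^{i\psi}-1)$. The genuine divergence is in the dyadic bookkeeping. You pair the crude $L^1$ bound $|K_j|\lesssim 2^{-j(n+\mu)}$ with the $N$-fold IBP bound $|K_j|\lesssim |z|^{-N}2^{j(N-n-\mu)}$, balance at the scale $2^{-j}\sim|z|^{-1}$, and sum two convergent geometric tails; this actually delivers the endpoint decay $\langle x-y\rangle^{-n-\mu}$. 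The paper instead manufactures a single \emph{fractional}-order IBP bound $|K_j|\lesssim|x-y|^{-H}2^{j(n-H+\mu)}$ (interpolating geometrically between consecutive integer orders $N$ and $N+1$) and then sums $\sum_{j\le M}2^{j\mu(1-\varepsilon)}$ with $H=n+\varepsilon\mu$; that series diverges at $\varepsilon=1$, which is precisely why the lemma is stated only for $\varepsilon<1$. So you prove slightly more than is asked; the paper's approach is equally valid for the stated conclusion but its summation scheme forfeits the endpoint.

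One shared caveat worth flagging: your treatment of $K^{(1)}$ and the paper's of $K'$ both perform arbitrary-order integration by parts in $\int a_L\,e^{iz\cdot\xi}\ddd\xi$, which tacitly requires $a_L(x,\cdot)$ to be smooth \emph{through} $\xi=0$. As you correctly observe, bounded $\xi$-derivatives on $\Rn\setminus\{0\}$ force a smooth extension when $n\geq 2$; but for $n=1$ the hypotheses of \eqref{eq:lf2} alone still admit a jump at the origin (e.g.\ $a_L(\xi)=\operatorname{sgn}(\xi)\chi(\xi)$), for which $K^{(1)}$ decays only like $|z|^{-1}$ and your parenthetical claim that the bounds are ``in any case good enough for finite-order IBP in $n=1$'' is not justified. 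This is a latent assumption in the paper's proof as well (it simply asserts $|K'(x,y)|\lesssim|x-y|^{-N}$ for all $N$), and in every application $a_L$ is genuinely smooth, so it is not a defect you introduced --- but it should not be waved away as you did.
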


\begin{proof}
Since $|K(x,y)|\lesssim 1,$ it is enough to show that that $|K(x,y)|\lesssim|x-y|^{-n-\eps\mu}$. 

In order to prove the Lemma under assumptions \eqref{eq:lf1} we set
$\sigma(x,\xi):= a_L(x,\xi)$ $e^{i\varphi(x,\xi)-ix\cdot\xi}$
\eq{
K(x,y) := \int_{\Rn} e^{i(x-y) \cdot \xi}\,  \sigma(x,\xi) \ddd\xi.
}

Observe that $ \abs{\partial_\xi^\alpha \sigma(x,\xi)} \lesssim  
\abs\xi^{\mu-\abs\alpha}
$ for any $\abs\alpha\geq 0$ and $\xi\in\supp_\xi a_L(x,\xi)$.
Now one introduces a Littlewood-Paley partition of unity 
 $$\sum_{j=-\infty}^{\infty} \psi(2^{-j}\xi)=1, \text{ for } \xi\neq 0, \text{ with } \supp \psi(\xi)\subset \{1/2\leq |\xi|\leq 2\},$$ and defines
\eq{
K_j(x,y) := \int_{\Rn} e^{i(x-y) \cdot \xi}\,  \sigma(x,\xi)\, \psi(2^{-j}\xi) \ddd\xi.
}

Integration by parts $N$ times yields
\begin{equation}\label{bloodykernelestim}
\begin{split}
\abs{K_j(x,y)} &\lesssim \abs{x-y}^{-N} \sum_{\abs\alpha+\abs\beta = N} \int_{\Rn} \abs{\partial_\xi^\alpha \sigma(x,\xi)}|\partial_\xi^\beta \psi(2^{-j}\xi)| \ddd\xi \\&\lesssim  \abs{x-y}^{-N} 2^{j(\mu+n-N)},
\end{split}
\end{equation}

However, if $H$ is any positive real number, then one can write $H$ as the sum $N+\theta$ where $N$ is a positive integer and $\theta \in [0,1)$. Now since \eqref{bloodykernelestim} implies
that \begin{equation}\label{skit 1}
|K_j(x,y)|\lesssim 2^{j(n-N+\mu)} |x-y|^{-N},
\end{equation} and 
\begin{equation}\label{skit 2}
|K_j(x,y)|\lesssim 2^{j(n-(N+1)+\mu)} |x-y|^{-(N+1)},
\end{equation}
raising \eqref{skit 1} to the power $1-\theta$ and \eqref{skit 2} to the power $\theta$, and using the fact that $H =N+\theta$, yield that
$$|K_j(x,y)|=|K_j(x,y)|^{1-\theta} |K_j(x,y)|^\theta
\lesssim 2^{j(n-H+\mu)} |x-y|^{-H},$$
for all $H\geq0$. 

Observe that there exists $M>0$ such that $\supp_\xi a_L(x,\xi)\subseteq B(0,2^M)$. Therefore, we can write
\eq{
K(x,y) = \sum_{j=-\infty}^M K_j(x,y),
}
and hence setting $H:= n+\eps\mu$, we obtain
\eq{
\abs{K(x,y)} \lesssim \sum_{j=-\infty}^M \abs{x-y}^{-n-\eps\mu} 2^{j\mu(1-\eps)} \lesssim \abs{x-y}^{-n-\eps\mu}.
}\hspace*{1cm}

To prove the lemma under assumptions \eqref{eq:lf2}, split the kernel into 
$K(x,y)= K'(x, y)$ $+K''(x, y)$ where
$$K'(x,y):= \int_{\Rn} e^{i(x-y) \cdot \xi}\,  a_L(x,\xi) \ddd\xi, $$
and
$$K''(x,y):= \int_{\Rn} e^{i(x-y) \cdot \xi}\,  a_L (x,\xi)\,(e^{i\varphi(x,\xi)-ix\cdot\xi}-1) \ddd\xi.
$$
Integration by parts $N$ times yields that $\abs{K'(x,y)}\lesssim \abs{x-y}^{-N}$ for all $N$. 
To obtain the estimate for $K''(x,y)$ we set $\sigma(x,\xi) :=  a_L(x,\xi)\,(e^{i\varphi(x,\xi)-ix\cdot\xi}-1) $ and note that for all $\abs\alpha\geq 0$,
$|\partial_\xi^{\alpha}( e^{i\varphi(x,\xi)-ix\cdot\xi}-1)|\lesssim |\xi|^{\mu-|\alpha|}$,
so that 
$\abs{\partial_\xi^\alpha \sigma(x,\xi)} \lesssim  
\abs\xi^{\mu-\abs\alpha}$
for any $\abs\alpha\geq 0$.  Now the rest of the proof proceeds as in the previous case above.\\

The proof for $\overline{K(y,x)}$ is identical and hence omitted.
\end{proof}
\begin{Rem}\label{Rem:CRS}
Observe that for phase functions of the form $x\cdot\xi+|\xi|^k$ with $k>1$ and symbols $a(x,\xi)=\chi(\xi)\in\mathcal C^\infty_c(\Rn)$, a decay of the form $\langle x-y\rangle^{-n-1}$, was already proven in e.g. \cite[Lemma 2.3]{CRS}.
\end{Rem}

The next lemma yields a sufficient condition for the $h^p-L^p$ boundedness of linear operators and will be quite useful in what follows.
\begin{Lem}\label{lem:hpLpofT}
Assume that $0<p<1$ and $T_a^{\varphi}$ is an $L^2$--bounded  oscillatory-- or  Schr\"odinger integral operator. Let $T_j$ be either $T^\varphi_a \psi_{j} (D)$ or $\psi_{j} (D)(T^\varphi_a)^*$ with $\psi_j$ as in \emph{Definition \ref{def:LP}} \emph{(i.e. the familiar $j$-th Littlewood-Paley piece of $T^\varphi_a$ and its formal adjoint $(T^\varphi_a)^*$ )}. We also assume that for a $p$-atom $\at$  supported in a ball of radius $r$ one has
\begin{equation}\label{eq:keyestimate1}
\| T_j \at \|_{L^p(\Rl^n)} \lesssim r^{n-n/p}\,2^{j\brkt{n-n/p}}.
\end{equation}
Moreover, assume that whenever $r <1$, 
\begin{equation}\label{eq:keyestimate2}
\| T_j \at \|_{L^p(\Rl^n)} \lesssim r^{{N} +1+n-n/p }\,2^{j\brkt{{N}+1+n-n/p}},
\end{equation} 
for some $N > n/p-n-1$. 
Then $T_a^{\varphi}$ \emph{(or $(T^\varphi_a)^*$ when the $T_j$'s are associated to the adjoint)} is bounded from $h^p(\Rl^n)$ to $L^p(\Rl^n)$.
\end{Lem}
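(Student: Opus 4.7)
The plan is to prove the statement by reducing boundedness on $h^p(\Rl^n)$ to a uniform bound on $p$-atoms via the atomic decomposition. Given $f\in h^p(\Rl^n)$ with $f=\sum_k \lambda_k \at_k$ and $\sum_k |\lambda_k|^p\sim \|f\|_{h^p}^p$, the $p$-subadditivity of $\|\cdot\|_{L^p}^p$ (valid since $0<p<1$) gives
\[
\|T_a^\varphi f\|_{L^p(\Rl^n)}^p \leq \sum_k |\lambda_k|^p \,\|T_a^\varphi \at_k\|_{L^p(\Rl^n)}^p,
\]
so the matter reduces to showing $\|T_a^\varphi \at\|_{L^p(\Rl^n)}\lesssim 1$ uniformly over all $p$-atoms $\at$. (The adjoint case proceeds identically after writing $(T_a^\varphi)^*=\sum_j \psi_j(D)(T_a^\varphi)^*$.)

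To establish the uniform bound I would write $T_a^\varphi = \sum_{j=0}^\infty T_j$ via the Littlewood-Paley partition of unity. Since $T_a^\varphi$ is $L^2$-bounded and each $p$-atom lies in $L^2$, the series converges to $T_a^\varphi\at$ in $L^2$ and hence a.e.\ along a subsequence; Fatou's lemma combined with $p$-subadditivity then gives
\[
\|T_a^\varphi \at\|_{L^p(\Rl^n)}^p \leq \sum_{j=0}^\infty \|T_j \at\|_{L^p(\Rl^n)}^p.
\]
It remains to sum the right-hand side uniformly by judiciously choosing between the two hypothesized estimates.

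The key observation is that the exponent $p(n-n/p)=n(p-1)$ appearing in \eqref{eq:keyestimate1} is negative, whereas the condition $N>n/p-n-1$ is precisely what makes the exponent $p(N+1+n-n/p)$ in \eqref{eq:keyestimate2} positive. Consequently, rewriting the bounds as functions of $r\cdot 2^j$, estimate \eqref{eq:keyestimate1} is efficient when $r\cdot 2^j$ is large, while \eqref{eq:keyestimate2} is efficient when $r\cdot 2^j$ is small. This motivates splitting the sum at the threshold $J$ with $2^J\sim 1/r$. When $r\geq 1$ only \eqref{eq:keyestimate1} is needed and summing the geometric series immediately gives a bound by $r^{p(n-n/p)}\leq 1$. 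When $r<1$ I would apply \eqref{eq:keyestimate2} for $j\leq J$ (sum bounded by $(r\cdot 2^J)^{p(N+1+n-n/p)}\sim 1$) and \eqref{eq:keyestimate1} for $j>J$ (sum bounded by $(r\cdot 2^J)^{p(n-n/p)}\sim 1$).

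The main conceptual hurdle is not the geometric bookkeeping above, which is essentially forced by the hypotheses, but rather the justification of $T_a^\varphi \at = \sum_j T_j\at$ in a sense strong enough to transfer to $L^p$; this is cleanly handled by the Fatou argument sketched above, which is where the $L^2$-boundedness hypothesis plays its essential role. Everything else amounts to verifying two geometric-series estimates whose balance point $r\cdot 2^j\sim 1$ is dictated by the very form of \eqref{eq:keyestimate1}--\eqref{eq:keyestimate2}.
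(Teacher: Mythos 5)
Your proof is correct and follows essentially the same route as the paper: reduce to a uniform bound over $p$-atoms, split the Littlewood-Paley sum at the threshold $2^j \sim 1/r$, and apply \eqref{eq:keyestimate2} on the small-$j$ piece and \eqref{eq:keyestimate1} on the large-$j$ piece so that both geometric sums are controlled at the threshold. The paper compresses the reduction to atoms by citing the standard Stein and Seeger--Sogge--Stein arguments, while you spell out the $L^2$-boundedness/Fatou justification for $T_a^\varphi\at = \sum_j T_j\at$ in $L^p$; the substance is the same.
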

\begin{proof}
 Using the atomic characterisation of $h^p(\Rn)$, and following the strategy in  \cite[p. 402]{Stein} for $T_a^{\varphi}$ and the strategy in \cite[p. 237]{SSS} for $(T_a^{\varphi})^*$, it is enough to show that for any $p$-atom $\at$, one has the uniform estimates
$$\|T_a^{\varphi} \at\|_{L^p(\Rn)}
\lesssim 1,$$
or 
$$\|(T_a^{\varphi})^* \at\|_{L^p(\Rn)}
\lesssim 1,$$ in each case.
We only prove the result in the case of $T^\varphi_a$, since the case of the adjoint is similar.
We split the proof in two different cases, namely $r<1$ and $r \geq 1$. 
For $r\geq 1$, \eqref{eq:keyestimate1} yields that
\begin{equation*}\label{eq:largeballs}
\| T_a^{\varphi} \at \|_{L^p(\Rl^n)}^p 
\lesssim \sum_{j=0}^\infty \| T_j \at \|_{L^p(\Rl^n)}^p \lesssim \sum_{j=0}^\infty r^{np-n}\,2^{j\brkt{np-n}} \lesssim 1.
\end{equation*}
Assume now that $r < 1$. 
Choose $\ell \in \Z_+$ such that 
$2^{-\ell-1} \leq r < 2^{-\ell }$. Using the facts that $2^{-\ell}\sim r$, $N+1+n-n/p>0$, $n-n/p<0$, \eqref{eq:keyestimate1} and \eqref{eq:keyestimate2} we conclude that
\begin{align*}
\| T_a^{\varphi} \at\|_{L^p(\Rn)}^p
& \lesssim \sum_{j=0}^{\ell} \brkt{r^{N+1+n-n/p} \, 2^{j\brkt{N+1+n-n/p}}}^p 
	+ \sum_{j=\ell +1}^{\infty} \brkt{r^{n-n/p} \, 2^{j\brkt{n-n/p}}}^p \\
& \lesssim  \brkt{r^{N+1+n-n/p} \, 2^{\ell \brkt{N+1+n-n/p}}}^p 
	+ \brkt{r^{n-n/p} \, 2^{\ell\brkt{n-n/p}}}^p \\
& \sim  \brkt{r^{N+1+n-n/p} \, r^{-\brkt{N+1+n-n/p}}}^p 
	+ \brkt{r^{n-n/p} \, r^{-\brkt{n-n/p}}}^p = 1. \qedhere
\end{align*} 
\end{proof}

As an application of the previous lemma, we have the following $h^p-L^p$ boundedness result, based entirely on kernel estimates of the corresponding operators.

\begin{Lem}\label{lem:Miyachistuff}
{Let $0<p\leq 1$, $k>0$,  $a(x,\xi)\in S^{m_k(p)}_{0,0}(\Rl^n)$, $\varphi(x,\xi)\in\textart{F}^k$, and let the operator $T_j$ given in \emph{Lemma \ref{lem:hpLpofT}}} have either the representation
\begin{equation}\label{representationone}
\int_{\Rl^n} K_{1,j}(x,x-y)\, f(y) \dd y , 
\end{equation} 
or the representation
\begin{equation}\label{representationtwo}
\int_{\Rl^n} K_{2,j}(y,x-y)\, f(y) \dd y. 
\end{equation}
\begin{itemize}
    \item If 
    \begin{equation}\label{eq:new1}
        \norm{ (x-y)^\alpha \partial^\beta_y \left(K_{1,j} (x,x-y)\right)}_{L^2_x(\Rl^n)}
\lesssim 2^{j(\vert\alpha\vert(k-1) +\vert \beta\vert+m_k(p)+n/2)}, 
    \end{equation}
    uniformly in $y \in \mathbb{R}^n$, and $T^\varphi_a:=\sum_{j=0}^\infty T_j$ \emph{(in the case of \eqref{representationone})} is $L^2$-bounded, 
then $T^\varphi_a$ is bounded from $h^p(\Rl^n)$ to $L^p(\Rl^n)$.\\
    \item If 
    \begin{equation}\label{eq:new2}
       \norm{ (x-y)^\alpha \partial^\beta_y \left(K_{2,j} (y,x-y)\right)}_{L^2_x(\Rl^n)}
\lesssim 2^{j(\vert\alpha\vert(k-1) +\vert \beta\vert+m_k(p)+n/2)}
    \end{equation}
    uniformly in $y \in \mathbb{R}^n$, and $(T^\varphi_a)^*:=\sum_{j=0}^\infty T_j$ \emph{(in the case of \eqref{representationtwo})} is $L^2$-bounded, 
then $(T^\varphi_a)^*$ is bounded from $h^p(\Rl^n)$ to $L^p(\Rl^n)$.
\end{itemize}

\end{Lem}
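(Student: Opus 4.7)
The plan is to invoke Lemma~\ref{lem:hpLpofT}, so the task reduces to verifying its two key estimates \eqref{eq:keyestimate1} and \eqref{eq:keyestimate2} for an arbitrary $p$-atom $\at$ supported in a ball $B(x_0, r)$. Since the hypotheses \eqref{eq:new1} on $K_{1,j}$ and \eqref{eq:new2} on $K_{2,j}$ are structurally identical, and the representations \eqref{representationone} and \eqref{representationtwo} differ only by the roles of $x$ and $y$, I will focus on the first bullet; the second follows by applying the same argument to $(T_a^\varphi)^*$ using \eqref{eq:new2}.

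To verify \eqref{eq:keyestimate1} I introduce the natural scale $\rho := \max(r, 2^{j(k-1)})$ at frequency $2^j$, and decompose $\Rl^n$ into a near set $E_j := B(x_0, C\rho)$ and dyadic annuli $A_\ell := \{x : |x-x_0| \sim 2^\ell \rho\}$ for $\ell \geq 1$. On $E_j$, H\"older's inequality gives $\|T_j \at\|_{L^p(E_j)} \lesssim \rho^{n(1/p-1/2)} \|T_j \at\|_{L^2}$; Minkowski's inequality in $y$, combined with $\|\at\|_{L^1} \lesssim r^{n-n/p}$ and the $\alpha = \beta = 0$ case of \eqref{eq:new1}, yields $\|T_j\at\|_{L^2} \lesssim r^{n-n/p}\, 2^{j(m_k(p)+n/2)}$. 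The critical choice $m_k(p) = -kn(1/p - 1/2)$ is precisely what makes the product of these exponents telescope to $r^{n-n/p}\,2^{j(n-n/p)}$. On each far annulus $A_\ell$, the bound $|x-y| \gtrsim 2^\ell \rho$ for $x \in A_\ell$ and $y \in \supp \at$ lets me trade $(x-y)^\alpha$ in \eqref{eq:new1} for a pointwise decay factor $(2^\ell\rho)^{-|\alpha|}$; summing geometrically in $\ell$ with $|\alpha|$ chosen above $n(1/p - 1/2)$ (and large enough to absorb the $r$-exponents) recovers the same bound, completing \eqref{eq:keyestimate1}.

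For \eqref{eq:keyestimate2} I exploit the vanishing moments of $\at$ up to order $N := [n(1/p-1)]$, which satisfies the required $N > n/p - n - 1$. Taylor-expanding $y \mapsto K_{1,j}(x, x-y)$ around $y = x_0$ to order $N$, the moment conditions annihilate the polynomial part against $\at$, and the integral remainder carries the factor $(y-x_0)^{N+1}$ which produces an extra $r^{N+1}$ when paired with $|\at|$; the $\partial_y^{N+1}$ derivative in \eqref{eq:new1} (with $|\beta| = N+1$) contributes a compensating $2^{j(N+1)}$. Repeating the near/far decomposition of the previous paragraph with these improved ingredients yields the sharper bound $r^{N+1+n-n/p}\, 2^{j(N+1+n-n/p)}$ required by \eqref{eq:keyestimate2}.

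The main obstacle will be the careful parameter tracking: across both regimes $r \lesssim 2^{j(k-1)}$ and $r \gtrsim 2^{j(k-1)}$, one must verify that the near-field volume factor $\rho^{n(1/p-1/2)}$, the $L^2_x$ kernel bound carrying the critical order $m_k(p)$, the annular decay $(2^\ell\rho)^{-|\alpha|}$, and (for small atoms) the Taylor-remainder power of $r$ all combine to precisely the exponents in \eqref{eq:keyestimate1}--\eqref{eq:keyestimate2}. The sharpness of $m_k(p) = -kn|1/p - 1/2|$ is exactly what enables this telescoping; any larger amplitude order would break \eqref{eq:keyestimate1} already at the near-field level.
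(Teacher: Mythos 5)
Your far-field analysis by dyadic annuli, trading powers of $(x-y)^\alpha$ for decay, is essentially a discretisation of the paper's continuous weight $g(x)=\brkt{1+2^{-j(k-1)}|x-\bar y|}^{-M}$ combined with H\"older's inequality, and that part of the sketch is sound. The gap is in the near-field. Your near-field estimate reads
\[
\|T_j\at\|_{L^p(E_j)}\lesssim\rho^{n(1/p-1/2)}\|T_j\at\|_{L^2(\Rl^n)}\lesssim\rho^{n(1/p-1/2)}\,r^{n-n/p}\,2^{j(m_k(p)+n/2)},
\]
and this telescopes to the target $r^{n-n/p}\,2^{j(n-n/p)}$ of \eqref{eq:keyestimate1} only when $\rho=2^{j(k-1)}$. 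When $\rho=r$ the same expression equals $r^{n/2}\,2^{j(m_k(p)+n/2)}$, which exceeds the target by the factor $\brkt{r/2^{j(k-1)}}^{n(1/p-1/2)}>1$. This regime is not negligible: for $r\geq 1$ and $k\leq 1$ it covers every $j\geq 0$, and for $k>1$ it covers all $j\lesssim\log_2 r$. Summing the $p$-th powers over $j$ then gives a contribution of size at least $r^{pn/2}$ (and, when $m_k(p)+n/2\geq 0$, the $j$-sum itself diverges), so the required atomic bound $\|T_a^\varphi\at\|_{L^p(\Rl^n)}\lesssim 1$ is not recovered.

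The missing ingredient is the stated hypothesis that $T_a^\varphi$ is $L^2$-bounded, which never appears in your argument. Minkowski's inequality together with the $\alpha=\beta=0$ case of \eqref{eq:new1} only produces an $L^1\to L^2$ bound on each individual $T_j$; these per-$j$ bounds do not assemble into an $L^2\to L^2$ bound on the full sum $\sum_j T_j$. The paper invokes the $L^2$-boundedness of the whole operator precisely to dispose of the near-field in one stroke, with no $j$-sum: taking the near set to be the fixed ball $2B:=B(\bar y,2r)$, H\"older gives
\[
\|T_a^\varphi\at\|_{L^p(2B)}\lesssim r^{n(1/p-1/2)}\,\|T_a^\varphi\at\|_{L^2(\Rl^n)}\lesssim r^{n(1/p-1/2)}\,\|\at\|_{L^2(\Rl^n)}\lesssim 1.
\]
The per-$j$ kernel estimates (and thus your moment machinery) are only needed for $\|T_j\at\|_{L^p(\Rl^n\setminus 2B)}$, where \eqref{eq:estimateybar} supplies $|x-\bar y|\lesssim|x-y|$ in place of your constraint $\rho\geq r$. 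So you should fix the near set to $2B$ independently of $j$, keep the $j$-dependent scale $2^{j(k-1)}$ strictly inside the far-field weight, and use the $L^2$ hypothesis on $T_a^\varphi$ for the near ball.
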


\begin{proof}

Once again, we only treat the case of $T_a^{\varphi}$, since that of the adjoint is done in a similar manner. Let $\at$ be a $p$-atom supported in the ball $B:=B(\bar{y},r)$ and let $2B:=B(\bar{y},2r)$. By H\"older's inequality and the $L^2$-boundedness of $T_a^{\varphi}$, we have
\begin{align*}\label{eq:combining1}
\Vert T_a^{\varphi} \at \Vert_{L^{p}(2B)} 
&\lesssim \Vert T_a^{\varphi} \at \Vert_{L^{2}(2B)} \norm{1}_{L^{2p/(2-p)}(2B)} \lesssim \norm{\at}_{L^{2}(\Rl^n)} r^{n(2-p)/2p}  \\
&\lesssim r^{n(p-2)/2p}\, r^{n(2-p)/2p} = 1.
\end{align*}

We proceed to the boundedness of
$\Vert T_a^{\varphi} \at \Vert_{L^{p}(\Rl^n\setminus 2B)}$, 
which is more subtle. By Lemma \ref{lem:hpLpofT}, it is enough to show estimates \eqref{eq:keyestimate1} and \eqref{eq:keyestimate2} for $\norm{T_j\at }_{L^p(\Rl^n\setminus 2B)}$. For all multi-indices $\alpha$, \eqref{eq:new1} yields
\eq{
\norm{ (2^{-j(k-1)}(x-y))^\alpha K_{1,j}(x,x-y)}_{L^2_x(\Rl^n)}
&\lesssim 2^{j(n/2+m_k(p))}, 
}
so that for any integer $M$, if one sums over $\abs\alpha\leq M$,
\nm{eq:newone}{
\norm{ (1+2^{-j(k-1)}\vert x-y\vert)^M K_{1,j}(x,x-y)}_{L^2_x(\Rl^n)}&\lesssim 2^{j(n/2+m_k(p))}.
}
We now observe that for $t\in [0,1],$ $x\in \Rl^n\setminus 2B$ and $y\in B$, one has 
\begin{equation}\label{eq:estimateybar}
    \vert x-\bar{y}\vert \lesssim \vert x-\bar{y}-t(y-\bar{y})\vert.
\end{equation}

Next we introduce
$$g(x):=\brkt{1+2^{-j(k-1)}|x-\bar{y}|}^{-M},$$
where $M > n/q$ and $1/q=1/p-1/2$. The H\"older and the Minkowski inequalities together with \eqref{eq:newone} and \eqref{eq:estimateybar} (with $t=1$) yield
\begin{align}
 & \norm{T_j\at }_{L^p(\Rl^n\setminus 2B)}
= \Big\| \int_{B} K_{1,j}(x,x-y)\at(y)\dd y \Big\| _{L^p_x(\Rl^n\setminus 2B)} \nonumber \\ 
&\qquad \leq \Big\|  \frac{1}{g(x)} 
\int_{B}K_{1,j}(x,x-y)\at(y)\dd y \Big\| _{L^2_x(\Rl^n\setminus 2B)}
\Vert g \Vert _{L^q(\Rl^n)}  \nonumber \\ 
&\qquad\lesssim 2^{jn(k-1)/q}
  \int_{B} \Big\|\frac{1}{g(x)}K_{1,j}(x,x-y)\at(y)\Big\|_{L^2_x(\Rl^n\setminus 2B)}  \dd y \nonumber \\
 &\qquad\lesssim 2^{jn(k-1)(1/p-1/2)} \int_{B} \vert \at(y) \vert \Big\| \brkt{1+2^{-j(k-1)}|x-{y}|}^{M}    K_{1,j}(x,x-y)\Big\| _{L^2_x(\Rl^n\setminus 2B)} \!\dd y \nonumber \\
&\qquad\lesssim r^{n-n/p} 2^{jn(k-1)(1/p-1/2)} 2^{j(n/2+m_k(p))}. \nonumber 
\end{align}
Recalling that $m_k(p)=-kn(1/p-1/2)$ 
we get \eqref{eq:keyestimate1}.\\
We proceed to show estimate \eqref{eq:keyestimate2}. Taking $N:=[n(1/p-1)]$ (note that $N > n/p-n-1$), 
a Taylor expansion of the kernel at the point $y=\overline{y}$ yields that
\eq{
K_{1,j}(x,x-y)
& = \sum_{ |\beta| \leq N} \frac{(y-\bar y)^\beta}{\beta!}  \partial^\beta_y (K_{1,j}(x,x- y))_{|_{y=\bar y}} \\
&  + (N+1) \sum_{ |\beta| = N+1}  \frac{(y-\bar y)^\beta}{\beta!}  \int_0^1 (1-t)^N \partial^\beta_y (K_{1,j}(x,x-y))_{|_{y=\bar y+t(y-\bar y)}} \dd t,
}
and due to vanishing moments of the atom in Definition \ref{def:hardyspace}, $iii)$, we may express the operator as
\begin{align*}
T_j\at(x)   =(N+1)\!\!\! \sum_{ |\beta| = N+1} \int_{B}  \int_0 ^1 \frac{(y-\bar{y})^\beta}{\beta!} (1-t)^N
\partial^\beta_y (K_{1,j}(x,x-y))_{|_{y=\bar y+t(y-\bar y)}}
\at(y) \dd t\dd y.
\end{align*}

 Now noting that $\abs{ (y-\bar{y})^\beta }\lesssim r^{N+1}$ and applying the same procedure as above together with estimates \eqref{eq:estimateybar} and \eqref{eq:new1}, we obtain
$$\Vert T_j \at \Vert _{L^p(\Rn)} 
\lesssim r^{N+1-n/p+n}2^{j(N+1+m_k(p)+n/2+n(k-1)(1/p-1/2))},
$$
which yields \eqref{eq:keyestimate2}.\\
\end{proof}

\section{\texorpdfstring{$L^2$}\ -boundedness}
\label{sec:L2-boundedness}

In the forthcoming sections, we will also need the following important theorem about $L^2$-boundedness of an oscillatory integral operator.

\begin{Th}\label{thm:fujiwara}
Let $a(x,\xi) \in S_{0,0}^0(\Rl^n)$ and assume that $\varphi (x,\xi)$ fulfills the $L^2$-condition \eqref{eq:L2 condition}
and is \emph{SND}. Then the  oscillatory integral operator $T_a^\varphi$ given by \eqref{eq:OIO} is bounded from $L^2(\Rl^n)$ to itself, under either of the following circumstances:
\begin{enumerate}
\item[$i)$] The amplitude $a(x,\xi)$ is compactly supported in $x$.

 \item[$ii)$] {One of the assumptions} \eqref{eq:lf1} or \eqref{eq:lf2} holds true.
 
\end{enumerate}
\end{Th}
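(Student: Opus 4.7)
The plan is to split $T_a^\varphi$ via a low/high frequency decomposition of the amplitude: set $a_L(x,\xi) := \psi_0(\xi)\,a(x,\xi)$ and $a_H(x,\xi) := (1-\psi_0(\xi))\,a(x,\xi)$, so that $T_a^\varphi = T_{a_L}^\varphi + T_{a_H}^\varphi$, and handle each piece by a different method---kernel estimates for the low-frequency part, and a Fujiwara-type theorem for the high-frequency part. Note that the high-frequency argument applies equally in cases $i)$ and $ii)$, since it only uses the SND and $L^2$-condition hypotheses on $\{|\xi|\geq 1\}$.

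For $T_{a_H}^\varphi$, the amplitude vanishes on $\{|\xi|\leq 1\}$, so only the behaviour of $\varphi$ on $\{|\xi|\geq 1\}$---where it is smooth and satisfies both SND and the $L^2$-condition---enters the calculation. I would smoothly modify $\varphi$ on the unit ball to obtain a phase $\tilde\varphi \in \mathcal{C}^\infty(\Rl^n\times\Rl^n)$ equal to $\varphi$ on $\{|\xi|\geq 1\}$ and satisfying both SND and \eqref{eq:L2 condition} globally. A concrete candidate is
\[
\tilde\varphi(x,\xi) := x\cdot\xi + \chi(\xi)\,(\varphi(x,\xi)-x\cdot\xi),
\]
with $\chi$ a smooth cutoff equal to $1$ for $|\xi|\geq 1$ and vanishing in a neighbourhood of $\xi=0$; this bypasses any singularity of $\varphi$ at the origin. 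Since $a_H\equiv 0$ wherever $\chi<1$, we have $T_{a_H}^\varphi = T_{a_H}^{\tilde\varphi}$, and the latter falls within the scope of the Asada--Fujiwara $L^2$-boundedness theorem (see e.g.\ \cite{AF, Fuj, CNR2}) for $S^0_{0,0}(\Rl^n)$ amplitudes and globally smooth, SND phases satisfying \eqref{eq:L2 condition}.

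For the low-frequency piece $T_{a_L}^\varphi$ the two cases are separate. Under assumption $ii)$, Lemma \ref{low freq lemma llf 1} directly yields the kernel bounds $|K_L(x,y)|,\,|K_L(y,x)| \lesssim \langle x-y\rangle^{-n-\varepsilon\mu}$ for any $0<\varepsilon<1$; this is integrable in each variable, so Schur's test gives $L^2$-boundedness. Under assumption $i)$, the compact $x$-support of $a$ enables a Plancherel-based argument: since $|a_L(x,\xi)|\lesssim 1$ and $\psi_0$ has compact support,
\[
\|K_L(x,\cdot)\|_{L^2_y(\Rl^n)}^2 = (2\pi)^{-n}\,\|a_L(x,\cdot)\|_{L^2_\xi(\Rl^n)}^2 \lesssim 1
\]
uniformly in $x$, and since $K_L(x,y)$ vanishes outside the compact $x$-support of $a$, Cauchy--Schwarz in $y$ followed by integration in $x$ over this compact set yields $\|T_{a_L}^\varphi f\|_{L^2(\Rl^n)}\lesssim \|f\|_{L^2(\Rl^n)}$.

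The main technical obstacle is verifying that $\tilde\varphi$ remains SND on the transition annulus where $\chi$ varies; this requires choosing $\chi$ carefully and exploiting the $L^2$-condition to control the error terms in the mixed Hessian of $\tilde\varphi$ (the contributions involving $\partial_\xi\chi\cdot(\partial_x\varphi-\xi)$ must not overwhelm the dominant $\delta_{jk}$ coming from $x\cdot\xi$). Should this direct extension prove awkward, an alternative is to bypass it entirely and obtain the $L^2$-bound for $T_{a_H}^\varphi$ via a dyadic Littlewood--Paley decomposition $a_H=\sum_{j\geq 1}\psi_j(\xi)\,a(x,\xi)$ combined with the Cotlar--Stein lemma, where the kernels of $T_{a_j}^\varphi(T_{a_k}^\varphi)^\ast$ and $(T_{a_j}^\varphi)^\ast T_{a_k}^\varphi$ are estimated by repeated integration by parts, using only the SND lower bound and \eqref{eq:L2 condition} on the support of the amplitudes.
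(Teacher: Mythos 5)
Your proposal follows essentially the same route as the paper's proof: the same low/high decomposition $a = a_L + a_H$ with the cutoff $\psi_0$, the same Plancherel/compact-support argument for $T_{a_L}^\varphi$ in case $i)$, the same appeal to Lemma~\ref{low freq lemma llf 1} and Schur's test for $T_{a_L}^\varphi$ in case $ii)$, and the same reduction of $T_{a_H}^\varphi$ to Fujiwara's $L^2$-theorem. Where you diverge is in how the appeal to Fujiwara's global theorem is justified for a phase that is only assumed smooth and structured on $\{|\xi|\geq 1\}$. The paper handles this tersely, asserting that the conditions (A-I)--(A-IV) of \cite{Fuj} hold \emph{on the support of} $a_H$ and stopping there; you instead propose to extend $\varphi$ to a globally valid phase $\tilde\varphi$ via a frequency cutoff, and you correctly flag the weak point of that device: the cross term $(\partial_{\xi_k}\chi)\,\partial_{x_j}(\varphi-x\cdot\xi)$ in the mixed Hessian of $\tilde\varphi$ is not controlled by the SND and $L^2$-condition alone (these only bound \emph{mixed} and $\xi$-derivatives, not $\partial_x\varphi - \xi$ itself), so the SND property of $\tilde\varphi$ on the transition annulus is not automatic. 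Your fall-back --- a Littlewood--Paley plus Cotlar--Stein argument using only the hypotheses on $\supp a_H$ --- is a sound and indeed more self-contained way to close this gap, and it is closer in spirit to Asada--Fujiwara's original method. In short: same strategy, but you make explicit (and address) a point of rigour that the paper glosses over.
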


\begin{proof}
We divide the proof into low and high frequency cases, by writing $a(x,\xi)= \psi_{0}(\xi)\,a(x,\xi)+ (1-\psi_0(\xi)) \,a(x,\xi)=: a_{\mathrm{L}}(x,\xi)+ a_{\mathrm{H}}(x,\xi)$, with $\psi_0$ is in Definition \ref{def:LP}. For $T_{a_H}^{\varphi}$, the phase function is smooth and doesn't have any singularity. This enables one to use an $L^2$-boundedness result for oscillatory integrals proven by D. Fujiwara in \cite{Fuj}, since the assumptions of Theorem \ref{thm:fujiwara} fulfill conditions (A-I)--(A-IV) in  \cite{Fuj}, on the support of $a_H$.
\\

For $T_{a_L}^{\varphi},$ part of the case
$i)$, using the compact support in $\xi$, Cauchy-Schwarz's inequality, and Plancherel's theorem allow us to write
\begin{equation*}
|T_{a_L}^\varphi f(x)|=
\Big| \int_{\Rl^n} a_L(x,\xi)\, e^{i\varphi(x,\xi)}\, \widehat{f}(\xi) \ddd\xi \Big|
\lesssim \Vert f\Vert_{L^2(\Rl^n)}.
\end{equation*}
Now the fact that $T_{a_L}^\varphi f(x)$ is compactly supported yields that 
$$\Vert T_{a_L}^\varphi f(x)\Vert_{L^2(\Rl^n)}\lesssim \Vert f\Vert_{L^2(\Rl^n)}.$$
For the $T_{a_L}^{\varphi},$ part of the case $ii)$, we use Lemma \ref{low freq lemma llf 1} to conclude that the kernel satisfies
\eq{
|K(x,y)|\lesssim \langle x-y\rangle ^{-n-\varepsilon\mu},
}
for any $\varepsilon\in[0,1)$. Therefore, {Schur's lemma} applies in this case. 
\end{proof}

\begin{Rem}
In dimension one, for $k>0$, if we take the phase function $$\varphi(x,\xi)
:=x\xi-\frac{1}{2}\sin x \cos \xi+ |\xi|^k,$$ 
then one can verify that for $k\geq 1$ 
{the low frequency assumption of \eqref{eq:lf2} holds with $\mu=1$, and for $0<k<1$ with $\mu=k.$}
Moreover, 
$$\Big|\partial^{\alpha}_{\xi} \partial^{\beta}_{x}\Big(x\xi-\frac{1}{2}\sin x \cos \xi+ |\xi|^k \Big) \Big|\lesssim 1,
\quad |\alpha|, |\beta|\geq 1$$
and the \emph{SND}-condition is also satisfied thanks to $$\Big|\partial_{\xi}\partial_{x} \Big(x\xi-\frac{1}{2}\sin x \cos \xi+ |\xi|^k \Big)\Big|\geq 1/2.$$
This together with an amplitude in $S^0_{0,0}(\Rn)$ gives rise to an $L^2$-bounded operator. However, this is not entirely covered by the $L^2$-boundedness results of H\"ormander \cite{Hor2} $($because of lack of homogeneity and also lack of compact support in the $x$-variable$)$ or Fujiwara \cite{Fuj} $($due to lack of smoothness$)$. It is also important to note that the rather strong assumptions on the phase function are needed to deal with the lack of decay in the amplitude $($i.e. an amplitude in $S^0_{0,0}(\Rl^n))$. 
\end{Rem}

\section{Boundedness of low frequency portion}
\label{sect:lowfreq}

The kernel estimate obtained in Lemma \ref{low freq lemma llf 1} can be used to show that the corresponding oscillatory integral operators are bounded in various Banach, as well as quasi-Banach spaces.  Now, as far as the $L^p$-regularity is concerned, the Mikhlin multiplier theorem yields the following boundedness result for operators with amplitudes that are compactly supported in the spatial variables.

\begin{Lem}\label{low freq lemma llf 2}
Let $a_L(x,\xi)\in \mathcal{C}^{\infty}_{c} (\Rl^n \times \Rl^n )$ be an amplitude and assume that  $\varphi(x,\xi)\in \mathcal{C}^{\infty}(\Rl^n \times \Rl^n \setminus \{0\}) $ with 
\begin{equation*}\label{mikhlin condition}
|\partial^{\alpha}_{\xi}\partial^{\beta}_{x} \varphi(x,\xi)|\leq c_{\alpha, \beta} |\xi|^{-|\alpha|},
\qquad|\alpha+\beta| \geq 1, \,\quad\, (x,\xi)\in\supp a_L
\end{equation*}
Then the operator $T_{a_L}^\varphi$ of the form \eqref{eq:OIO} is bounded on $L^p(\Rl^n)$ for $1<p<\infty$.

\end{Lem}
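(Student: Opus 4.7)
The plan is to recast $T_{a_L}^\varphi$ as a variable-coefficient Fourier multiplier and reduce matters to the classical Mikhlin--H\"ormander multiplier theorem. Introducing the reduced symbol
\[
\sigma(x,\xi) := a_L(x,\xi)\, e^{i(\varphi(x,\xi)-x\cdot\xi)},
\]
one has $T_{a_L}^\varphi f(x) = \int_{\Rn} e^{ix\cdot\xi}\,\sigma(x,\xi)\,\widehat f(\xi)\,\ddd\xi$, so that $T_{a_L}^\varphi = \sigma(x,D)$. By construction, $\sigma$ inherits compact support in both $x$ and $\xi$ from $a_L$ and is smooth away from $\xi=0$.

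The first step is the uniform Mikhlin-type symbol estimate
\[
|\partial_\xi^\alpha \partial_x^\beta \sigma(x,\xi)| \leq C_{\alpha,\beta}\,|\xi|^{-|\alpha|}, \qquad \xi\neq 0, \quad (x,\xi)\in \supp a_L.
\]
The hypothesis on $\varphi$ gives $|\partial_\xi^{\alpha'}\partial_x^{\beta'}\varphi(x,\xi)|\lesssim|\xi|^{-|\alpha'|}$ whenever $|\alpha'+\beta'|\geq 1$. Subtracting $x\cdot\xi$ affects only the cases $|\alpha'|+|\beta'|\leq 2$ and produces extra terms that are bounded on $\supp a_L$; applying Fa\`a di Bruno to $e^{i(\varphi-x\cdot\xi)}$ and then Leibniz with $a_L$ yields the stated bound after absorbing bounded contributions into $|\xi|^{-|\alpha|}$ using that $|\xi|$ is bounded above on $\supp a_L$.

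Next I exploit the compact spatial support via Fourier series. After a harmless rescaling one may assume $\supp_x a_L \subset [-\pi,\pi]^n$; pick a cutoff $\chi \in \mathcal{C}_c^\infty([-\pi,\pi]^n)$ equal to one on a neighbourhood of $\supp_x a_L$ and expand
\[
\chi(x)\sigma(x,\xi) = \sum_{k\in\Zn} c_k(\xi)\, e^{i k\cdot x}, \qquad c_k(\xi) := (2\pi)^{-n}\int_{[-\pi,\pi]^n} \chi(x)\,\sigma(x,\xi)\, e^{-ik\cdot x}\,\dd x.
\]
Repeated integration by parts in $x$, based on $(1-\Delta_x)^N e^{-ik\cdot x} = (1+|k|^2)^N e^{-ik\cdot x}$, together with the symbol estimate above yields
\[
|\partial_\xi^\alpha c_k(\xi)| \leq C_{N,\alpha}\, (1+|k|)^{-N}\,|\xi|^{-|\alpha|}, \qquad \xi\neq 0,
\]
for every integer $N\geq 0$ and every multi-index $\alpha$. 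Consequently each $c_k$ is a classical Mikhlin multiplier and the Mikhlin--H\"ormander theorem gives $\|c_k(D)g\|_{L^p(\Rn)} \leq C_{N,p}\,(1+|k|)^{-N}\|g\|_{L^p(\Rn)}$ for every $1<p<\infty$.

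Finally, writing $T_{a_L}^\varphi f(x) = \sum_{k\in\Zn} e^{ik\cdot x}\, c_k(D)f(x)$ and applying the triangle inequality,
\[
\|T_{a_L}^\varphi f\|_{L^p(\Rn)} \leq \sum_{k\in\Zn}\|c_k(D)f\|_{L^p(\Rn)} \lesssim \Big(\sum_{k\in\Zn}(1+|k|)^{-N}\Big)\|f\|_{L^p(\Rn)},
\]
which is finite as soon as $N>n$. The main obstacle is the verification of the Mikhlin-type estimate on $\sigma$, since $\varphi$ may be singular at $\xi=0$ and the hypothesis only controls derivatives of $\varphi$, not $\varphi$ itself; once the Fa\`a di Bruno bookkeeping is done, the remainder of the argument is a routine Fourier-series-plus-Mikhlin scheme.
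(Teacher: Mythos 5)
Your proof is correct and mirrors the paper's argument: both set $\sigma(x,\xi) := a_L(x,\xi)\,e^{i(\varphi(x,\xi)-x\cdot\xi)}$, verify the Mikhlin-type bound $|\partial_\xi^\alpha\partial_x^\beta\sigma|\lesssim|\xi|^{-|\alpha|}$, and then use the compact $x$-support to write $\sigma(x,D)$ as a rapidly decaying superposition of $x$-independent Mikhlin multipliers. The only cosmetic difference is that the paper expands $\sigma$ via the continuous Fourier transform in $x$ and applies Minkowski's integral inequality, while you periodise and use Fourier series with the triangle inequality.
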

\begin{proof}
Set $\sigma(x,\xi):=a_L(x,\xi)\, e^{i(\varphi(x,\xi)-x\cdot \xi)}$ and observe that the condition on the phase function implies that $|\partial^\alpha_{\xi} \partial^\beta_{x} \sigma(x,\xi)|\lesssim |\xi|^{-|\alpha|}.$ Now, we write 
$$T_{a_L}^\varphi f(x)
=\int_{\Rl^n} \sigma(x,\xi)\, e^{ix\cdot\xi}\, \widehat{f}(\xi) \ddd\xi,$$
and
using the fact that $\sigma$ is compactly supported in $x$ we have that for any integer $N>0$
\begin{equation}\label{Fourier rep of T}
T_{a_L}^\varphi f(x)= \int_{\Rl^n} \langle \eta\rangle^{-2N}\,\brkt{\int_{\Rl^n} \langle \eta\rangle^{2N}\,\widehat{\sigma}(\eta,\xi)\, e^{ix\cdot\xi}\,\widehat{f}(\xi) \ddd \xi}\, e^{ix\cdot\eta}  \ddd \eta.
\end{equation}
The compact support of $\sigma(x,\xi)$ also implies that
$$\langle \eta\rangle^{2N}\,\abs{\partial^\alpha_{\xi}\widehat{\sigma}(\eta,\xi)}=\abs{\int_{\Rl^n}e^{-ix\cdot\eta}\, (1-\Delta_{x})^{N} \partial^\alpha_{\xi} \sigma(x,\xi)\dd x}\lesssim |\xi|^{-|\alpha|},$$
uniformly in $\eta$. Now the boundedness of $\langle\eta\rangle^{2N}\,\widehat{\sigma}(\eta,\xi)$ and the estimate above show that the aforementioned function is a Mikhlin multiplier and therefore bounded on $L^p(\Rl^n)$ for $1<p<\infty.$ Therefore, using Minkowski's integral inequality to \eqref{Fourier rep of T}, which is valid for the Banach space scales of $L^p$-spaces, and choosing $N$ large enough yield the desired boundedness.
\end{proof}

The following lemma establishes the local boundedness of the low frequency portion of adjoint operator $\brkt{T_a^\varphi}^*$.
\begin{Lem}\label{Lem:smoothlowfreq}
Let $0<p<1$. Moreover, assume $\varphi(x,\xi) \in \mathcal{C}^\infty(\Rn \times \Rn)$ and
$a_L (x,\xi) \in \mathcal{C}^\infty_c(\Rn \times \Rn)$. Then $\brkt{T_{a_L}^\varphi}^*$ given as in \eqref{eq:adjointOIO} is a bounded operator from $h^p(\Rn)$ to $L^p(\Rn)$.
\end{Lem}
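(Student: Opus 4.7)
The plan is to exploit the fact that $a_L$ is compactly supported in both $x$ and $\xi$, which makes the integral kernel of $(T_{a_L}^\varphi)^*$ extremely regular. Writing
$$\brkt{T_{a_L}^\varphi}^* f(x) = \int_{\Rn} \mathcal{K}(x,y)\, f(y)\,\dd y,\qquad \mathcal{K}(x,y) := \int_{\Rn} e^{ix\cdot\xi - i\varphi(y,\xi)}\,\overline{a_L(y,\xi)} \ddd\xi,$$
the compact $\xi$-support of $a_L$ allows arbitrary integration by parts in $\xi$, so for every multi-index $\beta$ and every $N\in\Z_+$,
$$|\partial_y^\beta \mathcal{K}(x,y)| \lesssim_{N,\beta}\, \langle x\rangle^{-N}\,\1_K(y),$$
where $K$ is the projection of $\supp a_L$ onto the $y$-axis. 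In particular, $\mathcal{K}(x,\cdot)$ is compactly supported and $\mathcal{K}(\cdot,y)$ is Schwartz, with uniform constants.

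Next I would invoke the atomic characterisation from Definition \ref{def:hardyspace}: since $0<p<1$, it suffices to show that $\|(T_{a_L}^\varphi)^* \at\|_{L^p(\Rn)} \lesssim 1$ uniformly for all $p$-atoms $\at$ supported in some ball $B(x_0,r)$, because then
$$\|(T_{a_L}^\varphi)^* f\|_{L^p}^p \le \sum_j |\lambda_j|^p\,\|(T_{a_L}^\varphi)^* \at_j\|_{L^p}^p \lesssim \|f\|_{h^p}^p.$$
I split into the two standard cases. When $r\ge 1$, the atom need not have vanishing moments, but
$$\|\at\|_{L^1(\Rn)} \le |B(x_0,r)|^{1-1/p} \le |B(0,1)|^{1-1/p}<\infty,$$
since $1-1/p<0$. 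The pointwise kernel estimate then yields $|(T_{a_L}^\varphi)^* \at(x)|\lesssim \langle x\rangle^{-N}$, which is in $L^p$ for $N>n/p$.

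For $r<1$, I would set $N_0 := [n(1/p-1)]$ and Taylor expand $\mathcal{K}(x,y)$ in $y$ around $x_0$ up to order $N_0$. The polynomial part is killed by the vanishing moments of $\at$, leaving only the integral remainder
$$\brkt{T_{a_L}^\varphi}^* \at(x) = (N_0+1)\!\!\sum_{|\beta|=N_0+1}\!\int_{B(x_0,r)}\!\frac{(y-x_0)^\beta}{\beta!}\! \int_0^1\!(1-t)^{N_0}\,\partial_y^\beta \mathcal{K}(x,x_0+t(y-x_0))\dd t\,\at(y)\dd y.$$
Using $|y-x_0|\le r$, the uniform bound on $\partial_y^\beta \mathcal{K}$, and $\|\at\|_{L^1}\lesssim r^{n-n/p}$, I obtain
$$|\brkt{T_{a_L}^\varphi}^* \at(x)| \lesssim r^{N_0+1+n-n/p}\,\langle x\rangle^{-N}.$$
Since $N_0+1+n-n/p\ge 0$ by the choice of $N_0$, the $r$-factor is $\le 1$ and taking $L^p$-norms (again with $N>n/p$) gives the uniform bound.

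There is essentially no serious obstacle here: the smoothness of $\varphi$ together with the double compact support of $a_L$ removes any low-frequency singularity and makes the kernel Schwartz in $x$; the remaining steps are the standard atomic argument for $h^p\to L^p$ boundedness. The mildest technical point is ensuring the $y$-uniform bound on $\partial_y^\beta \mathcal{K}(x,z)$ along the Taylor segment, which is immediate since the bound holds uniformly in $z\in\Rn$ by the integration-by-parts argument.
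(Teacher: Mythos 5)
Your argument is correct, but it takes a genuinely different route from the paper's. The paper computes the same kernel (written as $K^*(y,x-y)$), but instead of pointwise bounds it establishes an $L^2_x$-norm estimate $\|(x-y)^\alpha\partial_y^\beta K^*(y,x-y)\|_{L^2_x}\lesssim 1$ by recognising the left side as the Fourier transform of a bounded compactly supported symbol $\rho(y,\cdot)$ and applying Plancherel, and then feeds this into the already-built machinery of Lemma~\ref{lem:Miyachistuff} (taking $T_0=(T_{a_L}^\varphi)^*$, $T_j=0$ for $j\ge1$, and using the easy $L^2$-boundedness of $T_0$). That lemma internally runs a H\"older/weight argument of Seeger--Sogge--Stein type, which is built for kernels that are only $L^2$-nice, and here it is applied in its degenerate one-term form. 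You instead exploit the full strength of the hypotheses --- $a_L$ compactly supported in \emph{both} $x$ and $\xi$, $\varphi$ globally smooth --- to get uniform pointwise Schwartz decay $|\partial_y^\beta\mathcal K(x,y)|\lesssim_N\langle x\rangle^{-N}\1_K(y)$ by direct integration by parts in $\xi$, and then close with the bare-hands atomic argument (the $r\ge1$ case via $\|\at\|_{L^1}\lesssim1$, the $r<1$ case via Taylor expansion and vanishing moments, as in the paper's Proposition~\ref{Prop:ThpLp} for the direct operator). Your route is more elementary and self-contained; it avoids invoking $L^2$-boundedness as a separate input and sidesteps Lemma~\ref{lem:Miyachistuff} entirely, at the price of being specific to this highly regular low-frequency situation, whereas the paper's $L^2_x$-kernel formulation keeps the proof in the same mold as the rougher high-frequency cases handled elsewhere in Section~\ref{Sect:kernel estimates}. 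One small presentational note: when you remark that ``the mildest technical point is ensuring the $y$-uniform bound on $\partial_y^\beta\mathcal K(x,z)$ along the Taylor segment,'' you could say directly that the bound holds for \emph{all} $z\in\mathbb R^n$ because $\mathcal K(x,z)\equiv 0$ whenever $z$ lies outside the $y$-projection of $\supp a_L$, so there is nothing to check.
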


\begin{proof}
Set  $\sigma (y,\xi) := \overline{a_L(y,\xi)}\,e^{-i\varphi(y,\xi) + i y \cdot \xi}$ and
consider the kernel of $\brkt{T_{a_L}^\varphi}^*$ 
\begin{align*}
K^\ast(y,x-y)
&=\int_{\Rl^n} \overline{a_L(y,\xi)}\,e^{-i(\varphi(y,\xi)-y\cdot \xi-(x-y)\cdot\xi)}\ddd\xi = \int_{\Rl^n} \sigma (y,\xi) \,e^{i(x-y)\cdot\xi} \ddd\xi.
\end{align*}
Leibniz's rule and integration by parts yield
\begin{align*}
(x-y)^\alpha \partial^{\beta}_{y} (K^\ast(y,x-y)) 
& = (x-y)^\alpha \int_{\Rl^n}  \partial^{\beta}_{y} (\sigma (y,\xi) \,e^{i(x-y)\cdot\xi}) \ddd\xi \\
& =\sum_{\substack{\alpha_1 +\alpha_2 =\alpha \\ \beta_1 +\beta_2 =\beta \\  \beta_2 \geq \alpha_2}} 
C_{\alpha,\beta} \int_{\Rl^n} \partial^{\alpha_1}_{\xi}\partial^{\beta_1}_y\sigma(y,\xi)\, \xi^{\beta_2 -\alpha_2} \, e^{i(x-y)\cdot.\xi} \, \ddd\xi \\
& = (\rho(y,\cdot))^\wedge (y-x),
\end{align*}
where
$$\rho(y,\xi)
:= \sum_{\substack{\alpha_1 +\alpha_2 =\alpha \\ \beta_1 +\beta_2 =\beta \\  \beta_2 \geq \alpha_2}} 
C_{\alpha,\beta}\, \partial^{\alpha_1}_{\xi}\partial^{\beta_1}_y\sigma(y,\xi)\,  \xi^{\beta_2 -\alpha_2} .$$
Therefore, Plancherel's formula yields that
\eq{
\norm{ (x-y)^\alpha \partial^{\beta}_{y} (K^\ast(y,x-y))}_{L^2_x(\Rl^n)}
&=\norm{ \rho(y,\cdot) }_{L^{2}_{\xi}(\Rl^n)}
 \lesssim \norm{\rho (y,\cdot)} _{L^\infty_\xi(\Rl^n)}
 \lesssim 1.
}

{Hence Lemma \ref{lem:Miyachistuff} can be applied} with 
$T_0:={(T_{a_L}^\varphi)}^*$ and $T_j:=0$, $j \geq 1$, since by Theorem \ref{thm:fujiwara}, $T_0$ is also bounded on $L^2(\Rl^n)$ 
and has an integral representation 
$$\int_{\Rl^n} K ^*(y, x-y) f(y) \dd y$$
with 
\begin{align*}
K^*(y,x-y)
& = \int_{\Rl^n} \overline{a_L (y,\xi)}\, e^{-i\varphi(y,\xi)+ (x-y)\cdot \xi} \, \ddd \xi.  \qedhere
\end{align*}
\end{proof}

Next we prove the main result concerning the regularity of the low frequency portions of oscillatory integral operators.
\begin{Lem}\label{Lem:lowfreq}
Assume that $\psi_0(\xi)\in \mathcal{C}_{c}^{\infty}(\Rl^n)$ is a smooth cut-off function supported in a neighborhood of the origin as in \emph{Definition \ref{def:LP}}, $a(x,\xi)\in S^m_{0,0}(\Rl^n)$ for some $m\in \Rl$,   $a_L (x,\xi) := \psi_0(\xi)\,a(x,\xi)$ and let $\varphi(x,\xi)$ be a phase function. Finally let the operator $T_{a_L}^\varphi$ be defined as in \eqref{eq:OIO}. Then the following statements hold:
\begin{enumerate}
\item[$i)$] If either \eqref{eq:lf1} or \eqref{eq:lf2} holds, then
\begin{equation*}
\Vert T_{a_L}^\varphi f\Vert_{L^\infty (\Rl^n)}\lesssim \Vert f\Vert_{\bmo(\Rl^n)}.
\end{equation*}
 \item[$ii)$] Assume that  $\varphi(x,\xi)$ satisfy the \emph{LF}$(\mu)$-condition \eqref{eq:LFmu} for $0<\mu\leq1$ and that $n/(n+\mu)< p\leq \infty$. Then for any $s_1, s_2\in (-\infty, \infty)$, and $q_1, q_2 \in (0,\infty]$ one has 
\begin{equation*}
\Vert T_{a_L}^\varphi f\Vert_{B_{p,q_2}^{s_2}(\Rl^n)}\lesssim \Vert f\Vert_{B_{p,q_1}^{s_1}(\Rl^n)}.
\end{equation*}
\item[$iii)$] Assume that $\varphi(x,\xi)$ satisfy the \emph{LF}$(\mu)$-condition \eqref{eq:LFmu}  for $0<\mu\leq1$ and that $a(x,\xi)$ has compact support in the $x$-variable. Then for any $s_1, s_2\in (-\infty, \infty)$, and $p, q_1, q_2 \in (0,\infty]$
\begin{equation*}
\Vert T_{a_L}^\varphi f\Vert_{B_{p,q_2}^{s_2}(\Rl^n)}\lesssim \Vert f\Vert_{B_{p,q_1}^{s_1}(\Rl^n)}.
\end{equation*}
\end{enumerate}
Moreover, all the Besov-Lipschitz estimates above can be replaced by the corresponding Triebel-Lizorkin estimates. 
\end{Lem}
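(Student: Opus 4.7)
The plan is to reduce all three parts to estimates for $u := T_{a_L}^\varphi g$, where $g := \Psi(D)f$ and $\Psi \in \mathcal{C}_c^\infty(\Rl^n)$ equals $1$ on $\supp_\xi a_L \subset B(0,2)$. Since $a_L$ already imposes a $\xi$-cutoff, one has $T_{a_L}^\varphi f = T_{a_L}^\varphi g$. Because $\Psi(D)$ retains only the first few Littlewood--Paley blocks of $f$, the majorisations $\|g\|_{L^p} \lesssim \|f\|_{B^{s_1}_{p,q_1}}$ (for parts $ii)$ and $iii)$) and $\|g\|_{L^\infty} \lesssim \|f\|_{\bmo}$ (for part $i)$) are immediate from the definitions. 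Part $i)$ then follows at once from Lemma \ref{low freq lemma llf 1}: the bound $|K(x,y)| \lesssim \langle x-y\rangle^{-n-\varepsilon\mu}$ (any $\varepsilon \in (0,1)$) is integrable in $y$ uniformly in $x$, and Schur's test gives $\|u\|_{L^\infty} \lesssim \|g\|_{L^\infty}$.

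For parts $ii)$ and $iii)$ the whole argument is driven by the pointwise estimate
\[
|\psi_j(D) u(x)| \lesssim 2^{-jN}\bigl(M(|g|^r)(x)\bigr)^{1/r}, \qquad j \geq 1,
\]
valid for every $N \in \Z_+$ and every $r \in (n/(n+\varepsilon\mu),1]$. To establish it, I would first observe that differentiating $K(x,y)$ in $x$ preserves the kernel bound of Lemma \ref{low freq lemma llf 1}: under the LF$(\mu)$-condition, all $x$-derivatives of $a_L(x,\xi)$ and $\varphi(x,\xi)$ remain bounded on the compact $\xi$-support of $a_L$, so exactly the same integration-by-parts scheme yields $|\partial^\beta_x K(x,y)| \lesssim \langle x-y\rangle^{-n-\varepsilon\mu}$ for every multi-index $\beta$. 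Then I would write
\[
\psi_j(D) u(x) = \iint \check\psi_j(x-z)\, K(z,y)\, g(y)\, dz\, dy
\]
and exploit the infinitely many vanishing moments of $\check\psi_j$ (for $j \geq 1$): subtracting the order-$N$ Taylor polynomial of $K(\cdot,y)$ at $z=x$ leaves only the integral remainder, whose $|z-x|^{N+1}$ factor couples with the scaling $\check\psi_j(z) = 2^{jn}\check\psi(2^j z)$ to produce the gain $2^{-j(N+1)}$, while the derivative estimate on $K$ and Peetre's inequality (Lemma \ref{grafakos lemma 1} applied to $g$, which has Fourier support in a fixed compact set) yield the displayed maximal-function bound.

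Both function-space norms follow at once from this pointwise estimate. For Besov--Lipschitz, summing $2^{jq_2 s_2}\|\psi_j(D)u\|_{L^p}^{q_2}$ uses the $L^{p/r}$-boundedness of $M$ to convert the right-hand side to $\|g\|_{L^p}$ once $N$ is chosen larger than $|s_2|$; for Triebel--Lizorkin, one first takes $\ell^{q_2}$ in $j$ (which collapses to a geometric series in $N$) before the $L^p$-norm. The $L^{p/r}$-boundedness of $M$ requires $r < p$, which combined with $r > n/(n+\varepsilon\mu)$ and $\varepsilon < 1$ forces exactly $p > n/(n+\mu)$ as required in part $ii)$. In part $iii)$, the compact $x$-support of $a$ permits additional integrations by parts in $\xi$ producing kernel decay $\langle x-y\rangle^{-M}$ for every $M$; the Peetre step then becomes trivial and the restriction on $p$ disappears entirely.

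The hardest point is the quasi-Banach regime $p < 1$, where Fourier-multiplier and Young-inequality arguments are unavailable; it is precisely there that the vanishing-moments decomposition of $\psi_j(D)T_{a_L}^\varphi$ against Taylor expansions of the kernel, combined with the derivative bound for $K$ and the exact range of exponents allowed by Peetre's lemma, becomes essential.
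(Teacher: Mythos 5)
Your treatment of parts $i)$ and $ii)$ takes a genuinely different route from the paper's, and it essentially works. The paper's proof uses a double integration by parts in the representation $\psi_j(D)T^\varphi_{a_L}f(x)=\iint e^{i(x-y)\cdot\xi}\psi_j(\xi)T^\varphi_{a_L}f_0(y)\,dy\,\ddd\xi$: it applies $L_y=1-\Delta_y$ to transfer $\langle\xi\rangle^{-2N_1}$ onto $\psi_j$ (giving the $2^{-2jN_1}$ gain) and $L_\xi=1-\Delta_\xi$ to produce $\langle x-y\rangle^{-2N_2}$, and then runs Lemma~\ref{low freq lemma llf 1} on the Leibniz-modified operator $T^\varphi_\sigma$. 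You instead pair $\check\psi_j$ directly against a Taylor expansion of $K(\cdot,y)$, exploiting the vanishing moments of $\check\psi_j$ for $j\geq 1$; the $j$-gain then comes from the $|z-x|^{N+1}$ remainder against the $2^{-j}$-scale of $\check\psi_j$, and the $x$-localisation from the kernel decay of Lemma~\ref{low freq lemma llf 1}. The one step you should not gloss over is $|\partial_x^\beta K(x,y)|\lesssim\langle x-y\rangle^{-n-\varepsilon\mu}$. This does hold, but not "trivially": after Leibniz and Fa\`a di Bruno the new amplitude is a sum of products $\partial_x^{\beta_1}a_L\cdot\prod_k\partial_x^{\gamma_k}\varphi$, and one must verify that each such product satisfies \eqref{eq:lf1} or \eqref{eq:lf2} under the LF($\mu$)-condition before re-invoking Lemma~\ref{low freq lemma llf 1}.

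However, part $iii)$ contains a genuine error. You claim that the compact $x$-support of $a$ "permits additional integrations by parts in $\xi$ producing kernel decay $\langle x-y\rangle^{-M}$ for every $M$." This is false: the obstruction to integrating by parts in $\xi$ is the singularity of the phase at $\xi=0$ (the LF($\mu$)-condition only gives $|\partial_\xi^\alpha(\varphi-x\cdot\xi)|\lesssim|\xi|^{\mu-|\alpha|}$, which is not integrable for large $|\alpha|$), and this singularity is completely insensitive to whether $a$ has compact $x$-support. Compact $x$-support only restricts \emph{where} $K(x,y)$ is nonzero in $x$; for $x$ in that support, the decay in $y$ remains $\langle x-y\rangle^{-n-\varepsilon\mu}$ and nothing better. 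The mechanism the paper actually uses to lift the restriction on $p$ in $iii)$ is entirely different: the function $T^\varphi_\sigma f_0$ has compact spatial support $\mathcal{K}$, so after convolving with the $\langle\cdot\rangle^{-2N_2}$ weight coming from the $L_\xi$-integration by parts one gets a \emph{global} spatial decay $\langle x\rangle^{-2N_2}$ for the output, whose $L^p$-norm is finite for every $p>0$; the remaining factor $\int_{\mathcal{K}}(M(|f_0|^r))^{1/r}$ is controlled by $\|f_0\|_{L^\infty}$, and Bernstein's inequality (Lemma~\ref{lem:bernstein}) converts $\|f_0\|_{L^\infty}\lesssim\|f_0\|_{L^p}$ for any $p>0$, since $f_0$ is frequency-localised. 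Your argument as written does not reach $p\leq n/(n+\mu)$ in part $iii)$, and the attempted shortcut does not close this gap.
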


\begin{proof}[Proof of \emph{Lemma \ref{Lem:lowfreq}}, $i)$]
 We are going to show that 
${(T_{a_L}^\varphi)}^*:L^1(\Rl^n)\to h^1(\Rl^n)$. By 
Lemma \ref{low freq lemma llf 1} and the definition of the $h^1$-space (regarded as the Triebel-Lizorkin space $F^{0}_{1,2}$) we obtain
\eq{
\norm{{(T_{a_L}^\varphi)}^*f}_{h^1(\Rn)} 
&= \Big\| \Big( \sum_{j=0}^\infty \abs{\psi_j(D){(T_{a_L}^\varphi)}^*f}^2 \Big)^{1/2}\Big\|_{L^1{(\Rn)}} 
{\sim} \norm{{(T_{a_L}^\varphi)}^*f }_{L^1{(\Rn)}} \\
&\lesssim \norm{f }_{L^1{(\Rn)}}, \qedhere }
where we have also used the fact that $\psi_j(D){(T_{a_L}^\varphi)}^*= {(T_{a_L}^\varphi \psi_j(D))}^*=0$ when $j\geq 1$ and used the kernel estimate in Lemma \ref{low freq lemma llf 1} to deal with the last $L^1$--estimate.
\end{proof}

\begin{proof}[Proof of \emph{Lemma \ref{Lem:lowfreq}}, $ii)-iii)$]
Assume that $f_0=\Psi_0(D) f$ where $\Psi$ is a smooth cut-off function that is equal to one on the support of $\psi_0$ so that $T_{a_L}^\varphi f = T_{a_L}^\varphi f_0$. Define the self-adjoint operators
\begin{equation*}
L_\xi := 1-\Delta_\xi, \quad L_y := 1-\Delta_y,
\end{equation*}
and note that
\begin{equation*}
\jap{\xi}^{-2} L_y \,e^{i( x-y)\cdot\xi } =  \jap{x-y}^{-2}
L_\xi\, e^{i( x-y)\cdot\xi } =e^{i( x-y)\cdot\xi }.
\end{equation*}
Take integers $ N_1$ and $ N_2$ large enough. Integrating by parts, we have
\eq{
&\psi_j(D)T_{a_L}^{\varphi}f(x) = \iint_{\Rl^n\times \Rl^n } e^{i( x-y)\cdot\xi }\, \psi_j(\xi)\,T_{a_L}^\varphi f_0(y)\dd y \ddd \xi  \\ &\qquad =  \iint_{\Rl^n\times \Rl^n}  \jap{\xi}^{-2N_1} L_y^{N_1} \left (\jap{x-y}^{-2N_2}\, L_\xi^{N_2} e^{i( x-y)\cdot\xi }\right ) \psi_j(\xi)\,T_{a_L}^\varphi f_0(y)\dd y \ddd \xi  \\ & \qquad =
\iint_{\Rl^n\times \Rl^n }     e^{i( x-y)\cdot\xi }   L_\xi^{N_2} \left (\jap{\xi}^{-2N_1}{\psi_j(\xi)}\right ) \,\jap{x-y}^{-2N_2}\, L_y^{N_1} T_{a_L}^\varphi  f_0(y)\dd y \ddd \xi.
}

Since $\psi_j$ is supported on an annulus of size $2^j$ one has
\eq{
\int_{\Rl^n} \Big| L_\xi^{N_2} \jap{\xi}^{-2N_1}\,\psi_j(\xi) \Big| \ddd \xi 
&\lesssim \sum_{|\alpha|\leq 2N_2} \int_{|\xi| \sim 2^j}  \Big|\partial_\xi^\alpha \Big (\la \xi \ra ^{-2N_1}\,\psi_j(\xi) \Big ) \Big|\ddd \xi
\\  
&\lesssim 2^{jn} \sum_{|\alpha|\leq 2N_2} 2^{-j(2N_1+|\alpha|)} 
\lesssim  2^{j(n-2N_1)}.
}
Also, applying Leibniz's rule and Fa\`a di Bruno's formulae we have that
\eq{
 L_y^{N_1} T_{a_L}^\varphi f(y)
 &= \int_{\Rl^n} L_y^{N_1} \big(a_L(y,\eta)\, e^{i\varphi(y,\eta)}\big)\, \widehat {f_0}(\eta) \ddd   \eta \\
 &= \int_{\Rl^n}
 \sigma(y,\eta)\, e^{i\varphi(y,\eta)}\, \widehat{f_0}(\eta) \ddd \eta
=:T_\sigma^\varphi  f_0(y),}
with 
\nm{eq:sigma}{
\sigma(y,\eta) := \sum_{|\alpha|\leq 2N_1 }\sum_{1\leq |\beta|\leq 2N_1 } \sum_{\ell\leq 2N_1}  C_{\alpha,\beta, \ell} \,\partial_y^\alpha  a_L(y,\eta) \big (\partial_y^\beta \varphi(y,\eta) \big)^\ell .
}

Thus, we have
\begin{equation}\label{pointwise estimate for LP piece}
\left |\psi_j(D)T_{a_L}^\varphi f(x) \right | \lesssim
2^{j(n-2N_1)} \Big (\la \cdot \ra ^{-2N_2} * \left |T_\sigma^\varphi f_0 \right | \Big )(x).
\end{equation}

Using the LF($\mu$) assumption one has
\eq{
\abs{\partial_\eta^\alpha\partial_y^\beta \varphi(y,\eta)} \lesssim 
\abs\eta^{\mu-\abs\alpha},
}
for $\abs\alpha\geq0$, $\abs\beta\geq1$. 
The terms of \eqref{eq:sigma} where $\ell=0$ are bounded by $1$ and the terms where $\ell\geq 1$ are bounded by $\abs\eta^{\mu-\abs\alpha}$.

Hence Lemma \ref{low freq lemma llf 1}, using both \eqref{eq:lf2} ($\ell=0$) and \eqref{eq:lf1} ($\ell\geq 1$), yields that for all $0 < \varepsilon <1$ the kernel of $T^\varphi_\sigma$ satisfies the estimate
\begin{equation*}
|K(x,y)| \lesssim \la x-y\ra^{-n-\varepsilon\mu}.
\end{equation*}

Now it follows from \eqref{pointwise estimate for LP piece}, the kernel estimate above and Lemma \ref{grafakos lemma 1} with \linebreak$ r>n/(n+\mu)$ that
\begin{equation}\label{punktvis lagfrek 1}
\begin{split}
\left |\psi_j(D)T_{a_L}^\varphi f(x) \right |
&\lesssim 2^{j(n-2N_1)} \int_{\Rl^n} \brkt{\int_{\Rl^n} \langle x-z\rangle^{-2N_2} \,\langle z-y\rangle^{-n-\varepsilon \mu} \dd z}\, |f_0(y)|\dd y\\
&\lesssim 2^{j(n-2N_1)} \int_{\Rl^n} \langle x-y\rangle^{-n-\varepsilon\mu} \, |f_0(y)|\dd y \\
& \lesssim  2^{j(n-2N_1)} \Big( M \left (|f_0|^r \right)(x)\Big)^{1/r} .
\end{split}
\end{equation}

This yields that for $r<p\leq \infty$ one has
\begin{equation}\label{jaevla namn 3}
\Vert \psi_j(D)T_{a_L}^\varphi f \Vert_{L^{p}(\Rl^n)}
 \lesssim 2^{j(n-2N_1)} \norm{f_0}_{L^p(\Rl^n)}.
\end{equation}
In case $iii)$ we would also like to extend \eqref{jaevla namn 3} to the range $0<p\leq\infty$ when $a(x,\xi)$ has compact support in $x$. If $ \mathcal{K}:=\supp_{y}\sigma(y,\eta)$, then since $f_0$ is frequency localised, Lemma \ref{grafakos lemma 1} and Peetre's inequality yield that for $r> n/(n+\mu)$, we have the pointwise estimate
\begin{equation}\label{punktvis lagfrek 2}
\begin{split}
\left |\psi_j(D)T_{a_L}^\varphi f(x) \right |
&\lesssim 2^{j(n-2N_1)}  
\Big(\la \cdot \ra ^{-2N_2} * \chi_{\mathcal{K}}\,\Big( M \left (|f_0|^r \right)\Big)^{1/r}\Big)(x)\\
&\lesssim 2^{j(n-2N_1)} \la x \ra ^{-2N_2}  \int_{\mathcal{K}} \Big( M \left (|f_0|^r \right)(y)\Big)^{1/r}\dd y,
\end{split}
\end{equation}
where $\chi_{\mathcal{K}}$ is the characteristic function of ${\mathcal{K}}$.
Now taking the $L^p$-norm, choosing $N_2$ large enough, using the $L^\infty$-boundedness of the Hardy-Littlewood maximal operator, and finally using Lemma \ref{lem:bernstein}, we obtain for $0<p\leq \infty$
\begin{equation}\label{jaevla namn 2}
\begin{split}
\Vert \psi_j(D)T_{a_L}^\varphi f \Vert_{L^{p}(\Rl^n)}
 &\lesssim  2^{j(n-2N_1)}\Vert |f_0|^r \Vert_{L^{\infty}(\Rl^n)}^{1/r}
 \lesssim 2^{j(n-2N_1)} \norm{f_0}_{L^\infty(\Rl^n)}\\ 
 &\lesssim 2^{j(n-2N_1)} \norm{f_0}_{L^p(\Rl^n)}.
\end{split}
\end{equation}
Thus, \eqref{jaevla namn 3} and \eqref{jaevla namn 2} yield for $N_1$ large enough
\begin{align*}
&\norm{T_{a_L}^\varphi f}_{B_{p,q_2}^{s_2}(\Rl^n)} 
= \Big(\sum_{j=0}^\infty 2^{js_2q_2}\norm{\psi_j(D)T_{a_L}^\varphi f}_{L^p(\Rl^n)}^{q_2} \Big)^{1/q_2} \\
& \qquad \lesssim \Big(\sum_{j=0}^\infty 2^{jq_2(s_2+n-2N_1)} \norm{f_0}_{L^{p}(\Rl^n)}^{q_2} \Big)^{1/q_2} 
 =  \norm{f_0}_{L^{p}(\Rl^n)}
\Big(\sum_{j=0}^\infty 2^{jq_2(s_2+n-2N_1)} \Big)^{1/q_2}\\
& \qquad \lesssim \norm{ f_0}_{L^{p}(\Rl^n)}\lesssim \norm{ f}_{B_{p,q_1}^{s_1}(\Rl^n)}.
\end{align*}
In the case of boundedness in Triebel-Lizorkin spaces for $ii)$, we use \eqref{punktvis lagfrek 1} and the assumption that $p>r>n/(n+\mu)$ which yield for $N_1$ large enough that
\begin{equation*}
\begin{split}
\norm{T_{a_L}^\varphi f}_{F_{p,q_2}^{s_2}(\Rl^n)} &= \Big\|\Big(\sum_{j=0}^\infty 2^{js_2q_2}\abs{\psi_j(D)T_{a_L}^\varphi f}^{q_2} \Big)^{1/q_2}\Big\|_{L^p(\Rl^n)} \\ 
& \lesssim \Big\|\Big(\sum_{j=0}^\infty 2^{js_2q_2} \Big|2^{j(n-2N_1)} \Big( M (|f_0|^r )\Big)^{1/r}\Big|^{q_2} \Big)^{1/q_2}\Big\|_{L^p(\Rl^n)}\\ 
& \lesssim  \Big(\sum_{j=0}^\infty 2^{jq_2(s_{2} + n -2 N_1)}\Big)^{1/q_2} 
\Big\| \Big( M \left (|f_0|^r \right)\Big)^{1/r} \Big\|_{L^p(\Rl^n)}\\
& \lesssim \norm{ f_0}_{L^{p}(\Rl^n)}
\lesssim \norm{ f}_{F_{p,q_1}^{s_1}(\Rl^n)}.
\end{split}
\end{equation*}
In the case of boundedness in Triebel-Lizorkin spaces for $iii)$, we use \eqref{punktvis lagfrek 2} and Lemma \ref{lem:bernstein}  to see that for all $p>0$ one has
\begin{equation*}
\begin{split}
\norm{T_{a_L}^\varphi f}_{F_{p,q_2}^{s_2}(\Rl^n)} &= \Big\|\Big(\sum_{j=0}^\infty 2^{js_2q_2}\abs{\psi_j(D)T_{a_L}^\varphi f}^{q_2} \Big)^{1/q_2}\Big\|_{L^p(\Rl^n)}\\ 
& \lesssim  \Big(\sum_{j=0}^\infty 2^{jq_2(s_{2} + n -2 N_2 )}\Big)^{1/q_2} \norm{ \langle \cdot\rangle^{-2N_2}}_{L^p(\Rl^n)} \int_{\mathcal{K}} \Big( M  (|f_0|^r) (x)\Big)^{1/r}\dd x\\
& \lesssim \norm{ f_0}_{L^{\infty}(\Rl^n)}
\lesssim \norm{ f}_{F_{p,q_1}^{s_1}(\Rl^n)},
\end{split}
\end{equation*}
by choosing $N_2$ large enough.
\end{proof}

\begin{Rem}
Note that the type of the phase $($i.e. the $\mu$ in the \emph{LF}$(\mu)$-condition \eqref{eq:LFmu}$)$ enters the picture only at the level of quasi-Banach boundedness of the oscillatory integral operators.
\end{Rem}

\section{Boundedness of middle frequency portion}
\label{sect:midfreq}
In this section we show that for the portion of the operator where the frequency support of the amplitude is bounded below, away from the origin and also bounded from above by a fixed $R\gg 1$, then the middle portion of the operator is bounded on Besov-Lipschitz and Triebel-Lizorkin spaces, as the following lemma shows:\\

\begin{Lem}\label{Lem:midfreq}
Assume that $\psi_0(\xi)\in \mathcal{C}_{c}^{\infty}(\Rl^n)$ is a smooth cut-off function supported in a neighborhood of the origin as in \emph{Definition \ref{def:LP}}, $a(x,\xi)\in S^m_{0,0}(\Rl^n)$ for some $m\in \Rl$,   $a_M (x,\xi) := (\psi_0(\xi/R)-\psi_0(\xi))\,a(x,\xi)$ for some $R>1$ and let $\varphi(x,\xi)$ be a phase function satisfying the $\textart F^k$-condition.
Finally let the operator $T_{a_M}^\varphi$ be defined as in \eqref{eq:OIO}. Then the following statements hold:
\begin{enumerate}
\item[$i)$] $T_{a_M}^\varphi$ satisfies
\begin{equation*}
\Vert T_{a_M}^\varphi f\Vert_{L^\infty (\Rl^n)}\lesssim \Vert f\Vert_{\bmo(\Rl^n)}.
\end{equation*}
 \item[$ii)$] Assume that $\partial^\beta_x \varphi(x,\xi) \in L^\infty(\mathbb{R}^n\times \mathbb S^{n-1})$, for any $|\beta| \geq 1$ and that $\varphi$ satisfy the $L^2$-condition \eqref{eq:L2 condition}. Then for any $s_1, s_2\in (-\infty, \infty)$ and $p,q_1, q_2 \in (0,\infty]$ one has 
\begin{equation*}
\Vert T_{a_M}^\varphi f\Vert_{B_{p,q_2}^{s_2}(\Rl^n)}\lesssim \Vert f\Vert_{B_{p,q_1}^{s_1}(\Rl^n)}.
\end{equation*}
\item[$iii)$] Assume that $a(x,\xi)$ has compact support in the $x$-variable and that $\varphi(x,\xi)$ satisfies the $L^2$-condition \eqref{eq:L2 condition}.  Then for any $s_1, s_2\in (-\infty, \infty)$, and $p, q_1, q_2 \in (0,\infty]$
\begin{equation*}
\Vert T_{a_M}^\varphi f\Vert_{B_{p,q_2}^{s_2}(\Rl^n)}\lesssim \Vert f\Vert_{B_{p,q_1}^{s_1}(\Rl^n)}.
\end{equation*}
\end{enumerate}
Moreover, all the Besov-Lipschitz estimates above can be replaced by the corresponding Triebel-Lizorkin estimates.
\end{Lem}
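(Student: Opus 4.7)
My plan is to adapt the three-part structure of Lemma 6.3 to the middle-frequency setting, exploiting the fact that the $\xi$-support of $a_M$ is confined to an annulus $\{1/2 \leq |\xi| \leq 2R\}$. This removes any singularity at the origin and, crucially, allows us to extract kernel decay of \emph{arbitrary} polynomial order, which is stronger than the $\langle x-y\rangle^{-n-\varepsilon\mu}$ decay available in the low-frequency case. The first step is thus a preliminary kernel estimate: writing
$$K_M(x,y) = \int_{\Rn} e^{i(x-y)\cdot\xi} \, \tau(x,\xi) \ddd\xi, \qquad \tau(x,\xi) := a_M(x,\xi)\, e^{i(\varphi(x,\xi)-x\cdot\xi)},$$
the $\textart F^k$-condition gives $|\partial^\alpha_\xi(\varphi(x,\xi)-x\cdot\xi)| \lesssim |\xi|^{k-|\alpha|}$, which on the annular support is simply a constant depending on $R$. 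Combined with $a \in S^m_{0,0}$ and Fa\`a di Bruno's formula, one gets $|\partial^\alpha_\xi \tau(x,\xi)| \lesssim 1$ uniformly in $x$, so $N$-fold integration by parts produces $|K_M(x,y)| \lesssim \langle x-y\rangle^{-N}$ for every $N$. The same estimate holds for the adjoint kernel.

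For part $i)$ I will use duality: it suffices to show $(T_{a_M}^\varphi)^*:L^1(\Rn)\to h^1(\Rn)$. Using the Triebel-Lizorkin characterisation $h^1 = F^0_{1,2}$ and the fact that $\psi_j(\xi)a_M(y,\xi)=0$ once $2^{j-1}>2R$, only finitely many Littlewood-Paley blocks contribute, so the square function reduces to a finite sum and is pointwise controlled by $|(T_{a_M}^\varphi)^* f|$ plus a few similar terms; each is $L^1$-bounded by Schur's lemma applied to the decaying kernel above, which closes the argument and yields $T_{a_M}^\varphi:\bmo\to L^\infty$ by $(h^1)^*=\bmo$.

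For parts $ii)$ and $iii)$ I will mimic the proof of Lemma 6.3 $ii)$--$iii)$. Pick $\Psi\in\mathcal C^\infty_c$ equal to one on $\supp_\xi a_M$, write $T_{a_M}^\varphi f = T_{a_M}^\varphi f_0$ with $f_0:=\Psi(D)f$, and use $L_\xi=1-\Delta_\xi$, $L_y=1-\Delta_y$ to integrate by parts, producing the same pointwise estimate
$$|\psi_j(D)T_{a_M}^\varphi f(x)|\lesssim 2^{j(n-2N_1)}\big(\langle\cdot\rangle^{-2N_2} * |T^\varphi_\sigma f_0|\big)(x)$$
with $\sigma(y,\eta)=\sum C_{\alpha,\beta,\ell}\,\partial_y^\alpha a_M(y,\eta)\,(\partial_y^\beta\varphi(y,\eta))^\ell$. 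The key verification is that $\sigma$ satisfies the same hypotheses as $\tau$ above, so that the preliminary kernel estimate applies to $T^\varphi_\sigma$. The nontrivial point lies in bounding $\partial_y^\beta\varphi(y,\eta)$ uniformly on $\supp a_M$: under assumption $ii)$ the hypothesis $\partial^\beta_x\varphi(x,\omega)\in L^\infty(\mathbb R^n\times\mathbb S^{n-1})$ only gives control on the sphere, which I will extend to $|\eta|\in[1,2R]$ via the fundamental theorem of calculus
$$\partial^\beta_x\varphi(x,\xi)=\partial^\beta_x\varphi\Big(x,\tfrac{\xi}{|\xi|}\Big)+\int_1^{|\xi|}\tfrac{\xi}{|\xi|}\cdot\nabla_\xi\partial^\beta_x\varphi\Big(x,s\tfrac{\xi}{|\xi|}\Big)ds,$$
using the $L^2$-condition \eqref{eq:L2 condition} to control the integrand; under assumption $iii)$ compact $x$-support and smoothness of $\varphi$ away from the origin give the same bounds trivially.

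With these uniform bounds, $T^\varphi_\sigma$ has a kernel decaying faster than any polynomial, so the argument from Lemma 6.3 applies directly: Peetre's inequality (Lemma 2.10) with $r$ chosen below any given $p$ in the Triebel-Lizorkin estimate, Bernstein's Lemma 2.11 in case $iii)$ to convert the spatially localised factor into an $L^\infty$ bound, and summation in $j$ with $N_1$ large enough allow $p,q_1,q_2\in(0,\infty]$ throughout, in both the Besov-Lipschitz and Triebel-Lizorkin scales. The principal obstacle is the phase-bound extension described above; everything else is a routine transcription of the low-frequency argument, simplified by the stronger kernel decay afforded by the compact annular $\xi$-support.
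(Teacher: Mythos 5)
Your proposal is correct and follows essentially the same route as the paper: extract arbitrary polynomial kernel decay from the annular frequency support and the $\textart F^k$-condition by integration by parts, then rerun the Lemma 6.3 machinery with the only non-trivial new ingredient being a uniform bound on $\partial_y^\beta\varphi(y,\eta)$ for $\eta$ in the annulus. Your fundamental-theorem-of-calculus extension of the sphere bound to the annulus is the same argument as the paper's mean-value-theorem step (with $\eta_0=\eta/|\eta|$), just written in integral form, and your observation that the stronger kernel decay removes the restriction $p>n/(n+\mu)$ is exactly what the paper uses to obtain all $p,q_1,q_2\in(0,\infty]$.
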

\begin{proof}
The proof is similar to that of Lemma \ref{Lem:lowfreq}. The only difference is that we cannot use any of \eqref{eq:LFmu}, \eqref{eq:lf1} or \eqref{eq:lf2} to obtain kernel estimates. Instead we observe that for any $N\geq0$
\nm{eq:midfreqkernelestimate}{
\abs{K(x,y)} &= \Big | \int_{\Rn}e^{i(x-y)\cdot\xi}\,e^{i\varphi(x,\xi)-ix\cdot\xi}\,a_M(x,\xi) \ddd\xi\Big| \\&= \Big | \jap{x-y}^{-2N}\int_{\Rn}e^{i(x-y)\cdot\xi}\,(1-\Delta_\xi)^N\,e^{i\varphi(x,\xi)-ix\cdot\xi}\,a_M(x,\xi)\ddd\xi\Big| \\&\lesssim \jap{x-y}^{-2N},
}
using the $\textart F^k$-condition. This is enough to conclude the result in $i)$.\\

For $ii)$ and $iii)$ we need to replace $a_L$ with $a_M$ in \eqref{eq:sigma} and obtain estimate \eqref{eq:midfreqkernelestimate} for the kernel of $T_\sigma^\varphi$. The only problem here is to control the factors of the form $\partial_\eta^\alpha\partial_y^\beta \varphi(y,\eta)$ where $\abs\alpha\geq0$ and $\abs\beta\geq 1$. But they are uniformly bounded because of the $L^2$-condition when $\abs\alpha\geq 1$ and
\eq{
\abs{\partial_y^\beta \varphi(y,\eta)}  &\leq \abs{\partial_y^\beta \varphi(y,\eta)-\partial_y^\beta \varphi(y,\eta_0)}+\abs{\partial_y^\beta \varphi(y,\eta_0)}\lesssim \abs{\eta-\eta_0}+\abs{\partial_y^\beta \varphi(y,\eta_0)}\lesssim 1,
}
for $\abs\alpha=0$ and $\eta \in \supp a_M$, if we choose $\eta_0:=\eta/\abs\eta$. Hence $\abs{\partial_\eta^\alpha\sigma(y,\eta)}\lesssim 1$ which yields the estimate \eqref{eq:midfreqkernelestimate}, and we can proceed as in Lemma \ref{Lem:lowfreq} from equation \eqref{punktvis lagfrek 1} onwards.
\end{proof}

\section{Local \texorpdfstring{$h^p-L^p$}\ \ boundedness}
\label{sec:local Lp-boundedness}

In this section, we prove the local $h^p-L^p$ boundedness of oscillatory integral operators. As it turns out, for the case of $0<p<1$ and the local $h^p-L^p$ boundedness of $T_a^\varphi$, no condition on the phase function is required. Moreover, the order of the amplitude could also be larger than the critical order $m_k(p)$. More explicitly we have

\begin{Prop}\label{Prop:ThpLp}
Let $0<p<1$ and {$m=-n/p$} and suppose that $\varphi (x,\xi)$ is a measurable real-valued function, $a (x,\xi)\in S^{m}_{0,0}(\Rl^n)$ with compact support in the \linebreak$x$-variable. Then $T_a^\varphi$ as given in \eqref{eq:OIO} is a bounded operator from $h^p(\Rn)$ to $L^p(\Rn)$.
\end{Prop}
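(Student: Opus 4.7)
The plan is to reduce the problem, via the atomic decomposition of $h^p(\Rn)$ (Definition \ref{def:hardyspace}), to the uniform estimate
\begin{equation*}
\|T_a^\varphi \at\|_{L^p(\Rn)} \lesssim 1
\end{equation*}
for every $p$-atom $\at$ supported in a ball $B(x_0,r)$. The crucial observation is that $m=-n/p<-n$ for $p<1$, so the amplitude is absolutely integrable in $\xi$: $\int_{\Rn}|a(x,\xi)|\,\ddd\xi\lesssim\int_{\Rn}\langle\xi\rangle^{-n/p}\,\ddd\xi<\infty$. Moreover, since $T_a^\varphi\at$ is supported in the compact $x$-support $K$ of $a$, any uniform pointwise bound $|T_a^\varphi\at(x)|\lesssim 1$ immediately yields $\|T_a^\varphi\at\|_{L^p(\Rn)}\le |K|^{1/p}$.

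Now I would split the argument based on $r$. For $r\ge 1$, no vanishing moments are available, so I would simply write
\begin{equation*}
|T_a^\varphi\at(x)| \le \int_{\Rn} |a(x,\xi)|\,|\widehat{\at}(\xi)|\,\ddd\xi \lesssim \|\at\|_{L^1(\Rn)}\lesssim r^{n-n/p}\le 1,
\end{equation*}
using $|\widehat{\at}(\xi)|\le\|\at\|_{L^1(\Rn)}\lesssim |B(x_0,r)|^{1-1/p}$ and $n-n/p<0$. This, together with the compact $x$-support, gives the desired bound.

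For $r<1$, I would exploit the moment condition with $N:=[n(1/p-1)]$ by Taylor expanding $e^{-iy\cdot\xi}$ around $y=x_0$. Since $\int (y-x_0)^\alpha\at(y)\,\dd y=0$ for $|\alpha|\le N$, one obtains the standard bound
\begin{equation*}
|\widehat{\at}(\xi)|\lesssim r^{n-n/p}\,\min\bigl(1,(r|\xi|)^{N+1}\bigr).
\end{equation*}
Splitting the $\xi$-integral in $|\xi|\lessgtr 1/r$ and inserting $|a(x,\xi)|\lesssim\langle\xi\rangle^{-n/p}$ yields
\begin{equation*}
|T_a^\varphi\at(x)|\lesssim r^{n-n/p}\Bigl(r^{N+1}\!\!\int_{|\xi|\le 1/r}\!\!|\xi|^{N+1}\langle\xi\rangle^{-n/p}\,\ddd\xi + \!\!\int_{|\xi|\ge 1/r}\!\!\langle\xi\rangle^{-n/p}\,\ddd\xi\Bigr) \lesssim r^{n-n/p}\cdot r^{n/p-n}=1,
\end{equation*}
where both integrals are evaluated using the choice of $N$, which guarantees $N+1-n/p+n>0$ while $N-n/p+n\le 0$.

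The only mildly delicate point is verifying that the exponent count works out at the upper limit of the truncated $\xi$-integral for all admissible $p$, including the borderline case when $n(1/p-1)\in\Z$; in all cases the choice $N=[n(1/p-1)]$ produces exactly the right cancellation between the factor $r^{n-n/p}$ and the radial integrals. No assumption on $\varphi$ intervenes because the argument estimates $|T_a^\varphi\at(x)|$ by absolute values, and the integrability of $|a(x,\cdot)|$ alone carries the proof. Once the uniform atomic bound is established, the conclusion follows from $\|T_a^\varphi f\|_{L^p(\Rn)}^p\le\sum_j|\lambda_j|^p\|T_a^\varphi\at_j\|_{L^p(\Rn)}^p\lesssim\|f\|_{h^p(\Rn)}^p$.
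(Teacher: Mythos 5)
Your proof is correct and takes a genuinely different, more elementary route than the paper. The paper decomposes $T_a^\varphi=\sum_j T_a^\varphi\psi_j(D)$ into Littlewood-Paley pieces, proves the kernel estimates of Lemma \ref{Lem:pointwiseKjsimplified} ($\|\partial_y^\beta K_j\|_{L^\infty}\lesssim 2^{j(m+|\beta|+n)}$) for each piece, Taylor-expands $K_j(x,y)$ in $y$ to exploit the atom's moments, and then invokes Lemma \ref{lem:hpLpofT} to sum the resulting geometric series in $j$. You collapse all of this by noting that $m=-n/p<-n$ makes $a(x,\cdot)$ uniformly integrable, so $T_a^\varphi\at(x)$ can be estimated pointwise by integrating the elementary bound $|\widehat\at(\xi)|\lesssim r^{n-n/p}\min(1,(r|\xi|)^{N+1})$ against $|a(x,\xi)|\lesssim\jap\xi^{-n/p}$, replacing the dyadic summation by a single radial integral split at $|\xi|=1/r$; your Taylor expansion of $e^{-iy\cdot\xi}$ is the Fourier-dual of the paper's expansion of $K_j$, carrying the same cancellation. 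What the paper's organization buys is reuse of Lemmas \ref{Lem:pointwiseKjsimplified} and \ref{lem:hpLpofT} in the high-frequency analysis later (Propositions \ref{Th:new} and \ref{Tj:Tad global}); your version is shorter and self-contained for this specific statement. One small remark: both proofs rely on the standard reduction from uniform atomic estimates to $h^p\to L^p$ boundedness; the paper's Lemma \ref{lem:hpLpofT} assumes $L^2$-boundedness of $T_a^\varphi$ to justify this (automatic here since $\|a(x,\cdot)\|_{L^2}\lesssim1$ and the $x$-support is compact), while your last line glosses over the same limiting argument. The two proofs are at the same level of rigor on that point.
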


\begin{proof}
Fix a $p$-atom $\at$ supported in the ball $B:=B(\bar{y},r)$, with $\bar{y} \in \Rn$ and \linebreak$r>0$. Also, make the Littlewood-Paley decomposition using Definition \ref{def:LP}, so that \linebreak$T_a^\varphi = \sum_{j=0}^\infty T_j$ where $T_j := T_a^\varphi\, \psi_j(D)$. By Lemma \ref{lem:hpLpofT}, and since $T_j\at$ has compact support, it is enough to show that
\begin{equation}\label{eq:keyestimate1new}
\| T_j \at \|_{L^\infty(\Rl^n)} \lesssim r^{n-n/p}2^{j\brkt{n-n/p}},
\end{equation}
and  whenever $r <1$
\begin{equation}\label{eq:keyestimate2new}
\| T_j \at \|_{L^\infty(\Rl^n)} \lesssim r^{{N} +1+n-n/p }2^{j\brkt{{N}+1+n-n/p}},
\end{equation} 
for some $N > n/p-n-1$.\\

First of all, Lemma \ref{Lem:pointwiseKjsimplified} taken with $\beta=0$, yields
 for all $x \in \Rn$
\begin{equation*}\label{eq:bound1Tjsimplified}
|T_j\at(x)|
 \leq  \int_{B} |K_j(x,y)| |\at(y)| \dd y 
 \lesssim r^{n-n/p} \, 2^{j(n+m)},
\end{equation*}
which gives \eqref{eq:keyestimate1new}.\\

On the other hand, if $r<1$, we Taylor expand the kernel as follows
\begin{align*}
K_j(x,y)
& = \sum_{ |\beta| \leq N} \frac{(y-\bar{y})^\beta}{\beta!}  (\partial^\beta_y K_j)(x,\bar{y}) \\
&  \qquad + (N+1) \sum_{ |\beta| = N+1}  \frac{(y-\bar{y})^\beta}{\beta!}  \int_0^1 (1-t)^N (\partial^\beta_y K_j) (x,ty + (1-t)\bar{y})\dd t,
\end{align*}
and taking advantage of the vanishing moments of the atom, we obtain
\begin{align*}
T_j\at(x)
&  = (N+1) \sum_{ |\beta| = N+1}  \int_{B} \int_0^1  \frac{(y-\bar{y})^\beta}{\beta!}   (1-t)^N (\partial^\beta_y K_j)(x,ty + (1-t)\bar{y}) 
 \at(y) \dd t  \dd y.
\end{align*}
Therefore, applying once again Lemma \ref{Lem:pointwiseKjsimplified}, with $|\beta|=N+1$
and $N :=[n(1/p-1)]$, we obtain
\begin{equation*}\label{eq:pointwiseTjasimplified}
|T_j\at(x)|
\lesssim r^{N+1+n-n/p} \, 2^{j(N+1+n+m)},
\end{equation*}
which yields \eqref{eq:keyestimate2new}.
\end{proof}

\begin{Rem}
We observe that interpolating the result of \emph{Proposition \ref{Prop:ThpLp}} with the $L^2$-boundedness of operators with amplitudes in $S^0_{0,0}(\Rl^n)$ yields that $T^\varphi_a$ is bounded from $h^p(\Rl^n)$ to $L^p(\Rl^n)$ for $0<p\leq 2$ with a \emph{SND} phase function verifying \eqref{eq:L2 condition} and $m<m_1(p)$.
\end{Rem}
\section{Boundedness of high frequency portion}
\label{sect:highfreq}
In this section, we treat the global regularity of the high frequency portion of oscillatory integral operators. Here we prove $h^p-L^p$ boundedness results.

\begin{Prop}\label{Th:new}
Suppose that $\varphi \in \textart F^k$ is \emph{SND},
for some $k \geq 1$ and satisfy the \linebreak$L^2$-condition \eqref{eq:L2 condition}. Let $a (x,\xi) \in S^{m_k(p)}_{0,0}(\Rl^n)$ and $a_H(x,\xi) := (1-\psi_0(\xi))\,a(x,\xi)$ where $\psi_0$ is given in \emph{Definition \ref{def:LP}}. Then, $T_{a_H}^\varphi$ as in \eqref{eq:OIO}, is a bounded operator from $h^p(\Rn)$ to $L^p(\Rn)$, when $0<p<1$.  In the case $0<k<1$, the result above is true provided that $a(x,\xi) \in S^{m_k(p)}_{1,0}(\Rl^n)$.
\end{Prop}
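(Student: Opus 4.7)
I would invoke Lemma~\ref{lem:Miyachistuff} to reduce the $h^p\to L^p$-boundedness of $T^\varphi_{a_H}$ to two tasks: the baseline $L^2$-boundedness of $T^\varphi_{a_H}$, and the weighted $L^2$-kernel estimate \eqref{eq:new1} for its Littlewood-Paley pieces. The $L^2$-boundedness follows at once from Theorem~\ref{thm:fujiwara}: since $a_H$ vanishes on $\set{|\xi|\le 1}$, the low-frequency clauses there become vacuous, and the $\textart F^k$-, SND-, and $L^2$-conditions on $\varphi$ together with $m_k(p)\le 0$ place us in Fujiwara's setting.

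For \eqref{eq:new1}, I would decompose $T^\varphi_{a_H}=\sum_{j\ge 0}T^\varphi_{a_j}$ with $a_j:=a_H\psi_j$ supported in $\set{|\xi|\sim 2^j}$ and lying in $S^{m_k(p)}_{0,0}$ (resp.\ $S^{m_k(p)}_{1,0}$ when $0<k<1$), write $T^\varphi_{a_j}f(x)=\int K_{1,j}(x,x-y)f(y)\,dy$ with
\begin{equation*}
K_{1,j}(x,z):=\int_{\Rn} e^{iz\cdot\xi+i\phi(x,\xi)}\,a_j(x,\xi)\,\ddd\xi,\qquad \phi(x,\xi):=\varphi(x,\xi)-x\cdot\xi,
\end{equation*}
and integrate by parts in $\xi$ via $z^\alpha e^{iz\cdot\xi}=(-i\nabla_\xi)^\alpha e^{iz\cdot\xi}$, together with the factor $(-i\xi)^\beta$ produced by $\partial_y^\beta$, to arrive at
\begin{equation*}
(x-y)^\alpha\partial_y^\beta K_{1,j}(x,x-y)=\int_{\Rn} e^{i\varphi(x,\xi)-iy\cdot\xi}\,b_{\alpha,\beta,j}(x,\xi)\,\ddd\xi,
\end{equation*}
with $b_{\alpha,\beta,j}:=e^{-i\phi}(i\partial_\xi)^\alpha\bigl[e^{i\phi}(-i\xi)^\beta a_j\bigr]$. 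Leibniz and Fa\`a di Bruno, combined with the $\textart F^k$-bounds ($|\partial_\xi^\gamma\phi|\lesssim|\xi|^{k-1}$ for $k\ge 1$; $|\partial_\xi^\gamma\phi|\lesssim|\xi|^{k-|\gamma|}$ for $0<k<1$) and the symbol bounds on $a_j$, then yield the pointwise estimate $|b_{\alpha,\beta,j}|\lesssim 2^{jM}$ on $\set{|\xi|\sim 2^j}$ with $M:=m_k(p)+(k-1)|\alpha|+|\beta|$.

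To upgrade this pointwise bound to an $L^2_x$-bound uniformly in $y$, I would observe that
\begin{equation*}
(x-y)^\alpha\partial_y^\beta K_{1,j}(x,x-y)=T^\varphi_{b_{\alpha,\beta,j}}\bigl[\widetilde\psi_j(D)\delta_y\bigr](x),
\end{equation*}
where $\widetilde\psi_j$ is a smooth cutoff equal to $1$ on $\supp_\xi b_{\alpha,\beta,j}$ and supported in $\set{|\xi|\sim 2^j}$. Plancherel gives $\|\widetilde\psi_j(D)\delta_y\|_{L^2(\Rn)}\lesssim 2^{jn/2}$ uniformly in $y$, and combining this with the operator-norm bound $\|T^\varphi_{b_{\alpha,\beta,j}}\|_{L^2\to L^2}\lesssim 2^{jM}$ delivers \eqref{eq:new1}; Lemma~\ref{lem:Miyachistuff} then closes the argument. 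The main obstacle is proving that operator-norm bound when $k>1$, since the derived symbol satisfies only $|\partial_\xi^\delta b_{\alpha,\beta,j}|\lesssim 2^{j(M+(k-1)|\delta|)}$, so $2^{-jM}b_{\alpha,\beta,j}$ is not uniformly in $S^0_{0,0}$ and a direct appeal to Fujiwara's theorem loses a power of $2^{j(k-1)}$ per derivative; one must therefore invoke an $\textart F^k$-adapted (semiclassical) variant of Fujiwara's $L^2$-theorem or estimate the oscillatory integral defining $K^{\alpha,\beta}_{1,j}$ directly by exploiting SND through integration by parts in~$x$.
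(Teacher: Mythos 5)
Your overall strategy matches the paper's exactly: invoke Lemma~\ref{lem:Miyachistuff}, obtain $L^2$-boundedness from Theorem~\ref{thm:fujiwara}, decompose into Littlewood--Paley pieces, integrate by parts in $\xi$ to produce a derived amplitude, and close by applying Fujiwara's $L^2$-theorem to the derived operator against $\tau_{-y}\widehat\Psi_j$. The issue is the final paragraph, where you assert that $2^{-jM}b_{\alpha,\beta,j}$ fails to be uniformly in $S^0_{0,0}$ because ``$|\partial_\xi^\delta b_{\alpha,\beta,j}|\lesssim 2^{j(M+(k-1)|\delta|)}$''. That estimate is not correct, and the obstacle you describe does not exist.

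The point you missed: after you run Leibniz and Fa\`a di Bruno, the factors $e^{\pm i\phi}$ cancel completely and $b_{\alpha,\beta,j}$ becomes a finite sum over partitions $\alpha_1+\alpha_2=\alpha$, $\lambda_1+\dots+\lambda_r=\alpha_2$ (each $|\lambda_i|\geq 1$) of terms
\begin{equation*}
\partial_\xi^{\alpha_1}\bigl((-i\xi)^\beta a_j(x,\xi)\bigr)\,\prod_{i=1}^r \partial_\xi^{\lambda_i}\phi(x,\xi),
\end{equation*}
a genuine polynomial in $\phi$-derivatives with no residual exponential. Now apply $\partial_\xi^\delta$: by Leibniz each piece of $\delta$ lands on either the $a_j$-factor or one of the $\partial_\xi^{\lambda_i}\phi$-factors. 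Since $a\in S^{m_k(p)}_{0,0}$, the first loses nothing. Since $\varphi\in\textart F^k$ with $k\geq 1$ gives $|\partial_\xi^{\gamma}\phi|\lesssim |\xi|^{k-1}$ \emph{for every} $|\gamma|\geq 1$ — the bound does not degrade as $|\gamma|$ grows — raising $\lambda_i$ to $\lambda_i+\delta_i$ keeps the factor $\lesssim 2^{j(k-1)}$ on $|\xi|\sim 2^j$. Hence, on the support,
\begin{equation*}
\bigl|\partial_\xi^\delta \partial_x^\gamma b_{\alpha,\beta,j}(x,\xi)\bigr|
\lesssim 2^{j(m_k(p)+|\beta|)}\cdot 2^{j(k-1)r}
\leq 2^{j\bigl(m_k(p)+|\beta|+(k-1)|\alpha|\bigr)} = 2^{jM},
\end{equation*}
uniformly in $\delta,\gamma$, where the last inequality uses $r\leq|\alpha_2|\leq|\alpha|$. (For $0<k<1$ one instead gets $|\partial_\xi^{\lambda_i+\delta_i}\phi|\lesssim 2^{j(k-|\lambda_i|-|\delta_i|)}$ and compensates via $a\in S^{m_k(p)}_{1,0}$, exactly as in the paper.) Thus $2^{-jM}b_{\alpha,\beta,j}\in S^0_{0,0}(\Rl^n)$ uniformly in $j$, and Theorem~\ref{thm:fujiwara} applies directly with the original phase $\varphi$; no semiclassical or $\textart F^k$-adapted variant is needed. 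The $x$-derivatives are handled the same way, with the mixed bound in $\textart F^k$ (or the $L^2$-condition) providing uniform control. This is precisely how the paper's proof closes the estimate.
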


\begin{proof}
We consider a generic Littlewood-Paley piece of the kernel of $T^\varphi_{a_H}$:
\begin{equation}\label{eq:2Bornot2B}
K_j(x,x-y)
:=\int_{\Rl^n} a_j(x,\xi)\,e^{i(\varphi(x,\xi)-x\cdot\xi+(x-y)\cdot\xi)}\ddd\xi,
\end{equation}
where $a_j(x,\xi) := a_H(x,\xi)\,\psi_j(\xi)$. In light of Lemma \ref{lem:Miyachistuff}, we only need to show that $T^\varphi_{a_H}$ is $L^2$-bounded, which is indeed the case by 
Theorem \ref{thm:fujiwara}, and that 
\eq{
\Vert (x-y)^\alpha \,\partial^{\beta} _{y}K_{j}(x,x-y)\Vert_{L^2_x(\Rl^n)}
&\lesssim 2^{j(\vert\alpha\vert(k-1) +\vert \beta\vert+m_k(p)+n/2)}.
}
However, since differentiating \eqref{eq:2Bornot2B} $\beta$ times in $y$ will only introduce factors of the size $2^{j|\beta|},$ it is enough to establish the above estimate for $\beta=0$. To this end, take ${\Psi}_j$ as in Definition \ref{def:LP}, integrate by parts and rewrite 

\eq{
&(x-y)^\alpha K_j(x,x-y)
=\int_{\Rl^n} a_j(x,\xi)\,e^{i\varphi(x,\xi)-ix\cdot\xi}\,\partial_\xi^\alpha e^{i(x-y)\cdot\xi} \ddd\xi\\
& \quad =\int_{\Rl^n} \partial_\xi^\alpha \Big[a_j(x,\xi)e^{i\varphi(x,\xi)-ix\cdot\xi}\Big]\, e^{i(x-y)\cdot\xi} \,{\Psi}_j (\xi)\ddd\xi \\
& \quad = \sum_{\alpha_1 + \alpha_2=\alpha} \!\!C_{\alpha_1, \alpha_2}\int_{\Rl^n}
\partial_\xi^{\alpha_1} a_j(x,\xi) \,
\partial_\xi^{\alpha_2} e^{i\varphi(x,\xi)-ix\cdot\xi} \,e^{i(x-y)\cdot\xi}\, {\Psi}_j (\xi)\ddd\xi \\
& \quad = \!\!\sum_{\substack{\alpha_1 + \alpha_2=\alpha \\ \lambda_1 + \dots + \lambda_r = \alpha_2} }\!\!\!\!
C_{\alpha_1, \alpha_2, \lambda_1, \dots \lambda_r} \int_{\Rl^n} 
\partial_\xi^{\alpha_1} a_j(x,\xi) \\
& \quad \qquad \qquad   \times
\partial_\xi^{\lambda_1}(\varphi(x,\xi)-x\cdot\xi)
\cdots
\partial_\xi^{\lambda_r}(\varphi(x,\xi)-x\cdot\xi)
\,e^{i\varphi(x,\xi)} \,e^{-iy\cdot\xi}\,{\Psi}_j (\xi) \ddd\xi \\
& \quad =\!\! \sum_{\substack{\alpha_1 + \alpha_2=\alpha \\ \lambda_1 + \dots + \lambda_r = \alpha_2} }\!\!\!\!
C_{\alpha_1, \alpha_2, \lambda_1, \dots \lambda_r} 
2^{j(m_k(p)+(k-1)|\alpha|)}  \!\!
\int_{\Rl^n} b_j^{\alpha_1,\alpha_2, \lambda_1, \dots, \lambda_r}(x,\xi)\,
 e^{i\varphi(x,\xi)}\, e^{-iy\cdot\xi}\,{\Psi}_j (\xi) \ddd\xi \\
& \quad =:\!\! \sum_{\substack{\alpha_1 + \alpha_2=\alpha \\ \lambda_1 + \dots + \lambda_r = \alpha_2} }
\!\!\!\!C_{\alpha_1, \alpha_2, \lambda_1, \dots \lambda_r} 
2^{j(m_k(p)+(k-1)|\alpha|)}   
S_{j}^{\alpha_1, \alpha_2, \lambda_1, \dots \lambda_r}(\tau_{-y}\widehat\Psi_j)(x),
}
where $\tau_{-y}$ is a translation by $-y$, $|\lambda_j| \geq 1$ and
\begin{align*}
b_j^{\alpha_1,\alpha_2, \lambda_1, \dots, \lambda_r}(x,\xi)
& := 2^{-j(m_k(p)+(k-1)|\alpha|)} \\
& \qquad \times \partial_\xi^{\alpha_1} a_j(x,\xi) \, 
\partial_\xi^{\lambda_1}(\varphi(x,\xi)-x\cdot\xi)
\dots
\partial_\xi^{\lambda_r}(\varphi(x,\xi)-x\cdot\xi).
\end{align*}
Now we claim that 
$b_j^{\alpha_1,\alpha_2, \lambda_1, \dots, \lambda_r}(x,\xi) \in S^0_{0,0}(\Rl^n)$ uniformly in $j$. Indeed, since \linebreak$a \in S^{m_k(p)}_{0,0}(\Rl^n)$ and $\varphi\in \textart F^k$ (with $k\geq 1$), we can write
\begin{align*}
\abs{b_j^{\alpha_1,\alpha_2, \lambda_1, \dots, \lambda_r}(x,\xi)}
&\lesssim 2^{-j(m_k(p)+(k-1)|\alpha|)} \, 
2^{jm_k(p)} \,
2^{j(k-1)r} \\
& \leq 2^{-j(k-1)|\alpha|}  \,
2^{j(k-1)r} 
{\lesssim 1}.
\end{align*}

In a similar way, using the $\textart F^k$-condition, we can also check that, for any multi-indices $\gamma$ and $\beta $, 
\begin{equation}\label{eq:bjs}
    \abs{ {\partial_{\xi}^\gamma \partial_x^\beta} \, b_j^{\alpha_1,\alpha_2, \lambda_1, \dots, \lambda_r}(x,\xi)}
\lesssim 1,
\end{equation}
hence $b_j^{\alpha_1,\alpha_2, \lambda_1, \dots, \lambda_r}\in S^0_{0,0}(\Rl^n).$
In the case $0<k<1$, the hypothesis on $a$ and $\varphi$ yield that
$$|{\partial_{\xi}^\alpha \partial_x^\beta} a_j(x,\xi)|
\lesssim 2^{j(m_k(p) -|\alpha|)},$$
and on the support of $a_j$
$$|{\partial_{\xi}^\alpha \partial_x^\beta} (\varphi(x,\xi)-x\cdot\xi)|
\lesssim 
{2^{j(k-|\alpha|)} },$$

which together imply \eqref{eq:bjs}.\\

Therefore, {Theorem \ref{thm:fujiwara}} yields that 
\begin{align*}
\Vert (x-y)^\alpha K_{j}(x,x-y)\Vert_{L^2_x(\Rl^n)}
& \lesssim \!\!\sum_{\substack{\alpha_1 + \alpha_2=\alpha \\ \lambda_1 + \dots + \lambda_r = \alpha_2} }\!\! 2^{j(m_k(p)+(k-1)|\alpha|)} 
\|S_{j}^{\alpha_1, \alpha_2, \lambda_1, \dots \lambda_r}(\tau_{y}\widehat\Psi_j)\|_{L^2(\Rl^n)} \\
& \lesssim 2^{j(\vert\alpha\vert(k-1) +m_k(p))} \|\widehat\Psi_j\|_{L^2(\Rl^n)} 
\lesssim 2^{j(\vert\alpha\vert(k-1) +m_k(p)+n/2)},
\end{align*}
and the proof is completed.
\end{proof}
We would like to have a similar result for the adjoint operator, but in this case we need to add an extra condition. However, this extra condition is automatically fulfilled if one assumes LF$(\mu)$-condition \eqref{eq:LFmu}, and it turns out to be superfluous as far as the $L^p$-boundedness is concerned. Since the result is only applied in these two cases, 
this extra condition will not have any impact on any of the main results. In the following proposition, we let {$e_\ell$} be the unit vectors as in the proof of Theorem \ref{thm:main1globalLp} on page \pageref{thm:main1globalLp}.

\begin{Prop}\label{Tj:Tad global}
Let $a (x,\xi) \in S^{m_k(p)}_{0,0}(\Rl^n)$ and $a_H(x,\xi) := (1-\psi_0(\xi/\sqrt n))\,a(x,\xi)$, where $\psi_0$ is given in \emph{Definition \ref{def:LP}}.
Suppose that, for $k\geq 1$, $\varphi \in \textart F^k$ is \emph{SND} and  satisfies the $L^2$-condition \eqref{eq:L2 condition}. Moreover assume that for all $\xi\in \supp_\xi a_H(x,\xi)$, there exists $1\leq\ell\leq2n$, such that the line segment between $\xi$ and $e_\ell$ does not 
pass through the unit ball $B(0,1)$ and such that $\partial^\beta_x \varphi(x,e_\ell) \in L^\infty(\mathbb{R}^n)$, for all  $|\beta| \geq 1$.
Then, ${(T_{a_H}^\varphi)}^*$ given as in \eqref{eq:adjointOIO} is a bounded operator from $h^p(\Rn)$ to $L^p(\Rn)$ when $0<p<1$.
In the case $0<k<1$, the result above is true provided that $a(x,\xi) \in S^{m_k(p)}_{1,0}(\Rl^n)$.
\end{Prop}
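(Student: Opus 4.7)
The plan is to verify the hypotheses of the second bullet of Lemma \ref{lem:Miyachistuff} for the operator ${(T_{a_H}^\varphi)}^*$. The required $L^2$-boundedness follows from the $L^2$-boundedness of $T_{a_H}^\varphi$, which is a consequence of Theorem \ref{thm:fujiwara}: on the $\xi$-support of $a_H$ the phase $\varphi$ is smooth and the \emph{SND}- and $L^2$-conditions hold, so Fujiwara's theorem applies directly. The bulk of the work is therefore the kernel estimate
\[
\norm{(x-y)^\alpha \partial_y^\beta K_{2,j}(y, x-y)}_{L^2_x(\Rl^n)} \lesssim 2^{j(|\alpha|(k-1) + |\beta| + m_k(p) + n/2)},
\]
where $K_{2,j}(y, x-y) := \int_{\Rl^n} e^{ix\cdot\xi - i\varphi(y,\xi)}\,\overline{a_j(y,\xi)}\ddd\xi$ and $a_j := a_H\,\psi_j$, uniformly in $y \in \Rl^n$.

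First, I would localise in $\xi$: decompose $a_H = \sum_{\ell=1}^{2n} a_H\,\lambda_\ell$ using the partition of unity $\{\lambda_\ell\}_{\ell=1}^{2n}$ from the proof of Theorem \ref{thm:main1globalLp}. On the $\xi$-support of each piece, the hypothesis of the proposition provides an $e_\ell$ such that the segment from $\xi$ to $e_\ell$ avoids $B(0,1)$ and $\partial_x^\beta \varphi(\cdot, e_\ell) \in L^\infty(\Rl^n)$. Combined with the $L^2$-condition, the argument of Lemma \ref{Lem:saviouroftoday} case $i)$, carried out with $\xi_0 = e_\ell$ in place of $\xi/|\xi|$, then yields
\[
|\partial_y^\beta \varphi(y, \xi)| \lesssim \jap{\xi}, \qquad |\beta| \geq 1,
\]
on the corresponding support.

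Next, I would write $e^{ix\cdot\xi - i\varphi(y,\xi)} = e^{i(x-y)\cdot\xi}\,e^{i\Phi(y,\xi)}$ with $\Phi(y,\xi) := y\cdot\xi - \varphi(y,\xi)$ and integrate by parts in $\xi$ via $(x-y)^\alpha e^{i(x-y)\cdot\xi} = (-i\partial_\xi)^\alpha e^{i(x-y)\cdot\xi}$ to obtain
\[
(x-y)^\alpha \partial_y^\beta K_{2,j}(y,x-y) = i^{|\alpha|}\int_{\Rl^n} e^{i(x-y)\cdot\xi}\,\partial_\xi^\alpha\partial_y^\beta\!\left[e^{i\Phi(y,\xi)}\,\overline{a_j(y,\xi)}\right]\ddd\xi.
\]
Expanding via Leibniz's rule and Fa\`a di Bruno's formula produces a sum of products of: $\xi$-derivatives of $\Phi$, each bounded by $|\xi|^{k-1}$ by the $\textart F^k$-condition; $y$-derivatives of $\varphi$ with no $\xi$-differentiation, each bounded by $\jap{\xi}$ by the previous step; mixed derivatives of $\varphi$, each bounded by $1$ by the $L^2$-condition; and $\partial_\xi^{\alpha_1}\partial_y^{\beta_2}\overline{a_j}$, each bounded by $\jap{\xi}^{m_k(p)}\Psi_j(\xi)$ since $a\in S^{m_k(p)}_{0,0}$. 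Book-keeping these contributions yields the pointwise estimate
\[
\left|\partial_\xi^\alpha\partial_y^\beta\!\left[e^{i\Phi(y,\xi)}\,\overline{a_j(y,\xi)}\right]\right| \lesssim 2^{j(m_k(p)+|\beta|+(k-1)|\alpha|)}\,\Psi_j(\xi),
\]
uniformly in $y$. Since the remaining $\xi$-integral is a plain inverse Fourier transform in $\xi \mapsto x-y$, Plancherel's identity in the $x$-variable delivers the required kernel estimate after using $\norm{\Psi_j}_{L^2_\xi(\Rl^n)} \lesssim 2^{jn/2}$.

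For $0 < k < 1$, the amplitude class $S^{m_k(p)}_{1,0}$ and the corresponding phase estimate $|\partial_\xi^\alpha\partial_x^\beta(\varphi - x\cdot\xi)| \lesssim |\xi|^{k-|\alpha|}$ from Definition \ref{def:fk} together yield the same pointwise bound, and the rest of the argument goes through verbatim. The main obstacle is the control $|\partial_y^\beta \varphi(y,\xi)| \lesssim \jap{\xi}$ for $|\beta|\geq 1$: the $L^2$-condition alone only handles mixed derivatives, so the hypothesis $\partial_x^\beta\varphi(\cdot,e_\ell)\in L^\infty(\Rl^n)$ together with the line-segment assumption is essential for applying Lemma \ref{Lem:saviouroftoday} on each piece of the $e_\ell$-decomposition; without it, differentiating in $y$ would introduce growth in $\xi$ that cannot be absorbed by the $L^2_x$-norm.
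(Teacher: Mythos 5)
Your overall strategy — verify the second bullet of Lemma \ref{lem:Miyachistuff} by establishing the weighted $L^2_x$ kernel bound, using Fujiwara for the $L^2$-boundedness and the mean-value argument to $e_\ell$ to control $\partial_y^\gamma\varphi$ — is exactly the paper's. However, the key displayed identity
\[
(x-y)^\alpha \partial_y^\beta K_{2,j}(y,x-y) = i^{|\alpha|}\int e^{i(x-y)\cdot\xi}\,\partial_\xi^\alpha\partial_y^\beta\bigl[e^{i\Phi(y,\xi)}\overline{a_j(y,\xi)}\bigr]\,\ddd\xi
\]
is false, and this is a genuine gap. Writing $\sigma_j(y,\xi):=e^{i\Phi(y,\xi)}\overline{a_j(y,\xi)}$, you have $K_{2,j}(y,x-y)=\int \sigma_j(y,\xi)\,e^{i(x-y)\cdot\xi}\,\ddd\xi$, and the total $y$-derivative that Lemma \ref{lem:Miyachistuff} requires (it is the coefficient in a Taylor expansion of $y\mapsto K_{2,j}(y,x-y)$ about $\bar y$) hits \emph{both} factors: $\partial_y^\beta$ applied to $e^{i(x-y)\cdot\xi}$ produces the monomials $(-i\xi)^{\beta_2}$. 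The correct identity, after Leibniz and integration by parts in $\xi$, is
\[
(x-y)^\alpha\partial_y^\beta\bigl(K_{2,j}(y,x-y)\bigr)
=\sum_{\substack{\alpha_1+\alpha_2=\alpha\\\beta_1+\beta_2=\beta\\ \alpha_2\le\beta_2}}C_{\alpha,\beta}\int e^{i(x-y)\cdot\xi}\,\partial_\xi^{\alpha_1}\partial_y^{\beta_1}\sigma_j(y,\xi)\,\xi^{\beta_2-\alpha_2}\,\ddd\xi,
\]
which is precisely what the paper (following its Lemma \ref{Lem:smoothlowfreq}) uses. Your formula retains only the $(\beta_2,\alpha_2)=(0,0)$ term, which amounts to differentiating only the first slot of $K_{2,j}$ and is therefore a different quantity.

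The omission happens to be benign in the end: with $|\xi|\sim 2^j$ on $\supp\psi_j$ and $k\ge 1$, the extra factor $\xi^{\beta_2-\alpha_2}$ contributes $2^{j(|\beta_2|-|\alpha_2|)}$, and one checks that
\[
(k-1)|\alpha_1|+|\beta_1|+|\beta_2|-|\alpha_2|=(k-1)|\alpha|+|\beta|-k|\alpha_2|\le(k-1)|\alpha|+|\beta|,
\]
so every cross-term satisfies the same bound as the one you computed, and Plancherel then gives the desired estimate. But you must carry out this accounting; as written the identity is incorrect and the estimate does not follow from what you wrote. The remaining ingredients — the localisation in $\xi$, the mean-value bound $|\partial_y^\gamma\varphi(y,\xi)|\lesssim\jap\xi$ via the hypothesis on $\partial_x^\beta\varphi(\cdot,e_\ell)$ and the segment avoiding $B(0,1)$, the Fa\`a di Bruno expansion of $e^{i\Phi}$ using the $\textart F^k$- and $L^2$-conditions, and the treatment of $0<k<1$ via $S^{m_k(p)}_{1,0}$ — all match the paper and are sound.
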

\begin{proof}

The proof follows the same lines as that of Lemma \ref{Lem:smoothlowfreq}. Indeed since ${(T_{a_H}^\varphi)}^*$ is $L^2$-bounded, we only need to show that
\begin{equation}\label{eq:target}
\norm{\rho_j(y,\cdot)} _{L^2_\xi(\Rl^n)}
    \lesssim 2^{j(\vert\alpha\vert(k-1) +\vert \beta\vert+m_k(p)+n/2)},
\end{equation}
where
\begin{equation}\label{defn of the function rho}
    \rho_j(y,\xi)
:= \sum_{\substack{\alpha_1 +\alpha_2 =\alpha \\ \beta_1 +\beta_2 =\beta \\  \beta_2 \geq \alpha_2}} 
C_{\alpha,\beta} \partial^{\alpha_1}_{\xi}\partial^{\beta_1}_y 
\sigma_j(y,\xi)\,  \xi^{\beta_2 -\alpha_2},
\end{equation}
$$\sigma_j (y,\xi) 
:= \overline{a_j(y,\xi)}\,e^{-i\varphi(y,\xi) + i y \cdot \xi},$$
and
$a_j(y,\xi) := a_H(y,\xi)\,\psi_j(\xi)$ is the usual Littlewood-Paley piece. To this end, Leibniz's rule yields that 
\begin{align*}
| \partial^{\alpha_1}_\xi \partial^{\beta_1}_y \sigma_j(y, \xi) |
& = |  \partial^{\alpha_1}_\xi \partial^{\beta_1}_y (
\overline{a_j(y,\xi)} \, e^{-i\varphi(y,\xi)+iy\cdot\xi}) | \\
& \lesssim \sum_{ \substack{\alpha_1'+\alpha_1''= \alpha_1  \\ \beta_1'+\beta_1'' = \beta_1 }} 
|\partial^{\alpha_1'}_\xi \partial^{\beta_1'}_y \overline{a_j(y,\xi)}|
\, |\partial^{\alpha_1''}_\xi \partial^{\beta_1''}_y e^{-i\varphi(y,\xi)+iy\cdot\xi}|.
\end{align*}

Now, if we let $\Phi(y,\xi):=\varphi(y, \xi) - y \cdot \xi$, then 
Fa\`a di Bruno's formulae implies
\begin{align}\label{eq:kintegrallemma3}
\vert \partial^{\alpha}_\xi\partial^{\beta}_y e^{-i\Phi(y,\xi)}  \vert 
& \lesssim \sum _{(\gamma_1,\delta_1)+\dots+(\gamma_r,\delta_r)=(\alpha,\beta)}
 \vert \partial^{\gamma_1}_\xi\partial^{\delta_1}_y
\Phi(y,\xi) \vert ... \vert\partial^{\gamma_r}_\xi\partial^{\delta_r}_y \Phi(y,\xi) \vert,
\end{align}
where the sum above runs over all possible partitions of $(\alpha,\beta)$ such that $|\gamma_\nu|+|\delta_\nu|\geq 1$ for $\nu=1,\dots, r$.\\

The $\textart F^k$-condition isn't enough to estimate the terms in \eqref{eq:kintegrallemma3} and we also need to derive estimates for the derivatives in $x$. 
For any $\xi\in \supp_\xi a_H(y,\xi)$, take $\ell$ as in the statement of this theorem. Then the $L^2$-condition and the mean-value theorem yield that
\nm{eq:phaseestimate}{\abs{\partial_y^\gamma \varphi(y,\xi)}&\leq \abs{\partial_y^\gamma \varphi(y,\xi)-\partial_y^\gamma \varphi(y,e_\ell)}+\abs{\partial_y^\gamma \varphi(y,e_\ell)}\\&\lesssim \abs{\xi-e_\ell}+\abs{\partial_y^\gamma \varphi(y,e_\ell)} \lesssim \abs\xi.}
  
Hence, on the support of $a_j$ one has, for $k \geq 1$,
$$
|\partial^{\gamma}_\xi\partial^{\delta}_y \Phi(y,\xi)|
= \left\{
\begin{array}{ll}
   O(2^j),  &  \gamma =0, \\
   O(2^{j(k-1)}),  & \gamma \neq 0,   
\end{array}
\right.$$
and for $0<k<1$
$$
|\partial^{\gamma}_\xi\partial^{\delta}_y \Phi(y,\xi)|
= \left\{
\begin{array}{ll}
   O(2^j),  &  \gamma =0, \\
   O(2^{j(k-|\gamma|)}),  & \gamma \neq 0,   
\end{array}
\right.$$
where we have used 
the $\textart F^k$-condition, $L^2$-condition and \eqref{eq:phaseestimate}. Therefore, for $k \geq 1$, using \eqref{eq:kintegrallemma3} we get
\begin{align*}
| \partial^{\alpha_1}_\xi \partial^{\beta_1}_y \sigma_j(y, \xi) |
& \lesssim  2^{j(m_k(p)+(k-1)\vert\alpha_1\vert +|\beta_1|)}.
\end{align*}
On the other hand, in the case $0<k<1$ using the assumption $a\in S^{m_k(p)}_{1,0}(\Rl^n)$ we obtain
\begin{align*}
| \partial^{\alpha_1}_\xi \partial^{\beta_1}_y \sigma_j(y, \xi) |
& \lesssim  
\sum_{ \substack{\alpha_1'+\alpha_1''= \alpha_1  \\ \beta_1'+\beta_1'' = \beta_1 }} 
|\partial^{\alpha_1'}_\xi \partial^{\beta_1'}_y \overline{a_j(y,\xi)}|
\, |\partial^{\alpha_1''}_\xi \partial^{\beta_1''}_y e^{-i\varphi(y,\xi)+iy\cdot\xi}| \nonumber \\
& \lesssim \sum_{ \substack{\alpha_1'+\alpha_1''= \alpha_1  \\ \beta_1'+\beta_1'' = \beta_1 }} 2^{j(m_k(p)-|\alpha_1'| + |\alpha_1''|k - |\alpha_1''|+|\beta_1''|)} \nonumber \\
& \lesssim 2^{j(m_k(p)+(k-1)\vert\alpha_1\vert +|\beta_1|)} .
\end{align*}
Thus in both cases
\begin{equation}\label{eq:sigmaj}
\begin{split}
| \partial^{\alpha_1}_\xi \partial^{\beta_1}_y \sigma_j(y, \xi) |
& \lesssim  2^{j(m_k(p)+(k-1)\vert\alpha_1\vert +|\beta_1|)}  \\
& = 2^{j(m_k(p)+(k-1)\vert\alpha\vert -(k-1)|\alpha_2|+|\beta_1|)}  \\
& \lesssim 2^{j(m_k(p)+(k-1)\vert\alpha\vert +|\alpha_2|+|\beta_1|)}.
\end{split}
\end{equation}
Finally, combining \eqref{eq:sigmaj} and \eqref{defn of the function rho} we obtain \eqref{eq:target}. Hence 
{Lemma \ref{lem:Miyachistuff} holds} and the proof is concluded.
\end{proof}

\section{The \texorpdfstring{$h^p-L^p$}\ \ boundedness of Schr\"odinger integral operators}
\label{sect:hpLpScho}

This section deals with the regularity of the Sch\"odinger integral operators. An important tool in the proof of the following theorem is a Littelwood-Paley decomposition of the amplitude, where each Littlewood-Paley annulus is further decomposed into a union of balls with constant radii, in contrast to the second frequency localisation introduced by C. Fefferman in \cite{Feff}, where different pieces of the amplitude are supported in "angular-radial rectangles".

\begin{Th}\label{thm:schrodinger}
Let  $T_a^\varphi$  be a Schr\"odinger integral operator according to $\mathrm{Definition}$ $\ref{def:SIO}$ with amplitude $a(x,\xi) \in S_{0,0}^{m_2(p)}(\Rl^n)$ and phase function $\varphi$ that is \emph{SND}. Then $T_a^\varphi$ is a bounded operator from $h^p(\Rl^n)$ to $L^p(\Rl^n)$ for $	0<p<\infty$. Moreover if $|\nabla_x \varphi(x,0)|\in L^{\infty}(\Rl^n)$ then $T_a^\varphi$ is bounded from $L^\infty(\Rl^n)$ to $\bmo(\Rl^n)$. 

\end{Th}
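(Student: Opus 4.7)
I will treat the two parts separately. For the $h^p$--$L^p$ bound I intend first to prove it for $0<p<1$ via the atomic criterion of Lemma~\ref{lem:hpLpofT}, then extend to $0<p\leq 2$ by analytic interpolation against the $L^2$--result of Theorem~\ref{thm:fujiwara}, and finally to $2\leq p<\infty$ by duality, noting that $\brkt{T_a^\varphi}^{*}$ is, up to harmless modifications (essentially $\varphi\mapsto -\varphi$ and a swap of the roles of $x$ and $\xi$-type variables in \eqref{eq:adjointOIO}), again a Schr\"odinger integral operator with an amplitude in $S^{m_2(p)}_{0,0}(\Rn)$ and an SND phase satisfying \eqref{Schrodinger phase}. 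Thus the core of the proof is the atomic estimate for $0<p<1$, which by Lemma~\ref{lem:hpLpofT} reduces to controlling $\norm{T_j\at}_{L^p(\Rn)}$ for the Littlewood--Paley pieces $T_j := T_a^\varphi\psi_j(D)$ and a $p$-atom $\at$ supported in $B(\bar y,r)$. The case $r\geq 1$ follows, as usual, from the $L^2$--bound and H\"older's inequality, so the substantial analysis concerns $r<1$.

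The main technical device is a \emph{two-scale} frequency decomposition. Because \eqref{Schrodinger phase} gives $|\partial_\xi^\alpha \varphi|\lesssim 1$ for $|\alpha|\geq 2$, the natural secondary partition of each annulus $\{|\xi|\sim 2^j\}$ is into balls $B(\xi_\nu^j,C)$ of \emph{constant} radius, as opposed to the radius-$2^{j/2}$ plates used by Fefferman and Seeger--Sogge--Stein for $1$-homogeneous phases. Writing $\psi_j a = \sum_\nu a_j^\nu$ accordingly, a Taylor expansion in $\xi$ gives
\eq{
\varphi(x,\xi) = \varphi(x,\xi_\nu^j) + \nabla_\xi\varphi(x,\xi_\nu^j)\cdot(\xi-\xi_\nu^j) + \tilde\varphi_j^\nu(x,\xi-\xi_\nu^j),
}
where, by \eqref{Schrodinger phase}, every $\eta$-derivative of $\tilde\varphi_j^\nu(x,\eta)$ is bounded uniformly in $j,\nu$; the SND condition in turn makes $x\mapsto \nabla_\xi\varphi(x,\xi_\nu^j)$ a global diffeomorphism with uniform control on its Jacobian. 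The annulus contains $\sim 2^{jn}$ such balls, which is consistent with the critical exponent being $m_2(p) = -2n|1/p-1/2|$, i.e.\ twice its Fourier-integral-operator counterpart $-(n-1)|1/p-1/2|$.

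With this decomposition in hand, the plan is to apply Lemma~\ref{lem:Miyachistuff} with $k=2$. After the substitution $\xi = \xi_\nu^j+\eta$ the pieces $K_j^\nu(x,x-y)$ of the kernel become oscillatory integrals with phase $\tilde\varphi_j^\nu(x,\eta) + (\nabla_\xi\varphi(x,\xi_\nu^j)-y)\cdot\eta$, and a combination of Plancherel in $x$ with repeated integration by parts in $\eta$ produces
\eq{
\norm{(x-y)^\alpha\partial_y^\beta K_j^\nu(x,x-y)}_{L^2_x(\Rn)}
\lesssim 2^{j(|\alpha|+|\beta|+m_2(p)+n/2)}\brkt{1+|\xi_\nu^j|}^{-L}
}
for arbitrarily large $L$, uniformly in $y,\nu$. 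Summing in $\nu$ and inserting the result into \eqref{eq:new1} yields the first atomic bound \eqref{eq:keyestimate1}; the sharper bound \eqref{eq:keyestimate2} is then obtained by Taylor-expanding the kernel in $y$ around $\bar y$ and exploiting the vanishing moments of $\at$, exactly as in the proof of Proposition~\ref{Prop:ThpLp}.

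Finally, the $L^\infty$--$\bmo$ claim is obtained by duality, and amounts to showing $\brkt{T_a^\varphi}^{*}:h^1(\Rn)\to L^1(\Rn)$. Because the adjoint has no compact spatial support, the assumption $|\nabla_x\varphi(x,0)|\in L^\infty(\Rn)$ enters here in precisely the same role as in Theorem~\ref{thm:main5glob}: it substitutes for spatial localisation by controlling the "zero-frequency" behaviour of $\varphi$, which is exactly what is required in order to run the above two-scale decomposition globally. \emph{The main obstacle} is, indeed, the absence of any phase-space partition as efficient as Fefferman's for homogeneous phases; the constant-radius replacement is forced by the quadratic nature of the Schr\"odinger phase and is ultimately responsible for the strictly worse critical order $m_2(p)$, as well as for the sharpness statement in Theorem~\ref{Th:bonus}.
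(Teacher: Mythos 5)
Your two-scale decomposition — a Littlewood--Paley annulus cut into $O(2^{jn})$ balls of \emph{constant} radius, followed by a second-order Taylor expansion of $\varphi$ in $\xi$ around each ball centre — is exactly the paper's starting point, and your heuristic for why this forces the order $m_2(p)$ is correct. The genuine gap is in the routing through Lemma~\ref{lem:Miyachistuff}. First, the claimed per-ball estimate with a factor $\langle 1+|\xi^j_\nu|\rangle^{-L}$ is not meaningful: every centre satisfies $|\xi^j_\nu|\sim 2^j$, so that factor is $\sim 2^{-jL}$ for arbitrary $L$ and would make the operator trivially small. More fundamentally, Lemma~\ref{lem:Miyachistuff} is built around $(x-y)^\alpha$-weighted $L^2_x$-bounds, i.e.\ decay of $K_j$ in $|x-y|$. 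Such weighting amounts (via integration by parts) to pulling down powers of $\nabla_\xi\big(\varphi(x,\xi)-x\cdot\xi\big)$, and for a general Schr\"odinger phase this quantity is \emph{not} controlled by $\langle\xi\rangle$: the paper's own one-dimensional example $\varphi(x,\xi)=\sin x\sin\xi+\xi^2+(2\xi+1)x$ gives $\partial_\xi(\varphi-x\xi)=\sin x\cos\xi+2\xi+x$, which grows linearly in $x$. Consequently the symbols $b_j$ generated by $(x-y)^\alpha$-integration by parts are not in $S^0_{0,0}(\Rn)$, and Fujiwara's $L^2$-theorem cannot be invoked to produce the required $L^2_x$-bounds. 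This is precisely why Proposition~\ref{Th:new} (which $\textart F^k$-phases enjoy) does \emph{not} extend to the Schr\"odinger class and why the paper does \emph{not} invoke Lemma~\ref{lem:Miyachistuff} here: the correct spatial decay variable is $\nabla_\xi\varphi(x,\xi^\nu_j)-y$, not $x-y$, because $\partial_\xi\vartheta(x,\xi)=\partial_\xi\varphi(x,\xi)-\partial_\xi\varphi(x,\xi^\nu_j)$ is bounded on a constant-radius ball by the mean-value theorem together with \eqref{Schrodinger phase}, irrespective of the growth of $\partial_\xi\varphi-x$ in $x$. The paper therefore proves the pointwise bound $|\partial_y^\alpha K_j^\nu(x,y)|\lesssim 2^{j(m_2(p)+|\alpha|)}\,\langle\nabla_\xi\varphi(x,\xi^\nu_j)-y\rangle^{-M}$, introduces the balls $B^\nu_j=\{x:|\nabla_\xi\varphi(x,\xi^\nu_j)-\bar y|\le 2r\}$ (of volume $\lesssim r^n$ by the SND-induced diffeomorphism), splits $\|T_j^\nu\at\|_{L^p}$ between $B^\nu_j$ (using Fujiwara's $L^2$-bound uniformly in $j,\nu$) and its complement (using the kernel decay and, for small $r$, the atom's moment conditions), and sums over $j,\nu$ directly. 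Lemma~\ref{lem:hpLpofT} and Lemma~\ref{lem:Miyachistuff} are bypassed entirely.

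A secondary concern: the adjoint step is also glossed over. The adjoint \eqref{eq:adjointOIO} is not automatically of the form \eqref{eq:OIO}, and the paper does not claim it is. Instead, after using Remark~\ref{Rem:varphi0} to normalise $\varphi(x,0)=0$, the paper establishes the analogous pointwise kernel estimate for $K_j^{\nu\ast}$ directly — the normalisation is needed because $\partial_y^\alpha$ of the adjoint kernel produces powers of $\nabla_y\varphi(y,\xi)$, which is controlled by $|\xi|$ only once $\varphi(y,0)=0$. Likewise, the assumption $|\nabla_x\varphi(x,0)|\in L^\infty(\Rn)$ enters only in the $L^\infty$--$\bmo$ claim, and its sole role is to make $e^{i\varphi(x,0)}$ a $\bmo$-pointwise multiplier via \eqref{poitwise multiiplier}, not to "substitute for spatial localisation" in the way you suggest.
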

\begin{proof}
We start by the analysis of the case of $0<p<1$. We make the following decomposition of the integral kernel 
$$K(x,y)= \int_{\Rl^n}  a(x,\xi)\,e^{i\varphi(x,\xi)-iy\cdot \xi}\,  \ddd \xi$$ 
of the operator $T_a^\varphi$.
We introduce a standard Littlewood-Paley partition of unity $\sum_{j=0}^\infty \psi_j(\xi) =1$ with $\supp \psi_0 \subset B(0,2)$, and $\supp \psi_j\subset \{ 2^{j-1}\leq|\xi|\leq 2^{j+1}\}$ for $j\geq 1$. Then for every $j\geq 0$ we cover  
$\supp \psi_j$ with open balls $C_j^\nu$ with radius $1$ and center $\xi_j^\nu$, where $\nu$ runs from $1$ to $O(2^{jn})$. Observe that $|C_j^\nu|\lesssim 1$ uniformly in $j$ and $\nu$. Now take $u\in \mathcal{C}_c^{\infty}(\Rl ^n)$, with $0\leq u\leq 1$ and supported in $B(0,2)$ with $u=1$ on $\overline{B(0,1)}.$  Define $\lambda_j^\nu(\xi) \in \mathcal{C}_c^{\infty}(\Rl^n)$ to be equal to $u(\xi-\xi_j^\nu).$
Next set
$\chi_j^\nu(\xi) := \lambda_j^\nu(\xi) /{\sum_\nu\lambda_j^\nu(\xi)} $ and observe that {for each $\xi \in \supp \psi_j$} the sum  $\sum_\nu\lambda_j^\nu(\xi) \geq 1$, and also
$\sum_{j=0}^\infty \sum_\nu \chi_j^\nu(\xi)\,\psi_j(\xi) =1.$
Now consider the kernel
\begin{equation*}
K_{j}^\nu (x,y) := \int_{\Rl^n}  \psi_j(\xi)\,\chi_j^\nu(\xi)\,e^{i\varphi(x,\xi)-iy\cdot \xi}\,a (x,\xi)  \ddd \xi .
\end{equation*}
Therefore, for any multi-index $\alpha$ and any $j\geq 0$ we have
\begin{align*}
\partial^{\alpha}_{y}K_{j}^{\nu}(x,y) 
& = \int_{\Rl^n}  \psi_j(\xi)\,\chi_j^\nu(\xi)\,
\partial^{\alpha}_{y} e^{i(\varphi(x,\xi )-y\cdot \xi)} \,a (x,\xi) \ddd \xi \\
& = 
\int_{\Rl^n}
e^{i(\varphi(x,\xi )-y\cdot \xi)}\, \sigma_j^{\alpha,\nu}(x,\xi) \ddd\xi,
\end{align*}
where
$$\sigma_{j}^{\alpha,\nu}(x,\xi)
:=  \psi_j(\xi)\,\chi_j^\nu(\xi)\,(-i\xi)^\alpha \, a(x,\xi).
$$
Using the assumption that $a(x,\xi)\in S^{m_2(p)}_{0,0}(\Rl^n)$, we deduce that for any multi-index $\gamma$, any $j\geq 0$ and any $\nu$ one has
\begin{equation}\label{amplitude derivative estim}
 |\partial^{\gamma}_\xi \sigma_j^{\alpha,\nu}(x,\xi)|\lesssim 2^{j(m_2(p)+|\alpha|)}.   
\end{equation}
If we now set $\vartheta(x,\xi):=\varphi(x,\xi)-\xi \cdot \nabla_\xi \varphi(x,\xi_j^\nu)$, then we can write
\begin{equation*}
\partial^{\alpha}_{y}K_{j}^\nu(x,y) = \int_{\Rl^n}  e^{i(\nabla_\xi\varphi(x,\xi_j^\nu )-y)\cdot \xi}\,e^{i\vartheta(x,\xi)}\,\sigma_{j}^{\alpha,\nu}(x,\xi)  \ddd \xi.
\end{equation*}
 Now we claim that the derivatives of $\vartheta$ in $\xi$ are uniformly bounded on the support of $\sigma_{j}^{\alpha,\nu}(x,\xi)$. To this end, the mean-value theorem and \eqref{Schrodinger phase} yield
\eq{
|\partial_{\xi_l}\vartheta(x,\xi)|
&=\abs{\partial_{\xi_l}\varphi(x,\xi)- \partial_{\xi_l} \varphi(x,\xi_j^\nu)}   
=  \Big| \brkt{\xi-\xi_j^\nu}\cdot \int_0^1\partial_{\xi_l}\nabla_\xi \varphi(x,t\xi+(1-t)\xi_j^\nu)\dd t \Big| \\
&\lesssim \abs{ \xi-\xi_j^\nu} \leq 1,
}
and 
\eq{
\abs{\partial_{\xi}^{\mu}\vartheta(x,\xi)} = \abs{\partial_{\xi}^{\mu}\varphi(x,\xi)}\lesssim 1,\quad \text{ for all } \abs\mu \geq 2.
}
Defining the differential operator $L$ by
\eq{
L := 1-i(\nabla_\xi \varphi(x,\xi_j^\nu)-y)\cdot \nabla_\xi,
}
one can easily verify that
\begin{equation*}
\jap{ \nabla_\xi \varphi(x,\xi_j^\nu)-y} ^{-2M} L^M e^{i\brkt{\nabla_\xi\varphi\brkt{x,\xi_j^\nu }-y}\cdot \xi} = e^{i\brkt{\nabla_\xi\varphi\brkt{x,\xi_j^\nu }-y}\cdot \xi},
\end{equation*}
for all integers $M\geq0$. Therefore, integrating by parts yields
\begin{equation*}
\partial^{\alpha}_{y} K_{j}^\nu(x,y)= 
\langle \nabla_\xi \varphi(x,\xi_j^\nu)-y \rangle^{-2M} \int_{\Rl^n} e^{i(\nabla_\xi\varphi(x,\xi_j^\nu )-y)\cdot \xi}\,\brkt{L^*}^Me^{i\vartheta(x,\xi)}\, \sigma_{j}^{\alpha,\nu}(x,\xi)  \ddd \xi.
\end{equation*}
This equality, the observation that $\supp \sigma_{j}^{\alpha,\nu}\subset C^{\nu}_{j},$ with $|C^{\nu}_j|=O(1)$ uniformly in $\nu$ and $j$, the estimates for the derivatives of $\vartheta$, and \eqref{amplitude derivative estim} yield 
\begin{equation}\label{main kernel estimate for AAW 1}
\abs{\partial_y^\alpha K_j^\nu (x,y)} 
\lesssim \frac {2^{j(m_2(p)+|\alpha|)}}{\jap{\nabla_\xi \varphi(x,\xi_j^\nu)-y}^{M}},
\end{equation}
for all multi-indices $\alpha$ and all $j\geq 0$.\\

Let $T_j^\nu$ be the operators corresponding to the kernels $K_j^\nu$ and $\at$ be a $p$-atom supported in the ball $B(\bar y,r)$ with $\bar{y} \in \mathbb{R}^n$ and $r>0$. Define
\eq{B^\nu_j := \set{x: |\nabla_\xi \varphi(x,\xi_j^\nu)-\bar y|\leq 2r}.}

Since $0<p<1$ we have
\begin{align}\label{eq:twoterms}
\norm{T_a^\varphi \at}^p_{L^p(\Rn)} 
& \lesssim \sum_j \sum_\nu 
\norm{{T_j^\nu} \at}^p_{L^p(\Rn)} \nonumber \\
& = \sum_j \sum_\nu \Big(\norm{T_j^\nu \at}^p_{L^p(B_j^\nu)}+ \norm{T_j^\nu \at}^p_{L^p(\Rn \setminus B_j^{\nu})}\Big).
\end{align}

We start with the first term in \eqref{eq:twoterms}. Since, by the SND-condition, the map \linebreak$x\mapsto \nabla_\xi \varphi(x,\xi_j^\nu)$ is a global diffeomorphism, one has that $|B_j^\nu| \lesssim r^n$ uniformly in $j$ and $\nu$. 
Therefore, the $L^2$-boundedness of $2^{-jm_2(p)}T_j^\nu$ proven in \cite{Fuj} and H\"older's inequality yield
\eq{\norm{T_j^\nu \at}_{L^p(B_j^\nu)} &\lesssim r^{n\brkt{1/p - 1/2}}2^{jm_2(p)}\norm{2^{-jm_2(p)}T_j^\nu \at}_{L^2(\Rn)} \\ 
&\lesssim r^{n\brkt{1/p - 1/2}}2^{jm_2(p)}\norm{\at}_{L^2(\Rn)}\lesssim 2^{jm_2(p)},}

where the $L^2$-boundedness of the second inequality is uniform in $j$ and $\nu$. This is because the symbol of $2^{-jm_2(p)}T_j^\nu$ fulfills 
\eq{
\left |  \partial_\xi^\alpha \partial_x^\beta (2^{-jm_2(p)} \,\psi_j(\xi)\,\chi_j^\nu(\xi) \, a(x,\xi))\right | \lesssim 1,
}
uniformly in $j$ and $\nu$ and is hence an element of $S^0_{0,0}(\Rl^n)$.\\

We turn to the second term in \eqref{eq:twoterms} and estimate that in two different ways. First, we observe that for $x\in \Rn \setminus B_j^{\nu}$ and $y\in B$ one has
\eq{
|\nabla_\xi \varphi(x,\xi_j^\nu)-y| 
& \geq \frac 12  |\nabla_\xi \varphi(x,\xi_j^\nu)-\bar y |.} 
Now using the SND-condition of the phase and \eqref{main kernel estimate for AAW 1} with $\alpha=0$, we obtain (taking $M$ large enough)
\nm{eq:schrodingerpieceestimate}{
\norm{T_j^\nu \at}_{L^p(\Rn \setminus B_j^{\nu})}^p 
&\lesssim \int_{\Rn \setminus B_j^{\nu}} 
\Big(\int_B |{ K_j^\nu(x,y)}||\at(y)|\dd y \Big)^p \dd x \\ 
&\lesssim \int_{\Rn} 
\Big(\int_B \frac {2^{jm_2(p)}r^{-n/p}}{\jap{\nabla_\xi \varphi(x,\xi_j^\nu)-\bar y}^{M}}\dd y \Big)^p \dd x 
\lesssim 2^{j(np-2n)}r^{np-n}.}

Second, if $r<1$, Taylor expansion of $K_j^\nu$ in the $y$-variable around $\bar y$, using the moment conditions of $\at$, and finally \eqref{main kernel estimate for AAW 1} yield 
that for $N:=[n(1/p-1)]$
\eq{\norm{T_j^\nu \at}_{L^p(\Rn \setminus B_j^{\nu})}^p 
&\lesssim \sum_{|\alpha|=N+1}
\int_{\Rn \setminus B_j^{\nu}} 
\Big(\int_B |{ \partial_y^\alpha K_j^\nu(x,y^*)}||y-\bar y|^{N+1}|\at(y)|\dd y \Big)^p \dd x \\ 
& \lesssim 2^{j(np-2n+p(N+1))}r^{np-n+p(N+1)},}
where $y^*$ is a point on the line segment connecting $y$ and $\bar{y}$. 
Note that we have also used that for $x\in \Rl^n\setminus 2B$, one has 
\begin{equation*}
    \vert \nabla_\xi \varphi(x,\xi_j^\nu)-\bar{y}\vert \lesssim \vert \nabla_\xi \varphi(x,\xi_j^\nu)-y^*\vert.
\end{equation*}
Since $r<1$, take the unique integer $\ell\in \mathbb Z_+$ such that $2^{-\ell-1}\leq r < 2^{-\ell}$. Then recalling that there are $O(2^{jn})$ terms in the sum in $\nu$, we have
\eq{
&\sum_{j=0}^\infty \sum_\nu \Big(\norm{T_j^\nu \at}^p_{L^p(B_j^\nu)}+ \norm{T_j^\nu \at}^p_{L^p(\Rn \setminus B_j^{\nu})}\Big) \\ 
&\qquad \lesssim \sum_{j\geq \ell} \sum_\nu \Big(2^{-2jn\brkt{1 - p/2}}+2^{j(np-2n)}r^{np-n}\Big)\\ 
&\qquad\qquad+ \sum_{j<  \ell} \sum_\nu \Big(2^{-2jn\brkt{1 - p/2}}+2^{j(np-2n+pN+p)}r^{np-n+pN+p} \Big)\\ 
&\qquad\lesssim \sum_{j\geq \ell}  
\Big(2^{-jn(1-p)}+2^{j(np-n)}r^{np-n} \Big) \\ 
&\qquad\qquad+ \sum_{j<  \ell}  
\Big(2^{-jn(1-p)}+2^{j(np-n+pN+p)}r^{np-n+pN+p} \Big)\\ 
&\qquad\lesssim 1 +2^{\ell(np-n)}r^{np-n}+2^{\ell(np-n+pN+p)}r^{np-n+pN+p}\sim 1.}

Now, if $r\geq 1$, we do the same calculation as above, except that we take $\ell=0$ and do not consider the case $j<\ell$. Hence, only \eqref{eq:schrodingerpieceestimate} is needed to estimate $\norm{T_j^\nu \at}^p_{L^p(\Rn \setminus B_j^{\nu})}$, and we conclude that $\norm{T_a^\varphi \at}^p_{L^p(\Rn)}$ is also  uniformly bounded when $r\geq 1$.\\

Interpolating this with the $L^2$-boundedness result in \cite{Fuj} yields the result for \linebreak$0<p\leq 2$.\\

For the $L^p$--boundedness of $T_a^\varphi$ in the range $2\leq p<\infty$, using Remark \ref{Rem:varphi0} we can without loss of generality assume that $\varphi(x,0)=0$ in $T_a^\varphi$. Now, using duality and interpolation, the $L^p$--boundedness of $T_a^\varphi$ (with this kind of phase function) would be a consequence of the $h^p(\Rl^n)$ to $L^p(\Rl^n)$ boundedness of the adjoint operator ${(T_a^{\varphi})}^*$, for $0<p\leq 2.$\\

Therefore we start by showing the $h^p(\Rl^n)$ to $L^p(\Rl^n)$ boundedness of the adjoint operator {${(T_a^\varphi)}^*$} (with $\varphi(x,0)=0$), for $0<p<1$ and make the following observations.
The kernel of ${(T_a^\varphi)}^*$ is given by
\begin{equation*}
K_{j}^{\nu*}(x,y) = \int_{\Rl^n}  \psi_j(\xi)\,\chi_j^\nu(\xi)\,e^{-i(\varphi(y,\xi )-x\cdot \xi)}\,\overline{a (y,\xi)}  \ddd \xi,
\end{equation*}
therefore for any multi-index $\alpha$ we have
\begin{align*}
\partial^{\alpha}_{y}K_{j}^{\nu*}(x,y) 
& = \int_{\Rl^n}  \psi_j(\xi)\,\chi_j^\nu(\xi)\,
\partial^{\alpha}_{y} \Big(e^{-i(\varphi(y,\xi )-x\cdot \xi)}\,\overline{a (y,\xi)}\Big) \ddd \xi \\
& = 
\int_{\Rl^n}
e^{-i(\varphi(y,\xi )-x\cdot \xi)}\,
\sigma_{j}^{\alpha,\nu*}(y,\xi) \ddd\xi,
\end{align*}
where
$$\sigma_{j}^{\alpha,\nu*}(y,\xi)
:=  \psi_j(\xi)\,\chi_j^\nu(\xi)\,\sum_{\substack{\alpha_1 + \alpha_2=\alpha \\ \lambda_1 + \dots + \lambda_r = \alpha_2} }
C_{\alpha_1, \alpha_2, \lambda_1, \dots \lambda_r}\,
\partial_y^{\alpha_1} \overline{a(y,\xi)}\,
\partial_y^{\lambda_1}\varphi(y,\xi)
\cdots
\partial_y^{\lambda_r}\varphi(y,\xi),
$$
and $|\lambda_j| \geq 1$.
Now, for $|\lambda_j + \beta| \geq 2$,
$$|\partial^{\lambda_j}_y \partial^{\beta}_{\xi} \varphi(y,\xi)|
\lesssim 1,$$ 
and using that $\varphi(y,0)=0$ and the mean-value theorem, we obtain $$| \nabla_y \varphi(y,\xi)|
\lesssim |\xi|.$$ 
From these estimates we deduce that for any multi-index $\gamma$ one has
$|\partial^{\gamma}_\xi\sigma_{j}^{\alpha,\nu*}(y,\xi)|\lesssim 2^{j(m_2(p)+|\alpha|)}$.
Therefore, following the same line of reasoning as for the case of $T^\varphi_a$ yields for all multi-indices $\alpha$ and all $j\geq 0$ that
$$|\partial^{\alpha}_{y}K_{j}^{\nu*}(x,y)|
\lesssim \frac {2^{j(m_2(p)+|\alpha|)}}{\jap{\nabla_\xi \varphi(y,\xi_j^\nu)-x}^{M}}.$$
Now the rest of the proof proceeds almost exactly as in the case of $T_a^\varphi$.\\

Having established the $h^p - L^p$ boundedness of  $(T_a^\varphi)^{\ast}$ for $0<p<1$, we can use interpolation to extend this to the desired range $0<p\leq 2$. Summing up, this (together with duality and interpolation) shows the $h^p$-$L^p$ boundedness of $T_a^\varphi$ for $0<p<\infty.$\\ 

Now for the boundedness of $T_a^\varphi$ from $L^\infty(\Rl^n)$ to $\bmo(\Rl^n)$ one can write \linebreak$T_a^\varphi= e^{i\varphi(x,0)} T_a^{\tilde{\varphi}}$ with  $\tilde{\varphi}(x,0)=0.$  Then given the assumption on the phase function of Schr\"odinger integral operators and the extra assumption $|\nabla_x\varphi(x,0)|\in L^{\infty}(\Rl^n)$ on the phase, one can use \eqref{poitwise multiiplier} to reduce matters to the boundedness of $T_a^{\tilde{\varphi}}$. But the boundedness of $T_a^{\tilde{\varphi}}$ from $L^\infty(\Rl^n)$ to $\bmo(\Rl^n)$ is a consequence of the boundedness of $(T_a^{\tilde{\varphi}})^{\ast}$ from $h^1(\Rl^n)$ to $L^1(\Rl^n)$ which is achieved in the same way, as in the analysis of $(T_a^\varphi)^{\ast}$ above. The details are left to the interested reader.
\end{proof}

\section{Action of parameter-dependent pseudodifferential operators on oscillatory integrals}\label{section:left comp of OIO with pseudo} 
Here we prove the  result concerning the composition of parameter-dependent pseudodifferential operators and oscillatory integral operators, and also derive an asymptotic expansion for the composition operator.
\begin{proof}[\textbf{\emph{Proof of Theorem \ref{thm:left composition with pseudo}}}]
The idea of the proof is similar to that of the asymptotic expansion proved in \cite{RRS}, however the details are somewhat different. Let $\chi(x-y)\in \mathcal{C}^{\infty}(\mathbb{R}^n \times \Rl^n)$ such that $0\leq \chi\leq 1,$ $\chi(x-y)\equiv 1$ for $|x-y|< \kappa/2$ and $\chi(x-y)=0$ for $|x-y|>\kappa$, for some small $\kappa$ to be specified later. We now decompose $\sigma_t(x,\xi)$ into two parts $\textbf{I}_1 (t,x,\xi)$ and $\textbf{I}_2 (t,x,\xi)$ where
\begin{equation*}
  \textbf{I}_1 (t,x,\xi)
  :=\iint_{\Rl^n\times \Rl^n} a(y,\xi)\,b(x, t\eta)\, (1-\chi(x-y))\,e^{i(x-y)\cdot\eta+i\phase(y,\xi)-i\phase(x,\xi)}\ddd\eta\dd y,
  \end{equation*}
  and
\begin{equation*}
  \textbf{I}_2 (t,x,\xi)
  :=\iint_{\Rl^n\times \Rl^n} a(y,\xi)\,b(x, t\eta)\,\chi(x-y)\,e^{i(x-y)\cdot\eta+i\phase(y,\xi)-i\phase(x,\xi)}\ddd\eta\dd y.
  \end{equation*}

\quad\\

\textbf{Step 1 -- The analysis of $\textbf{I}_1(t,x,\xi)$}\\
To this end, we introduce the differential operators
\[
L_{\eta} 
:=-i\frac{x -y}{|x-y|^2}\cdot \nabla_{\eta} \quad \text{and} \quad
L_{y} 
:=\frac{1}{\langle \nabla_{y}\phase(y,\xi)\rangle ^2 -i\Delta_{y}\phase(y,\xi)}(1-\Delta_{y}).
\]
Because of \eqref{asymptotic condition2}, one has 
$$|\langle \nabla_{y}\phase(y,\xi) \rangle^2 -i\Delta_{y}\phase(y,\xi)|\geq \langle \nabla_{y}\phase(y,\xi) \rangle^2 \gtrsim \xxi^2.$$
Now integration by parts yields
\eq{
  \textbf{I}_1 (t,x,\xi)&=\iint_{\Rl^n\times \Rl^n} (L_{y}^*)^{N_{2}} \set{e^{-iy\cdot\eta} \,a(y,\xi)\, (L_{\eta}^*)^{N_1}[(1-\chi(x-y))\,b(x, t\eta)]}\\ &\qquad \qquad \times e^{ix\cdot\eta+i\phase(y,\xi)-i\phase(x,\xi)} \ddd\eta \dd y.
}

Now since $0<t\leq 1$, provided $0<N_3<N_1-s$, we have
\begin{align*}
\abs{\partial^{N_1}_{\eta_j}b(x, t\eta)} & \lesssim t^{N_1} \langle t\eta\rangle^{s-N_1}= t^{N_1} \langle t\eta\rangle^{-N_3}\langle t\eta\rangle^{s-(N_1 -N_3)} \\
& \lesssim t^{N_{1}} \brkt{t^2+|t\eta|^2}^{-N_3/2} \langle t\eta\rangle^{s-(N_{1}-N_3)}\lesssim t^{N_{1}-N_3}\langle \eta\rangle^{-N_3}.
\end{align*}
Therefore, choosing $N_1 >n$ and $2N_2<N_3 -n$

\eq{
  | \textbf{I}_1 (t,x,\xi)| &\lesssim t^{N_1 -N_3}\xxi^{-2N_2 +m}\iint_{|x-y|>\kappa} \langle \eta\rangle^{2N_2} |x-y|^{-N_1} \langle\eta\rangle^{-N_3} \ddd\eta \dd y \\ &\lesssim  t^{N_1 -N_3} \xxi^{-2N_2 +m}.
}
Estimating derivatives of $\textbf{I}_1(t,x,\xi)$ with respect to $x$ and $\xi$ may introduce factors estimated by powers of $\xxi$, $\langle \eta\rangle$, and $|x-y|$, which can all be handled by choosing $N_1$ and $N_2$ appropriately. Therefore, for all $N$ and any $\nu>0$
\begin{equation*}
  \abs{ \partial^{\alpha}_{\xi} \partial^{\beta}_{x}\textbf{I}_1 (t,x,\xi)} \lesssim t^{\nu}\xxi^{-N},
  \end{equation*}
and so $\textbf{I}_1 (t,x,\xi)$ forms part of the error term $t^{\eps M}r(t,x,\xi)$ in \eqref{asymptotic expansion}.\\

\textbf{Step 2 -- The analysis of $\textbf{I}_2(t,x,\xi)$}\\
First, we make the change of variables $\eta=\nabla_x\phase(x,\xi)+\zeta$ in the integral defining $\textbf{I}_2 (t,x,\xi)$ and then expand $b(x, t\eta)$ in a Taylor series to obtain
\begin{equation*}
  \begin{aligned} b(x,t\nabla_x\phase(x,\xi)+t\zeta) & = \sum_{0 \leq |\alpha|<M} t^{|\alpha|}\frac{\zeta^\alpha}{\alpha !} \brkt{\partial_\eta^\alpha b}(x,t\nabla_x\phase(x,\xi)) \\ & \qquad+ t^{M} \sum_{|\alpha|=M} C_\alpha {\zeta^\alpha} r_\alpha(t, x,\xi,\zeta),
  \end{aligned} \end{equation*}
 where 
\begin{equation}\label{eq:ralapha}
    r_\alpha(t, x,\xi,\zeta) 
 := \int_0^1 (1-\tau)^{M-1} \brkt{\partial_\eta^{\alpha} b}(x, t\nabla_x\phase(x,\xi)+\tau t\zeta)\dd \tau.
\end{equation}
If we set
\[
\Phi(x,y,\xi)
:=\phase(y,\xi)-\phase(x,\xi)+(x-y)\cdot\nabla_x\phase(x,\xi),
\]
we obtain
\[
\textbf{I}_2 (t,x,\xi)= \sum_{|\alpha|<M} \frac{t^{\eps|\alpha|}}{\alpha!}\, \sigma_{\alpha}(t,x,\xi) + t^{\eps M}\, \sum_{|\alpha|=M} C_\alpha\, R_{\alpha}(t,x,\xi),
\]
where, using integration by parts, we have
\begin{align*}
\sigma_{\alpha}(t,x,\xi) 
& := t^{(1-\eps)\abs\alpha}\iint_{\Rl^n\times \Rl^n} e^{i(x-y)\cdot\zeta+i\Phi(x,y,\xi)} \,\zeta^{\alpha}\, a(y,\xi) \\
 &\qquad\qquad\qquad\times\chi(x-y)\, (\partial_\eta^\alpha b)(x, t\nabla_x\phase(x,\xi))\dd y\ddd\zeta \\
& =t^{(1-\eps)\abs\alpha}\brkt{\partial_\eta^\alpha b}(x,t\nabla_x\phase(x,\xi)) (i)^{-|\alpha|}\partial_y^{\alpha}\left[ e^{i\Phi(x,y,\xi)}\, a(y,\xi)\,\chi(x-y) \right]_{|_{_{y=x}}},
\end{align*}
and
\[
R_{\alpha}(t,x,\xi) 
:= t^{(1-\eps)\abs\alpha}\iint_{\Rl^n\times \Rl^n} e^{i(x-y)\cdot\zeta} e^{i\Phi(x,y,\xi)} \zeta^{\alpha}\,a(y,\xi)\, \chi(x-y) \, r_\alpha(t, x,\xi,\zeta)\, \dd y\ddd\zeta.
\]

\quad\\

\textbf{Step 2.1 -- The analysis of $\sigma_{\alpha}(t,x,\xi)$}\\
We now claim that
\begin{equation} \label{half}
\left|\partial_y^{\gamma} e^{i\Phi(x,y,\xi)}{}_{|_{y=x}}\right|
\lesssim \bra{\xi}^{|\gamma|/2}.
\end{equation}
We first observe that when $\gamma = 0$, \eqref{half} is obvious. To obtain \eqref{half} for $\gamma \neq 0$ we recall Fa\`a di Bruno's formulae
\[
\partial_y^{\gamma} e^{i\Phi(x,y,\xi)}=\sum_{\gamma_1 + \cdots+ \gamma_k =\gamma} C_\gamma \brkt{\partial^{\gamma_{1}}_{y}\Phi(x,y,\xi)}\cdots \brkt{\partial^{\gamma_{k}}_{y}\Phi(x,y,\xi)}\,e^{i\Phi(x,y,\xi)},
\]
where the sum ranges of $\gamma_j$ such that $|\gamma_{j}|\geq 1$ for $j=1,2,\dots, k$ and $\gamma_1 + \cdots+ \gamma_k =\gamma$ for some $k \in \Z_+$. Since $\Phi(x,x,\xi)=0$ and $\left.\partial_y\Phi(x,y,\xi)\right|_{y=x}=0$, setting $y=x$ in the expansion above leaves only terms in which $|\gamma_j|\geq 2$ for all $j = 1,2,\dots,k$. But $\sum_{j=1}^{k} |\gamma_j |\leq |\gamma|,$ so we actually have $2k\leq |\gamma|$, that is $k\leq  |\gamma|/2$. Estimate \eqref{asymptotic condition3} on the phase tells us that $|\partial^{\gamma_{j}}_{y}\Phi(x,y,\xi)| \lesssim \xxi$, so
\[
\left|\partial_y^{\gamma} e^{i\Phi(x,y,\xi)}{}_{|_{y=x}}\right| \lesssim \xxi \cdots \xxi \lesssim \xxi ^{k} \lesssim \bra{\xi}^{|\gamma|/2},
\]
which is \eqref{half}.\\

If we use the fact that $t\leq 1$ and the assumption $i)$ of Theorem \ref{thm:left composition with pseudo} on the phase function $\phase$, then we have
\eq{
|\sigma_{\alpha}(t,x,\xi)| &\lesssim t^{(1-\eps)\abs\alpha}\bra{t\nabla_x\phase(x,\xi)}^{s-|\alpha|} \bra{\xi}^{|\alpha|/2} \bra{\xi}^m
\\ &\lesssim  t^{(1-\eps)|\alpha|} \,\bra{t\xi}^{s-(1-\varepsilon)|\alpha|}\,\bra{t\xi}^{-\varepsilon|\alpha|}\, \bra{\xi}^{m+|\alpha|/2}
\\ &\lesssim t^{\min(s,0)}  \bra{\xi}^{s+m-(1/2-\varepsilon)|\alpha|},
}
when $|\alpha| > 0$.\\ 

By the assumptions of the theorem, the derivatives of $\sigma_{\alpha}$ with respect to $x$ or $\xi$ do not change the estimates when applied to $b$, and the same is true when derivatives are applied to $\partial_y^{\alpha} e^{i\Phi(x,y,\xi)}|_{y=x}$. Therefore, for all multi-indices $\beta$, $\gamma\in \mathbb{Z}_{+}$,
\[
\abs{\partial^{\beta}_{\xi} \partial^{\gamma}_{x} \sigma_{\alpha}(t,x,\xi)} \lesssim  t^{\min(s,0)}  \bra{\xi}^{s+m-(1/2-\varepsilon)|\alpha|-\rho\abs\beta},
\]
as required.\\

\textbf{Step 2.2 -- The analysis of $R_{\alpha}(t,x,\xi)$}\\
Take $g\in \mathcal{C}_c^\infty(\Rl^n)$ such that $g(x)=1$ for $|x|<\delta/2$ and $g(x)=0$ for $|x|>\delta$, for some small $\delta>0$ to be chosen later. We then decompose
\begin{equation*}\label{EQ:Ralphas}
 \begin{aligned} R_{\alpha}(t,x,\xi) 
 & = t^{(1-\eps)\abs\alpha}\iint_{\Rl^n\times \Rl^n} e^{i(x-y)\cdot\zeta} g\Big(\frac{\zeta}{\bra{\xi}}\Big) \\
 &\qquad\qquad\qquad\times \partial_y^{\alpha} \left[ e^{i\Phi(x,y,\xi)}\, \chi(x-y)\,\,a(y,\xi)\, r_\alpha(t,x,\xi,\zeta) \right] \dd y\ddd\zeta \\ 
 & \quad + t^{(1-\eps)\abs\alpha} \iint_{\Rl^n\times \Rl^n} e^{i(x-y)\cdot\zeta} \Big(1-g\Big( \frac{\zeta}{\bra{\xi}}\Big)\Big) \\
 &\qquad\qquad\qquad\times \partial_y^{\alpha}\left[  e^{i\Phi(x,y,\xi)}\,\chi(x-y)\,\,a(y,\xi)\, r_\alpha(t,x,\xi,\zeta)\right]\dd y \ddd\zeta \\
 &=: R_\alpha^I(t,x,\xi)
 + R_\alpha^{I\!\!I}(t,x,\xi). 
   \end{aligned}
   \end{equation*}

\quad\\

\textbf{Step 2.2.1 -- The analysis of $R^I_{\alpha}(t,x,\xi)$}\\
Note that the inequality
\[
\bra{\xi}\leq 1+|\xi|\leq \sqrt{2}\bra{\xi},
\]
and \eqref{asymptotic condition2} yield
$$
\bra{t\nabla_x\phase(x,\xi)+t\tau\zeta} 
\leq  (C_2\sqrt{2}+\delta)\bra{t\xi},  $$
and
\begin{align*}
\sqrt{2}\bra{t\nabla_x\phase(x,\xi)+t\tau\zeta} 
& \geq 1+|t\nabla_x\phase|-|t\zeta|  \\
& \geq 1+C_1|t\xi|-t\delta\bra{\xi} \\
& \geq (1-\delta)+(C_1-\delta)|t\xi| \geq (\min\{1,C_1\}-\delta)\bra{t\xi}.
\end{align*}

Therefore, if we choose $\delta<\min\{1,C_1\}$, then for any $\tau\in (0,1)$, $\bra{t\nabla_x\phase(x,\xi)+t\tau\zeta}$ and $\bra{t\xi}$ are equivalent.\\

This yields that for $|\zeta|\leq r\bra{\xi}$, $\partial^\beta_\zeta r_\alpha(t, x,\xi,\zeta)$  are dominated by $ t^{\abs\beta}\bra{t\xi}^{s-|\alpha|-|\beta|}.$ Furthermore, for $t\leq 1$, it follows from the representation \eqref{eq:ralapha} for $r_\alpha$ that
\begin{equation}\label{eq:estr}
\begin{aligned}
& \Big|\partial_\zeta^{\beta}\Big( g\Big( \frac{\zeta}{\bra{\xi}}\Big) r_\alpha(t,x,\xi,\zeta)\Big)\Big| 
 \lesssim\sum_{\gamma\leq\beta} \Big|\partial_\zeta^\gamma g\Big(\frac{\zeta}{\bra{\xi}}\Big) \partial_\zeta^{\beta-\gamma} r_\alpha(t,x,\xi,\zeta)\Big| \\
& \qquad \qquad \leq C_{\alpha,\beta} \sum_{\gamma\leq\beta}t^{|\beta|-|\gamma|}\bra{\xi}^{-|\gamma|} \bra{t\xi}^{s-|\alpha|-|\beta|+|\gamma|} \\
& \qquad \qquad \lesssim \sum_{\gamma\leq\beta}t^{\min(s,0)+|\beta|-|\gamma|-(1-\varepsilon)|\alpha|}\bra{\xi}^{-|\gamma|}\\ 
&\qquad \qquad \qquad \qquad \times \bra{\xi}^{s-(1-\varepsilon)|\alpha|}   t^{-(|\beta|-|\gamma|)}\bra{\xi}^{-(|\beta|-|\gamma|)}\\
& \qquad \qquad \lesssim t^{\min(s,0)-(1-\varepsilon)|\alpha|} \,\bra{\xi}^{s-(1-\varepsilon)|\alpha|-|\beta|}.
\end{aligned}
\end{equation}

At this point we also need estimates for $\partial_y^\alpha e^{i\Phi(x,y,\xi)}$ off the diagonal, that is, when $x\neq y.$ This derivative has at most $|\alpha|$ powers of terms $\nabla_y\phase(y,\xi)-\nabla_x\phase(x,\xi)$, possibly also multiplied by at most $|\alpha|$ higher order derivatives $\partial_y^\beta\phase(y,\xi)$, which can be estimated by $(|y-x|\bra{\xi})^{|\alpha|}$ using \eqref{asymptotic condition3}. The term containing the difference $\nabla_y\phase(y,\xi)-\nabla_x\phase(x,\xi)$ is the product of at most $|\alpha|/2$ terms of the type $\partial_y^\beta\phase(y,\xi)$, which can be estimated by $\bra{\xi}^{|\alpha|/2}$ in view of \eqref{asymptotic condition3}. These observations yield
\[
\abs{\partial_y^\alpha e^{i\Phi(x,y,\xi)}}\lesssim (1+|x-y|\bra{\xi})^{|\alpha|} \bra{\xi}^{|\alpha|/2},
\]
and therefore we also have
\begin{equation}\label{eq:ests}
\left|\partial_y^{\alpha}\left[ e^{i\Phi(x,y,\xi)} \chi(x-y) \right] \right|\lesssim (1+|x-y|\bra{\xi})^{|\alpha|}\bra{\xi}^{|\alpha|/2}.
\end{equation}

Let
\[
L_\zeta
:=\frac{(1-\bra{\xi}^2\Delta_\zeta)}{1+\bra{\xi}^2|x-y|^2}, \quad \text{so} \quad L_\zeta^N e^{i(x-y)\cdot\zeta}=e^{i(x-y)\cdot\zeta}.
\]
Integration by parts with $L_\zeta$ yields
\[
 \begin{aligned}
  R^I_{\alpha}(t,x,\xi) 
 & = t^{(1-\eps)\abs\alpha}\iint_{\Rl^n\times \Rl^n} \frac{e^{i(x-y)\cdot\zeta}\,\partial_y^{\alpha}\left[ \chi(x-y)\, a(y,\xi)\, e^{i\Phi(x,y,\xi)}\right]}{(1+\bra{\xi}^2 |x-y|^2)^N} \\
 &\qquad \qquad \times (1-\bra{\xi}^{2}\Delta_\zeta)^N \Big\{ g\Big(\frac{\zeta}{\bra{\xi}}\Big) \, r_\alpha(t,x,\xi,\zeta) \Big\} \dd y \ddd\zeta \\
 &  = t^{(1-\eps)\abs\alpha}\iint_{\Rl^n\times \Rl^n} \frac{e^{i(x-y)\cdot\zeta}\,\partial_y^{\alpha}\left[\chi(x-y)\,a(y,\xi)\,e^{i\Phi(x,y,\xi)}\right]}{(1+\bra{\xi}^2 |x-y|^2)^N}\\
 &\qquad \qquad \times \sum_{|\beta|\leq 2N} c_{\beta}\bra{\xi}^{|\beta|} \Big\{ \partial_\zeta^{\beta}\Big( g\Big(\frac{\zeta}{\bra{\xi}}\Big) r_\alpha(t,x,\xi,\zeta)\Big) \Big\} \dd y \ddd\zeta.
 \end{aligned}
 \]
Using estimates (\ref{eq:estr}), (\ref{eq:ests}) and that the size of the support of $g(\zeta/\bra{\xi})$ in $\zeta$ is bounded by $(\delta\bra{\xi})^n$, yield
\[
\begin{aligned}
\abs{R^I_{\alpha}(t,x,\xi)} & \lesssim t^{\min(s,0)}\,\sum_{|\beta|\leq 2N} \bra{\xi}^{n+|\beta|} \bra{\xi}^{-(1-\varepsilon)|\alpha|-|\beta|} \bra{\xi}^{|\alpha|/2+s+m} \\
&\qquad \qquad \times\int_{|x-y|<\kappa}\frac{(1+|x-y|\bra{\xi})^{|\alpha|}} {(1+\bra{\xi}^2 |x-y|^2)^N} \dd y \\ 
& \lesssim t^{\min(s,0)}\sum_{|\beta|\leq 2N} \bra{\xi}^{n+|\beta|} \bra{\xi}^{s-(1-\varepsilon)|\alpha|-|\beta|} \bra{\xi}^{|\alpha|/2+m} \\
&\qquad \qquad \times \langle \xi\rangle^{-n} \int_{0}^{\infty}\frac{\tau^{n-1}(1+\tau)^{|\alpha|}} {(1+\tau^2)^N} \dd \tau\\
& \lesssim t^{\min(s,0)} \bra{\xi}^{s+m-\brkt{1/2- \varepsilon}|\alpha|},
\end{aligned}
\]
if we choose $N>(n+|\alpha|)/2$,  and the hidden constants in the estimates are independent of $t$ (because of (\ref{eq:estr})). The derivatives of $R_{\alpha}^I(t,x,\xi)$ with respect to $x$ and $\xi$ give an extra power of $\zeta$ under the integral. This amounts to taking more $y$-derivatives, yielding a higher power of $\bra{\xi}.$ However, for a given number of derivatives of the remainder $R_{\alpha}^I(t,x,\xi)$, we are free to choose $M=|\alpha|$ as large as we like and therefore the higher power of $\bra{\xi}$ will not cause a problem. Thus for all multi-indices $\beta$, $\gamma,$ and $|\alpha|$ large enough we have 
\begin{equation*}
\abs{\partial^{\beta}_{\xi} \partial^{\gamma}_{x} R_{\alpha}^{I}(t,x,\xi)} \lesssim  t^{\min(s,0)} \bra{\xi}^{s+m-\brkt{1/2- \varepsilon}|\alpha|-\rho\abs\beta},
\end{equation*}
where the hidden constant in the estimate does not depend on $t$.\\

\textbf{Step 2.2.2 -- The analysis of $R_{\alpha}^{I\!\!I}(t,x,\xi)$}\\
Define
\[
\Psi(x,y,\xi,\zeta)
:=(x-y)\cdot\zeta+\Phi(x,y,\xi)= (x-y)\cdot(\nabla_x\phase(x,\xi)+\zeta)+\phase(y,\xi)-\phase(x,\xi).
\]
It follows from \eqref{asymptotic condition2} and \eqref{asymptotic condition3} that if we choose $\kappa <\delta/8C_0,$ then since $|x-y|<\kappa$ on the support of $\chi$, one has (using that we are in the region $|\zeta|\geq \delta\bra{\xi}/2$)
\begin{equation*}\label{eq:rho}
\begin{aligned}
|\nabla_y\Psi| & =|-\zeta+\nabla_y\phase-\nabla_x\phase|\leq 2C_2(|\zeta|+\bra{\xi}), \quad \text{and} \\
|\nabla_y\Psi| & \geq |\zeta|-|\nabla_y\phase-\nabla_x\phase| \geq \frac{1}{2}|\zeta|+\left(\frac{\delta}{4}-C_0|x-y| \right)\bra{\xi}\geq C(|\zeta|+\bra{\xi}).
\end{aligned}
\end{equation*}
Now, using \eqref{asymptotic condition3}, for any $\beta$ we have the estimate
\begin{equation}\label{no idea what to call 1}
\abs{\partial_y^\beta\brkt{e^{-i\Phi(x,y,\xi)}\,\d_y^{\gamma} e^{i\Phi(x,y,\xi)}}}\lesssim\bra{\xi}^{|\gamma|}.
\end{equation}
For $M=|\alpha|>s$ we also observe that
\begin{equation} \label{eq:rs}
|r_\alpha(t, x,\xi,\zeta)|\lesssim 1.
\end{equation}
For the differential operator defined to be 
$$L_y:=i|\nabla_y\Psi|^{-2}\sum_{j=1}^n (\partial_{y_j}\Psi) \,\partial_{y_j},$$ 
induction shows that $L_y^N$ has the form
\begin{equation*} (L_y^*)^N=\frac{1}{|\nabla_y\Psi|^{4N}}\sum_{|\beta|\leq N} P_{\beta,N}\,\partial_y^\beta, 
\end{equation*}
where
$$ P_{\beta,N}
:=\sum_{|\mu|=2N} c_{\beta\mu\delta_j}\,(\nabla_y\Psi)^\mu\, \partial_y^{\delta_1}\Psi\cdots \partial_y^{\delta_N}\Psi,$$
$|\delta_j|\geq 1$ and $\sum_{j=M}^{N}|\delta_j|+|\beta|=2N$. It follows from \eqref{asymptotic condition3} that $|P_{\beta,N}|\leq C(|\zeta|+\bra{\xi})^{3N}.$ Now Leibniz's rule yields
\begin{equation*}
\begin{aligned}
 R^{I\!\!I}_{\alpha}(t,x,\xi) 
 &= t^{(1-\eps)\abs\alpha}\iint_{\Rl^n\times \Rl^n} e^{i(x-y)\cdot\zeta} \,
 \Big(1-g\Big( \frac{\zeta}{\bra{\xi}}\Big)\Big)\, r_\alpha(x,\xi,\zeta) \\
 & \qquad \qquad \times\partial_y^{\alpha}\left[ e^{i\Phi(x,y,\xi)}\, a(y,\xi)\,\chi(x-y) \right] \dd y \ddd\zeta \\
& = t^{(1-\eps)\abs\alpha}\iint_{\Rl^n\times \Rl^n} e^{i\Psi(x,y,\xi,\zeta)} \,
\Big(1-g\Big(\frac{\zeta}{\bra{\xi}}\Big)\Big)\, r_\alpha(t,x,\xi,\zeta) \\
& \qquad\qquad \times \sum_{\gamma_1+\gamma_2 +\gamma_3=\alpha} \brkt{e^{-i\Phi(x,y,\xi)}\partial_y^{\gamma_1} e^{i\Phi(x,y,\xi)}}\, \partial^{\gamma_2}_{y}\chi(x-y) \,\partial^{\gamma_3}_{y} a(y,\xi)  \dd y \ddd\zeta \\
& = t^{(1-\eps)\abs\alpha}\iint_{\Rl^n\times \Rl^n} e^{i\Psi(x,y,\xi,\zeta)} |\nabla_y\Psi|^{-4N}\sum_{|\beta|\leq N} P_{\beta,N}(x,y,\xi,\zeta)\\
& \qquad\qquad \times \Big(1-g\Big(\frac{\zeta}{\bra{\xi}} \Big)\Big) r_\alpha(t,x,\xi,\zeta)  \sum_{\gamma_1+\gamma_2 +\gamma_3=\alpha} \partial_y^\beta \big [ \brkt{e^{-i\Phi(x,y,\xi)}}\\
& \qquad\qquad \times \partial_y^{\gamma_1} e^{i\Phi(x,y,\xi)}\, \partial^{\gamma_2}_{y}\chi(x-y)\, \partial^{\gamma_3}_{y} a(y,\xi)\big ] \dd y \ddd\zeta.
\end{aligned}
\end{equation*}
It follows now from \eqref{no idea what to call 1} and (\ref{eq:rs}) that
\[
\begin{aligned}
|R^{I\!\!I}_{\alpha}(t,x,\xi)| & \lesssim t^{(1-\eps)\abs\alpha}\int_{|\zeta|\geq \delta\bra{\xi}/2} \int_{|x-y|<\kappa} (|\zeta|+\bra{\xi})^{-N} \bra{\xi}^{|\alpha|+m}\dd y \ddd\zeta\\ 
&\lesssim t^{(1-\eps)\abs\alpha}\bra{\xi}^{|\alpha|+m} 
\int_{|\zeta|\geq \delta\bra{\xi}/2}|\zeta|^{-N}\ddd\zeta  \leq C \bra{\xi}^{|\alpha|+n+m-N},
\end{aligned}
\]
which yields the desired estimate when $N>|\alpha|+n$. For the derivatives of $R_{\alpha}^{I\!\!I}(t,x,\xi)$, we can get, in a similar way to the case for $R_{\alpha}^I$, an extra power of $\zeta$, which can be taken care of by choosing $N$ large and using the fact that $|x-y|<\kappa.$  Therefore for all multi-indices $\beta$, $\gamma\in \mathbb{Z}_{+},$
\begin{equation*}
            \abs{\partial^{\beta}_{\xi} \partial^{\gamma}_{x} R_{\alpha}^{I\!\!I}(t,x,\xi)} \lesssim \bra{\xi}^{|\alpha|+n+m-N},
\end{equation*}
where the constant hidden in the estimate does not depend on $t$. The proof of Theorem \ref{thm:left composition with pseudo} is now complete.
\end{proof}

\section{Regularity on Besov-Lipschitz spaces}
\label{sec:Besov}

In this section, we prove sharp boundedness results of oscillatory integral operators on Besov-Lipschitz spaces. The idea here is to boost all 
$h^p - L^p$ 
results in above sections to 
$B_{p,q}^{s+m-m_k(p)} - B_{p,q}^{s}$
using the calculus of Theorem \ref{thm:left composition with pseudo}. To this end, we prove the following proposition:

\begin{Prop}\label{Prop: hp-Lp}
Let $k \geq 1$, $0<p\leq \infty$ and $a(x,\xi) \in S^{m_k(p)}_{0,0}(\Rl^n)$ with compact support in the $x$-variable. Assume that $\varphi\in \textart F^k$ \emph{is SND} satisfies the $L^2$-condition \eqref{eq:L2 condition} and the \emph{LF}$(\mu)$-condition \eqref{eq:LFmu} for some $0<\mu\leq 1$.
If $0<p<\infty$, then \linebreak$T_a^\varphi :h^{p}(\Rl^n)\to L^{p}(\Rl^n)$ and for $p=\infty$ one has $T_a^\varphi :L^{\infty}(\Rl^n)\to \bmo (\Rl^n)$. If one removes the condition of compact support of $a(x,\xi)$ in $x$, then the aforementioned boundedness result is valid, but $p$ has to be taken strictly larger than $n/(n+\mu).$ In the case $0<k<1$, the results above are true provided that $a(x,\xi) \in S^{m_k(p)}_{1,0}(\Rl^n)$. 
\end{Prop}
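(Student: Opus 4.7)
The approach mirrors the outlines of Theorem \ref{thm:main1} and Theorem \ref{thm:main1globalLp}. Decompose $a(x,\xi) = a_L(x,\xi) + a_H(x,\xi)$ with $a_L := \psi_0(\xi/R)\, a(x,\xi)$ for a suitable $R \geq \sqrt{n}$, so that $T_a^\varphi = T_{a_L}^\varphi + T_{a_H}^\varphi$ splits into a low- and a high-frequency part.

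For $T_{a_L}^\varphi$, apply Lemma \ref{Lem:lowfreq} in its Triebel-Lizorkin version: boundedness in $F^0_{p,2}$ translates via \eqref{table of tl spaces} into $h^p \to h^p$ for $0 < p \leq 1$, $L^p \to L^p$ for $1 < p < \infty$, and $L^\infty \to \bmo$ for $p = \infty$. In the compactly supported case, part $iii)$ of that lemma applies for all $p > 0$; in the non-compact case, part $ii)$ restricts us to $p > n/(n+\mu)$, which precisely accounts for the stated range restriction. Part $i)$ of the lemma takes care of the $L^\infty \to \bmo$ estimate.

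For $T_{a_H}^\varphi$, implement the interpolation strategy from the outline of Theorem \ref{thm:main1}. Propositions \ref{Th:new} and \ref{Tj:Tad global} show that, for any fixed $p_0 \in (0,1)$, both $T_{a_H}^\varphi$ and its formal adjoint are bounded from $h^{p_0}$ to $L^{p_0}$. Consider the analytic family $T^\varphi_{a_z}$ with
\[
a_z(x,\xi) := |\xi|^{m_k(p_0)\varepsilon - (1+\varepsilon)m_k(p_0) z}\, a_H(x,\xi),
\]
where $\varepsilon := (1/q - 1/2)/(1/p_0 - 1/2) - 1$ and $q \in (0,1)$ is chosen so that $[n(1/q - 1/2)] = [n/2]$. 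Re-examining the proofs of Propositions \ref{Th:new} and \ref{Tj:Tad global} shows that $T^\varphi_{a_{i\operatorname{Im} z}}$ and its adjoint map $h^q \to L^q$ with bounds depending polynomially on $(1 + |\operatorname{Im} z|)$, while Theorem \ref{thm:fujiwara} gives uniform-in-$\operatorname{Im} z$ bounds $L^2 \to L^2$ on $\operatorname{Re} z = 1$. Mac\'\i as' analytic interpolation theorem \cite[Theorem E, p. 597]{coifweiss} then yields $h^{p_0} \to L^{p_0}$ boundedness of $T_{a_H}^\varphi$ for all $0 < p_0 \leq 2$. The range $2 < p < \infty$ follows by duality, using the identification $h^p = L^p$ there, while the case $p = \infty$ follows by dualising the $h^1 \to L^1$ bound obtained for the adjoint.

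In the non-compactly-supported case, the hypotheses of Proposition \ref{Tj:Tad global} on $\partial_x^\beta \varphi(x, e_\ell)$ and on line segments are arranged via the directional decomposition of Theorem \ref{thm:main1globalLp}: split $T_{a_H}^\varphi = \sum_{\ell=1}^{2n} T_{a_\ell}^\varphi$ using a partition of unity on cones around the $\pm e_\gamma$, and in each piece replace $\varphi$ by $\tilde\varphi := \varphi - \varphi(\,\cdot\,, e_\ell)$; the unimodular factor $e^{i\varphi(x, e_\ell)}$ preserves all function spaces involved, and the LF$(\mu)$-condition combined with $|e_\ell|=1$ guarantees $\partial_x^\beta \varphi(\,\cdot\,, e_\ell) \in L^\infty$ for $|\beta|\geq 1$. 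The main obstacle is the interpolation bookkeeping: one must verify that the constants in Propositions \ref{Th:new} and \ref{Tj:Tad global} grow at most polynomially in $|\operatorname{Im} z|$ when applied to the symbols $a_{i\operatorname{Im} z}$, so that Mac\'\i as' analytic interpolation genuinely applies.
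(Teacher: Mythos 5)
Your high-frequency treatment is essentially the paper's: Propositions~\ref{Th:new} and~\ref{Tj:Tad global} plus Mac\'{\i}as' analytic interpolation and duality, with the observation that the LF$(\mu)$-condition feeds the hypotheses of Proposition~\ref{Tj:Tad global}. The directional decomposition of Theorem~\ref{thm:main1globalLp} that you import is harmless but unnecessary here: the LF$(\mu)$-condition already gives $\partial^\beta_x\varphi(\cdot,e_\ell)\in L^\infty(\Rl^n)$ for every $\ell$ and $|\beta|\geq1$ (take $|\xi|=1$, $\alpha=0$ in~\eqref{eq:LFmu}), so Proposition~\ref{Tj:Tad global} applies directly, as the paper notes.

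The genuine gap is in the low-frequency part. You take $a_L=\psi_0(\xi/R)\,a$ with $R\geq\sqrt{n}$ so that $a_H$ is supported where $|\xi|\geq R\geq\sqrt n$, as required for Proposition~\ref{Tj:Tad global}, and then apply Lemma~\ref{Lem:lowfreq} to $T^\varphi_{a_L}$. But Lemma~\ref{Lem:lowfreq} is formulated for $a_L=\psi_0(\xi)\,a$, supported in $|\xi|\leq2$, and its proof feeds the LF$(\mu)$-condition --- which is only assumed for $0<|\xi|\leq 2$ --- into the estimates for the auxiliary symbol $\sigma$ in~\eqref{eq:sigma} over the full $\xi$-support of $a_L$. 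In particular it needs $|\partial^\beta_y\varphi(y,\eta)|\lesssim|\eta|^\mu$ for $\alpha=0$, $|\beta|\geq1$, and this is not available from the $\textart F^k$- or $L^2$-conditions alone on the annulus $2<|\xi|\leq 2R$ (the $\textart F^k$-condition for $k\geq1$ only controls pure $\xi$-derivatives, and the $L^2$-condition requires $|\alpha|\geq1$). The paper therefore splits into \emph{three} pieces: $a_L=\psi_0(\xi)\,a$ handled by Lemma~\ref{Lem:lowfreq}, a middle piece $a_M=(\psi_0(\xi/R)-\psi_0(\xi))\,a$ supported in $1\leq|\xi|\leq 2R$ handled by Lemma~\ref{Lem:midfreq}, and $a_H=(1-\psi_0(\xi/R))\,a$. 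Lemma~\ref{Lem:midfreq} handles precisely the region your single low-frequency piece reaches into but Lemma~\ref{Lem:lowfreq} does not cover: it uses the $L^2$-condition plus $\partial^\beta_x\varphi(x,\xi)\in L^\infty(\Rl^n\times\mathbb S^{n-1})$ (which again follows from LF$(\mu)$ at $|\xi|=1$) together with a mean-value-theorem argument to control $\partial^\beta_y\varphi$ on the annulus. You need to add this middle-frequency step, or else justify why the LF$(\mu)$-type bounds extend to $|\xi|\leq 2R$.
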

\begin{proof}
For the high frequency portion of the operator (here is the compact support in the spatial variable not relevant), we use Propositions \ref{Th:new} and \ref{Tj:Tad global} to show that the operators $T^{\varphi}_a$ and ${(T^{\varphi}_a)}^*$ are bounded from $h^{p}(\mathbb{R}^n)$ to $L^{p}(\mathbb{R}^n)$ for all $0<p <1$. Observe that the condition $\partial^\beta_x \varphi(x,e_\ell) \in L^\infty(\mathbb{R}^n)$ is satisfied for all $\ell$ due to the LF$(\mu)$-condition. Now using analytic interpolation, duality and the $L^2$-boundedness provided in Theorem \ref{thm:fujiwara}, yields the desired result for the high frequency portion of the operator $T_a^{\varphi}$.\\

For the low and middle frequency portions of the operator, we just use Lemma \ref{Lem:lowfreq} and Lemma \ref{Lem:midfreq} in the Triebel-Lizorkin case with $s=0$ and $q=2$.
\end{proof}

\begin{Lem}\label{lem:besovuniformboundedness}
Let $k\geq 1$, $0<p\leq \infty$, $m \in \Rl$ and $a(x,\xi)\in S^{m}_{0,0}(\Rl^n)$. Assume that $\varphi$ \emph{is SND} and satisfies the $L^2$-condition \eqref{eq:L2 condition}. If $\psi_j$ is defined as in \emph{Definition \ref{def:LP},} then the operator $T_j$ given by
\begin{equation*}\label{eq:defTj}
T_j f(x) 
:= \int_{\Rl^n} e^{i\varphi(x,\xi)}\, a(x,\xi)\,\psi_j(\xi)\, \widehat f(\xi) \ddd \xi,
\end{equation*}
 satisfies
\begin{equation*}\label{eq:uniformboundednessinj}
\norm{ T_j f}_{L^p(\Rl^n)} \lesssim 2^{j(m-m_k(p))} \norm{\Psi_j(D)  f}_{L^p(\Rl^n)},
\end{equation*}
for $j\in \mathbb Z_+$, provided that one of the following holds true$:$
\begin{enumerate}
    \item[$i)$] $\varphi\in \textart F^k$, $a(x,\xi)$ is compactly supported in the $x$-variable and has frequency support in $\Rl^n \setminus B(0,R)$, for the $R$ given in \emph{Lemma \ref{Lem:saviouroftoday}}.
    \item[$ii)$] $\varphi\in \textart F^k$, $\partial^\beta_x \varphi(x,\xi) \in L^\infty(\mathbb{R}^n\times\mathbb S^{n-1})$, for any $|\beta| \geq 1$ and $a(x,\xi)$ has frequency support in $\Rl^n \setminus B(0,R)$, for the $R$ given in \emph{Lemma \ref{Lem:saviouroftoday}}.
    \item[$iii)$] $\varphi\in \textart F^k\cap \mathcal C^\infty(\Rn\times\Rn)$. 
    \item[$iv)$] $k= 2$ and $\varphi$ satisfies \eqref{Schrodinger phase}.
\end{enumerate}

If one removes the requirement on the frequency support of $a(x,\xi)$ and adds the \emph{LF}$(\mu)$-condition \eqref{eq:LFmu} in $i)-ii)$, then one obtains the result for $0<p<\infty$ in $i)$ and $n/(n+\mu)<p<\infty$ in $ii)$.\\ 

In the case $0<k<1$, the results for $i)-iii)$ above are true provided that $a(x,\xi) \in S^{m}_{1,0}(\Rl^n)$.
\end{Lem}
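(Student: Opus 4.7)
The plan is to reduce the estimate for each $T_j$ to the $h^p$--$L^p$ boundedness results already established for oscillatory integral operators whose amplitudes have the critical order $m_k(p)$, by rescaling the amplitude and then exploiting the Fourier-localisation of $\Psi_j(D) f$.

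Since $\Psi_j = 1$ on $\supp\psi_j$, we have $\psi_j = \psi_j\Psi_j$, so $T_j f = T_j\,\Psi_j(D) f$. Setting $g := \Psi_j(D)f$, which is Fourier-localised in a ball of radius $\sim 2^j$, it suffices to prove
\begin{equation*}
\|T_j g\|_{L^p(\Rl^n)} \lesssim 2^{j(m-m_k(p))}\,\|g\|_{L^p(\Rl^n)}.
\end{equation*}
Introducing the rescaled amplitude
\begin{equation*}
b_j(x,\xi) := 2^{-j(m-m_k(p))}\,a(x,\xi)\,\psi_j(\xi),
\end{equation*}
a direct symbol-seminorm computation, using that $a\in S^m_{0,0}(\Rl^n)$, that $\psi_j$ is supported on $\{|\xi|\sim 2^j\}$ for $j\geq 1$, and that $|\partial_\xi^\alpha\psi_j|\lesssim 2^{-j|\alpha|}$, shows that $\{b_j\}_{j\geq 1}$ is a uniformly bounded family in $S^{m_k(p)}_{0,0}(\Rl^n)$. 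The corresponding oscillatory integral operator with amplitude $b_j$ and phase $\varphi$ is therefore exactly $2^{-j(m-m_k(p))}T_j$.

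For $j\geq 1$, we apply the appropriate $h^p$--$L^p$ boundedness to this rescaled operator: Proposition \ref{Prop: hp-Lp} in case $i)$; the combination of Propositions \ref{Th:new}, \ref{Tj:Tad global} and Theorem \ref{thm:fujiwara} with analytic interpolation in case $ii)$, noting that the assumption $\partial_x^\beta\varphi(x,\xi)\in L^\infty(\Rl^n\times\mathbb S^{n-1})$ supplies precisely the hypothesis on $\partial_x^\beta\varphi(x,e_\ell)$ needed in Proposition \ref{Tj:Tad global}; the same scheme in case $iii)$, where the smoothness of $\varphi$ takes the place of the LF$(\mu)$-condition; and Theorem \ref{thm:schrodinger} in case $iv)$. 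These yield, uniformly in $j$,
\begin{equation*}
\|T_j g\|_{L^p(\Rl^n)} \lesssim 2^{j(m-m_k(p))}\,\|g\|_{h^p(\Rl^n)},
\end{equation*}
for $0<p<\infty$, with the analogous $L^\infty$--$\bmo$ estimate at the endpoint.

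Finally, to pass from $\|g\|_{h^p}$ to $\|g\|_{L^p}$, we use that $g=\Psi_j(D) f$ is Fourier-localised in a ball of radius $\sim 2^j$: by the maximal-function characterisation of $h^p$ combined with the Peetre-type inequality of Lemma \ref{grafakos lemma 1} and the $L^{p/r}$-boundedness of the Hardy--Littlewood maximal function for a suitable $r<p$, one obtains $\|g\|_{h^p(\Rl^n)}\lesssim \|g\|_{L^p(\Rl^n)}$ with constants independent of $j$. The endpoint $p=\infty$ is handled by the direct bound $\|T_j g\|_{L^\infty}\lesssim 2^{jm}\|g\|_{L^\infty}$ obtained by integration by parts in $\xi$ in the oscillatory integral defining the kernel $K_j$ along the lines of Lemma \ref{Lem:pointwiseKjsimplified}; since $m_k(\infty)=-kn/2\leq 0$, this already majorises the target. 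The piece $j=0$ is trivial in cases $i)$ and $ii)$ (the amplitude has no low-frequency part by the support hypothesis), reduces to a smooth low-frequency argument in cases $iii)$ and $iv)$, and is absorbed into Lemma \ref{Lem:lowfreq} in the extension statement with the LF$(\mu)$-condition. The main technical obstacle is ensuring uniformity in $j$ throughout: this amounts to careful bookkeeping of the symbol seminorms of $b_j$ and of the kernel-estimate constants produced in Lemmas \ref{lem:hpLpofT} and \ref{lem:Miyachistuff}, verifying that none of them introduce hidden $j$-dependent factors.
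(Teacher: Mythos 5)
Your argument for $0<p<\infty$ is essentially the paper's: rescale the amplitude by $2^{-j(m-m_k(p))}$ to produce a family $\{b_j\}$ that is uniformly bounded in $S^{m_k(p)}_{0,0}(\Rl^n)$, invoke the $h^p$--$L^p$ results for this family, and use the frequency localisation of $g=\Psi_j(D)f$ to pass from $\|g\|_{h^p}$ back to $\|g\|_{L^p}$ uniformly in $j$. (The paper does this last step through the $F^0_{p,2}$-characterisation of $h^p$, you use the maximal-function characterisation plus Peetre's inequality; either works. One imprecision to note: $\Psi_j(D)f$ is Fourier-localised in a dyadic \emph{annulus}, not a ball. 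For $0<p\leq 1$ the inequality $\|g\|_{h^p}\lesssim\|g\|_{L^p}$ is \emph{not} uniform over functions with Fourier support in balls $B(0,2^j)$ --- such functions need not satisfy the cancellation inherent in $H^p$. What makes the bound uniform here is precisely that the annular support forces $\psi_0(tD)g=0$ for $t\gtrsim 2^{-j}$, so that $2^j t$ stays bounded in the Peetre estimate. Since you do treat $j=0$ separately this does not break the argument, but the stated justification is off.)

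The genuine gap is at $p=\infty$. You claim $\|T_j g\|_{L^\infty}\lesssim 2^{jm}\|g\|_{L^\infty}$ ``by integration by parts along the lines of Lemma \ref{Lem:pointwiseKjsimplified}'' and then majorise by $2^{j(m-m_k(\infty))}$ using $m_k(\infty)\leq 0$. This is false. Lemma \ref{Lem:pointwiseKjsimplified} only yields the pointwise bound $\|K_j\|_{L^\infty_{x,y}}\lesssim 2^{j(m+n)}$; to get an $L^\infty\to L^\infty$ bound one needs $\sup_x\|K_j(x,\cdot)\|_{L^1_y}\lesssim 2^{jm}$, which does not hold. Because the phase is genuinely oscillatory, the kernel of $T_j$ is not concentrated near $y\approx x$ as for a $\Psi$DO, but spreads over a region of diameter $\sim 2^{j(k-1)}$ (the image of the annulus $\{|\xi|\sim 2^j\}$ under $\nabla_\xi\varphi$). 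For the model phase $\varphi(x,\xi)=x\cdot\xi+|\xi|^k$ with $a\equiv 1$ and $k>1$, Plancherel gives $\|K_j\|_{L^2}\sim 2^{jn/2}$, and the effective support yields $\|K_j\|_{L^1}\sim 2^{jkn/2}=2^{j(m-m_k(\infty))}$; so the target estimate is in fact \emph{sharp}, and your claimed $2^{jm}$ is off by the full factor $2^{jkn/2}$. The paper handles $p=\infty$ by an entirely different and substantially longer mechanism: it proves the $L^1\to L^1$ bound for the adjoint $T_j^*$ by expanding $\psi_j(D)(T_\sigma^\varphi)^*$ via the parameter-dependent composition formula of Theorem \ref{thm:left composition with pseudo} into main terms (controlled through the $h^1$--$L^1$ theory applied to the amplitudes $\sigma_{\alpha,\ell}$) plus a remainder $T_{r_\ell}^\varphi$ whose kernel decays rapidly after choosing $M$ large in the expansion, and then concludes by duality. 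This is the technical core of the lemma's proof and cannot be replaced by a trivial kernel estimate.
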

\begin{proof}

Using Remark \ref{Rem:varphi0} 
we can without loss of generality assume that $\varphi(x,0)=0$ in $iii)-iv)$. Observe that using the mean value theorem, and either $L^2$-condition \eqref{eq:L2 condition} or  \eqref{Schrodinger phase},  yields that $\partial^{\beta}_x \varphi(x,\xi) \in L^{\infty}(\Rl^n \times \mathbb{S}^{n-1})$ for $\abs\beta \geq 1.$\\

To simplify the calculation we set $\sigma(x,\xi) :=\jap{\xi}^{m_k(p) -m}\,a(x,\xi)$ so that $\sigma\in S_{0,0}^{m_k(p)}(\Rn)$.\\ 

We start with the case $p\in (0,\infty)$. Proposition \ref{Prop: hp-Lp} in $i)-iii)$ and Theorem \ref{thm:schrodinger} in $iv)$ yields that $T_\sigma^\varphi:h^p(\Rl^n)\to L^p(\Rl^n)$. Next, we use the definition of the local Hardy space $h^p(\Rl^n)$ (see Definition \ref{def:Triebel}) and Definition \ref{def:LP} to obtain
\begin{align*}
\norm{T_j f}_{L^p(\Rl^n)} 
&\lesssim 2^{j(m-m_k(p))}\norm{T_\sigma ^\varphi \psi_j(D) f}_{L^p(\Rl^n)}
\lesssim 2^{j(m-m_k(p))}\norm{ \psi_j(D) f}_{h^p(\Rl^n)} \\
&\sim 2^{j(m-m_k(p))}
\Big\| \Big(\sum_{\ell=0}^\infty  \abs{\Psi_\ell(D) \psi_j(D) f}^2 \Big)^{1/2}\Big\|_{L^p(\Rl^n)} \\
&\lesssim 2^{j(m-m_k(p))}\norm{\Psi_j(D)f}_{L^p(\Rl^n)}.
\end{align*}
If one removes the requirement on the frequency support of $a(x,\xi)$ and adds either \eqref{eq:lf1} or \eqref{eq:lf2} in $i)-ii)$, then Lemma \ref{Lem:lowfreq} $iii)$ yields the $h^p- L^p$ result for the low frequency part of $T_\sigma^\varphi$.\\

We turn to the case when $p=\infty$, which can only be proved under the assumption $\abs\xi\geq R$ in $i)-ii)$. Observe that Proposition \ref{Prop: hp-Lp} in $i)-iii)$ and Theorem \ref{thm:schrodinger} in case $iv)$ give us the $h^1-L^1$ boundedness of the adjoint operator ${(T_\sigma^\varphi)}^*$. We set $f_\ell:= \psi_\ell(D)f$. Now the assumptions on the phase and 
Lemma \ref{Lem:saviouroftoday} enable us to apply formula \eqref{eq:highfreq1} to $(\Psi_\ell(D) T_{\sigma}^{\varphi})^*$, which in turn yields that
\nm{eq:uniformLinftybounedness}{
\norm{T^*_j f}_{L^1(\Rl^n)} 
&= 2^{j(m-m_k(p))}\norm{\psi_j(D)\brkt{T_{\sigma}^{\varphi}}^*f}_{L^1(\Rl^n)} \\ 
& =  2^{j(m-m_k(p))} \Big\|\sum_{\ell=0}^\infty \psi_j(D)\brkt{T_{\sigma}^{\varphi}}^*\Psi_\ell(D)\psi_\ell(D)f  \Big\|_{L^1(\Rl^n)} \\
&\lesssim 2^{j(m-m_k(p))}\sum_{\abs\alpha < M}\Big\|\sum_{\ell=0}^{N}2^{- \ell\eps |\alpha|}\psi_j(D){(T^\varphi_{\sigma_{\alpha,\ell}})}^*f_\ell  \Big\|_{L^1(\Rl^n)} \\
&\qquad +2^{j(m-m_k(p))}
\Big\|\sum_{\ell=0}^{\infty}2^{-\ell\eps M} \psi_j(D) (T_{r_\ell}^\varphi)^* f_\ell\Big\|_{L^1(\Rl^n)} \\ 
& =: \mathrm{I}+\mathrm{II},
}
where $N<\infty$ by the properties of the Littlewood-Paley sums.
We consider the main terms I above. Observe that using the semi-norm estimate \eqref{eq:estimate1} for $\sigma_{\alpha,\ell}$, we can claim that
\begin{align*}
&\Big\|\sum_{\ell=0}^{N} 2^{- \ell\eps |\alpha|} \psi_j(D){(T^\varphi_{\sigma_{\alpha,\ell}})}^*f_\ell  \Big\|_{L^1(\Rl^n)} 
 \lesssim \sum_{\ell=0}^{N}\norm{ 2^{- \ell\eps |\alpha|} \psi_j(D){(T^\varphi_{\sigma_{\alpha,\ell}})}^*f_\ell  }_{L^1(\Rl^n)} \\
&\qquad \lesssim\sum_{\ell=0}^N  \norm{f_\ell}_{h^1(\Rl^n)} = \sum_{\ell=0}^N\Big\| \Big(\sum_{j=0}^\infty  \abs{ \psi_j(D)\Psi_\ell(D) f}^2 \Big)^{1/2}\Big\|_{L^1(\Rl^n)}
\lesssim \norm{f}_{L^1(\Rl^n)}.
\end{align*}
To prove this claim, we first observe that 
$\psi_j(D)$ maps $L^1$ into itself (with a norm independent of $j$). Then we use Proposition \ref{Prop: hp-Lp} in $i)-iii)$ and Theorem \ref{thm:schrodinger} in case $iv)$ to obtain the desired result.\\

For the remainder term II we use the representation
\eq{
\brkt{T_{r_\ell} ^\varphi}^* f(x) = \int_{\Rl^n} K(x,y) f(y) \dd y.
}
Then in case $i)-iii)$ Integration by parts yields
\begin{align*}
\abs{K(x,y)} 
&= \Big|\int_{\Rl^n} e^{-i(x-y)\cdot \xi}\,e^{i\varphi(y,\xi)-iy\cdot\xi}\,\overline{r_\ell (y,\xi)} \ddd\xi \Big| \\ 
&\lesssim \jap{x-y}^{-2N} \int_{\Rl^n} \abs{(1+i(x-y)\cdot\nabla_\xi)^N\, \Big(e^{i\varphi(y,\xi)-iy\cdot\xi}\,\overline{r_\ell (y,\xi)}\Big)} \ddd\xi \\
&\lesssim {\jap{x-y}^{-2N}} \int_{|\xi| \gtrsim 1} \sum_{\abs{\alpha_1}+\dots+ \abs{\alpha_{N}}\leq N}  \abs{\partial_\xi^{\alpha_N} \overline{r_\ell (y,\xi)}}\\ &\qquad\qquad \qquad \qquad \times\prod_{\nu=1}^{N-1} C_{\alpha}\abs{(x-y)^{\alpha_\nu } {\partial_\xi^{\alpha_\nu} (\varphi(y,\xi)-y\cdot\xi )}}  \ddd\xi.
\end{align*}
Now since by estimate \eqref{eq:estimate3} we have that $r_\ell \in S^{m-\brkt{1/2- \varepsilon}M}_{0,0}{(\Rn)}$, choosing $M$ large enough, the $\textart F^k$-condition yields that
\eq{
\abs{K(x,y)} \lesssim \jap{x-y}^{-N},
}
for any $N>0$. In case $iv)$ we estimate
\eq{
\abs{K(x,y)} 
&= \abs{\int_{\Rl^n} e^{i(x-\nabla_\xi\varphi (y,0))\cdot \xi}\,e^{i\nabla_\xi\varphi (y,0)\cdot\xi-i\varphi(y,\xi)}\,\overline{ {r_\ell}(y,\xi)} \ddd\xi} \\ 
&\lesssim  \jap{x-\nabla_\xi\varphi (y,0)}^{-2N} \int_{\Rl^n} \Big|(1+i(x-\nabla_\xi\varphi (y,0))\cdot\nabla_\xi)^N \\
& \qquad \qquad \times e^{i\varphi(y,\xi)-i\nabla_\xi\varphi (y,0)\cdot\xi}\,\overline{ {r_\ell}(y,\xi)} \Big| \ddd\xi \\
&\lesssim \jap{x-\nabla_\xi\varphi (y,0)}^{-2N} \int_{\Rl^n} \sum_{\abs{\alpha_1}+\dots+ \abs{\alpha_{N}}\leq N}  \abs{\partial_\xi^{\alpha_N} \overline{ {r_\ell}(y,\xi)}}\\
&\qquad\qquad\times\prod_{\nu=1}^{N-1} \abs{(x-\nabla_\xi\varphi (y,0))^{\alpha_\nu } \,\partial_\xi^{\alpha_\nu} (\varphi(y,\xi)-\nabla_\xi\varphi (y,0)\cdot\xi )}  \ddd\xi .
}
Here we observe that 
\begin{equation*}
    |\partial_\xi^{\alpha_\nu} (\varphi(y,\xi)-\nabla_\xi\varphi (y,0)\cdot\xi )|
    = \left\{
    \begin{array}{ll}
         O(1), &  \abs{\alpha_\nu} \geq 2,\\
         O(|\xi|), &  \abs{\alpha_\nu}=1, 
    \end{array}
    \right.
\end{equation*}
where we have used the fact that when $\abs{\alpha_\nu}\geq 2$, then \eqref{Schrodinger phase}  yields the first estimate and when $\abs{\alpha_\nu}=1$, then the mean-value theorem yields the second.
 Therefore, once again choosing $M$ large enough, we have for any $N>0$ that
 $$\abs{K(x,y)}\lesssim  \jap{x-\nabla_\xi\varphi (y,0)}^{-N}$$ 
and hence 
\eq{
\norm{\brkt{T_{r_\ell}^\varphi}^* f}_{L^1(\Rl^n)} 
\lesssim 
\norm f_{L^1(\Rl^n)}. 
}
Now we estimate the remainder term of \eqref{eq:uniformLinftybounedness}. It is bounded by
\eq{
\sum_{\ell=0}^\infty 2^{-\ell\varepsilon M}\norm{ \psi_j(D) \brkt{T_{ {r_\ell}}^\varphi}^* f_\ell}_{L^1(\Rl^n)} \lesssim \sum_{\ell=0}^\infty 2^{-\ell\varepsilon M}\norm{f}_{L^1(\Rl^n)} \lesssim \norm{f}_{L^1(\Rl^n)} .
}
Therefore,
$$\norm{T^*_jf}_{L^1(\Rl^n)} \lesssim \norm{f}_{L^1(\Rl^n)},$$
and a duality argument yields
\eq{\norm{T_j f}_{L^\infty(\Rl^n)} =\norm{T_j \Psi_j(D) f}_{L^\infty(\Rl^n)} \lesssim \norm{\Psi_j(D)f}_{L^\infty(\Rl^n)}.}
Now if one removes the requirement on the frequency support of $a(x,\xi)$ and adds either \eqref{eq:lf1} or \eqref{eq:lf2} in $i)-ii)$, then Lemma \ref{Lem:lowfreq} $i)$ yields $L^\infty - \bmo$ boundedness and the rest of the argument proceeds as before.
\end{proof}

Now we are finally ready to prove the regularity of oscillatory integral operators on Besov-Lipschitz spaces. 

\begin{proof}[\emph{\textbf{Proof of Theorems \ref{thm:main4} and \ref{thm:main4glob}, part \emph{i})}}]
\label{main highfreq Besov oscillatory}
For the low and middle frequency portions of the operator, we just use Lemma \ref{Lem:lowfreq} and Lemma \ref{Lem:midfreq} parts $ii)$  and $iii)$. Observe that $\partial^\beta_x \varphi(x,\xi) \in L^\infty(\mathbb{R}^n\times\mathbb S^{n-1})$ is a consequence of the LF$(\mu)$-condition \eqref{eq:LFmu}. Thus from now on we concentrate on the high frequency portion of the operator. We divide the proof into three steps. In Step 1 we invoke a composition formula which yields a sum of two terms (a main term and a remainder term) that need to be analysed separately, and conclude that the main term  is $L^p$-bounded (in the sense of Lemma \ref{lem:besovuniformboundedness}). In Step 2 we show $B_{p,q}^s-L^p$ boundedness for the remainder term, and in Step 3 we complete the proof by deducing the $ B_{p,q}^{s+m-m_k(p)} - B_{p,q}^s$ boundedness. In Step 4 we deal with the case when the phase function is smooth everywhere in Theorem \ref{thm:main4glob}.\\

\textbf{Step 1 -- A composition formula and boundedness of the main term}\\ In the definition of the Besov-Lipschitz norm, the expression $\psi_j(D)T_a^{\varphi}f$ plays a central role. To obtain favourable estimates for $\psi_j(D)T_a^{\varphi}f$ we use formula \eqref{eq:highfreq1} with $M$ chosen large enough, which states
\begin{equation}\label{eq:new}
\psi\left (2^{-j}D \right ) T_a ^\varphi = \sum_{|\alpha|\leq M-1} \frac{2^{-j\eps|\alpha|}}{\alpha!}T_{\sigma_{\alpha,j}}^{\varphi} + 2^{-j\eps M}T_{ r_j}^{\varphi}.
\end{equation}

From Lemma \ref{lem:besovuniformboundedness} we have, after a change of variables, that
\begin{equation} \label{eq:highfreq8}
\begin{split}
\norm{T_{\sigma_{\alpha,j}}^{\varphi} f}_ {L^p(\Rl^n)} \lesssim 2^{j(m-m_k(p))}   \norm{\Psi_j(D)f}_{L^p(\Rl^n)}.
\end{split}
\end{equation}

\textbf{Step 2 -- The remainder term}\\
We decompose $T_{ r}^\varphi$ of \eqref{eq:new} into Littlewood-Paley pieces as follows:
\eq{T_{  r_j}^{\varphi} f(x) 
= \sum_{{\ell=0}}^\infty \int_{\Rl^n} e^{i\varphi(x,\xi)}\,  r_j(x,\xi)\,\psi_\ell(\xi)\,\widehat{f}(\xi) \ddd\xi 
=: \sum_{{\ell=0}}^\infty  T^{\varphi}_{ {r}_{j,\ell}}f(x),}
where the $\psi_\ell$'s are given in Definition \ref{def:LP}.
We use the fact that for $0<p\leq\infty,$
\begin{equation}\label{quasibanch triangel}
\norm{f+g}_{L^p(\Rl^n)} \leq 2^{C_p}\left (\norm{f}_{L^p(\Rl^n)}+\norm{g}_{L^p(\Rl^n)}\right ),
\end{equation}
where $ C_p:=\max (0,1/p -1 )$.
Now Fatou's lemma and iteration of \eqref{quasibanch triangel} yield that
\begin{align*}
\norm{T_{r_j}^{\varphi} f}_{L^p(\Rl^n)}
&=\Big\| \sum_{\ell=0}^\infty T^{\varphi}_{ {r}_{j,\ell}} f \Big\|_{L^p(\Rl^n)} 
\leq \liminf_{N\to\infty}
\Big\|\sum_{\ell=0}^N T^{\varphi}_{ {r}_{j,\ell}} f \Big\|_{L^p(\Rl^n)} \\
& \lesssim \liminf_{N\to\infty}\sum_{\ell=0}^N2^{\ell C_p}
\| T^{\varphi}_{ {r}_{j,\ell}} f\|_{L^p(\Rl^n)} 
\lesssim\sum_{\ell=0}^\infty 2^{\ell C_p}
\|T^{\varphi}_{ {r}_{j,\ell}}f\|_{L^p(\Rl^n)},
\end{align*}
where the hidden constant in the last estimate depends only on $p$. Therefore, applying Lemma \ref{lem:besovuniformboundedness} with $m-\brkt{1/2- \varepsilon}M $ instead of $m$ (recall that $r_j$ vanishes for all $\xi$, for which $a$ vanishes), we obtain
\begin{equation}\label{eq:highfreq6}
\begin{split}
\|T_{  r_j}^{\varphi} f\|_{L^p(\Rl^n)} 
&\lesssim \sum_{\ell =0}^\infty 2^{\ell C_p}
\|T^{\varphi}_{ {r}_{j,\ell}}f\|_{L^p(\Rl^n)} \\ &\lesssim \sum_{\ell =0}^\infty 2^{\ell \left (C_p+m-m_k(p)-\brkt{1/2- \varepsilon}M  \right )}\norm{\Psi_\ell (D)f}_{L^p(\Rl^n)}.
\end{split}
\end{equation}
Note that the estimate \eqref{eq:highfreq6} is uniform in $j$. Now we claim that
\begin{equation}\label{eq:highfreq9}
T_{  r_j}^{\varphi}:B_{p,q}^s(\Rl^n)\to L^p(\Rl^n).
\end{equation}
To see this, we shall analyse the cases $0<q<1$ and $1\leq q \leq \infty$ separately. Starting with the former, we have
\begin{equation*}
\begin{split}
\norm{T_{r_j}^{\varphi} f}_{L^p(\Rl^n)} \lesssim 
&\sum_{\ell =0}^\infty 2^{\ell \left (C_p+m-m_k(p)-\brkt{1/2- \varepsilon}M \right )}\norm{\Psi_\ell (D)f}_{L^p(\Rl^n)}  \\  \lesssim & \sum_{\ell =0}^\infty 2^{\ell (s+m-m_k(p))}\norm{\Psi_\ell (D)f}_{L^p(\Rl^n)} \\ 
\leq &\Big(\sum_{\ell =0}^\infty 2^{\ell q(s+m-m_k(p))}\norm{\Psi_\ell (D)f}_{L^p(\Rl^n)}^q\Big)^{1/q}
= \norm{f}_{B_{p,q}^{s+m-m_k(p)}(\Rl^n)},
\end{split}
\end{equation*}
where we used (\ref{eq:highfreq6}) for the first inequality and that $M$ is large enough for the second. 
For $1\leq q \leq \infty$, H\"older's inequality in the sum over $\ell$ and picking $M$ large enough yield
\begin{equation*}
\begin{split}
\norm{T_{r_j}^{\varphi}f}_{L^p(\Rl^n)} &\lesssim \sum_{\ell =0}^\infty 2^{\ell \left (C_p+m-m_k(p)-\brkt{1/2- \varepsilon}M \right )}\norm{\Psi_\ell (D)f}_{L^p(\Rl^n)} \\ &= \sum_{\ell =0}^\infty 2^{\ell \left (-s +C_p-\brkt{1/2- \varepsilon}M \right )}\brkt{2^{\ell (s+m-m_k(p))}\norm{\Psi_\ell (D)f}_{L^p(\Rl^n)}}  \\ 
&\lesssim \Big( \sum_{\ell =0}^\infty 2^{\ell q'\left (-s +C_p-\brkt{1/2- \varepsilon}M \right )}\Big)^{1/q'} \Big( \sum_{\ell =0}^\infty 2^{\ell q(s+m-m_k(p))}\norm{\Psi_\ell (D)f}^{q}_{L^p(\Rl^n)}\Big)^{1/q} \\ 
& \lesssim \norm{f}_{B_{p,q}^{s+m-m_k(p)}(\Rl^n)},
\end{split}
\end{equation*}
which implies \eqref{eq:highfreq9}. Note that the calculation above also holds for $q=\infty$ with the usual interpretation of H\"older's inequality. \\\\

\textbf{Step 3 -- The $\mathbf{B_{p,q}^{s+m-m_k(p)} - B_{p,q}^s}$ boundedness}\\
The results in (\ref{eq:highfreq8}) and (\ref{eq:highfreq9}) yield that
\begin{align*}
&\norm{T_a^\varphi f}_{B_{p,q}^{s}(\Rl^n)} 
= \Big(\sum_{j=0}^\infty \left (2^{js}\norm{\psi\brkt{2^{-j}D}T_a^\varphi f}_{L^p(\Rl^n)}\right )^q\Big)^{1/q}\\ 
& \quad \lesssim  \Big (\sum_{j=0}^\infty \Big (\sum_{|\alpha|\leq M-1} 2^{js}\norm{T_{\sigma_{\alpha,j}} f}_{L^p(\Rl^n)} + 2^{-j(\varepsilon M-s)}\norm{T_{r_j} f}_{L^p(\Rl^n)}\Big )^q \Big )  ^{1/q}  \\ 
& \quad \lesssim  \Big( \sum_{j=0}^\infty \left ( 2^{j(s+m-m_k(p))}\norm{ \Psi_j(D)f}_{L^p(\Rl^n)} + 2^{-j\brkt{\varepsilon M-s}} \norm{ f}_{B_{p,q}^{s+m-m_k(p)}(\Rl^n)}\right  )^q\Big)^{1/q} \\
& \quad \lesssim \Big (\sum_{j=0}^\infty  2^{jq(s+m-m_k(p))}\norm{ \Psi_j(D)f}_{L^p(\Rl^n)}^q + \sum_{j=0}^{\infty}2^{-jq\brkt{\varepsilon M-s}}\norm{ f}_{B_{p,q}^{s+m-m_k(p)}(\Rl^n)}^q \Big )^{1/q}\\ 
& \quad \lesssim \norm{ f}_{B_{p,q}^{s+m-m_k(p)}(\Rl^n)}.
\end{align*}\hspace*{1cm}

\textbf{Step 4 -- The smooth case}\\
For the smooth version we don't need separate proofs for low, middle and high frequencies. Note that $T_a^\varphi= e^{i\varphi(x,0)} T_a^{\tilde{\varphi}}$, where $\tilde{\varphi}(x,0)= 0.$ Therefore, using the condition $|\nabla_x\varphi(x,0)|\in L^{\infty}(\Rl^n)$ and the $L^2$-condition (for all $x$ and $\xi$), we have by \eqref{poitwise multiiplier} that 
$\Vert T_a^\varphi f\Vert_{B^{s}_{p,q}}\lesssim \Vert T_a^{\tilde{\varphi}}f\Vert_{B^{s}_{p,q}}.$ 
Now apply Lemma \ref{lem:besovuniformboundedness} $iii)$ and Lemma \ref{Lem:saviouroftoday} $iii)$ to $T_a^{\tilde{\varphi}}$ and continue as above and the proof is complete.
\end{proof}

We can also establish the boundedness of Schr\"odinger integral operators on Besov-Lipschitz spaces. 

\begin{proof}[\emph{\textbf{Proof of Theorems \ref{thm:main5} and \ref{thm:main5glob}, part \emph{i})}}] 
Theorem \ref{thm:main5} is a special case of Theorem \ref{thm:main5glob} so it is enough to consider the latter. This is identical to Step 4 in the previous proof, except that Lemma \ref{lem:besovuniformboundedness} $iv)$ is used instead. 

\end{proof}

\section{Regularity on Triebel-Lizorkin spaces}
\label{sec:Triebel}

In this section we prove various Triebel-Lizorkin regularity results as corollaries of the previous Besov-Lipschitz results. We observe that, if we do not let the order $m$ of the amplitude to go all the way to the endpoint, then we have Triebel-Lizorkin boundedness for all $p$'s and $q$'s. 

\begin{proof}[\emph{\textbf{Proof of Theorems \ref{thm:main4},  \ref{thm:main5}, \ref{thm:main4glob} and \ref{thm:main5glob}, part \emph{ii})}}]

Using the embedding \eqref{embedding of TL}, equality \eqref{equality of TL and BL},
and part \emph i) of the theorems, we have that
\begin{align*}
\Vert T_a^\varphi f\Vert_{F^{s}_{p,q}(\Rl^n)}
&\lesssim \Vert T_a^\varphi f\Vert_{F^{s+\varepsilon/2}_{p,p}(\Rl^n)}
\lesssim \Vert f\Vert_{F^{s+m-m_k(p)+\varepsilon/2}_{p,p}(\Rl^n)} \\
& \lesssim \Vert f\Vert_{F^{s+m-m_k(p)+\varepsilon}_{p,q}(\Rl^n)}. \qedhere
\end{align*}
\end{proof}

\begin{proof}
[\textbf{\emph{Proof of Theorems \ref{thm:main4}, \ref{thm:main5}, \ref{thm:main4glob} and \ref{thm:main5glob}, 
parts \emph{iii}) and \emph{iv})}}]
We divide the \\
proof into different steps.\\

\textbf{Step 1 -- The diagonal $\mathbf{p=q}$}\\
The theorem is true for the diagonal $p=q$ because of the Besov-Lipschitz results in 
Theorems \ref{thm:main4}--\ref{thm:main4glob}, \ref{thm:main5glob}, part \emph i) and the fact that $F_{p,p}^s(\Rl^n)=B_{p,p}^s(\Rl^n)$.\\

\textbf{Step 2 -- The $\mathbf{h^p -h^p}$ boundedness}\\
 For Theorems \ref{thm:main4} and \ref{thm:main4glob} $iii)$--$iv)$, we split the proof into low, middle and high frequency parts. The low and middle frequency parts were treated in Lemma \ref{Lem:lowfreq} and Lemma \ref{Lem:midfreq}. Observe that $\partial^\beta_x \varphi(x,\xi) \in L^\infty(\mathbb{R}^n\times\mathbb S^{n-1})$ is a consequence of the LF$(\mu)$-condition \eqref{eq:LFmu}. \\
 
For the high frequency cases, $a_H(x,\xi) := (1-\psi_0(\xi/R)\,a(x,\xi)$, recall that Proposition \ref{Prop: hp-Lp} yields the $h^p - L^p$ boundedness, which we will now lift  to the $h^p- h^p$ level.\\

To this end, it is enough to show that if $b(D)$ is a Fourier multiplier with $b\in S^0_{1,0}(\Rl^n)$, and $t$ a parameter in $(0,1]$, then the composition $b (tD)T_{a_H}^{\varphi}$ is $h^p - L^p$ bounded with a norm that doesn't depend on $t$. But this is indeed the case, since using the composition formula \eqref{asymptotic expansion} with $M=1$ we see that
$$b (tD)T_{a_H}^{\varphi}= T_{a_Hb(t\cdot)}^{\varphi} + t^{\eps} T_{r}^\varphi, $$
where $\abs{\partial^{\alpha}_{\xi} \partial^{\beta}_{x} r(t,x,\xi)} \leq C_{\alpha, \beta} \,\bra{\xi}^{m_k(p)-(1/2-\eps)}.$ Now since $a_H b(t\cdot)\in S^{m_{k}(p)}_{0,0}(\Rl^n)$ uniformly in $t\in (0,1]$,  Proposition \ref{Prop: hp-Lp} yields the $h^p - L^p$ boundedness of $ b (tD)T_{a_H}^\varphi$ with a norm that is independent of $t$, and the proof for the oscillatory integral operators is concluded.\\  

For Schr\"odinger integral operators (Theorems \ref{thm:main5} and \ref{thm:main5glob} $iii)$--$iv)$) and the smooth version of Theorem \ref{thm:main4glob}, there is no need to divide the amplitude different frequency portions, and we once again note that $T_a^\varphi= e^{i\varphi(x,0)} T_a^{\tilde{\varphi}}$ where $\tilde{\varphi}(x,0)= 0.$ Therefore, using the condition $|\nabla_x\varphi(x,0)|\in L^{\infty}(\Rl^n)$ and condition \eqref{Schrodinger phase}, we see by \eqref{poitwise multiiplier} and the definition of the local Hardy space as a Triebel-Lizorkin space that 
{$\Vert T_a^\varphi f\Vert_{h^p}\lesssim \Vert T_a^{\tilde{\varphi}}f\Vert_{h^p}.$} Now using Lemma \ref{Lem:saviouroftoday}, Theorem \ref{thm:left composition with pseudo}, and Theorem \ref{thm:schrodinger}, we can proceed as above to show the $h^p - L^p$ boundedness of $ b (tD)T_{a}^{\tilde{\varphi}}$ (for $0<p<\infty$) with a norm that is independent of $t$, and the proof for the Schr\"odinger integral operators is also concluded.\\

\textbf{Step 3 -- Boosting $\mathbf{F^0_{p,2}}$-boundedness to arbitrary regularity} \\
Once again for oscillatory integral operators, we decompose into low, middle and high frequency portions.
For the low and middle frequency parts, we apply Lemma \ref{Lem:lowfreq} and Lemma \ref{Lem:midfreq}. For the high frequency parts, we proceed as follows.
Write $a_H(x,\xi)= \sigma(x,\xi) \jap{\xi}^{m-m_k(p)}$, with $\sigma(x,\xi) \in S^{m_k(p)}_{0,0}(\Rl^n),$ and use Theorem \ref{thm:left composition with pseudo} to conclude that $(1-\Delta)^{s/2}T_\sigma^\varphi (1-\Delta)^{-s/2}$ is the same kind of oscillatory integral operator as $T_\sigma^\varphi$. Therefore by Step 2 above
\eq{
\norm{T_{a_H}^\varphi f}_{F^{s}_{p,2}(\Rl^n)} &= \norm{(1-\Delta)^{s/2}
{T_\sigma^\varphi} (1-\Delta)^{-s/2}(1-\Delta)^{(m-m_k(p)+s)/2}f}_{F^{0}_{p,2}(\Rl^n)}\\
&\lesssim \norm{(1-\Delta)^{(m-m_k(p)+s)/2}f}_{F^{0}_{p,2}(\Rl^n)} = \norm{f}_{F^{s+m-m_k(p)}_{p,2}(\Rl^n)}.
}
Now for the Schr\"odinger integral operator case and the smooth phase function case, there is no need to decompose the operator into high and low frequency cases, instead we just use \eqref{poitwise multiiplier} to once again reduce to the case of $T_{a_H}^{\tilde{\varphi}}$ for which it is true, thanks to Lemma \ref{Lem:saviouroftoday}, that 
$(1-\Delta)^{s/2}T_\sigma^{\tilde{\varphi}} (1-\Delta)^{-s/2}$ is the same kind of oscillatory integral operator as $T_\sigma^{\tilde{\varphi}}.$ Therefore we can once again run the same argument as above and achieve the desired result.\\

\textbf{Step 4 -- Interpolation}\\
By interpolation in $q$ we get the desired result. Note that one cannot interpolate between Triebel-Lizorkin spaces when $p=\infty$.
\end{proof}

\begin{figure}[ht!]
\centering\includegraphics[scale=.4]{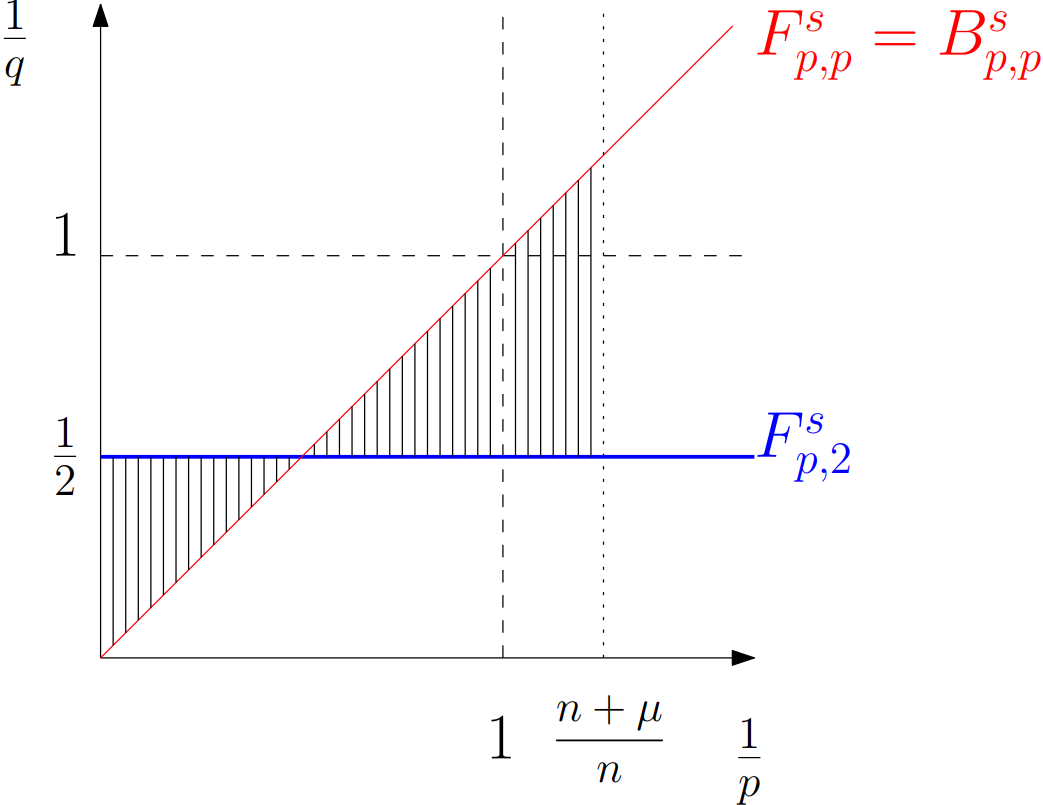}
\caption{Boundedness and interpolation scheme in Triebel-Lizorkin scale.}
\end{figure}

\begin{proof}[\emph{\textbf{Proof of Theorem \ref{Th:bonus}, part \emph{i}) 
}}]
Write $a(x,\xi)= \sigma (x,\xi) \jap{\xi}^{m_2(p)}$ for 
$\sigma \in S^0_{0,0}(\Rl^n)$, and let $b(D)$ be any pseudodifferential operator of order zero. Then \linebreak\cite[Theorem 6.1]{AF} asserts that $[b(tD),T_{\sigma}^\varphi]= T^{\varphi}_{r}$,  for some $r(t,x, \xi) \in S^0_{0,0}(\Rl^n)$ uniformly in $t$ if $t\in (0,1]$. Therefore, we have that
\begin{align*}
b(tD)T_a^\varphi f(x)
& =b(tD)T_{\sigma}^\varphi (1-\Delta)^{{m_2(p)}/2}f(x)\\
& = T_{\sigma}^\varphi b(tD)(1-\Delta)^{{m_2(p)}/2}f(x)+ T_{r}^\varphi (1-\Delta)^{{m_2(p)}/2}f(x),
\end{align*}
with $r(x, \xi,t) \in S^0_{0,0}(\Rl^n)$ uniformly in $t\in (0,1]$.
Then since $b\in S^{0}_{1,0}(\Rl^n)$ we have that $\sigma(x,\xi)\,b(t\xi) \jap{\xi}^{m_2(p)}\in S^{{m_2(p)}}_{0,0}(\Rl^n)$  uniformly in $t\in (0,1]$ and also $r(t,x,\xi)\jap{\xi}^{m_2(p)}\linebreak\in S^{{m_2(p)}}_{0,0}(\Rl^n)$ uniformly in $t\in (0,1]$. Therefore, we can apply Theorem \ref{thm:schrodinger} to conclude the desired result.
\end{proof}
\section{Sharpness of the results}
\label{sect:Sharp}

Let us start from a naive approach to the regularity problem of oscillatory integral operators, by considering a concrete case of an oscillatory integral operator, namely
$$Tf(x)
:= \int_{\mathbb{R}^n} |\xi|^{m} \, (1-\psi_0(\xi))\, e^{ix\cdot\xi+ i|\xi|^k}\,\widehat{f}(\xi) \, \dd \xi,$$
with $\psi_0$ as in Definition \ref{def:LP}. \\

Now, if we look upon $T$ as a $\Psi$DO with symbol $$a_{k,m}(\xi):=e^{i|\xi|^k }\,  (1-\psi_0(\xi))\, |\xi|^{m},$$ then we see that this symbol does not belong to any H\"ormander class $S^{m}_{\rho,\delta}(\Rl^n)$ for any $\rho \in [0,1]$, since $|\partial^{\alpha} a_{k,m}(\xi)|\lesssim \langle \xi\rangle^{m +(k-1)|\alpha|}$. Therefore, the appeal to the boundedness theory of pseudodifferential operators fails in a rather drastic way.\\

To understand the significance of the order $m_k (p)= -kn\left |1/p-1/2\right |,$ let 
\begin{equation*}
\begin{split}
K_{k,m}(x):=\int_{\mathbb{R}^n} (1-\psi_0(\xi))\,|\xi|^{m}\, e^{ix\cdot\xi+|\xi|^k}\, \ddd \xi.
\end{split}
\end{equation*}
Let $1<p<\infty$ and 
$$f_\lambda(x):= \int_{\mathbb{R}^n} (1-\psi_0(\xi))\,|\xi|^{-\lambda}\,e^{ix\cdot\xi}\, \ddd \xi.$$
It was shown in \cite[p. 302]{Miyachi1} that $f_\lambda \in L^p(\mathbb{R}^n)$ iff $-\lambda < n/p-n$. Now, if $m>m_k(p)$ and if $\lambda$ is such that $-\lambda <n/p-n$ and $-m +\lambda -n +nk/2<n(k-1)/p$, then $f_{\lambda}\in L^p(\Rl^n)$, but $Tf_\lambda (x)=(K_{k,m}\ast f_{\lambda})(x)\notin L^p(\Rl^n)$, see \cite[p. 301, (I-ii)]{Miyachi1}.\\

This shows that, if we regard the operator $T$ above as an oscillatory integral operator with the amplitude  $(1-\psi_0(\xi))\,|\xi|^{m}\in S^m_{1,0}(\Rl^n),$  and the phase function $x\cdot\xi+|\xi|^k$, then one can not in general expect any $L^p$-boundedness, unless $m\leq m_k(p)$ and thus this order of the amplitude is sharp for the $L^p$-regularity of $T$.

\bibliographystyle{siam}
\bibliography{references}

\end{document}